\documentclass[11pt,a4paper]{amsart}

\usepackage[utf8]{inputenc}
\usepackage[english]{babel}
\usepackage[			
	colorlinks=true,
	urlcolor=blue,
	linkcolor=green
]{hyperref}
\usepackage{csquotes}


\usepackage{amsmath}
\usepackage{amsfonts} 
\usepackage{amsthm}
\usepackage{amssymb}
\usepackage{mathtools}
\usepackage{mathrsfs}
\usepackage{mathdots} 
\usepackage{tikz}
\usepackage{tikz-cd}


\theoremstyle{plain}
\newtheorem{lemm}{Lemma}[section]
\newtheorem{theo}[lemm]{Theorem}
\newtheorem{prop}[lemm]{Proposition} 
\newtheorem{coro}[lemm]{Corollary}

\theoremstyle{definition}
\newtheorem{defi}[lemm]{Definition}

\theoremstyle{remark}
\newtheorem{rema}[lemm]{Remark}
\newtheorem{exam}[lemm]{Example}


\newcommand{\N}{\mathbb{N}}
\newcommand{\Z}{\mathbb{Z}}
\newcommand{\R}{\mathbb{R}}
\newcommand{\C}{\mathbb{C}}

\renewcommand{\d}{\mathrm{\ d}}

\newcommand{\delb}{\bar{\partial}}
\newcommand{\End}{\mathrm{End}}
\newcommand{\Fix}{\mathsf{Fix}}
\newcommand{\Eig}{\mathsf{Eig}}

\newcommand{\id}{\mathrm{id}}
\newcommand{\tr}{\mathrm{tr}}
\renewcommand{\ker}{\mathsf{ker}}
\newcommand{\Ker}{\mathsf{ker}}
\newcommand{\rest}{\: \rule[-3.5pt]{0.5pt}{11.5pt}\,{}}
\renewcommand{\H}{\mathsf{Hit}}
\newcommand{\supp}{\mathsf{supp}}
\renewcommand{\O}{\mathcal{O}}
\newcommand{\Heck}{\mathsf{Heck}}
\renewcommand{\Pr}{\mathcal{\mathcal{P}}}

\newcommand{\Prym}{\mathsf{Prym}}
\newcommand{\Pic}{\mathsf{Pic}}
\newcommand{\ord}{\mathsf{ord}}
\renewcommand{\div}{\mathsf{div}}
\newcommand{\Div}{\mathsf{Div}}
\newcommand{\Aut}{\mathsf{Aut}}
\newcommand{\A}{\mathcal{A}}
\newcommand{\M}{\mathcal{M}}
\newcommand{\Jac}{\mathsf{Jac}}
\newcommand{\tpst}{\tilde{\pi}^*}
\newcommand{\even}{\mathrm{even}}
\newcommand{\odd}{\mathrm{odd}}
\renewcommand{\P}{\mathbb{P}}
\renewcommand{\S}{\mathcal{S}}
\newcommand{\SL}{\mathsf{SL}}
\newcommand{\Nm}{\mathsf{Nm}}


\begin{document}
\title[Semi-Abelian Spectral Data]{Semi-Abelian Spectral Data for Singular Fibres of the \texorpdfstring{$\SL(2,\C)$}{SL(2,C)}-Hitchin System}


\author{Johannes Horn}

\address{%
Universität Heidelberg,
Im Neuenheimer Feld 205,
69120 Heidelberg, Germany
}
\email{johannes-horn@gmx.de}

\begin{abstract}
We describe spectral data for singular fibres of the $\SL(2,\C)$-Hitchin fibration with irreducible and reduced spectral curve. Using Hecke transformations we give a stratification of these singular spaces by fibre bundles over Prym varieties. By analysing the parameter spaces of Hecke transformations this describes the singular Hitchin fibres as compactifications of abelian group bundles over abelian torsors. We prove that a large class of singular fibres are themselves fibre bundles over Prym varieties. As applications we study irreducible components of singular Hitchin fibres and give a description of $\SL(2,\R)$-Higgs bundles in terms of these semi-abelian spectral data.
\end{abstract}

\maketitle

\section{Introduction}
Since the introduction of Higgs bundles in the late 80's by Hitchin \cite{Hi87a} and Simpson \cite{Si88}, the geometry of Higgs bundle moduli spaces is an extensively studied research topic at the interface of algebraic geometry, differential geometry and mathematical physics. From the algebro-geometric perspective the moduli space of Higgs bundles is a non-compact analogue of the moduli space of holomorphic vector bundles. In addition, it can be interpreted as the moduli space of solutions to a differential geometric PDE, the Hitchin equation. Moreover, it enables the study of moduli spaces of representations of the fundamental group of an orientable surface by the so-called Non-Abelian Hodge Correspondence. Thereby, Higgs bundles are an important tool to study components of these moduli spaces of representations in the realm of higher Teichmüller theory \cite{Hi92,BGG06,AlCo19,ABCGGO}. 

In the last ten years, there is renewed interest in Higgs bundles motivated by physics. Higgs bundle moduli spaces are hyperkähler and hence subject to mirror symmetry conjectures. Emerging from a duality between electric and magnetic charges in physics, mirror symmetry was interpreted as a duality of torus fibrations \cite{SYZ} and connected to the geometric Langlands program \cite{KaWi07}. 

The moduli space of $G$-Higgs bundles associated to a complex reductive Lie group $G$ contains a dense subset fibred by complex Lagrangian tori, the so-called Hitchin fibration. Mirror symmetry reincarnates as a duality between the Hitchin fibrations of the moduli space of $G$-Higgs bundles and the moduli space of $G^L$-Higgs bundles, where $G^L$ denotes the Langlands dual group of $G$. This Langlands duality was established for classical Lie groups by Hausel, Thaddeaus \cite{HaTh03} and Hitchin \cite{Hi07} and in general by Donagi, Pantev \cite{DoPa12}. 

In this paper, we study singular Hitchin fibres - degenerations of the Lagrangian tori forming the Hitchin fibration. Motivated by Langlands duality, singular Hitchin fibres were more closely studied in \cite{HaPa12, GO13, Br18, FGOP18, BBS19}. We give a complex-geometric description of these singular spaces emphasizing what is left of the rich geometry of the regular fibres. Indeed, singular Hitchin fibres naturally contain abelian subvarieties and one can describe their geometry building on this observation. These complex subtori played an important role in the recent work \cite{Hi19}, where Hitchin studied subintegrable systems on the singular locus of the Hitchin fibration. \\

Let $X$ be a Riemann surface of genus $g \geq 2$. Let $\M$ denote the moduli space of polystable $\SL(2,\C)$-Higgs bundles on $X$. In this case, the Hitchin map is given by 
\[ \H: \M \rightarrow H^0(X,K_X^2), \quad (E,\Phi) \mapsto \det(\Phi).
\] It was shown in \cite{Hi87a, Hi87b}, that this defines an algebraically completely integrable system over the subset of quadratic differentials with simple zeros. This defines the fibration by complex Lagrangian tori, we referred to as the Hitchin fibration. We will call the subset of quadratic differentials with simple zeros the regular locus and its complement the singular locus. 

To every quadratic differential $q_2 \in H^0(X,K_X^2)$, one associates a complex curve $\Sigma \subset \mathsf{Tot}(K_X)$, the so-called spectral curve. For a quadratic differential in the regular locus it is smooth and defines a branched covering of Riemann surfaces $\pi: \Sigma \rightarrow X$. In this case, $\H^{-1}(q_2)$ can be identified with an abelian torsor over the Prym variety associated to the spectral covering $\pi$. Hence, the complex tori forming the integrable system are abelian torsors.

The singular Hitchin fibres are divided in 3 types, depending on the regularity of the spectral curve. The most singular fibre is the nilpotent cone $\H^{-1}(0)$. It has many irreducible components, which intersect in a complicated way \cite{Hi87a}. A more detailed study can be found in the recent work \cite{ALS20}. Secondly, there are singular fibres, where the spectral curve decomposes into two irreducible components. I.e. there exists an abelian differential $\lambda \in H^0(X,K_X)$, such that $\lambda^2=q_2$. These fibres are still quite complicated.  
It was shown in \cite{GO13}, that they are connected. We study the case of $\SL(2,\C)$-Hitchin fibres with irreducible and reduced spectral curve. This is equivalent to $q_2$ having no global square root on $X$. By a classical theorem \cite{BNR89}, these singular Hitchin fibres can be identified with certain moduli spaces of rank 1 torsion-free sheaves on $\Sigma$. These moduli spaces were studied in \cite{GO13}, where again it is proven that they are connected. \\

We take a different approach. The normalisation naturally associates to the singular curve $\Sigma$ a smooth curve $\tilde{\Sigma}$. The Prym variety $\Prym(\tilde{\Sigma})$ associated to the covering $\tilde{\pi}: \tilde{\Sigma} \rightarrow X$ is naturally embedded in the singular Hitchin fibre through the pushforward along $\tilde{\pi}$. We describe the singular Hitchin fibre by gluing together holomorphic fibre bundles over the Prym varieties $\Prym(\tilde{\Sigma})$. 

\begin{theo}[Thm. \ref{theo:stratifi}, Thm. \ref{theo:strat:sl2C}] \label{theo:intro1}
Let $q_2 \in H^0(X,K_X^2)$ a quadratic differential with at least one zero of odd order. There is a stratification 
\[ \H^{-1}(q_2) = \bigcup\limits_{i \in I} \S_i 
\] by finitely many locally closed analytic subsets $\S_i$. Each stratum $\S_i$ is a holomorphic $(\C^*)^{r_i} \times \C^{s_i}$-bundle over $\Prym(\tilde{\Sigma})$ with $r_i,s_i \in \N_0$.
\end{theo}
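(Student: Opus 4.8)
The plan is to realise $\H^{-1}(q_2)$ as a moduli space of rank-one torsion-free sheaves on $\Sigma$ and to stratify it by the local isomorphism type of such a sheaf at the finitely many singular points of $\Sigma$. By the classical theorem of \cite{BNR89} recalled above, a point of $\H^{-1}(q_2)$ is a rank-one torsion-free sheaf $\mathcal{F}$ on $\Sigma$ whose pushforward $\pi_* \mathcal{F}$ carries the determinant prescribed by the $\SL(2,\C)$-condition; via $\pi = \tilde{\pi}\circ\nu$ this translates into fixing the norm $\Nm(\tilde{\mathcal{F}})$, where $\nu : \tilde{\Sigma}\to\Sigma$ is the normalisation. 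The hypothesis that $q_2$ has a zero of odd order is exactly what makes $\tilde{\pi}: \tilde{\Sigma}\to X$ a \emph{ramified} double cover (the odd-order zeros are its branch points, the even-order ones are not), hence $\tilde{\Sigma}$ is connected, $\Sigma$ is irreducible and reduced, and $\Prym(\tilde{\Sigma})$ is a genuine abelian variety. The singular points of $\Sigma$ lie over the zeros of $q_2$ of order $\geq 2$ — cusp-type singularities over odd zeros, nodes/tacnodes over even ones — and I would work throughout with the comparison sequence
\[
0 \to \mathcal{F} \to \nu_* \tilde{\mathcal{F}} \to Q \to 0, \qquad \tilde{\mathcal{F}} := (\nu^*\mathcal{F})/(\text{torsion}),
\]
in which $Q$ is a torsion sheaf supported on the singular locus whose length at each point is bounded by the local $\delta$-invariant.

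First I would attach to each singular point $p$ of $\Sigma$ the structure of $Q$ at $p$ — equivalently the local $\O_{\Sigma,p}$-module type of $\mathcal{F}$ — as a discrete invariant; a rank-one torsion-free module over such a singularity has only finitely many isomorphism types, so collecting these over all singular points yields a finite index set $I$. Setting $\S_i \subset \H^{-1}(q_2)$ to be the locus where this invariant equals $i$, semicontinuity of the local type shows that each $\S_i$ is a locally closed analytic subset and that the decomposition is finite. This is the content of the general stratification of Theorem \ref{theo:stratifi}.

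Next, on a fixed stratum $\S_i$ the assignment $\mathcal{F}\mapsto \tilde{\mathcal{F}}$ defines a holomorphic map $\S_i \to \Pic(\tilde{\Sigma})$. Because $\deg\tilde{\mathcal{F}} = \deg\mathcal{F} + \operatorname{length}(Q)$ and the colength is fixed along $\S_i$, the image lies in a single torsor under $\Prym(\tilde{\Sigma})$ cut out by the norm condition — the same Prym for every stratum, which is precisely why the deepest stratum $\{\mathcal{F}=\nu_*\tilde{L}\}$ is $\Prym(\tilde{\Sigma})$ itself. The fibre over a fixed $\tilde{L}$ is the set of $\O_\Sigma$-submodules $\mathcal{F}\subset \nu_*\tilde{L}$ of the prescribed type $i$, i.e. a parameter space of Hecke modifications at the singular points, realising the passage from the pushforward sheaf $\nu_*\tilde{L}$ to $\mathcal{F}$. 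A local computation at each singular point then identifies this parameter space: the type $i$ rigidifies some gluing directions into factors $\C^*$ (parameters forced nonzero in order not to fall into a deeper stratum) and leaves others as free affine parameters $\C$, giving $(\C^*)^{r_i}\times\C^{s_i}$. Since these local models vary holomorphically and locally trivially with $\tilde{L}$, the map $\S_i \to \Prym(\tilde{\Sigma})$ is a holomorphic $(\C^*)^{r_i}\times\C^{s_i}$-bundle, which is Theorem \ref{theo:strat:sl2C}.

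The main obstacle is this last step: pinning down the Hecke parameter space at each singular point and showing it is exactly $(\C^*)^{r_i}\times\C^{s_i}$ while respecting the $\SL(2,\C)$-determinant constraint, which couples the local data at the different singular points and removes a prescribed number of parameters globally. The local analysis at cusps (odd zeros) genuinely differs from that at nodes and tacnodes (even zeros), and one must verify branch by branch that fixing the type splits the admissible gluing data into the correct numbers of $\C^*$- and $\C$-directions. Establishing local triviality of the resulting family over $\Prym(\tilde{\Sigma})$ — that the Hecke parameter spaces assemble into an honest fibre bundle rather than merely a surjective fibration — is the remaining technical point, which I would settle by exhibiting local holomorphic frames for the modifications over a neighbourhood in $\Prym(\tilde{\Sigma})$.
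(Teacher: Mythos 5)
Your strategy is sound in outline but follows a genuinely different route from the paper's. You propose to work with rank-one torsion-free sheaves on the singular curve $\Sigma$ via the Beauville--Narasimhan--Ramanan correspondence and to stratify by the local $\O_{\Sigma,p}$-module type; this is essentially the Gothen--Oliveira picture, which the paper explicitly sets aside. The paper instead pulls the whole fibre back to the normalisation, identifies $\H^{-1}(q_2)$ with a space of $\sigma$-invariant $\tpst K$-twisted Higgs bundles on $\tilde{\Sigma}$ (the odd-order zero enters exactly as you say, but through the uniqueness of the lift $\hat{\sigma}$ and hence injectivity of $\tpst$, cf.\ Proposition \ref{prop_pi*_inj}), and stratifies by the vanishing divisor $\div(\Phi)$ --- which agrees with your local-type invariant, since for an $A_{k-1}$ singularity both are recorded by a single integer. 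Two points where your sketch is thinner than it looks: (a) semicontinuity of the discrete invariant gives that $\bigcup_{D'\geq D}\S_{D'}$ is closed, but not that it is \emph{analytic}; the paper obtains this by exhibiting it as the holomorphic image of a Hitchin fibre of $M(-\tfrac12\Nm D)$-twisted Higgs bundles, and you would need a substitute (e.g.\ Fitting ideals). (b) Your worry that the $\SL(2,\C)$-determinant condition ``couples the local data and removes parameters globally'' is a non-issue: once $\tilde{L}$ lies in the correct norm torsor and the colength is fixed, the determinant condition is automatic and removes no gluing parameters --- it only selects the Prym torsor.

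The genuine gap is the step you yourself flag as the main obstacle: identifying the fibre of $\S_i\to\Prym(\tilde{\Sigma})$ as $(\C^*)^{r_i}\times\C^{s_i}$ with the correct exponents, and establishing local triviality. This is the quantitative core of the theorem and it is asserted rather than proved; moreover your heuristic ``parameters forced nonzero give $\C^*$, the rest give $\C$'' does not by itself predict the right answer. At an odd-order zero the Hecke direction is also required to be non-vanishing, yet the reparametrisation group $H^0(D,\O_X)^*$ --- a solvable, non-reductive group --- absorbs the leading coefficient, and the quotient is purely affine, of dimension $\tfrac12(\ord_p(q_2)-1)-D_p$, with \emph{no} $\C^*$ factor; the $\C^*$ factors arise only at the even-order zeros (the locally reducible points), one per point. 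Sorting this out, parity by parity, is precisely the content of Proposition \ref{prop:u-coord}, Theorem \ref{theo:strata:Hecke-bundle:odd} and Proposition \ref{prop:even:zeros} (the latter via a hypercohomology/extension-class computation rather than raw gluing data), and the bundle structure over the Prym is obtained by choosing frames of a universal line bundle coherently, as in Proposition \ref{prop:Hecke_bundle}. So the architecture of your argument is right and would work, but the decisive local computation is missing.
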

We prove a more general version of this result in Theorem \ref{theo:strat:sl2C} covering all Hitchin fibres with irreducible and reduced spectral curve. The stratification is indexed by the local shape of the Higgs field at the higher order zeros of $q_2$. There is a unique open stratum $\S_0$, which is dense in the singular Hitchin fibre and is compactified by lower-dimensional strata. 

The $(\C^*)^{r_i} \times \C^{s_i}$-fibres - the non-abelian part of the spectral data - parametrize Hecke transformations. Originally introduced in \cite{NaRa75}, a Hecke transformation of a holomorphic vector bundle is the generalisation to higher rank of twisting a line bundle by a divisor. Hecke transformations are parametrized by Hecke parameters determining the direction, in which the holomorphic vector bundle is twisted. Hwang-Ramanan \cite{HwRa04} studied Hecke transformations of order one. Here the moduli space of Hecke parameters is one-dimensional and Hecke transformations define so-called Hecke curves in the moduli space of holomophic vector bundles. They showed that the space of tangents to Hecke curves is dual to the space of cotangent Higgs bundles lying over the singular locus. This result indicated a strong relationship between Hecke transformations and the geometry of singular Hitchin fibres. 

In this work, we use a generalisation of this concept by allowing higher order twists. This yields higher dimensional moduli spaces of Hecke parameters, which are the fibres of the strata $\S_i$ in Theorem \ref{theo:intro1} as bundles over Prym varieties. Hecke transformations of holomorphic vector bundles and Higgs bundles were used before to study solution spaces to geometric PDEs \cite{Ba10, HeWa19} and to parametrize moduli spaces of holomorphic $G$-bundles \cite{Wo13}. 

We study how the different strata are glued together by compactifying the space of Hecke parameters of the open stratum $\S_0$. These compact moduli spaces of Hecke parameters are given in terms of a quotient by a non-reductive group action. We can build a model for these quotient spaces by explicitly computing invariant functions (Theorem \ref{theo:moduli_Hecke}). For quadratic differentials with only zeros of odd order, the fibre bundle structure of $\S_0$ is preserved under the degeneration to lower-dimensional strata. We obtain the following theorem.

\begin{theo}[Thm. \ref{theo:anal:fibreing} ] \label{theo:intro2}
Let $q_2\in H^0(X,K_X^2)$ be a quadratic differential, such that all zeros have odd order. Then $\H^{-1}(q_2)$ fibres holomorphically over $\Prym(\tilde{\Sigma})$ with fibres given by the compact moduli of Hecke parameters. 
\end{theo}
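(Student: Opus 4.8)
The plan is to realize $\H^{-1}(q_2)$ as a total space fibering over $\Prym(\tilde{\Sigma})$ by assembling the strata $\S_i$ from Theorem~\ref{theo:intro1} into a single holomorphic bundle whose fibre is the compactified moduli space of Hecke parameters. Since every stratum $\S_i$ is already a $(\C^*)^{r_i} \times \C^{s_i}$-bundle over $\Prym(\tilde{\Sigma})$ with the \emph{same} base, the natural candidate for the fibration map is the one sending a point in each stratum to the underlying line bundle in $\Prym(\tilde{\Sigma})$ obtained via pushforward along $\tilde{\pi}$, as described in the discussion preceding Theorem~\ref{theo:intro1}. First I would verify that these stratum-wise projections glue to a single continuous map $p\colon \H^{-1}(q_2) \to \Prym(\tilde{\Sigma})$; this should follow because the Prym class is determined by the normalisation data, which varies continuously (indeed holomorphically) across the degeneration from $\S_0$ to the lower strata, and the odd-order hypothesis on the zeros of $q_2$ is precisely what is needed to control the parity obstructions in the Hecke construction so that the base does not jump.

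The key structural input is the explicit model for the compactified Hecke parameter space from Theorem~\ref{theo:moduli_Hecke}, together with the assertion in the excerpt that for quadratic differentials with only odd-order zeros the fibre bundle structure of the open stratum $\S_0$ is \emph{preserved} under degeneration. So the central step is to show that the Hecke parameter spaces, which a priori are defined pointwise over the zeros of $q_2$ and patched over $\Prym(\tilde{\Sigma})$, form a \emph{locally trivial} holomorphic fibre bundle rather than merely a stratified space with varying fibre. Concretely, I would fix the open stratum $\S_0$ as a $(\C^*)^{r_0}$-bundle, identify its fibrewise compactification using the invariant functions computed in Theorem~\ref{theo:moduli_Hecke}, and then argue that this compactification is carried out uniformly over the base: the non-reductive quotient producing the compact Hecke moduli depends only on the local type of the spectral curve at each zero, which is constant over all of $\Prym(\tilde{\Sigma})$ since the spectral curve $\Sigma$ (and hence its singularities) is fixed once $q_2$ is fixed.

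The plan is therefore to proceed in three steps. \emph{Step 1:} Exhibit the projection $p\colon \H^{-1}(q_2) \to \Prym(\tilde{\Sigma})$ by gluing the stratum projections and check it is holomorphic and surjective, using that the embedded $\Prym(\tilde{\Sigma})$ acts on each fibre by tensoring, so the action is transitive on the base direction. \emph{Step 2:} Identify the fibre $p^{-1}(L)$ over a point $L \in \Prym(\tilde{\Sigma})$ with the compact moduli of Hecke parameters, by restricting the bundle structure of each $\S_i$ and showing the closures glue to give exactly the compactification from Theorem~\ref{theo:moduli_Hecke}; here the odd-order hypothesis guarantees that no stratum collapses to a lower-dimensional base, so the fibre over every $L$ is the \emph{same} compact space. \emph{Step 3:} Establish local triviality by trivialising the Prym bundle over small opens of $X$ (or of the parameter space) and transporting the fibrewise Hecke data along the resulting local sections, using that the Prym variety acts freely and transitively on each torsor-fibre in the base direction to produce local holomorphic trivialisations.

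The main obstacle I expect is \emph{Step 2}, specifically proving that the closures of the various strata fit together to give a globally constant fibre, i.e.\ that the degenerations of Hecke parameters in the open stratum land precisely in the lower strata $\S_i$ and exhaust the compactified Hecke moduli without introducing fibrewise monodromy. The odd-order assumption on the zeros is doing heavy lifting here: it removes the possibility of a global square root of $q_2$ and, more importantly, forces the local Hecke parameter spaces at each higher-order zero to have a compactification whose combinatorial type is rigid, so that the boundary strata match across the base. Verifying this rigidity—that the non-reductive quotient from Theorem~\ref{theo:moduli_Hecke} yields a compact fibre independent of the Prym point—is where the analytic and combinatorial control of the Hecke degeneration must be made precise, and I would expect this to require a careful local analysis at each odd-order zero combined with the global gluing already established in the proof of Theorem~\ref{theo:strat:sl2C}.
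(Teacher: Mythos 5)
Your outline follows the paper's strategy in spirit, but two essential mechanisms are missing, and without them the argument does not close. First, the glued projection is not simply ``the underlying line bundle obtained via pushforward'': the stratum $\S_D$ maps under $\Eig_D$ to the \emph{twisted} Prym variety $\Pr_{\Lambda-D}$, which varies with $D$. The paper's map is $\Eig_{\mathsf{tw}}(E,\Phi)=\Eig_D(E,\Phi)(-\tfrac12 D)$, and the odd-order hypothesis enters precisely here: at fixed points of $\sigma$ a $\sigma$-Higgs divisor has even coefficients, so the half-twist $-\tfrac12 D$ is well defined and all strata land in the single torsor $\Pr_\Lambda$. Your ``parity obstruction'' remark gestures at this but never produces the map; and one must then check (as in Proposition \ref{prop:Appl:Hecke}) that a degenerating Hecke parameter of vanishing order $n$ produces eigenline $L(np)$, so that the twist really does make the base constant along degenerations. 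When $q_2$ has even zeros this fails for a different reason than the one you name: the eigenline can be twisted by the non-$\sigma$-invariant divisors $ly$ or $l\sigma(y)$, which is why Section \ref{sec:global_even} shows no global fibreing exists there.

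Second, and more seriously, your plan gives no mechanism for proving holomorphy of the glued map across the strata, which is the actual content of Theorem \ref{theo:anal:fibreing}. You cannot ``establish local triviality by transporting the fibrewise Hecke data'' holomorphically, because the compact moduli $\Heck_\Lambda/G_\Lambda$ is a priori only a topological space glued from toric subspaces of weighted projective spaces; in the paper it acquires a complex structure only \emph{as a consequence} of the holomorphy of $\Eig_{\mathsf{tw}}$, so taking it as a holomorphic fibre from the outset is circular. The paper's route is: (i) use the explicit nonsingular toric model for $N_1^0$ to exhibit $V_0\cup V_1$ as a complex manifold and hence prove $\Eig_{\mathsf{tw}}$ is holomorphic on $\S_0\cup\S_1$, whose complement has codimension $2$; (ii) invoke the Riemann extension theorem for reduced, locally pure dimensional complex spaces, which requires the irreducibility of the fibre (Corollary \ref{coro:irred:odd}) and local pure dimensionality coming from flatness of the Hitchin map; (iii) conclude from the already-established continuity that the meromorphic extension is holomorphic. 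None of these steps appears in your proposal, and I do not see how your Step 3 could substitute for them.
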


For quadratic differentials with zeros of even order, we can still understand how the strata are glued together by analysing the compact moduli of Hecke parameters (see Section \ref{sec:global_even}). However, we observe that there can not be a global fibreing of $\H^{-1}(q_2)$ over $\Prym(\tilde{\Sigma})$ in this case. Nevertheless, we can study the irreducible components of the singular Hitchin fibres.

\begin{coro}[Cor. \ref{coro:even_irred}, Thm. \ref{theo:red:even:zero}] Let $q_2 \in H^0(X,K_X^2)$ with at least one zero of odd order, then $\H^{-1}(q_2)$ is an irreducible complex space. If all zeros of $q_2$ are of even order, then $\H^{-1}(q_2)$ is connected and has four irreducible components.
\end{coro}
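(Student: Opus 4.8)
The plan relies on the stratification from Theorem~\ref{theo:intro1}.  For the first assertion, the key observation is that when $q_2$ has at least one zero of odd order, Theorem~\ref{theo:intro1} presents $\H^{-1}(q_2)$ as a finite union of strata $\S_i$, each a holomorphic $(\C^*)^{r_i}\times\C^{s_i}$-bundle over the \emph{irreducible} base $\Prym(\tilde{\Sigma})$.  I would first argue that each stratum $\S_i$ is therefore itself irreducible: a fibre bundle with irreducible fibre $(\C^*)^{r_i}\times\C^{s_i}$ over an irreducible base is irreducible, since the total space is locally a product and the base cannot be covered by the images of two proper closed subsets.  The crucial step is then to show that there is a \emph{unique} stratum $\S_0$ of top dimension whose closure is the whole fibre.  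As noted after Theorem~\ref{theo:intro1}, $\S_0$ is the open dense stratum and every other stratum lies in its closure, so $\H^{-1}(q_2)=\overline{\S_0}$; the closure of the irreducible set $\S_0$ is irreducible, giving irreducibility of the whole fibre.  The main work is establishing the density of $\S_0$, but this is exactly the content packaged into Theorem~\ref{theo:intro1} together with the description of the degeneration to lower strata.

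For the second assertion, where all zeros of $q_2$ have even order, the strategy is different because the dense-open-stratum picture breaks down.  Here I would invoke Theorem~\ref{theo:intro2} in its contrapositive spirit: the obstruction to a global fibreing over $\Prym(\tilde{\Sigma})$, flagged in the remark preceding the corollary, reflects a genuine splitting of the fibre.  Concretely, when a zero has even order $2k$, the Higgs field admits near that point two distinct leading behaviours (corresponding to the two choices of sign of a local square root $\pm\lambda$ of $q_2$ on each branch), and these choices are no longer connected through a single $\C^*$- or $\C$-family as they are in the odd-order case.  I would make this precise by examining the boundary strata: the compactification of the Hecke parameter spaces (Theorem~\ref{theo:moduli_Hecke}) for an even-order zero produces a limiting divisor that disconnects into separate components rather than collapsing to a single point.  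Counting these sign choices across the even-order zeros, subject to the $\SL(2,\C)$-constraint (the total parity / determinant condition), yields exactly $2^2=4$ components.

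The technical heart is thus the \textbf{even-order case}, and the hard part will be the combinatorics and gluing of the boundary strata: I expect to show that the top-dimensional strata $\S_i$ organise into four families, each family having an irreducible closure, and that these four closures are genuinely distinct (no two coincide) yet pairwise intersect so that the total space is connected.  The intersection pattern is where connectedness comes from — one would exhibit lower-dimensional strata lying in the closures of several of the four components simultaneously, so that the dual graph of components is connected.  I would organise the argument by first identifying, from the local normal forms of the Higgs field at each even-order zero, a $(\Z/2\Z)$-valued invariant on each stratum; the $\SL(2,\C)$ determinant constraint cuts the naive count down, and a parity/monodromy argument shows precisely two independent binary choices survive, giving four components.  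Establishing that each of the four resulting closed sets is irreducible would again reduce, via the fibre-bundle-over-Prym structure on its open part, to the irreducibility of $\Prym(\tilde{\Sigma})$; the delicate point is verifying that passing to the closure does not further reduce or merge components, which requires a careful analysis of which boundary strata are shared.
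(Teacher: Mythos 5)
Your argument for the odd-order case is essentially the paper's: the open stratum $\S_0$ is dense (this is exactly what the compactified Hecke parameters establish), it is a $(\C^*)^{r}\times\C^{s}$-bundle over a twisted Prym variety which is connected because $\tilde{\pi}$ has a branch point, and the closure of a connected, irreducible dense stratum containing the smooth locus gives irreducibility. That part is fine.

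The even-order case contains a genuine gap: you have misidentified where the four components come from. They are \emph{not} produced by local sign choices of $\pm\lambda$ at the even-order zeros --- if they were, the count would grow with the number of even zeros, whereas the answer is $4$ independently of $Z(q_2)$. The actual mechanism is global and twofold. First, when all zeros are even the covering $\tilde{\pi}:\tilde{\Sigma}\to X$ is \emph{unbranched}, so by Mumford's theorem the norm fibre $\Nm^{-1}(I(-\tfrac12\div(q_2)))$ over which the top stratum of $\M^\sigma(\tilde{\Sigma},\tpst K,\lambda)$ fibres has exactly two connected components (distinguished by $\dim H^0(L\otimes\tpst K^{1/2})\bmod 2$); their closures give two irreducible components of $\M^\sigma$. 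Second, the pullback $\tpst:\H^{-1}(q_2)\to\M^\sigma$ is generically two-to-one because of the nontrivial $2$-torsion line bundle $I\in\ker(\tpst)$ (Proposition \ref{prop_pi*_inj}), and it is genuinely two-to-one on the highest stratum since its branch points are strictly polystable and lie in the lowest stratum. This gives $2\times 2=4$. Your proposed "$(\Z/2\Z)$-invariant at each even zero cut down by a determinant constraint" does not reduce to these two binary invariants and gives no reason the answer is exactly $4$. Connectedness likewise has a specific mechanism you would need: degenerating $\Eig_0^{-1}(L)$ and $\Eig_0^{-1}(L(p-\sigma p))$ to a common point in a lower stratum connects the two Prym components (since twisting by $\O(p-\sigma p)$ switches Mumford's parity), and the existence of a branch point of $\tpst$ in the lowest stratum (Example \ref{exam:branchpoint}) connects the two sheets of the covering. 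Your "connected dual graph" idea is the right shape, but without these two concrete identifications the argument does not close.
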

These results will be generalized to a description of singular Hitchin fibres of the $\mathsf{Sp}(2n,\C)$-Hitchin system in a subsequent paper \cite{Ho2}.\\

To emphasize the analogies to the spectral data of regular Hitchin fibres, we generalize the correspondence between $\SL(2,\R)$-Higgs bundles and two-torsion points of $\Prym(\Sigma)$ (\cite{Sc13b,PNThesis,GPR18}) to singular Hitchin fibres.

\begin{theo}[Thm. \ref{theo:realpts:odd}] Let $q_2 \in H^0(X,K_X^2)$ be a quadratic differential, such that all zeros have odd order. For every stratum $\S_i \subset \H^{-1}(q_2)$, the $\SL(2,\R)$-Higgs bundles in $\S_i$ correspond to the two-torsion points of the Prym variety $\Prym(\tilde{\Sigma})$.
\end{theo}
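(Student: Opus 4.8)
The plan is to realise the $\SL(2,\R)$-Higgs bundles as the fixed-point locus of the holomorphic involution $\iota\colon (E,\Phi)\mapsto (E,-\Phi)$ on $\H^{-1}(q_2)$ and then to read off this locus from the stratification of Theorem \ref{theo:strat:sl2C}. First I would recall the holomorphic description of $\SL(2,\R)$-Higgs bundles underlying \cite{Sc13b,PNThesis,GPR18}: a polystable $(E,\Phi)$ with $\Phi\neq 0$ is an $\SL(2,\R)$-Higgs bundle precisely when $E\cong L\oplus L^{-1}$ with $\Phi$ off-diagonal. Since $q_2\neq 0$ forces $\Phi\neq 0$, and since for a stable $(E,\Phi)$ the equation $g\Phi g^{-1}=-\Phi$ has a solution $g\in\Aut(E)$ with $g^{2}\in\{\pm\id\}$, the eigenbundle decomposition of $g$ exhibits such a splitting; conversely $g=\mathrm{diag}(i,-i)$ conjugates an off-diagonal $\Phi$ to $-\Phi$. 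Hence the $\SL(2,\R)$-Higgs bundles in $\H^{-1}(q_2)$ are exactly the fixed points of $\iota$, and it suffices to identify $\Fix(\iota)\cap\S_i$ with $\Prym(\tilde\Sigma)[2]$ for each stratum.

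Next I would transport $\iota$ to the spectral side. On $\mathsf{Tot}(K_X)$ the map $\Phi\mapsto -\Phi$ is induced by the sheet-exchange $\sigma\colon\lambda\mapsto-\lambda$, which lifts to the normalisation $\tilde\Sigma$. Because $\sigma$ preserves the vanishing orders of the off-diagonal entries of $\Phi$ at the zeros of $q_2$, the involution $\iota$ respects the local type and therefore maps each stratum $\S_i$ to itself. On the base $\Prym(\tilde\Sigma)$, embedded by $\tilde\pi_*$, it acts as $M\mapsto\sigma^{*}M=M^{-1}$, i.e. as inversion on the abelian variety. Consequently every $\iota$-fixed point of $\S_i$ projects to a fixed point of inversion, hence to a two-torsion point of $\Prym(\tilde\Sigma)$; this already rules out $\SL(2,\R)$-Higgs bundles lying over non-two-torsion base points.

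It then remains to prove that over each $M_0\in\Prym(\tilde\Sigma)[2]$ the involution induced by $\iota$ on the fibre of $\S_i\to\Prym(\tilde\Sigma)$ has exactly one fixed point, and I expect this to be the main obstacle. I would approach it through the explicit local models for the Hecke parameters at the higher-order zeros of $q_2$ from Theorem \ref{theo:moduli_Hecke}. At a zero of odd order $2k+1$ the normalisation carries a single ramified branch $z=w^{2}$, and $\iota$ acts on the local Hecke data through $w\mapsto-w$. The aim is to write this action in the invariant coordinates on the Hecke parameter space and to check that its differential at a fixed point has no $+1$-eigenvalue, so that the fibre fixed-locus is the reduced point given by the symmetric (that is, $\SL(2,\R)$) Hecke transformation; this simultaneously yields existence and uniqueness. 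The oddness of the zero order is essential: it forces the single-branch model and hence the linearisation $-\id$, whereas at an even-order zero the two branches are exchanged by $\sigma$ and the fixed-point structure degenerates — in line with the absence of a global fibreing (contrast Theorem \ref{theo:anal:fibreing}) and the four-component phenomenon in the even case. Assembling the three steps identifies $\Fix(\iota)\cap\S_i$ with $\Prym(\tilde\Sigma)[2]$, which is the claim.
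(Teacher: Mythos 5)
Your overall architecture — characterise the $\SL(2,\R)$-locus as $\Fix(\iota)$ for $\iota:(E,\Phi)\mapsto(E,-\Phi)$, observe that $\iota$ preserves each stratum and covers $L\mapsto\sigma^*L$ on the base via $\Eig_D$, and then analyse the induced involution on the fibres — is sound and genuinely different in presentation from the paper's argument, which never invokes $\iota$ but instead constructs the bijection directly: for a $\sigma$-symmetric $L\in\Pr_{\Lambda-\tpst D}$ with isomorphism $\phi:\sigma^*L\to L$ (normalised so $\phi_p=\pm1$ at each ramification point), the Hecke transformation of $(L\oplus\sigma^*L,\mathrm{diag}(\lambda,-\lambda))$ in direction $u=0$ splits as $L(N_-)\oplus L(N_+)$ with off-diagonal Higgs field, where $N=N_++N_-$ records the signs of $\phi$, and this descends to the $\SL(2,\R)$-Higgs bundle on $X$; the converse recovers $L$ as the eigenline of the pullback of a split Higgs bundle. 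Two small imprecisions in your set-up: on the twisted Prym $\Pr_{\Lambda-\tpst D}$ one has $\sigma^*M=\O(-(\Lambda-\tpst D))\otimes M^{-1}$, not $M^{-1}$, so the fixed locus of $M\mapsto\sigma^*M$ is the set of $\sigma$-symmetric points, a torsor over $\Prym(\tilde{\Sigma})[2]$ that needs a $\sigma$-symmetric base point (e.g.\ a pullback $\tpst(K^{-\frac12}M)$) to be identified with the two-torsion; and you should note that this fixed locus is non-empty.

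The genuine gap is your third step, which you yourself flag as the main obstacle but do not carry out: it is precisely where the content of the theorem lives. Two problems. First, the criterion you propose — that the differential of the fibre involution at a fixed point has no $+1$-eigenvalue — only yields isolatedness of an already-given fixed point; it gives neither existence of a fixed point over each $\sigma$-symmetric $L$ nor uniqueness on the whole fibre $\Eig_D^{-1}(L)\cong\C^{r}$, and the "symmetric Hecke transformation" you invoke for existence is never constructed. What is actually true, and what the paper's computation amounts to, is the stronger global statement that in the $u$-coordinate of Proposition \ref{prop:u-coord} (with the frame normalised by $s=\pm\phi(\sigma^*s)$) the induced involution on the fibre is exactly $u\mapsto-u$, whence $u=0$ is the unique fixed point; proving this requires tracking how the identification of $\Eig_D^{-1}(L)$ with $\Eig_D^{-1}(\sigma^*L)$ via $\phi$ interacts with the frame-dependence $u_{s'}=\frac{f_eu_s-f_o}{f_e-f_ou_s}$, which is not linear, so the linearity of the fibre involution is something to be checked, not assumed. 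Second, even granted the fixed point, you still need to verify that it is an $\SL(2,\R)$-Higgs bundle on $X$ and not merely a $\sigma$-invariant Higgs bundle on $\tilde{\Sigma}$ isomorphic to its $\iota$-image upstairs: the descent of the splitting requires the lift $\hat{\sigma}$ to act as $+\id$ on the fibres over ramification points, and this is exactly where the decomposition $N=N_++N_-$ by the signs $\phi_p=\pm1$ and the resulting twist $L(N_-)\oplus L(N_+)$ enter. None of this is in the proposal, so as written it establishes only that real points must lie over $\sigma$-symmetric base points, not the claimed bijection.
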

Counting the two-torsion points of $\Prym(\tilde{\Sigma})$, this describes the branching of the moduli spaces of $\SL(2,\R)$-Higgs bundles as a covering over the Hitchin base (cf. \cite{Sc13a}). 
A similar result for quadratic differentials with even zeros is proven in Theorem \ref{theo:realpts:even}. \\

\noindent \textbf{Structure} \quad The paper is structured as follows: After introducing the framework (Section \ref{sec:prel}), we identify the Hitchin fibres with irreducible and reduced spectral curve with certain moduli spaces of Higgs bundles on $\tilde{\Sigma}$ (Section \ref{sec:pap}). In Section \ref{sec:Hecke}, we introduce the concept of Hecke transformations and adapt it to our setting. We prove the stratification result in Section \ref{sec:strat} and \ref{sec:stra:sl2c}. In Section \ref{sec:glob:odd}, we analyse the compact moduli of Hecke parameters for singular fibres over quadratic differentials, such that all zeros are of odd order, and prove Theorem \ref{theo:intro2}. Hereafter, we give an explicit description of the first degenerations (Section \ref{ssec:exam}). In Section \ref{sec:global_even}, we do the same analysis for general Hitchin fibres with irreducible and reduced spectral curve. In the final section, we parametrize the real points in singular Hitchin fibres by these semi-abelian spectral data (Section \ref{sec:real}). 


\subsection*{Funding}
This work was supported by the Deutsche Forschungsgemeinschaft (DFG, German Research Foundation) [281869850 (RTG 2229)]; the Klaus Tschira Foundation; and the U.S. National Science Foundation [DMS 1107452, 1107263, 1107367 "RNMS: GEometric structures And Representation varieties" (the GEAR Network)].

\subsection*{Acknowledgement} Great thanks to Daniele Alessandrini for his enduring support through the preparation of this work. Thank you to Richard Wentworth, Steven Bradlow, Xuesen Na and Andrew Sanders for enlightening discussions. We want to thank the referee for a careful reading of the manuscript and International Mathematics Research Notices for acceptance for publication.


\section{Preliminaries}\label{sec:prel}
Let $X$ be a Riemann surface of genus $g \geq 2$. Let $p_{K}: K= K_X \rightarrow X$ denote the line bundle of holomorphic 1-forms. Denote by $\mathcal{M}=\mathcal{M}(X)$ the moduli space of polystable $\SL(2,\C)$-Higgs bundles on $X$. These are pairs $(E,\Phi)$ of a holomorphic vector bundle $E$ of rank 2 with $\det(E)=\O_X$ and a holomorphic section $\Phi$ of $\End(E)\otimes K$, such that $\tr(\Phi)=0$. The smooth locus of $\mathcal{M}$, the moduli space of stable $\SL(2,\C)$-Higgs bundles, is a complex manifold of dimension $6g-6$. It carries a holomorphic symplectic structure extending the holomorphic symplectic structure of the cotangent bundle to the moduli space of stable holomorphic vector bundles. The Hitchin map is defined by 
\begin{align*}
\H: \mathcal{M} \rightarrow H^0(X, K^2), \quad (E,\Phi) \mapsto \det(\Phi)
\end{align*} with $\dim H^0(X,K^2)=3g-3$. This map is proper, surjective and defines an algebraically completely integrable system on the dense subset of quadratic differentials with simple zeros (\cite{Hi87a}, \cite{Hi87b}). 

The torus fibres can be identified algebraically as follows.  The Hitchin map computes the coefficients of the characteristic polynomial of $(E,\Phi) \in \H^{-1}(q_2)$ 
\[ \eta^2+q_2.
\]Here $\eta$ can be interpreted as the tautological section $\eta: K \rightarrow p_K^*K$. The pointwise eigenvalues of the Higgs field form the complex analytic curve
\[ \Sigma:=Z_{K}(\eta^2+p_K^*q_2) \subset \mathsf{Tot}K.
\]
This is the so-called spectral curve. The projection $p_K$ restricts to a two-sheeted branched covering $\pi: \Sigma \rightarrow X$ with branch points at the zeros of $q_2$. The spectral curve is smooth away from the ramification points. It is smooth at a ramification point if and only if the corresponding zero of the quadratic differential $q_2$ is of order 1. Due to the specific type of characteristic equation the spectral curve comes with an involutive automorphism $\sigma: \Sigma \rightarrow \Sigma$ interchanging the sheets.  

The subset of quadratic differentials with simple zeros is an open and dense subset of $H^0(X,K^2)$, which we refer to as the regular locus. Its complement will be referred to as the singular locus and the fibres over the singular locus as singular (Hitchin) fibres. The Hitchin fibres over the regular locus form a dense subset of $\M$ fibred by complex tori.

\begin{theo}[Abelian Spectral Data 	\cite{Hi87a}]
Let $q_2 \in H^0(X,K^2)$ be a quadratic differential with simple zeros. Then $\H^{-1}(q_2)$ is a torsor over the Prym variety
\begin{align*} \mathsf{Prym}(\Sigma):=\{L \in \mathsf{Jac}(\Sigma)\mid L \otimes \sigma^*L=\O_X\}.
\end{align*}
\end{theo}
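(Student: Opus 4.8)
The plan is to realise the fibre through the spectral correspondence of Beauville--Narasimhan--Ramanan \cite{BNR89}, specialised to the trace-free determinant-one setting, and then to recognise the resulting space of spectral line bundles as a torsor over the Prym. Since $q_2$ has simple zeros, $\Sigma$ is smooth and $\pi \colon \Sigma \to X$ is an honest degree-two branched cover, branched exactly at the $4g-4$ zeros of $q_2$, with $g_\Sigma = 4g-3$ by Riemann--Hurwitz. To a line bundle $L$ on $\Sigma$ I would associate the pair $(\pi_* L, \Phi_L)$, where $\Phi_L$ is obtained by pushing forward the $\O_\Sigma$-linear map $L \to L \otimes \pi^* K$ given by multiplication with the tautological section $\eta$; the projection formula turns this into a map $\pi_* L \to (\pi_* L) \otimes K$, i.e.\ a Higgs field. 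Because $\eta$ satisfies $\eta^2 + \pi^* q_2 = 0$ on $\Sigma$ by the very definition of the spectral curve, Cayley--Hamilton shows that $\Phi_L$ has characteristic polynomial $\eta^2 + q_2$, so $\tr \Phi_L = 0$ and $\det \Phi_L = q_2$ are automatic. Conversely, from any $(E,\Phi)$ with $\det \Phi = q_2$ and $\tr \Phi = 0$ one recovers $L$ as the eigenline (cokernel) sheaf on $\mathsf{Tot}(K)$ supported on $\Sigma$; smoothness of $\Sigma$ guarantees this is a genuine line bundle and that the two constructions are mutually inverse.

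Next I would pin down the $\SL(2,\C)$-constraints on the $L$-side. The vanishing of the trace is already built into the shape of the characteristic polynomial, so only $\det(\pi_* L) = \O_X$ remains. Using $\det(\pi_* L) = \Nm(L) \otimes \det(\pi_* \O_\Sigma)$ together with the standard splitting $\pi_* \O_\Sigma = \O_X \oplus K^{-1}$ for a spectral curve inside $\mathsf{Tot}(K)$, which gives $\det(\pi_* \O_\Sigma) = K^{-1}$, the determinant-one condition becomes precisely $\Nm(L) = K_X$. A degree count via $\chi(\pi_* L) = \chi(L)$ confirms consistency, yielding $\deg L = 2g-2$. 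I would also record that for a smooth spectral curve every such $(E,\Phi)$ is automatically stable, so polystability imposes no further condition and the fibre is exactly $\{L \in \Jac(\Sigma) : \Nm(L) = K_X\}$.

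Finally I would produce the torsor structure. The group $\Prym(\Sigma)$ acts on the fibre by $L \mapsto L \otimes M$; since $\pi^* \Nm(M) = M \otimes \sigma^* M$, the defining relation $M \otimes \sigma^* M = \O_\Sigma$ forces $\Nm(M)$ to be trivial (on the identity component), so the action preserves the locus $\Nm(L) = K_X$. As a fibre of the norm homomorphism $\Nm \colon \Jac(\Sigma) \to \Jac(X)$ over a fixed value, $\{L : \Nm(L) = K_X\}$ is either empty or a single coset of $\ker \Nm$; non-emptiness follows from surjectivity of the Hitchin map (or by exhibiting one spectral line bundle directly), and the induced action is then free and transitive. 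This exhibits $\H^{-1}(q_2)$ as a torsor over $\Prym(\Sigma)$.

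The main obstacle is the spectral correspondence of the first step: verifying that the eigenline-sheaf construction genuinely inverts pushforward and produces a line bundle rather than a sheaf that fails to be locally free or of the wrong rank. This is exactly where smoothness of $\Sigma$, equivalently simplicity of the zeros of $q_2$, is indispensable, and it is precisely the hypothesis that degenerates in the singular-fibre analysis occupying the rest of the paper. A secondary, purely bookkeeping subtlety is matching conventions between $\ker \Nm$ and the description of $\Prym(\Sigma)$ as $\{L : L \otimes \sigma^* L = \O_\Sigma\}$; these agree up to components and two-torsion and are reconciled through the identity $\pi^* \Nm = 1 + \sigma^*$ on $\Jac(\Sigma)$.
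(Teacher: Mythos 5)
Your argument is correct, but it packages the spectral correspondence differently from the paper. The paper's proof is a sketch that defers to its general machinery: it works with the eigenline bundle $L=\Ker(\pi^*\Phi-\lambda\,\id)$, shows it lies in the twisted Prym variety $\Pr_\Lambda=\{L : L\otimes\sigma^*L=\O(-\Lambda)\}$ with $\Lambda=\div(\lambda)$, invokes Proposition \ref{prop:isom:prym} to see that $\Pr_\Lambda$ is a $\Prym(\Sigma)$-torsor, and recovers $(E,\Phi)$ as the pushforward $\pi_*(L\otimes\pi^*K,\lambda)$ via Lemma \ref{lem:2push}. You instead use the cokernel/BNR line bundle and encode the $\SL(2,\C)$-condition as $\Nm(L)=K_X$ through $\det\pi_*L=\Nm(L)\otimes\det\pi_*\O_\Sigma$ and $\pi_*\O_\Sigma=\O_X\oplus K^{-1}$; the torsor structure then comes from norm fibres being cosets of $\ker\Nm$. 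The two normalizations are related by $L\mapsto L\otimes\pi^*K$, which converts $L\otimes\sigma^*L=\O(-\Lambda)=\pi^*K^{-1}$ into $\Nm=K_X$, so the descriptions match. Your route is self-contained and closer to the classical literature; the paper's kernel/twisted-Prym formulation is preferred there because it is the one that survives the degeneration to singular spectral curves occupying the rest of the paper. Two minor points: since the cover is branched at the $4g-4$ zeros, $\pi^*$ is injective and $\ker\Nm$ is connected, so $\ker\Nm$ coincides \emph{exactly} with $\{L: L\otimes\sigma^*L=\O_\Sigma\}$ --- your ``up to components and two-torsion'' hedge is not needed in this case; and non-emptiness of $\Nm^{-1}(K_X)$ is most cleanly obtained from surjectivity of the norm map or by exhibiting the Hitchin section, rather than from surjectivity of the Hitchin map, whose usual proof already relies on the spectral correspondence and would risk circularity.
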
   

\begin{proof}
This will be a special case of the description of Hitchin fibres with irreducible and reduced spectral curve given below. We want to sketch the classical construction for context. Let $\lambda=\eta\rest_{\Sigma}$ and $\Lambda=\div(\lambda)$.
Let $(E,\Phi) \in \H^{-1}(q_2)$, then $\lambda$ is an eigensection of $\pi^*\Phi$ and the line bundle of eigenvectors
\[ L = \Ker(\pi^*\Phi-\lambda \id_{\pi^*E}) 
\] is an element of the twisted Prym variety 
\[ \Pr_{\Lambda}(\Sigma)=\{L \in \Pic(\Sigma) \mid L \otimes \sigma^*L = \O(-\Lambda) \}
\] (see Theorem \ref{theo:stratifi}). We will see below that $\Pr_{\Lambda}(\Sigma)$ is a torsor over $\mathsf{Prym}(\Sigma)$ (Proposition \ref{prop:isom:prym}). The eigenline bundle uniquely determines the Higgs bundle by the algebraic pushforward $(E,\Phi)=\pi_*(L \otimes \pi^*K,\lambda)$ (cf. \ref{lem:2push}).
\end{proof}
\noindent Prym varieties are abelian varieties and were intensively studied in \cite{Mu74}. \\

In this work, we study Hitchin fibres with irreducible and reduced spectral curve. The spectral curve is irreducible and reduced if and only if $q_2$ has no global square root on $X$, i.e.\ there exists no $\lambda \in H^0(X,K)$, such that $\lambda^2=q_2$. In this case, there is a covering of Riemann surfaces associated to the characteristic equation. It is the unique two-sheeted branched covering of Riemann surfaces $\tilde{\pi}: \tilde{\Sigma} \rightarrow X$, such that there exists $\lambda \in H^0(\tilde{\Sigma},\tpst K)$ solving 
\[ \lambda^2 + \tpst q_2=0.
\] From a algebro-geometric perspective $\tilde{\Sigma}$ is the normalisation of $\Sigma$ and we will refer to $\tilde{\Sigma}$ as the normalised spectral curve. The geometry of this covering can be easily understood. The restriction
\[ \pi: \Sigma\setminus \pi^{-1}(Z(q_2)) \rightarrow X \setminus Z(q_2)
\] is an unbranched covering of Riemann surfaces and there is a unique way to extend it in a smooth way. Whenever the local polynomial equation for $\Sigma$ in a neighbourhood of $p \in \pi^{-1}(Z(q_2))$ is irreducible, or equivalently the corresponding zero of $q_2$ is of odd order, we glue in a disc, such that the covering map locally extends to $\tilde{\pi}: z \mapsto z^2$. If instead the local polynomial is reducible, or equivalently the zero of $q_2$ is of even order, we glue in two discs separating the two sheets. Hence, the branch points of $\tilde{\pi}: \tilde{\Sigma} \rightarrow X$ are the zeros of $q_2$ of odd order and with Riemann-Hurwitz the genus of $\tilde{\Sigma}$ is 
\[ g(\tilde{\Sigma})=2g-1+ \frac{n_\odd}{2},
\] where $n_\odd$ denotes the number of odd zeros of $q_2$ (without multiplicity).
Hitchin fibres with irreducible spectral curve have the following useful property:
\begin{lemm} Let $q_2$ be a quadratic differential with no global square root and $(E,\Phi) \in \H^{-1}(q_2)$. Then $(E,\Phi)$ is stable.
\end{lemm}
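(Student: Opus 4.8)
The plan is to prove the stronger assertion that $(E,\Phi)$ admits \emph{no} proper nonzero $\Phi$-invariant subsheaf at all; the slope inequality in the definition of stability is then vacuously satisfied, so stability follows immediately. Recall that $\det E = \O_X$ forces $\deg E = 0$, hence $\mu(E) = 0$, and that by definition $\tr \Phi = 0$, while $\det \Phi = q_2$ because $(E,\Phi) \in \H^{-1}(q_2)$.

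First I would reduce to line subbundles. Any proper nonzero $\Phi$-invariant subsheaf $F \subset E$ has rank $1$ (ranks $0$ and $2$ being excluded), and its saturation $\bar F$ is again $\Phi$-invariant: the induced map $\bar F \to (E/\bar F)\otimes K$ vanishes on $F$, hence on all of $\bar F$, since $\bar F / F$ is torsion while $(E/\bar F)\otimes K$ is torsion-free. It therefore suffices to rule out $\Phi$-invariant line subbundles $F \subset E$.

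The core computation is then the following. Given such an $F$, the restriction $\Phi|_F$ lands in $F \otimes K$, so it is a global section $\lambda \in \mathrm{Hom}(F, F \otimes K) = H^0(X,K)$. In a local frame adapted to $F$ the Higgs field is upper triangular, $\Phi = \begin{pmatrix} \lambda & * \\ 0 & \mu \end{pmatrix}$, where $\mu \in H^0(X,K)$ is the eigenvalue induced on $E/F$. The condition $\tr \Phi = \lambda + \mu = 0$ gives $\mu = -\lambda$, and taking determinants yields $q_2 = \det \Phi = \lambda\mu = -\lambda^2$. Consequently $i\lambda \in H^0(X,K)$ satisfies $(i\lambda)^2 = q_2$, i.e. $q_2$ has a global square root on $X$ — contradicting the hypothesis. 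Hence no $\Phi$-invariant line subbundle exists and $(E,\Phi)$ is stable.

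I do not expect a serious obstacle, as the argument is short; the only points requiring care are the sign (the relevant square root is $i\lambda$ rather than $\lambda$, so the ``no global square root'' hypothesis must be applied to $-\lambda^2$ rewritten as $(i\lambda)^2$), and the saturation step guaranteeing that a possibly non-saturated invariant subsheaf still yields an invariant line subbundle. I also note that polystability is not actually used: the computation relies only on $\tr\Phi = 0$ and $\det\Phi = q_2$, so it establishes stability directly for every point of the fibre.
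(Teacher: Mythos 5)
Your proof is correct and follows essentially the same route as the paper's: a $\Phi$-invariant line subbundle forces the eigenvalue $\lambda$ of $\Phi$ to exist globally in $H^0(X,K)$ with $-\lambda^2=q_2$, so $i\lambda$ is a global square root of $q_2$, contradicting the hypothesis. Your version merely spells out the saturation step and the sign, which the paper leaves implicit.
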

\begin{proof}
If there is a $\Phi$-invariant subbundle $M \subsetneq E$, then it is an eigenline bundle. Hence, the eigensections of $\Phi$ must exist on $X$ as global sections of $K$. This contradicts the assumption on $q_2$.
\end{proof}
In the following, we will also need to look at $M$-twisted $\SL(2,\C)$-Higgs bundles. These are pairs $(E,\Phi)$, where $E$ is a holomorphic vector bundle of rank 2, such that $\det(E)=\O_X$ and $\Phi \in H^0(X,\End(E) \otimes M)$ with $\tr(\Phi)=0$. Nitsure \cite{Ni91} constructed the moduli space of polystable $M$-twisted Higgs bundles in the algebraic category. It will be denoted by $\M(X,M)$. The Hitchin map is given by
\[ \H_M: \M(X,M) \rightarrow H^0(X,M^2), \quad (E,\Phi) \rightarrow \det(\Phi).
\] As above, the generic fibres can be identified with torsors over Prym varieties of dimension $g-1+\deg(M)$. However, the smooth locus of $\M(X,M)$ is no longer a holomorphic symplectic manifold and hence this torus fibration is no longer defined as a completely integrable system. If there exists a section $s \in H^0(X,MK^{-1})$, the smooth locus of $\M(X,M)$ has the structure of a Poisson manifold (see \cite{Ma94}).

\section{\texorpdfstring{$\sigma$}{sigma}-invariant Higgs Bundles on the Normalised Spectral Curve}\label{sec:pap}
\subsection{The pullback}
Let $p: Y \rightarrow X$ be a two-sheeted covering of Riemann surfaces and $\sigma: Y \rightarrow Y$ the involutive biholomorphism interchanging the sheets. 
\begin{defi}\label{def:sigma-inv} A $\sigma$-invariant holomorphic vector bundle $(E,\hat{\sigma})$ on $Y$ is holomorphic vector bundle $E$ on $Y$ with a lift
\[ \begin{tikzcd} E \arrow[d] \arrow[r,"\hat{\sigma}"] & E \arrow[d] \\
Y \arrow[r,"\sigma"] & Y
\end{tikzcd}
\] such that 
\begin{itemize}
\item[i)] $\hat{\sigma}^2=\id_E$, and
\item[ii)] $\hat{\sigma}\rest_{E_y}= \id_{E_y}$ for all ramification points $y \in Y$.
\end{itemize}
Let $(M,\hat{\sigma}_M)$ be $\sigma$-invariant holomorphic line bundle on $Y$. A $\sigma$-invariant $(M,\hat{\sigma}_M)$-twisted Higgs bundle $(E,\Phi,\hat{\sigma}_E)$ on $Y$ is a $M$-twisted Higgs bundle $(E,\Phi)$ on $Y$, such that $(E,\hat{\sigma}_E)$ is $\sigma$-invariant holomorphic vector bundle and
\begin{itemize}
\item[iii)] $(\hat{\sigma}_E \otimes \hat{\sigma}_M) \circ \Phi  = \Phi \circ \hat{\sigma}_E$.
\end{itemize}
\end{defi}

\begin{lemm} Let $(E,\Phi,\hat{\sigma}_E)$ be a $\sigma$-invariant $(M,\hat{\sigma}_M)$-twisted Higgs bundle and $g \in \A^0(Y,\SL(E))$ an element of the gauge group. Then $(gE, g \Phi g^{-1}, g \circ \hat{\sigma} \circ g^{-1})$ is a $\sigma$-invariant $(M,\hat{\sigma}_M)$-twisted Higgs bundle. 
\end{lemm}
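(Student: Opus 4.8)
The plan is to verify directly the three defining conditions of Definition \ref{def:sigma-inv} for the transformed triple, writing $\hat{\sigma}' := g \circ \hat{\sigma}_E \circ g^{-1}$ and $\Phi' := g \Phi g^{-1}$. Since $g, g^{-1}$ are sections of $\SL(E)$ covering $\id_Y$ while $\hat{\sigma}_E$ covers $\sigma$, the composite $\hat{\sigma}'$ again covers $\sigma$ and is thus a legitimate candidate lift; that $(gE, \Phi')$ is an $M$-twisted Higgs bundle for the gauge-transformed holomorphic structure is the standard compatibility of the gauge action, so the only new content lies in conditions i)--iii). Throughout I treat all maps as morphisms of total spaces and use only associativity of composition together with the pointwise identities $g^{-1}\circ g = \id_E = g \circ g^{-1}$.

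For condition i) I compute $(\hat{\sigma}')^2 = g\,\hat{\sigma}_E\, g^{-1} g\, \hat{\sigma}_E\, g^{-1}$; the inner $g^{-1}g$ cancels, leaving $g\,\hat{\sigma}_E^2\,g^{-1} = g\,\id_E\,g^{-1} = \id_E$ by condition i) for the original datum. For condition ii) I restrict to a ramification point $y$, where $\sigma(y)=y$, so that $g$ and $g^{-1}$ are both evaluated on the single fibre $E_y$; with $\hat{\sigma}_E|_{E_y}=\id_{E_y}$ this gives $\hat{\sigma}'|_{E_y}=g_y\circ\id_{E_y}\circ g_y^{-1}=\id_{E_y}$. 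For condition iii) I factor $\hat{\sigma}'\otimes\hat{\sigma}_M = (g\otimes\id_M)(\hat{\sigma}_E\otimes\hat{\sigma}_M)(g^{-1}\otimes\id_M)$, using that $g$ acts trivially on the $M$-factor; composing with $\Phi'=(g\otimes\id_M)\,\Phi\, g^{-1}$ the middle pair $(g^{-1}\otimes\id_M)(g\otimes\id_M)$ cancels, and the original condition iii) $(\hat{\sigma}_E\otimes\hat{\sigma}_M)\Phi=\Phi\,\hat{\sigma}_E$ reduces the left-hand side to $(g\otimes\id_M)\,\Phi\,\hat{\sigma}_E\,g^{-1}$, which coincides with the right-hand side $\Phi'\hat{\sigma}'$ after cancelling the analogous $g^{-1}g$.

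These computations are routine once the formalism is fixed, so the main (modest) obstacle is purely the book-keeping of the covering structure: one must ensure that each cancellation $g^{-1}g=\id$ is applied to maps composed over matching base points. In conditions i) and iii) this is automatic from associativity, whereas in condition ii) it is exactly the place where one invokes that ramification points are fixed by $\sigma$, so that both $g$ and $g^{-1}$ land on the same fibre.
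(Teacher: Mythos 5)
Your verification is correct: the paper states this lemma without proof, treating it as the routine check you carry out, and your direct verification of conditions i)--iii) (with the correct attention to the fact that the cancellations $g^{-1}g=\id$ occur over matching fibres, and that ramification points are fixed by $\sigma$) is exactly the intended argument. Nothing is missing.
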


Let $(M,\hat{\sigma}_M)$ a $\sigma$-invariant holomorphic line bundle on $Y$. Define 
\begin{align*} \mathcal{M}^{\sigma}(Y,M,\hat{\sigma}_M)=\left\{ (E,\Phi) \in \M_{\SL(2,\C)}(Y,M) \, \middle\vert \, \exists \hat{\sigma}: \begin{array}{l}
  (E,\Phi,\hat{\sigma}) \ \sigma\text{-invariant } \\ 
(M,\hat{\sigma}_M)\text{-twisted} 
\end{array} \right\}
\end{align*} 

\begin{prop}\label{prop:pull} \ 
\begin{itemize} \item[i)] Let $E$ be a holomorphic vector bundle on $X$. Then $p^*E$ has a induced lift $\hat{\sigma}_{p^*E}$, such that $(p^*E,\hat{\sigma}_{p^*E})$ is a $\sigma$-invariant holomorphic vector bundle.
\item[ii)] We have a natural map 
\[ p^*: \M_{\SL(2,\C)}(X,M) \rightarrow \M^{\sigma}(Y,p^*M,\hat{\sigma}_{p^*M}).
\] 
\end{itemize} 
\end{prop}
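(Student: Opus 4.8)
The plan is to build the lift in part i) canonically out of the relation $p \circ \sigma = p$, and then to observe that pullback automatically carries all the defining data of an $M$-twisted $\SL(2,\C)$-Higgs bundle on $X$ to $\sigma$-invariant data on $Y$, so that part ii) is a tautological verification together with a polystability check.

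First I would treat part i). Since $\sigma$ permutes the two sheets of $p$, one has $p \circ \sigma = p$, so over $y \in Y$ the fibres $(p^*E)_y = E_{p(y)}$ and $(p^*E)_{\sigma(y)} = E_{p(\sigma(y))} = E_{p(y)}$ are literally the same vector space. I define $\hat{\sigma}_{p^*E}$ to be the bundle map covering $\sigma$ whose effect on fibres is this tautological identification $(y,v) \mapsto (\sigma(y),v)$ with $v \in E_{p(y)}$; equivalently it is the canonical isomorphism $p^*E \xrightarrow{\sim} \sigma^*p^*E = (p\circ\sigma)^*E = p^*E$. Conditions i) and ii) of Definition \ref{def:sigma-inv} are then immediate: $\hat{\sigma}_{p^*E}^2$ covers $\sigma^2 = \id_Y$ and acts as $(y,v)\mapsto(y,v)$, hence equals $\id$; and at a ramification point, where $\sigma(y)=y$, the fibre map is $(y,v)\mapsto(y,v)$, i.e.\ $\id_{(p^*E)_y}$. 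Applying the same construction to line bundles produces the lift $\hat{\sigma}_{p^*M}$ referenced in ii).

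For part ii) I would first record that $(p^*E, p^*\Phi)$ is a $p^*M$-twisted $\SL(2,\C)$-Higgs bundle on $Y$: pullback is exact and multiplicative on determinants, so $\det(p^*E) = p^*\det(E) = p^*\O_X = \O_Y$, while the canonical section $p^*\Phi \in H^0(Y, \End(p^*E)\otimes p^*M)$ satisfies $\tr(p^*\Phi) = p^*\tr(\Phi) = 0$. It then remains to check compatibility iii) for the canonical lifts, which is a pointwise statement. Over $y$ the endomorphism $(p^*\Phi)_y$ equals $\Phi_{p(y)}$ under the identifications above, and both $\hat{\sigma}_{p^*E}$ and $\hat{\sigma}_{p^*M}$ act as the tautological identity between the fibres over $y$ and $\sigma(y)$; hence $(\hat{\sigma}_{p^*E}\otimes \hat{\sigma}_{p^*M})\circ p^*\Phi$ and $p^*\Phi \circ \hat{\sigma}_{p^*E}$ both reduce over $y$ to $\Phi_{p(y)}$ and therefore agree.

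Finally, to see that $(E,\Phi)\mapsto (p^*E,p^*\Phi)$ descends to a map of moduli spaces, I would invoke that pullback along a finite cover of smooth projective curves in characteristic zero preserves polystability: the slope is multiplied by $\deg p = 2$, and a $p^*\Phi$-invariant destabilizing subsheaf of $p^*E$ would, after averaging over $\sigma$ or descending along $p$, yield a $\Phi$-invariant destabilizing subsheaf of $E$, contradicting polystability of $(E,\Phi)$. The only step requiring genuine care is this preservation of polystability under the (possibly ramified) pullback; the rest is the formal observation that pulled-back data is automatically $\sigma$-invariant for the canonical lifts.
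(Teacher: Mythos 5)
Your construction of the lift via the canonical isomorphism $p^*E \cong (p\circ\sigma)^*E$, acting as the tautological identification of fibres over $y$ and $\sigma(y)$ and as the identity over ramification points, is the same as the paper's (which phrases it through pulled-back local trivialisations), and your pointwise verification of conditions i)--iii) of Definition \ref{def:sigma-inv} matches the paper's. You are in fact slightly more careful than the paper on one point: the paper dismisses membership of $(p^*E,p^*\Phi)$ in $\M_{\SL(2,\C)}(Y,p^*M)$ as clear, whereas you correctly isolate preservation of polystability under pullback along the (possibly ramified) double cover as the one step needing a genuine argument, and your sketch (descending or $\sigma$-averaging an invariant destabilizing subsheaf) is the standard and correct way to supply it.
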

\begin{proof} \begin{itemize}
\item[i)] Let $U \subset X$ open, such that $E \rest_U \cong U \times \C^r$. The trivialisation induces a trivialisation $p^*E \rest_{p^{-1}(U)} \cong p^{-1}U \times \C^r$. If $x \in U$ is not a branch point, i.e.\ $p^{-1}(x)=\{ y, \sigma(y) \}$, such trivialisation induces a identification of the fibres $p^*E_y \cong p^*E_{\sigma(y)}$. This defines a lift $\hat{\sigma}_{p^*E}: p^*E \rightarrow p^*E$ away from the ramification points. This lift extends over the ramification points by the identity. Therefore, $(p^*E,\hat{\sigma}_{p^*E})$ is a $\sigma$-invariant holomorphic vector bundle.
\item[ii)] Clearly, $(p^*E,p^*\Phi) \in \M_{\SL(2,\C)}(Y,p^*M)$ and by i) $(p^*E,\hat{\sigma}_{p^*E})$ is a $\sigma$-invariant holomorphic vector bundle. Property iii) of Definition \ref{def:sigma-inv} becomes clear in a trivialisation as in the proof of i).
\end{itemize}
\end{proof}
\noindent In the sequel, a pullback will always carry the induced lift $\hat{\sigma}$ and we will omit it in the notation.

\subsection{The \texorpdfstring{$\sigma$}{sigma}-invariant pushforward}
\begin{defi}\label{def:sigma-inv.sheaf} Let $\xi$ be an analytic sheaf on $Y$. A lift $\hat{\sigma}: \xi \rightarrow \xi$ of $\sigma$ is a family of involutive homomorphisms of abelian groups
\[ \hat{\sigma}_V: H^0(V,\xi) \rightarrow H^0(\sigma(V),\xi)
\] commuting with restriction maps, such that for all $f \in \O_V$ and $s \in H^0(V,\xi)$
\[ \hat{\sigma}_V(fs)=(\sigma^*f) \hat{\sigma}_V(s).
\] The pair $(\xi,\hat{\sigma})$ is called an analytic $\sigma$-sheaf. 
\end{defi}

\begin{defi}\label{def:sigma:push} Let $(\xi,\hat{\sigma})$ be an analytic $\sigma$-sheaf on $Y$, then the $\sigma$-invariant pushforward $p_*(\xi,\hat{\sigma})$ is the analytic sheaf on $X$ defined through 
\[ H^0(U,p_*(\xi,\hat{\sigma}))=H^0(p^{-1}U,\xi)^{\hat{\sigma}}
\] for open sets $U \subset X$. Here $H^0(p^{-1}U,\xi)^{\hat{\sigma}}$ denotes the $\hat{\sigma}$-invariant sections of $(\xi,\hat{\sigma})$. 
\end{defi}

\begin{lemm}\label{lem:push:loc:free}\begin{itemize}
\item[i)] Let $(\xi,\hat{\sigma})$ be a locally free $\sigma$-sheaf of rank $r$ on $Y$, such that for every ramification point $y \in Y$ there exists an open, $\sigma$-invariant neighbourhood $V \subset Y$ of $y$ and an isomorphism $H^0(V,\xi)\cong \O_V^r$, such that
\[ \hat{\sigma}\rest_V:  \O_V^r  \rightarrow \O_V^r , \quad f \mapsto f\circ \sigma.
\] Then $p_*(\xi,\hat{\sigma})$ is locally free of rank $r$.
\item[ii)] Let $(E,\hat{\sigma})$ be a $\sigma$-invariant holomorphic vector bundle of rank $r$. Then $(\O(E),\hat{\sigma})$ satisfies the assumption in i). In particular, the pushforward $p_*(\O(E),\hat{\sigma})$ is locally free of rank $r$.
\end{itemize}
\end{lemm}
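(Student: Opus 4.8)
The plan is to prove both statements by a purely local analysis on $X$, since local freeness is a local property and the defining formula $H^0(U,p_*(\xi,\hat{\sigma}))=H^0(p^{-1}U,\xi)^{\hat{\sigma}}$ is compatible with restriction to smaller opens. I would fix a point $x\in X$ and exhibit a neighbourhood $U$ over which $p_*(\xi,\hat{\sigma})$ is free of rank $r$, distinguishing whether $x$ is a branch point of $p$ or not.

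For $x$ not a branch point I would shrink $U$ so that $p^{-1}(U)=V_1\sqcup V_2$ with $\sigma(V_1)=V_2$ and $p\rest_{V_i}$ biholomorphic, and small enough that $\xi$ is free over $V_1$. Writing a section of $\xi$ over $p^{-1}(U')$ as a pair $(s_1,s_2)$ with $s_i\in H^0(V_i',\xi)$, the fact that $\hat{\sigma}$ interchanges the two sheets makes the invariance condition $\hat{\sigma}(s_1,s_2)=(s_1,s_2)$ equivalent to the single equation $s_2=\hat{\sigma}(s_1)$ (the companion equation $s_1=\hat{\sigma}(s_2)$ following from $\hat{\sigma}^2=\id$). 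Hence the invariant sections are freely parametrized by $s_1\in H^0(V_1',\xi)\cong\O^r(V_1')\cong\O^r(U')$, naturally in $U'$, which gives $p_*(\xi,\hat{\sigma})\rest_U\cong\O_U^r$.

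For $x$ a branch point I would invoke the hypothesis to choose the distinguished $\sigma$-invariant neighbourhood $V$ of the ramification point $y$ over $x$, on which $\xi\cong\O_V^r$ with $\hat{\sigma}$ acting by $f\mapsto f\circ\sigma$, and set $U=p(V)$. The invariant sections over $p^{-1}(U')=V'$ are then the $\C^r$-valued holomorphic functions $f$ on $V'$ with $f\circ\sigma=f$. In the local model $p\colon z\mapsto z^2$, $\sigma\colon z\mapsto -z$, these are exactly the even functions, i.e. the pullbacks $p^*g$ of holomorphic functions $g$ on $U'$; thus $H^0(V',\xi)^{\hat{\sigma}}\cong\O^r(U')$, again naturally in $U'$. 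This descent of $\sigma$-invariant functions along a branched double cover — which I would record locally as $p_*\O_Y^{\sigma}\cong\O_X$ — is the technical heart of part i).

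For part ii) the task is to produce, at each ramification point $y$, an equivariant local frame realizing the normal form required in i). Starting from any local frame $(\tilde e_i)$ of $E$ on a $\sigma$-invariant neighbourhood of $y$, I would average it by setting $e_i=\tfrac12\bigl(\tilde e_i+\hat{\sigma}(\tilde e_i)\bigr)$, which is $\hat{\sigma}$-invariant because $\hat{\sigma}^2=\id$; for an invariant frame the action on coefficient functions is precisely $f\mapsto f\circ\sigma$. The crucial point — and the one place where condition ii) of Definition \ref{def:sigma-inv}, namely $\hat{\sigma}\rest_{E_y}=\id$, is used — is that at the fixed point $e_i(y)=\tilde e_i(y)$, so the averaged sections still form a frame on a neighbourhood of $y$; intersecting it with its $\sigma$-image yields a $\sigma$-invariant $V$ on which $(e_i)$ trivializes $E$ with $\hat{\sigma}$ acting as $f\mapsto f\circ\sigma$. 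This verifies the hypothesis of i), and local freeness of rank $r$ of the pushforward then follows from the first part. I expect the only genuine obstacle, and it a mild one, to be checking that the averaged frame does not degenerate at the fixed point, which is exactly what $\hat{\sigma}\rest_{E_y}=\id$ guarantees.
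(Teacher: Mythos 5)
Your proposal is correct and follows essentially the same route as the paper: the same splitting $p^{-1}(U)=V_1\sqcup V_2$ away from branch points with invariant sections determined by their restriction to one sheet, the same identification of invariant sections with even functions (pullbacks from the base) at branch points, and for part ii) the same symmetrization $s_i+\hat{\sigma}s_i$ of an arbitrary frame, with $\hat{\sigma}\rest_{E_y}=\id$ guaranteeing non-degeneracy at the ramification point. No gaps.
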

\begin{proof}
\begin{itemize}
\item[i)]Let $U \subset X$ an open subset trivializing the covering. Let $p^{-1}(U)=U_1 \cup U_{2}$. A section in $H^0(p^{-1}U,\xi)^{\hat{\sigma}}$ is fixed by its values on $U_1$. Hence $H^0(p^{-1}U,\xi)^{\hat{\sigma}}\cong \O_{U_1}^{r}  \cong \O_U^r$. Let $x \in X$ a branch point. By assumption there exists a neighbourhood $U \subset X$, such that 
\[ H^0(p^{-1}U,\xi)^{\hat{\sigma}}\cong \{ f \in \O_{p^{-1}U}^r \mid f= \sigma^*f \} \cong p^{-1}\O_U^{r} \cong \O_U^r.
\] 
\item[ii)] Clearly, a lift $\hat{\sigma}$ on $E$ induces a lift on the sheaf of sections $\hat{\sigma}: \O(E) \rightarrow \O(E)$ satisfying Definition \ref{def:sigma-inv.sheaf}. To check the extra assumption in i), let $y \in Y$ be a ramification point. Assumption ii) of Definition \ref{def:sigma-inv} guarantees the existence of a local frame of $\sigma$-invariant sections in a $\sigma$-invariant neighbourhood $V$ of $y$. Take a local basis for $E_{y}$ and extend it to a holomorphic frame $s_1,\dots,s_r$ of $E_{V}$. Then a $\sigma$-invariant frame is given by $s_1 + \hat{\sigma} s_1, \dots, s_r + \hat{\sigma}s_r$ for a small enough neighbourhood $V$ of $y$. A $\sigma$-invariant frame induces an isomorphism $\O(E)_V \cong \O_V^r$ such that $\hat{\sigma}\rest_V$ has the desired form.
\end{itemize}
\end{proof}

\begin{defi}
Let $(E,\hat{\sigma})$ be a $\sigma$-invariant vector bundle. We define the $\sigma$-invariant pushforward $p_*(E,\hat{\sigma})$ to be the vector bundle corresponding to the locally free sheaf $p_*(\O(E),\hat{\sigma})$.
\end{defi}

\begin{lemm}\label{lem:pup} Let $E$ be a holomorphic vector bundle on $X$ and $(p^*E,\hat{\sigma}_{p^*E})$ the corresponding $\sigma$-invariant holomorphic vector bundle on $Y$. Then
\[ p_*(p^*E,\hat{\sigma}_{p^*E})=E.
\]
\end{lemm}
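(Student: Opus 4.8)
The plan is to reduce the claim $p_*(p^*E,\hat{\sigma}_{p^*E})=E$ to a local computation, exploiting that both sides are locally free sheaves on $X$ and that the statement can be checked on a cover of $X$ by small open sets, compatibly with restriction maps. First I would fix a point $x \in X$ and an open neighbourhood $U$ over which $E$ trivialises, say $\O(E)\rest_U \cong \O_U^r$, and split the analysis into the two cases $x$ not a branch point and $x$ a branch point, exactly as in the proof of Lemma \ref{lem:push:loc:free}.

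Over the unbranched locus, for $U$ small enough with $p^{-1}(U)=U_1 \sqcup U_2$ and $\sigma$ interchanging $U_1, U_2$, the pullback sheaf is $\O(p^*E)\rest_{p^{-1}U} \cong \O_{p^{-1}U}^r$, and by Proposition \ref{prop:pull} i) the lift $\hat{\sigma}_{p^*E}$ is precisely the one that identifies the fibre of $p^*E$ over $y \in U_1$ with the fibre over $\sigma(y) \in U_2$ via the trivialisation of $E$. Concretely, a section of $p^*E$ over $p^{-1}U$ is a pair $(s_1,s_2)$ of sections of $\O_U^r$ (pulled back to $U_1$ and $U_2$ respectively), and $\hat{\sigma}_{p^*E}$ swaps the two components after composing with $\sigma$. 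The $\hat{\sigma}$-invariant sections are then exactly those with $s_2 = \sigma^* s_1$, so projection to the first component gives an isomorphism $H^0(p^{-1}U,\O(p^*E))^{\hat{\sigma}} \cong H^0(U,\O(E))$. I would check that this isomorphism intertwines multiplication by functions pulled back from $X$, which is what is needed for it to be an isomorphism of $\O_U$-modules, and that it commutes with restriction to smaller open sets.

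At a branch point $x$ with $\tilde{\pi}^{-1}(x)=\{y\}$ a ramification point, the trivialisation of $E$ near $x$ pulls back to a trivialisation of $p^*E$ near $y$, and Lemma \ref{lem:push:loc:free} ii) guarantees that in a $\sigma$-invariant neighbourhood $V$ the lift takes the standard form $f \mapsto f \circ \sigma$. Hence the $\hat{\sigma}$-invariant sections over $p^{-1}U=V$ are the $\sigma$-invariant (i.e.\ even) elements of $\O_V^r$, and these descend to $\O_U^r$ via $p$, giving again the identification with $H^0(U,\O(E))$. Since the pullback trivialisation of $p^*E$ is induced by the chosen trivialisation of $E$, the resulting isomorphism is canonically the identity on $E$ under the splitting into local frames.

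The main content, and the only place care is required, is to verify that these local isomorphisms are \emph{canonical}, i.e.\ independent of the chosen trivialisation of $E$, so that they glue to a global isomorphism of sheaves $p_*(p^*E,\hat{\sigma}_{p^*E}) \xrightarrow{\sim} \O(E)$ rather than merely agreeing with $E$ locally. This follows because the induced lift $\hat{\sigma}_{p^*E}$ is defined intrinsically from the covering structure (Proposition \ref{prop:pull} i)), so a change of trivialisation of $E$ by $g \in \mathsf{GL}(r,\O_U)$ pulls back to $p^*g$ on $p^*E$ and commutes with $\hat{\sigma}_{p^*E}$; thus the projection-to-first-component map transforms covariantly and the identifications on overlaps are compatible. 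I expect this gluing/naturality check to be the essential (though routine) obstacle, while the two local computations themselves are essentially already carried out in the proof of Lemma \ref{lem:push:loc:free}.
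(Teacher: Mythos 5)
Your proof is correct; note that the paper states Lemma \ref{lem:pup} without proof, evidently regarding it as immediate from the two local computations already carried out in the proof of Lemma \ref{lem:push:loc:free}, and your argument is exactly that implicit one: identify the $\hat{\sigma}$-invariant sections over the unbranched locus with sections of $E$ via one sheet, identify them at a branch point with the even (i.e.\ pulled-back) sections, and observe that both identifications are the inverse of the canonical map $s \mapsto p^*s$, hence trivialisation-independent and compatible with restrictions. Your closing naturality check (that a change of frame $g$ pulls back to $p^*g$, which commutes with the intrinsic lift $\hat{\sigma}_{p^*E}$) is the right way to see that the local isomorphisms glue.
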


\begin{exam}\label{exam:I} Let $p: Y \rightarrow X$ be an unbranched 2-covering of Riemann surfaces. Let $L$ be a line bundle on $X$ and $(p^*L,\hat{\sigma})$ the induced $\sigma$-invariant line bundle on $Y$. Then $-\hat{\sigma}$ is another lift of $\sigma$ on $L$. However, $p_*(p^*L,-\hat{\sigma}) \ncong L$. We have 
\[ p_*(p^*L,-\hat{\sigma}) \cong L \otimes I,
\] where $I=p_*(\O_Y,-\id_{\O_Y})$ is the unique non trivial line bundle on $X$, which pulls back to the trivial bundle on $Y$. Then $p^*(I^2) \cong \O_Y$ and the induced lift $\hat{\sigma}_{p^*(I^2)}$ is the identity. Hence, $I^2=\O_X$. 
\end{exam}

\subsection{Pullback and pushforward of singular Hitchin fibres}
Let $q_2 \in H^0(X,K^2)$ be a quadratic differential with no global square root on $X$. Let $\tilde{\pi}: \tilde{\Sigma} \rightarrow X$ be the covering by the normalized spectral curve and $\sigma: \tilde{\Sigma}\rightarrow \tilde{\Sigma}$ the involution interchanging the sheets. We want to parametrize the singular Hitchin fibres by parametrizing their pullback to $\tilde{\Sigma}$. However, the pullback
\[ \tpst: \H^{-1}(q_2) \rightarrow \mathcal{M}^{\sigma}(\tilde{\Sigma},\tpst K)
\] is not injective in general, because there can be multiple lifts of $\sigma$.

\begin{exam}\label{exam:non:inj}
Let $q_2 \in H^0(X,K^2)$ be a quadratic differential with only double zeros, which has no global square root on $X$. Then $\tilde{\Sigma} \rightarrow X$ is a 2-sheeted unbranched covering of Riemann surfaces. In this case, there exists a non-trivial line bundle $I$ with $\tilde{\pi}^*(I)\cong \O_{\tilde{\Sigma}}$ and $I^2=\O_X$ (see Example \ref{exam:I}). For $(E,\Phi) \in \H^{-1}(q_2)$, also $(E\otimes I,\Phi)\in\H^{-1}(q_2)$. But we clearly have 
\[ \tilde{\pi}^*(E,\Phi) \cong \tilde{\pi}^*(E\otimes I,\Phi).\]
\end{exam}

\begin{prop}\label{lem:lift}
Let $q_2 \in H^0(X,K^2)$ be a quadratic differential with no global square root. Let \[ 
(E,\Phi) \in \tilde{\pi}^* \H^{-1}(q_2) \subset \mathcal{M}^{\sigma}(\tilde{\Sigma},\tpst K).
\]
\begin{itemize}
\item[i)] If $q_2$ has at least one zero of odd order, then there is a unique lift $\hat{\sigma}$, such that $(E,\Phi,\hat{\sigma})$ is a $\sigma$-invariant Higgs bundle. 
\item[ii)] If $q_2$ has only zeros of even order, then there are two such lifts $\pm\hat{\sigma}$.
\end{itemize}
\end{prop}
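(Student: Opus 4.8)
The plan is to count the lifts relative to the canonical one supplied by Proposition \ref{prop:pull}. Write $(E,\Phi)=\tpst(E_0,\Phi_0)$ for some $(E_0,\Phi_0)\in\H^{-1}(q_2)$, and let $\hat{\sigma}_0:=\hat{\sigma}_{\tpst E_0}$ be the induced lift, which is already a valid $\sigma$-invariant structure. Any other lift $\hat{\sigma}$ covers the same involution $\sigma$, so $g:=\hat{\sigma}\circ\hat{\sigma}_0^{-1}$ is a holomorphic automorphism of $E$ over the identity. The three conditions of Definition \ref{def:sigma-inv} translate into conditions on $g$: since $\hat{\sigma}_0$ already satisfies iii), condition iii) for $\hat{\sigma}$ becomes $g\Phi=\Phi g$; condition i) becomes $\hat{\sigma}_0 g\hat{\sigma}_0^{-1}=g^{-1}$; and condition ii) becomes $g|_{E_y}=\id$ at the ramification points. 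The set of lifts is thus a torsor under the group of such $g$, and it suffices to enumerate them.

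First I would pin down $g$ on $\tilde{\Sigma}^{\circ}:=\tilde{\Sigma}\setminus Z(\lambda)$. There $\Phi$ has distinct eigenvalues $\pm\lambda$, giving $E|_{\tilde{\Sigma}^{\circ}}=L_+\oplus L_-$ with $L_\pm=\ker(\Phi\mp\lambda)$, and $g$, commuting with $\Phi$, must be diagonal, $g=\mathrm{diag}(a,b)$. Because $\tr g$ and $\det g$ are global holomorphic functions on the compact curve $\tilde{\Sigma}$, they are constant, so $a,b$ are constants. Since $\hat{\sigma}_0$ interchanges $L_+$ and $L_-$ (it conjugates $\lambda$ to $\sigma^*\lambda=-\lambda$), the relation $\hat{\sigma}_0 g\hat{\sigma}_0^{-1}=g^{-1}$ reads $\mathrm{diag}(b,a)=\mathrm{diag}(a^{-1},b^{-1})$, i.e. $b=a^{-1}$. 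Hence $g=\mathrm{diag}(a,a^{-1})$ for a single constant $a\in\C^*$, and the entire problem reduces to deciding for which $a$ this $g$ extends to a global automorphism of $E$ across $Z(\lambda)$.

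The remaining step is the local analysis at the zeros of $\lambda$, using the identity $g=\tfrac{a+a^{-1}}{2}\id+\tfrac{a-a^{-1}}{2\lambda}\Phi$ valid on $\tilde{\Sigma}^{\circ}$. At a zero $y$ of $\lambda$ over an odd-order zero of $q_2$ — a ramification point of $\tpst$ — the two eigenlines collide and $\Phi(y)$ is a non-zero nilpotent, so $\lambda^{-1}\Phi$ has a pole and holomorphy of $g$ forces $a=a^{-1}$; condition ii), $g|_{E_y}=\id$, then forces $a=1$, so $g=\id$ and the lift is unique. This proves i). Over an even-order zero the cover is unramified, so ii) is vacuous; the decisive local input is that $\Phi$ is again a non-zero nilpotent at the preimage, i.e. the eigenline bundles $\ker(\Phi\mp\lambda)$ are tangent there. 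Granting this, $\lambda^{-1}\Phi$ is singular and extension of $g$ forces $a=a^{-1}$, i.e. $a=\pm1$; with no identity constraint both values survive, giving exactly the two lifts $g=\pm\id$, that is $\hat{\sigma}=\pm\hat{\sigma}_0$, which proves ii).

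The main obstacle is precisely this even-order case, where the count rests entirely on the local normal form of the pulled-back Higgs field at the preimages of even zeros. The argument in the third step needs the eigendirections to be tangent there; if instead $\Phi$ were locally semisimple (the eigenlines splitting, as happens for the pushforward bundles parametrized by $\Prym(\tilde{\Sigma})$), then every $\mathrm{diag}(a,a^{-1})$ would extend and produce a positive-dimensional family of lifts. Establishing the correct local model — and thereby ruling out the split behaviour, in the spirit of the twist ambiguity of Example \ref{exam:non:inj} — is therefore the crux, and I expect it to require the stability of $(E_0,\Phi_0)$ together with the irreducibility of the spectral curve.
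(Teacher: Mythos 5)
Your reduction is sound: writing $\hat{\sigma}=g\hat{\sigma}_0$ with $\hat{\sigma}_0$ the canonical lift, the three conditions do translate into $g\in\Aut(E,\Phi)$, $\hat{\sigma}_0 g\hat{\sigma}_0^{-1}=g^{-1}$ and $g\rest_{E_y}=\id$ at ramification points, and on $\tilde{\Sigma}\setminus Z(\lambda)$ one indeed gets $g=\mathrm{diag}(a,a^{-1})$ with $a$ constant. Part i) then closes cleanly — in fact condition ii) alone suffices, since $\tr(g)=a+a^{-1}$ is a constant that must equal $2$ at a ramification point, forcing $a=1$; you do not need the pole of $\lambda^{-1}\Phi$ there (which does exist, since $D_y$ is even and $\Lambda_y$ odd, but your phrase ``$\Phi(y)$ is a non-zero nilpotent'' is only the leading-order statement when $D_y>0$). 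This is a genuinely different, more local route than the paper's.

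The gap is in ii), exactly where you flag it, and the repair you propose cannot work: the ``decisive local input'' — that the eigenlines are tangent at the preimages of even-order zeros — is \emph{false} in general. By Lemma \ref{lem:loc:form} the local form at such a point $y$ is $z^{D_y}\left(\begin{smallmatrix}0&1\\z^{2\Lambda_y-2D_y}&0\end{smallmatrix}\right)$, and on the strata with $D_y=\Lambda_y$ (which are non-empty; this is the locally diagonalizable behaviour at even zeros described in Proposition \ref{prop:even:zeros} and for the lowest stratum in Section \ref{sec:stra:sl2c}) the section $\lambda^{-1}\Phi$ is holomorphic at $y$, so your extension criterion imposes no condition there. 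Stability of $(E_0,\Phi_0)$ on $X$ cannot rule this out, because it says nothing about the local stratum datum $D_y$. The paper's proof avoids all local analysis: $\hat{\sigma}_1\circ\hat{\sigma}_2\in\Aut(E,\Phi)$, and whenever the pullback $(E,\Phi)$ is \emph{stable on $\tilde{\Sigma}$} its automorphism group is $\{\pm\id_E\}$ outright, which gives the count in ii) irrespective of local normal forms; the only case needing separate treatment is a strictly polystable pullback, i.e.\ $(M\oplus M^{-1},\mathrm{diag}(\lambda,-\lambda))$ globally — which is precisely the configuration your own analysis identifies as admitting the whole torus $\mathrm{diag}(a,a^{-1})$ of candidates, and which the paper handles via the explicit automorphism group. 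To complete your argument you should therefore replace the local tangency claim by the global dichotomy stable/strictly polystable for $\tpst(E_0,\Phi_0)$, noting that local diagonalizability at \emph{every} point of $Z(\lambda)$ forces the global splitting.
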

\begin{proof} Let $(E,\Phi) \in \tilde{\pi}^* \H^{-1}(q_2)$. Assume that there a two lifts $\hat{\sigma}_1,\hat{\sigma}_2$, such that $(E,\Phi,\hat{\sigma}_i)$ is a $\sigma$-invariant Higgs bundle. Then $\hat{\sigma}_1 \circ \hat{\sigma}_2 \in \Aut(E,\Phi)$. If $(E,\Phi)$ is stable, this implies that $\hat{\sigma}_1=\pm \hat{\sigma}_2$. If in addition, $q_2$ has only even zeros, the normalised spectral covering $\tilde{\pi}$ is unbranched and this gives the two possible lifts. If $(E,\Phi)$ is stable, and $q_2$ has at least on zero of odd order, $\tilde{\pi}: \tilde{\Sigma} \rightarrow X$ has at least one ramification point $p \in \tilde{\Sigma}$. In particular, $(\hat{\sigma}_1)_p=(\hat{\sigma}_2)_p=\id_{E_p}$ and therefore $\hat{\sigma}_1= \hat{\sigma}_2$. On the other hand, $(E,\Phi) \in \tpst \H^{-1}(q_2)$ is strictly polystable if and only if
\[ (E,\Phi)= \left(M \oplus M^{-1}, \begin{pmatrix} \lambda & 0 \\ 0 & -\lambda \end{pmatrix}\right).
\] with $M \in \Jac(\tilde{\Sigma})$. Hence, $q_2$ has only even zeros and $\hat{\sigma}_1=g \hat{\sigma}_2$ with
\[ g \in \Aut(E,\Phi)=\left\{ \begin{pmatrix} t & 0 \\ 0 & t^{-1} \end{pmatrix} \mid t \in \C^*\right\},
\] such that $g^2=\id_E$. Hence again, $g=\pm \id_E$.
\end{proof}

\begin{prop}\label{prop_pi*_inj}
Let $q_2 \in H^0(X,K^2)$ be a quadratic differential with no global square root. The pullback
\[ \tilde{\pi}^*: \H^{-1}(q_2) \rightarrow \mathcal{M}^{\sigma}(\tilde{\Sigma},\tpst K)
\] 
\begin{itemize}
\item[i)] is injective, if $q_2$ has at least one zero of odd order, and
\item[ii)] is generically two-to-one, if $q_2$ has only even zeros. 
\end{itemize}Let $I$ be the unique non-trivial line bundle with $\tpst I= \O_Y$. Then the non-injectivity in ii) is due to the identification of $(E,\Phi)$ and $(E\otimes I,\Phi)$ by pullback. 
\end{prop}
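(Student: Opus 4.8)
The plan is to analyze the fibres of $\tpst$ by comparing the canonical lifts of $\sigma$ that pullbacks carry, and then to descend isomorphisms through the $\sigma$-invariant pushforward. Suppose $(E,\Phi),(E',\Phi') \in \H^{-1}(q_2)$ satisfy $\tpst(E,\Phi) \cong \tpst(E',\Phi')$ as $\tpst K$-twisted Higgs bundles on $\tilde{\Sigma}$, and fix an isomorphism $\psi: \tpst E \to \tpst E'$ with $\psi \circ \tpst\Phi = \tpst\Phi' \circ \psi$. Both pullbacks carry their canonical induced lifts $\hat{\sigma}_{\tpst E}$ and $\hat{\sigma}_{\tpst E'}$ from Proposition \ref{prop:pull}. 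Transporting the latter through $\psi$ produces $\hat{\sigma}' := \psi^{-1} \circ \hat{\sigma}_{\tpst E'} \circ \psi$, which is again a lift making $(\tpst E,\tpst\Phi,\hat{\sigma}')$ a $\sigma$-invariant Higgs bundle. By Proposition \ref{lem:lift} the lifts on $\tpst E$ are rigid: in case i) there is a unique lift, forcing $\hat{\sigma}' = \hat{\sigma}_{\tpst E}$, while in case ii) there are exactly two, so $\hat{\sigma}' = \pm \hat{\sigma}_{\tpst E}$.

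In the case $\hat{\sigma}' = +\hat{\sigma}_{\tpst E}$, the isomorphism $\psi$ intertwines the two canonical lifts, i.e.\ it is an isomorphism of $\sigma$-sheaves $(\O(\tpst E),\hat{\sigma}_{\tpst E}) \to (\O(\tpst E'),\hat{\sigma}_{\tpst E'})$. Such a morphism carries $\hat{\sigma}$-invariant sections to $\hat{\sigma}$-invariant sections, so it induces a morphism of the $\sigma$-invariant pushforwards; by Lemma \ref{lem:pup} these pushforwards are canonically $E$ and $E'$, and functoriality shows the induced map $\tilde{\pi}_*\psi: E \to E'$ is an isomorphism intertwining $\Phi$ and $\Phi'$. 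Hence $(E,\Phi) \cong (E',\Phi')$. In case i) this is the only possibility, which proves injectivity; in case ii) it is the possibility $\hat{\sigma}'=+\hat{\sigma}_{\tpst E}$.

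It remains to treat the case $\hat{\sigma}' = -\hat{\sigma}_{\tpst E}$, which occurs only when $q_2$ has only even zeros; then $\tilde{\pi}$ is unbranched and the line bundle $I$ with $\tpst I \cong \O_{\tilde{\Sigma}}$ and $I^2=\O_X$ of Example \ref{exam:I} is available. Here $\psi$ is an isomorphism of $\sigma$-sheaves $(\O(\tpst E),-\hat{\sigma}_{\tpst E}) \to (\O(\tpst E'),\hat{\sigma}_{\tpst E'})$, and pushing forward, together with the rank-two analogue of Example \ref{exam:I} — namely $\tilde{\pi}_*(\tpst E,-\hat{\sigma}_{\tpst E}) \cong E \otimes I$, since $-\hat{\sigma}_{\tpst E} = \hat{\sigma}_{\tpst E}\otimes(-\id_{\O_{\tilde{\Sigma}}})$ and $\tilde{\pi}_*(\O_{\tilde{\Sigma}},-\id)=I$ — yields $E' \cong E \otimes I$ compatibly with the Higgs fields, i.e.\ $(E',\Phi') \cong (E\otimes I,\Phi)$, exactly the identification of Example \ref{exam:non:inj}. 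Thus every fibre of $\tpst$ in case ii) is the orbit $\{(E,\Phi),(E\otimes I,\Phi)\}$ of the holomorphic involution $\tau(E,\Phi)=(E\otimes I,\Phi)$, which has order dividing two since $I^2=\O_X$.

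To conclude that $\tpst$ is \emph{generically} two-to-one I would show that $\tau$ is a nontrivial involution: its fixed locus $\{(E,\Phi)\mid E\cong E\otimes I \text{ as Higgs bundles}\}$ consists of Higgs bundles that descend from, or are pushed forward along, the étale double cover of $X$ defined by the $2$-torsion bundle $I$, and hence forms a proper analytic subset of the fibre, so the generic orbit has exactly two elements. I expect the main technical obstacle to lie in the two functoriality inputs used for descent: that a $\sigma$-equivariant isomorphism of vector bundles induces an isomorphism of their $\sigma$-invariant pushforwards (making $\tilde{\pi}_*\psi$ well defined and invertible, and compatible with $\Phi$), and the $\sigma$-invariant projection formula $\tilde{\pi}_*\big(\tpst E \otimes (\O_{\tilde{\Sigma}},-\id)\big) \cong E \otimes I$; both reduce to local computations in a $\sigma$-invariant trivialisation as in Lemma \ref{lem:push:loc:free}, but must be carried out compatibly with the Higgs field, and the genericity of fixed-point freeness of $\tau$ is the more delicate secondary point.
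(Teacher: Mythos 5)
Your proposal is correct and follows essentially the same route as the paper: rigidity of the lift from Proposition \ref{lem:lift}, descent via Lemma \ref{lem:pup}, and the $I$-twist from Example \ref{exam:I} for the sign-reversed lift — you have simply spelled out the intertwining and descent steps that the paper leaves implicit. Your closing remark on genericity (properness of the fixed locus of $(E,\Phi)\mapsto(E\otimes I,\Phi)$) is also consistent with what the paper establishes later, where fixed points are shown to be strictly polystable and confined to the lowest stratum.
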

\begin{proof}
We already saw in Lemma \ref{lem:lift} that in the first case there is a unique lift $\hat{\sigma}$. Hence the injectivity follows from Lemma \ref{lem:pup}. In the second case, we saw that there are two possible lifts $\pm \hat{\sigma}$. From Example \ref{exam:I} this implies
\[ \pi_*(E,\hat{\sigma})=(\pi_*(E,-\hat{\sigma})) \otimes I.
\] 
Together with Lemma \ref{lem:pup}, this gives the result in case ii).
\end{proof}

\begin{exam}\label{exam:branchpoint} In case ii) branching exists. The section $\lambda: \tilde{\Sigma} \rightarrow \tpst K$ has the property $\sigma^*\lambda=-\lambda$. Hence, it descends to a section $\alpha \in H^0(X,KI)$. Then
\[ (E,\Phi)= \left( I^\frac12 \oplus I^{-\frac{1}{2}}, \begin{pmatrix}
 0 & \alpha \\ \alpha & 0 \end{pmatrix} \right)
\] defines a Higgs bundle in $\H^{-1}(q_2)$, such that $E\otimes I \cong E$.

\end{exam}

\section{Hecke Transformations}\label{sec:Hecke}
In Section \ref{sec:strat}, we will stratify singular Hitchin fibres by fibre bundles over twisted Prym varieties. The twisted Prym variety will parametrize the eigenline bundles of the Higgs bundles in the stratum. The fibres of these bundles parametrize the manipulation of Higgs bundles by Hecke transformations. In this section, we recall the definition of Hecke transformation, originally introduced in \cite{NaRa75}, and adapt it to our purpose. It is a special case of the more general concept of Hecke modifications, see \cite{Ba10, Wo13,HeWa19}. For simplicity, we will only treat the case of holomorphic vector bundles of rank 2. \\

Let us start by recalling the rank 1 analogue. The Hecke transformation of a line bundle $L$ at $p \in X$ is the line bundle $L(-p)$. We have an exact sequence 
\[ 0 \rightarrow \O(L(-p)) \xrightarrow{s_p} \O(L) \rightarrow \mathcal{T}_X(p) \rightarrow 0,
\] where $s_p$ is a canonical section of $\O(p)$ and $\mathcal{T}_X(p)$ is the torsion sheaf with a stalk of length 1 at $p$. 

\begin{defi}[\cite{NaRa75},\cite{HwRa04}]\label{def:Hecke}
Let $E$ be a holomorphic vector bundle of rank $2$ on a Riemann surface $X$. Let $p \in X$ and $\alpha \in E_p^\vee\setminus\{0\}$, the dual fibre at $p$. The Hecke transformation $\hat{E}^{(p,\alpha)}$ of $E$ is defined through the exact sequence of coherent sheaves 
\[ 0 \rightarrow \O(\hat{E}^{(p,\alpha)}) \rightarrow \O(E) \xrightarrow{\alpha} \mathcal{T}_X(p) \rightarrow 0.
\]
\end{defi}

For a more concrete description of Hecke transformations, let us to describe them on the level of transition functions. Let $\mathcal{GL}(n)$ denote the sheaf of holomorphic $\mathsf{GL}(n,\C)$-valued functions on $X$. Let $\mathcal{U}=\{ U_i\}_{i=1}^{m}$ a covering of $X$ by contractible open sets, such that $p \in U_i$ if and only if $i=1$. Let $\{ \psi_{ij}\} \in \check{H}^1(\mathcal{U},\mathcal{GL}(2))$ transition functions for $E$. Choose a holomorphic frame $s_1,s_2$ of $E \rest_{U_1}$, such that $\alpha=(s_2)_p^\vee$. Define a covering $\mathcal{V}=\{V_i\}_{i=0}^m$ by $V_0=U_1$, $V_1=U_1\setminus\{p\}$ and $V_i=U_i$ for $i \geq 2$. Define transition functions $\{ \hat{\psi}_{ij}\} \in \check{H}^1(\mathcal{V},\mathcal{GL}(2))$ by
\begin{align}\label{equ:trans:Hecke1}
 \hat{\psi}_{01}: V_0 \cap V_1 \times \C^2 &\rightarrow V_0 \cap V_1 \times \C^2, \\
 (z,x_1,x_2) \quad &\mapsto \quad (z,x_1,zx_2) \notag
 \end{align} with respect to the frame $s_1,s_2$,
 \begin{align}\label{equ:trans:Hecke2}
 \hat{\psi}_{0j}= \psi_{1j} \circ \hat{\psi}_{01}, \quad \hat{\psi}_{j0}=\hat{\psi}_{0j}^{-1} \quad \text{for}  \quad j \geq 1, \quad \text{and} \quad \hat{\psi}_{ij}=\psi_{ij} \quad \text{for} \quad i,j \geq 1. 
\end{align} 
\begin{lemm} The holomorphic vector bundle associated to the transition functions $\{\hat{\psi}_{ij}\}\in \check{H}^1(\mathcal{V},\mathcal{GL}(2))$ is the Hecke transformation $\hat{E}^{p,\alpha}$ of $E$.  
\end{lemm}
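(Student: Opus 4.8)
The plan is to produce the defining short exact sequence of Definition \ref{def:Hecke} directly from the cocycle $\{\hat\psi_{ij}\}$, by constructing a morphism of sheaves from the sections of the bundle $\hat E$ determined by $\{\hat\psi_{ij}\}$ into $\O(E)$ and identifying its cokernel with $\mathcal{T}_X(p)$.

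First I would note that $\hat E$ is canonically isomorphic to $E$ over $X\setminus\{p\}$: the charts $V_1,\dots,V_m$ cover $X\setminus\{p\}$, and there $\hat\psi_{ij}=\psi_{ij}$ by \eqref{equ:trans:Hecke2}, so the identity on frames glues to an isomorphism $\O(\hat E)\rest_{X\setminus\{p\}}\cong\O(E)\rest_{X\setminus\{p\}}$. The only additional datum is the chart $V_0=U_1$, glued to $V_1=U_1\setminus\{p\}$ by the twist $\hat\psi_{01}$ of \eqref{equ:trans:Hecke1}, which in the frame $s_1,s_2$ is the matrix $\mathrm{diag}(1,z)$.

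Next I would define a morphism $\Psi\colon\O(\hat E)\to\O(E)$ by the above identification over $V_1,\dots,V_m$ and, over $V_0$, by the matrix $\mathrm{diag}(1,z)$ in the frame $s_1,s_2$; equivalently, $\Psi$ sends the distinguished frame of $\hat E$ over $V_0$ to $s_1$ and $z\,s_2$. The point to check is that $\Psi$ is well defined on the overlaps $V_0\cap V_j$ for $j\geq 1$, and this is exactly the content of the relation $\hat\psi_{0j}=\psi_{1j}\circ\hat\psi_{01}$ from \eqref{equ:trans:Hecke2}: mapping a $V_j$-section into $\O(E)$ directly or through the chart $V_0$ gives the same element. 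I expect this overlap bookkeeping to be the only genuinely delicate step, although it is forced by the very definition of $\{\hat\psi_{ij}\}$.

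Finally I would read off injectivity and the cokernel. Away from $p$ the morphism $\Psi$ is the isomorphism above; at $p$ it is, on stalks and in the frame $s_1,s_2$, the map $\mathrm{diag}(1,z)$, which is injective since multiplication by $z$ has no kernel. Its cokernel is supported at $p$ and is isomorphic to $\O_{X,p}/z\,\O_{X,p}\cong\C$, a torsion sheaf of length one, that is $\mathcal{T}_X(p)$; moreover the induced surjection $\O(E)\to\mathcal{T}_X(p)$ constrains only the $s_2$-coordinate and is precisely its evaluation at $p$, namely the functional $\alpha=(s_2)_p^\vee$. Hence $\Psi$ fits into the short exact sequence
\[ 0 \rightarrow \O(\hat E) \xrightarrow{\Psi} \O(E) \xrightarrow{\alpha} \mathcal{T}_X(p) \rightarrow 0, \]
which is the sequence defining $\hat E^{(p,\alpha)}$ in Definition \ref{def:Hecke}. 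Since the Hecke transformation is characterised as the kernel subsheaf of $\alpha$, this identifies the bundle $\hat E$ with $\hat E^{(p,\alpha)}$, as claimed.
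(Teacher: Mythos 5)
Your proof is correct and takes essentially the same route as the paper, which simply observes that by definition of the transition function $\hat{\psi}_{01}$ the associated bundle fits into the exact sequence of Definition \ref{def:Hecke}; you have merely spelled out the sheaf morphism, the overlap compatibility, and the identification of the cokernel that the paper leaves implicit.
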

\begin{proof}
By definition of the transition function $\hat{\psi}_{01}$, the associated vector bundle fits into an exact sequence as in Definition \ref{def:Hecke}.
\end{proof}

We generalize this concept by allowing higher order twists. Let $\Div^+(X)$ denote the set of effective divisors on $X$. Let $D \in \Div^+(X)$ and $E$ a holomorphic vector bundle on $X$. Hecke transformations at $D$ will be parametrised by polynomial germs on $D$. Define
\[ H^0(D,E):=\bigoplus\limits_{p \in \supp D} \O(E)_p / \sim,
\] where $[s_1] \sim [s_2]$ if and only if $\ord_p([s_1]-[s_2]) \geq D_p$, for all $p \in \supp D$. Furthermore, denote by $H^0(D,E)^* \subset  H^0(D,E)$ the equivalence classes of germs, such that for all $p \in \supp D$ the evaluation at $p$ is non-zero. 
\begin{defi}
Let $E$ be a holomorphic vector bundle of rank 2. Let $D \in \Div^+(X)$ and $\alpha \in H^0(D,E^\vee)^*$. Then the Hecke transformation $\hat{E}^{(D,\alpha)}$ of $E$ at $D$ in direction $\alpha$ is defined by the exact sequence of locally free sheaves
\[ 0 \rightarrow \O(\hat{E}^{(D,\alpha)}) \rightarrow \O(E) \xrightarrow{\alpha} \mathcal{T}_X(D) \rightarrow 0,
\] where $\mathcal{T}_X(D)$ is the torsion sheaf of length $D_p$ at $p \in \supp D$. 
\end{defi}

\begin{lemm} Let $D \in \Div^+(X)$ and $\alpha \in H^0(D,E^\vee)^*$. Then
$\det(\hat{E}^{(D,\alpha)})= \det(E)(-D)$.
\end{lemm}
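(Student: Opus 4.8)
The plan is to read off the determinant from the defining exact sequence
\[ 0 \rightarrow \O(\hat{E}^{(D,\alpha)}) \rightarrow \O(E) \xrightarrow{\alpha} \mathcal{T}_X(D) \rightarrow 0,\]
by comparing the line bundle $\det(\hat{E}^{(D,\alpha)})$ with $\det(E)$ through the induced inclusion of sheaves. Away from $\supp D$ the torsion sheaf $\mathcal{T}_X(D)$ has zero stalk, so the inclusion $\O(\hat{E}^{(D,\alpha)}) \hookrightarrow \O(E)$ is an isomorphism there and the two determinant line bundles agree. It therefore suffices to analyse the behaviour of $\det$ in a neighbourhood of each $p \in \supp D$.

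First I would fix $p \in \supp D$ and choose an adapted local frame. Since $\alpha \in H^0(D,E^\vee)^*$ has non-zero evaluation at $p$, a representative of $\alpha$ is a local section of $E^\vee$ with $\alpha(p) \neq 0$; starting from any frame one normalises by a unipotent change followed by scaling to obtain a holomorphic frame $s_1,s_2$ of $E$ on a coordinate neighbourhood $(U,z)$ of $p$ with $\alpha(s_1)=0$ and $\alpha(s_2)=1$, i.e.\ $\alpha = s_2^\vee$. With respect to this frame the map $\alpha : \O(E) \to \mathcal{T}_X(D)$ sends $a s_1 + b s_2$ to the class of $b$ in $\O_U/\mathfrak{m}_p^{D_p}$, so its kernel $\O(\hat{E}^{(D,\alpha)})\rest_U$ is freely generated by $s_1$ and $z^{D_p} s_2$. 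This matches the transition-function description \eqref{equ:trans:Hecke1}, now with the local gluing matrix $\mathrm{diag}(1,z^{D_p})$ in place of $\mathrm{diag}(1,z)$.

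Next I would pass to determinants. The frame $s_1 \wedge s_2$ trivialises $\det(E)$ on $U$, while $s_1 \wedge (z^{D_p} s_2) = z^{D_p}(s_1 \wedge s_2)$ trivialises $\det(\hat{E}^{(D,\alpha)})$ on $U$. Hence, in these frames, the inclusion $\det(\hat{E}^{(D,\alpha)}) \hookrightarrow \det(E)$ is multiplication by $z^{D_p}$ and so vanishes precisely to order $D_p$ at $p$. Collecting these local contributions over all $p \in \supp D$ and using that the map is an isomorphism elsewhere, the inclusion of determinant line bundles has divisor exactly $D$, which gives $\det(\hat{E}^{(D,\alpha)}) = \det(E)(-D)$.

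Alternatively one may argue purely by multiplicativity of the determinant over the short exact sequence, $\det(E) = \det(\hat{E}^{(D,\alpha)}) \otimes \det(\mathcal{T}_X(D))$, together with the resolution $0 \to \O_X(-D) \to \O_X \to \mathcal{T}_X(D) \to 0$ which yields $\det(\mathcal{T}_X(D)) = \O(D)$. The only non-formal point, in either version, is the construction of the adapted frame: one must check that $\alpha(p) \neq 0$ genuinely suffices to normalise $\alpha$ to $s_2^\vee$ so that the kernel sheaf is free on $s_1, z^{D_p}s_2$. This is exactly where the hypothesis $\alpha \in H^0(D,E^\vee)^*$ is used, and I expect it to be the main (though mild) obstacle.
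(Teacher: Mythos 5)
Your proposal is correct, and both of your arguments go through. Your \emph{primary} route — trivialising $E$ near each $p\in\supp D$ by a frame $s_1,s_2$ adapted to $\alpha$, identifying the kernel sheaf as free on $s_1, z^{D_p}s_2$, and reading off that the inclusion of determinants vanishes to order $D_p$ — is more explicit than what the paper does: the paper's entire proof is the one-line observation that $\det(\mathcal{T}_X(D))\cong\O(D)$, i.e.\ exactly the ``alternative'' you record in your last paragraph (multiplicativity of $\det$ over the short exact sequence). The local computation buys you something the one-liner does not: it exhibits the gluing matrix $\mathrm{diag}(1,z^{D_p})$, which is precisely the transition-function description the paper needs anyway in Lemma \ref{lemm:trans:Hecke}, so your argument doubles as a proof of that lemma. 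The point you flag as the ``main obstacle'' is indeed harmless: since the evaluation of $\alpha$ at $p$ is non-zero, any representative of the jet is a non-vanishing local section of $E^\vee$, its kernel is a line subbundle near $p$, and the kernel sheaf of $\O(E)\to\mathcal{T}_X(D)$ depends only on the class of $\alpha$ in $H^0(D,E^\vee)$, so the adapted frame exists and the computation is independent of the choice of representative.
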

\begin{proof}
By definition, $\det(\mathcal{T}_X(D)) \cong \O(D)$.
\end{proof}

\noindent For our purposes, it will be more convenient to use the dual version of this concept.
\begin{defi}\label{defi:dual:Hecke}
Let $D \in \Div^+(X)$ and $\alpha \in H^0(D,E)^*$. Then the (dual) Hecke transformations $\hat{E}^{(D,\alpha)}$ of $E$ at $D$ in direction $\alpha$ is defined by
\[ (\hat{E}^{(D,\alpha)})^\vee:=\widehat{E^\vee}^{(D,\alpha)}.
\]
\end{defi}

\begin{lemm}\label{lemm:trans:Hecke} Let $\{ \psi_{ij}\} \in \check{H}^1(\mathcal{U},\mathcal{GL}(2))$ transition functions of $E$ as above. For $p \in X, l \in \N$, let $D:=lp \in \Div^+(X)$. Let further $\alpha \in H^0(D,E)^*$. The Hecke transformation $\hat{E}^{(D,\alpha)}$ is the holomorphic vector bundle associated to the transition functions $\{ \hat{\psi}_{ij}\}\in \check{H}^1(\mathcal{V},\mathcal{GL}(2))$ defined as in (\ref{equ:trans:Hecke1}),(\ref{equ:trans:Hecke2}), where the frame $s_1,s_2$ is chosen, such that 
\[ [(s_2)_p]=\alpha\in H^0(D,E)
\] and 
\begin{align*}\hat{\psi}_{01}: V_0 \cap V_1 \times \C^2 &\rightarrow V_0 \cap V_1 \times \C^2 \\
(z,x_1,x_2) \quad &\mapsto \quad (z,x_1,z^{-l}x_2).
\end{align*} More generally, for $D \in \Div^+(X)$ and $\alpha \in H^0(D,E)^*$, we obtain transition functions of $\hat{E}^{(D,\alpha)}$ by introducing a new transition function like this for all $p \in \supp(D)$.
\end{lemm}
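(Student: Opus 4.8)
The plan is to reduce to the one-point case $D=lp$ and then obtain the statement by dualising the transition-function description of an \emph{ordinary} (non-dual) Hecke transformation. First I would carry out the order-one computation preceding Definition \ref{def:Hecke} at arbitrary order. Work on $U_1$ with the frame $s_1,s_2$ normalised so that $[(s_2)_p]=\alpha$, and let $z$ be a local coordinate centred at $p$. By Definition \ref{defi:dual:Hecke} the object to understand is $\widehat{E^\vee}^{(D,\alpha)}$, defined by the exact sequence $0\to\O(\widehat{E^\vee}^{(D,\alpha)})\to\O(E^\vee)\xrightarrow{\alpha}\mathcal{T}_X(D)\to 0$, where $\alpha\in H^0(D,E)^*$ pairs with sections of $E^\vee$. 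In the dual frame $s_1^\vee,s_2^\vee$ this pairing sends $t=t_1s_1^\vee+t_2s_2^\vee$ to the class of $t_2$, so the kernel condition $\alpha(t)\equiv 0 \pmod{\mathfrak{m}_p^l}$ is exactly $z^l\mid t_2$. Hence $\O(\widehat{E^\vee}^{(D,\alpha)})\rest_{U_1}$ is spanned by $s_1^\vee$ and $z^l s_2^\vee$, and the same argument as in the order-one lemma produces transition functions for $\widehat{E^\vee}^{(D,\alpha)}$ whose only new datum on $V_0\cap V_1$ is the diagonal matrix $\mathrm{diag}(1,z^l)$, all remaining transition functions being those of $E^\vee$.

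Second, I would dualise. Since $\hat{E}^{(D,\alpha)}=(\widehat{E^\vee}^{(D,\alpha)})^\vee$, its transition functions are the inverse transposes of those just computed. On $V_0\cap V_1$ the matrix $\mathrm{diag}(1,z^l)$ becomes $\mathrm{diag}(1,z^{-l})$, which is precisely the claimed $\hat{\psi}_{01}$. For the indices $i,j\geq 1$ the transition functions $\psi_{ij}^{-T}$ of $E^\vee$ revert to $\psi_{ij}$, and using the cocycle relation one checks that the inverse transpose of the composite functions $\psi_{1j}^{-T}\,\mathrm{diag}(1,z^l)$ returns $\psi_{1j}\,\mathrm{diag}(1,z^{-l})$; thus the full inverse-transpose cocycle is exactly that recorded in (\ref{equ:trans:Hecke2}), now expressed through the original $\{\psi_{ij}\}$ of $E$.

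Third, for a general effective divisor $D$ the torsion sheaf $\mathcal{T}_X(D)$ splits as a direct sum over $\supp D$, and the kernel condition defining the Hecke transformation is imposed independently at each point. One therefore introduces a separate pair of open sets and a separate diagonal transition function of the above form at each $p\in\supp D$; since the defining exact sequence is local around each point, the verification reduces to the single-point case already settled.

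I expect the main obstacle to be the careful bookkeeping of the dualisation: one must verify both that the exponent changes sign, yielding $z^{-l}$ rather than $z^{l}$, and that taking inverse transposes of the whole cocycle turns the background transition functions of $E^\vee$ back into those of $E$, so that the final answer is genuinely phrased through $\{\psi_{ij}\}$ and not $\{\psi_{ij}^{-T}\}$. A secondary point to confirm is that the normalisation $[(s_2)_p]=\alpha$ correctly singles out the $s_2^\vee$-direction under the pairing $H^0(D,E)\times\O(E^\vee)\to\mathcal{T}_X(D)$; once this is in place, everything else is the same diagonal computation as in the order-one lemma.
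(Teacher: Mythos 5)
Your argument is correct, and it follows the route the paper itself implicitly intends: the paper states this lemma without proof, relying on the order-one computation (kernel of the evaluation map $\O(E^\vee)\to\mathcal{T}_X(D)$ is spanned by $s_1^\vee, z^l s_2^\vee$) combined with Definition \ref{defi:dual:Hecke}; your inverse-transpose bookkeeping, including the check that $(\psi_{1j}^{-T}\,\mathrm{diag}(1,z^l))^{-T}=\psi_{1j}\,\mathrm{diag}(1,z^{-l})$ reproduces (\ref{equ:trans:Hecke2}), and the reduction of a general effective $D$ to its points via the splitting of $\mathcal{T}_X(D)$, supplies exactly the omitted details.
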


\begin{lemm} \label{lemm:det:Hecke} Let $D \in \Div^+(X)$ and $\alpha \in H^0(D,E)^*$. Then
$\det(\hat{E}^{(D,\alpha)})= \det(E)(D)$.
\end{lemm}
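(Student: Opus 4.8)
The plan is to obtain this as a corollary of the determinant formula already established for non-dual Hecke transformations, namely $\det(\hat F^{(D,\alpha)}) = \det(F)(-D)$ for every rank-$2$ bundle $F$ and every $\alpha \in H^0(D,F^\vee)^*$, by dualising. Recall that that formula follows from multiplicativity of the determinant along the defining short exact sequence $0 \to \O(\hat F^{(D,\alpha)}) \to \O(F) \to \mathcal{T}_X(D) \to 0$ together with $\det(\mathcal{T}_X(D)) \cong \O(D)$.

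First I would apply this to $F = E^\vee$ with the same direction $\alpha \in H^0(D,E)^* = H^0(D,(E^\vee)^\vee)^*$; this $\alpha$ is precisely the datum defining the non-dual transformation $\widehat{E^\vee}^{(D,\alpha)}$, so the formula yields $\det(\widehat{E^\vee}^{(D,\alpha)}) = \det(E^\vee)(-D)$. Next I would unwind Definition \ref{defi:dual:Hecke}, which states $(\hat E^{(D,\alpha)})^\vee = \widehat{E^\vee}^{(D,\alpha)}$. Taking determinants of both sides and using $\det(F^\vee) = (\det F)^{-1}$ gives $(\det \hat E^{(D,\alpha)})^{-1} = (\det E)^{-1} \otimes \O(-D)$, and inverting these line bundles produces $\det \hat E^{(D,\alpha)} = \det E \otimes \O(D) = \det(E)(D)$, as claimed.

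I do not anticipate a genuine obstacle: the statement is essentially one dualisation away from the preceding lemma, and the only points requiring care are the bookkeeping that $\alpha$ is the correct input type for $\widehat{E^\vee}^{(D,\alpha)}$ and that the twist changes sign exactly once under $(-)^\vee$. As an independent consistency check I would recompute the sign directly from the cocycle of Lemma \ref{lemm:trans:Hecke}: for $D = lp$ the new transition function is $\hat\psi_{01} = \mathrm{diag}(1, z^{-l})$, whose determinant cocycle is $z^{-l}$; since in the order-one non-dual case the factor $z$ produces the twist $\O(-p)$, the factor $z^{-l}$ produces $\O(lp) = \O(D)$, in agreement with the formula above.
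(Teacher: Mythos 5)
Your argument is correct: the paper leaves this lemma without an explicit proof precisely because it follows, as you show, by applying the non-dual determinant formula to $F=E^\vee$ and dualising via Definition \ref{defi:dual:Hecke}, and your sign bookkeeping (including the cocycle cross-check against Lemma \ref{lemm:trans:Hecke}) is accurate. This is exactly the intended deduction, so nothing further is needed.
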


\subsection{Parameters of Hecke transformations}
\begin{lemm} Let $D \in \Div^+(X), \alpha \in H^0(D,E)^*$ and $\phi \in H^0(D,\O_X)^*$. Then 
\[ E^{(D,\alpha)} \cong E^{(D,\phi \alpha)}.
\]
\end{lemm}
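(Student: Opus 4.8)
The plan is to show that the Hecke transformations $E^{(D,\alpha)}$ and $E^{(D,\phi\alpha)}$ are isomorphic by producing an explicit isomorphism, using the description of Hecke transformations via transition functions from Lemma \ref{lemm:trans:Hecke}. Since $\phi \in H^0(D,\O_X)^*$ is a germ of a holomorphic function that is non-vanishing at every point of $\supp D$, multiplication by $\phi$ acts on the direction germ $\alpha$ without changing the \enquote{line} it spans at each point modulo the divisor $D$. The key geometric point is that the Hecke transformation only depends on $\alpha$ up to such a non-vanishing scalar germ, because the subsheaf $\O(E^{(D,\alpha)})$ is determined by the kernel of the map $\O(E^\vee) \xrightarrow{\alpha} \mathcal{T}_X(D)$ (after dualising), and rescaling $\alpha$ by a unit $\phi$ does not change this kernel.

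First I would reduce to the case $D = lp$ supported at a single point, since by the last sentence of Lemma \ref{lemm:trans:Hecke} the general Hecke transformation is assembled pointwise over $\supp D$, and an isomorphism constructed locally at each point glues to a global isomorphism. Working at $p$ with the frame $s_1, s_2$ chosen so that $[(s_2)_p] = \alpha \in H^0(D,E)$, I would write down the transition function $\hat{\psi}_{01}$ of $E^{(D,\alpha)}$ as in the lemma, namely $(z,x_1,x_2) \mapsto (z, x_1, z^{-l}x_2)$. For the transformation in direction $\phi\alpha$ one may choose the frame $s_1, \phi s_2$, whose second vector still represents $\alpha$ up to the unit $\phi$; since $\phi$ is a non-vanishing germ near $p$, this is again a valid holomorphic frame and represents the class $\phi\alpha \in H^0(D,E)$.

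Next I would compare the two cocycles. The change of frame from $(s_1,s_2)$ to $(s_1,\phi s_2)$ is given by the diagonal gauge transformation $\mathrm{diag}(1,\phi)$, which is holomorphic and invertible on all of $V_0 = U_1$ because $\phi$ is a non-vanishing germ. On the new chart $V_1 = U_1 \setminus \{p\}$ one uses the identity, and on the remaining charts $V_i$, $i \geq 2$, one also takes the identity. I would then verify that these local gauge transformations intertwine the two cocycles $\{\hat{\psi}_{ij}\}$ and $\{\hat{\psi}'_{ij}\}$ associated to $\alpha$ and $\phi\alpha$ respectively: on the overlap $V_0 \cap V_1$ the relation $\hat{\psi}'_{01} = g_1 \circ \hat{\psi}_{01} \circ g_0^{-1}$ holds because the scaling factor $z^{-l}$ is unchanged and only the unit $\phi$ is transported across, while on the other overlaps the transition functions agree since they are built from the same $\psi_{ij}$. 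This exhibits the desired cocycle equivalence and hence the isomorphism of holomorphic vector bundles.

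The main obstacle I anticipate is bookkeeping rather than conceptual difficulty: one must check that $\mathrm{diag}(1,\phi)$ extends holomorphically and invertibly across the puncture at $p$ on the relevant charts, which relies crucially on $\phi$ being an element of $H^0(D,\O_X)^*$ (non-vanishing at $p$) rather than a germ vanishing there, and one must confirm that the ambiguity in representing $\alpha$ and $\phi\alpha$ modulo $D$ does not obstruct the gluing. An equally clean alternative, which I would mention, is the sheaf-theoretic argument: since $\phi$ is a unit in each local ring $\O_{X,p}/\mathfrak{m}_p^{D_p}$, the maps $\alpha$ and $\phi\alpha$ have the same kernel as sheaf homomorphisms $\O(E^\vee) \to \mathcal{T}_X(D)$ (using Definition \ref{defi:dual:Hecke}), so the defining exact sequences yield literally the same subsheaf, whence the isomorphism is immediate. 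This dualised reformulation sidesteps the transition-function computation entirely and is perhaps the most economical route.
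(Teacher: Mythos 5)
Your argument is correct and is exactly the one the paper intends (the paper states this lemma without proof): rescaling the frame vector $s_2$ by a unit representative of $\phi$ changes the Hecke cocycle of Lemma \ref{lemm:trans:Hecke} only by the coboundary of $\mathrm{diag}(1,\phi^{\pm1})$, which is holomorphic and invertible at $p$ precisely because $\phi$ is non-vanishing there. Your closing sheaf-theoretic remark is in fact the most economical version, since $\phi$ acts as an automorphism of $\mathcal{T}_X(D)$ and so $\alpha$ and $\phi\alpha$ define literally the same kernel subsheaf of $\O(E^\vee)$.
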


\begin{prop}\label{prop:G} $H^0(D,\O_X)^*$ is a complex solvable Lie group with respect to the multiplication of germs of non-vanishing holomorphic functions. Let $D=lp$ with $l \in \N$ and $p \in X$. Then
\[ H^0(D,\O_X)^* \cong \left\{ \begin{pmatrix} x_0 & x_1 & \dots & x_{l-1} \\ 
& \ddots & \ddots & \vdots \\ 
& & x_0 & x_1 \\ & & & x_0
\end{pmatrix} \mid x_0 \in \C^*, x_i \in \C \right\}.
\] For $D \in \Div^+(X)$, $H^0(D,\O_X)^*$ is isomorphic to a Cartesian product of such groups, one for each $p \in \supp(D)$.
\end{prop}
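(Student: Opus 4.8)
The plan is to identify $H^0(D,\O_X)$ with a product of truncated polynomial rings and then to realise its unit group via a regular representation. First consider $D = lp$ with $l \in \N$. Choosing a local holomorphic coordinate $z$ centred at $p$, every germ in the stalk $\O_{X,p}$ is a convergent power series, and the relation $[s_1]\sim[s_2]\iff \ord_p([s_1]-[s_2])\geq l$ identifies two germs exactly when their Taylor expansions agree up to order $l-1$. Hence
\[ H^0(D,\O_X)\;\cong\;\O_{X,p}/\mathfrak{m}_p^{\,l}\;\cong\;\C[z]/(z^l)=:R, \]
and under this identification the multiplication of germs descends to the ring multiplication of the local Artinian ring $R$, with $[s]$ recording the Taylor coefficients $(x_0,\dots,x_{l-1})$. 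A germ is non-vanishing at $p$ precisely when $x_0\neq 0$, so $H^0(D,\O_X)^*$ is the unit group $R^*$ of $R$.

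Next I realise $R^*$ as a matrix group. Sending $z$ to the nilpotent shift $S\in M_l(\C)$ (ones on the first superdiagonal, $S^l=0$) defines a ring homomorphism $R\to M_l(\C)$, $f\mapsto f(S)=x_0 I+x_1 S+\cdots+x_{l-1}S^{l-1}$; it is injective since $I,S,\dots,S^{l-1}$ are linearly independent, and its image is exactly the algebra of upper-triangular Toeplitz matrices displayed in the statement. Because $\det f(S)=x_0^{\,l}$, the matrix $f(S)$ is invertible if and only if $x_0\neq 0$, i.e.\ if and only if $f\in R^*$. Thus $R^*$ is isomorphic to the group of invertible upper-triangular Toeplitz matrices, which is the Zariski-open subset $\{x_0\neq 0\}$ of the $l$-dimensional affine space of all such matrices; as multiplication and inversion are given there by rational maps, this is a complex (indeed algebraic) Lie group, and a closed algebraic subgroup of $\mathsf{GL}(l,\C)$.

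Solvability is then immediate: germ multiplication is commutative, so $R^*$ is abelian — equivalently, all upper-triangular Toeplitz matrices are polynomials in $S$ and hence commute — and abelian groups are solvable. To pin down the structure one may note that the coefficient map $f\mapsto x_0$ is a surjective homomorphism $R^*\to\C^*$ whose kernel is the group $1+\mathfrak{m}$ of units with $x_0=1$, where $\mathfrak{m}=(z)\subset R$; since $\mathfrak{m}$ is nilpotent and we are in characteristic zero, the complex logarithm identifies $(1+\mathfrak{m},\,\cdot)$ with the additive group $(\mathfrak{m},+)\cong\C^{l-1}$, exhibiting $R^*\cong\C^*\times\C^{l-1}$. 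Finally, for a general effective divisor $D=\sum_i l_i p_i$ the stalks at distinct points are independent, so $H^0(D,\O_X)\cong\prod_i\C[z_i]/(z_i^{l_i})$ with componentwise multiplication, whence $H^0(D,\O_X)^*$ is the Cartesian product of the groups just described. The only point demanding care is fixing the basis ordering $1,z,\dots,z^{l-1}$ so that the regular representation yields the upper-triangular (rather than lower-triangular) Toeplitz form of the statement; the remaining verifications are routine.
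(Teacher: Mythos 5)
Your argument is correct: identifying $H^0(lp,\O_X)$ with the truncated polynomial ring $\C[z]/(z^l)$ and realising its unit group through the regular representation $z\mapsto S$ yields exactly the invertible upper-triangular Toeplitz matrices of the statement, and commutativity gives solvability. The paper states this proposition without proof, and your write-up is precisely the routine verification it implicitly relies on, so there is nothing to compare beyond noting that your extra observation $R^*\cong\C^*\times\C^{l-1}$ via the truncated logarithm is consistent with the explicit $(\C^*)^{r_1}\times\C^{r_2}$ fibres appearing later in the paper.
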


\noindent An equivalence class in the quotient $H^0(D,E)/ H^0(D,\O_X)^*$ is referred to as a Hecke parameter.

\subsection{Guiding example}
As a guiding example, we show how the algebraic pushforward of a line bundle along a two-sheeted covering of Riemann surfaces can be recovered using Hecke transformations and the $\sigma$-invariant pushforward defined in Section \ref{sec:pap}.

Let $p: Y \rightarrow X$ be a two-sheeted covering of Riemann surfaces and $\sigma: Y \rightarrow Y$ the holomorphic involution interchanging the sheets. Denote by $R \subset Y$ the ramification divisor.
Let $L \in \Pic(Y)$, then $E=L \oplus \sigma^*L$ has a natural lift $\hat{\sigma}: E \rightarrow E$ induced by pullback along $\sigma$. At a ramification point, we can choose a frame, such that $\hat{\sigma}$ is locally given by 
\[ \begin{pmatrix} 0 & 1 \\ 1 & 0\end{pmatrix}.
\] Hence, $E$ is no $\sigma$-invariant holomorphic vector bundle (cf. Definition \ref{def:sigma-inv}). This can be corrected by applying a Hecke transformation. 

Choose a neighbourhood $U$ of $\Fix(\sigma)$ separating all ramification points and a frame $s \in H^0(U,L)$. Then 
\[ s_1 = s \oplus \sigma^*s, \quad s_2=s \oplus - \sigma^*s
\] is a frame of $E$ diagonalizing $\hat{\sigma}$. Let
\[ \alpha=[s_2]_R^\vee \in H^0(R,E^\vee)^*.
\] For $y \in \supp(R)$ choose a coordinate $z$, such that the involution is given by $\sigma: z \mapsto -z$. We saw above that $\hat{E}^{(R,\alpha)}$ is obtained from $E$ by introducing new transition functions of the form 
\begin{align*} &\hat{\psi}_{01}: V_0 \cap V_1 \times \C^2 \rightarrow V_0 \cap V_1 \times \C^2  \\
  &\qquad \quad (z,x_1,x_2) \quad \mapsto \quad (z,x_1,zx_2)
\end{align*} at every point $y \in \supp(R)=\Fix(\sigma)$. Here $\hat{\sigma}$ induces a lift of $\sigma$ on $\hat{E}^{(R,\alpha)}$, that we keep calling $\hat{\sigma}$. The frame $s_1,zs_2$ extends to a $\hat{\sigma}$-invariant frame $s_1^\sigma,s_2^\sigma$ of $\hat{E}^{(R,\alpha)}$. Hence, $(\hat{E}^{(R,\alpha)},\hat{\sigma})$ is a $\sigma$-invariant holomorphic vector bundle and $p_*(\hat{E}^{(R,\alpha)},\hat{\sigma})$ defines a holomorphic vector bundle of rank 2 on $X$. 

\begin{lemm}\label{lem:2push} $p_*(\hat{E}^{(R,\alpha)},\hat{\sigma})=p_*L$.
\end{lemm}

\begin{proof}
Let $U_1 \subset X$ be open, contractible subset trivializing the covering $p$, i.e.\ $p^{-1}U_1= V^+ \sqcup V^-$. Then $\O(p_*L)$ is free of rank 2 over $\O_{U_1}$. This is apparent from decomposing 
\[ H^0(U_1,p_*L)=H^0(p^{-1}U_1,L)=H^0(V^+,L) \oplus H^0(V^-,L).
\] Hence, we have a natural isomorphism
\begin{align} H^0(U_1,p_*L)\cong H^0(U_1,p_*(\hat{E}^{(D,\alpha)},\hat{\sigma}))= H^0(p^{-1}U_1,L \oplus \sigma^*L)^{\hat{\sigma}}. \label{iso1}
\end{align} Let $U_2 \subset X$ be open, contractible neighbourhood of a branch point $x \in X$. Choose a coordinate on $p^{-1}(U_2)$, such that $\sigma\rest_{p^{-1}(U_2)}: z \mapsto -z$. Let $s \in H^0(p^{-1}U_2,L)$ a local frame and $\phi \in H^0(p^{-1}U_2,L)$. Then there exist $\phi_1,\phi_2 \in \O_{U_2}$, such that
\begin{align} \phi(z)= \phi_1(z^2)s+\phi_2(z^2)zs. \label{phi}
\end{align} Hence, $p_*L\rest_{p^{-1}(U_2)}$ is free over $\O_{U_2}$ of rank 2 with generators $s,zs$. Let $s_1,s_2$ be the $\sigma$-invariant frame of $\hat{E}^{(D,\alpha)}$ defined above. Then we define an isomorphism
\begin{align} H^0(p^{-1}U_2,L) \rightarrow H^0(p^{-1}U_2,\hat{E}^{(D,\alpha)})^{\hat{\sigma}}, \qquad
\phi \mapsto \phi_1s_1 + \phi_2s_2. \label{iso2}
\end{align}
We claim that (\ref{iso1}) and (\ref{iso2}) define an isomorphism of locally free sheaves, i.e.\ they commute with the restriction functions. 

Let $U_1,U_2 \subset X$ as above, such that $U_1 \subset U_2$. Choosing a coordinate $w$ on $U_1$ we can identify the two branches $V^\pm$ with the square roots $\pm \sqrt{w}$. Let $\phi \in H^0(U_2,p_*L)=H^0(p^{-1}U_2,L)$. From (\ref{phi}) we obtain
\begin{align*} \phi\rest_{V^+}&= (\phi_1(z^2)+ \phi_2(z^2)z) s\rest_{V^+}= (\phi_1(w)+ \phi_2(w)\sqrt{w})s\rest_{V^+}, \\
\phi\rest_{V^-}&= 
(\phi_1(z^2)+ \phi_2(z^2)z)s \rest_{V^-} = (\phi_1(w)- \phi_2(w)\sqrt{w})s \rest_{V^-}.
\end{align*}
So the restriction map is given by 
\begin{align*} 
r_{U_2U_1}=\begin{pmatrix} 1 & \sqrt{w} \\ 1 & -\sqrt{w}\end{pmatrix}.
\end{align*}
This agrees with the restriction map of $p_*(\hat{E}^{(D,\alpha)},\hat{\sigma})$ by construction.
\end{proof}

\begin{coro}\label{coro:push:trivial} Consider a two-sheeted covering of Riemann surfaces $p: Y \rightarrow X$. Then
\[ p_*\O_Y=\O_X \oplus J.
\] If $p$ is not unbranched, then $J \in \Pic(X)$ is the unique line bundle, such that $p^*J=\O(-R)$, where $R$ is the ramification divisor of $p$. If $p$ is unbranched, then $J \in \Jac(X)$ is the unique non-trivial line bundle, such that $p^*J=\O_X$.
\end{coro}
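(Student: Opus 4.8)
The plan is to recover this classical decomposition from the $\sigma$-invariant pushforward machinery of Section~\ref{sec:pap} by splitting $p_*\O_Y$ into the eigensheaves of a natural involution. First I would note that $\O_Y = p^*\O_X$ carries the pullback lift, which on sections is $\hat\sigma = \sigma^*$, i.e.\ $g \mapsto g\circ\sigma$. Since $p \circ \sigma = p$, for $g \in \O_X(U)$ and $s \in (p_*\O_Y)(U) = \O_Y(p^{-1}U)$ one has $\hat\sigma((p^*g)\,s) = (p^*g)\,\hat\sigma(s)$, so $\hat\sigma$ is an $\O_X$-linear involution of $p_*\O_Y$. As $p$ is finite flat of degree $2$, $p_*\O_Y$ is locally free of rank $2$, and the idempotents $\tfrac12(\id \pm \hat\sigma)$ split it as the direct sum of its $(\pm1)$-eigensheaves,
\[ p_*\O_Y = (p_*\O_Y)^{+} \oplus (p_*\O_Y)^{-}. \]
The invariant summand consists of the functions constant along the fibres of $p$, i.e.\ the pullbacks from $X$; equivalently $(p_*\O_Y)^+ = p_*(\O_Y, \sigma^*) = \O_X$ by Lemma~\ref{lem:pup}. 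Setting $J := (p_*\O_Y)^- = p_*(\O_Y, -\sigma^*)$, which is a direct summand of a rank-$2$ locally free sheaf and hence a line bundle, yields the asserted splitting $p_*\O_Y = \O_X \oplus J$.

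Next I would identify $J$. The unbranched case is precisely Example~\ref{exam:I} with $L = \O_X$, where $J = I$ is the unique non-trivial line bundle with $p^*I \cong \O_Y$ and $I^2 = \O_X$. For the general (branched) case I would compute $p^*J$ directly through the counit $\varepsilon: p^*p_*\O_Y \to \O_Y$ of the adjunction. Composing the pullback of the inclusion $J \hookrightarrow p_*\O_Y$ with $\varepsilon$ produces a morphism of line bundles $\mu : p^*J \to \O_Y$, concretely $g \otimes j \mapsto g\cdot j$. I would then read off $\mu$ in local models. Over an open set $U$ trivialising the covering, $J|_U$ is generated by the anti-invariant function taking values $\pm1$ on the two sheets, which is a unit, so $\mu$ is an isomorphism there; near a ramification point, in a coordinate $z$ with $\sigma: z \mapsto -z$ and $w = z^2$, the anti-invariant functions are the odd power series $z\cdot\O_X(U)$, so $J|_U$ is generated by $z$ and $\mu$ has image the ideal $(z)$, vanishing to first order exactly along $R$. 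Hence $\mu$ is injective with image $\O_Y(-R)$, giving $p^*J \cong \O(-R)$, with $R = \emptyset$ in the unbranched case.

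Finally, for the uniqueness assertions I would determine $\ker(p^*: \Pic(X) \to \Pic(Y))$. If $p^*\tau \cong \O_Y$, the projection formula gives $\tau \oplus (\tau\otimes J) = \tau\otimes p_*\O_Y = p_* p^* \tau = p_*\O_Y = \O_X \oplus J$; comparing these two decompositions of a rank-$2$ bundle into line bundles forces $\{\tau, \tau J\} = \{\O_X, J\}$, so either $\tau = \O_X$, or $\tau = J$ with $J^2 = \O_X$. The multiplication map $J\otimes J \to (p_*\O_Y)^+ = \O_X$ has image vanishing exactly at the branch points, so $J^2 \cong \O_X$ precisely in the unbranched case; thus $\ker p^* = 0$ when $p$ is branched and $\ker p^* = \{\O_X, J\}$ when $p$ is unbranched, which gives the two uniqueness statements.

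I expect the main obstacle to be pinning down $p^*J$ on the nose rather than only up to $2$-torsion: a degree count alone yields merely $(p^*J)^2 \cong \O(-2R)$, and it is the explicit counit map $\mu$ together with the local generator $z$ at the ramification points that removes the torsion ambiguity. The comparison of direct-sum decompositions into line bundles used in the uniqueness step requires a short separate justification (a nonzero map between line bundles of equal degree on a curve is an isomorphism), but this is routine.
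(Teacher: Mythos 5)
Your argument is correct, but it takes a genuinely different route from the paper's. The paper proves the corollary as an application of its guiding example: with $L=\O_Y$ one has $E_L=\O_Y\oplus\O_Y$ with the swap lift, the diagonalizing frame gives a global splitting into the $(\pm1)$-eigen-subbundles, a Hecke transformation at the ramification divisor $R$ turns the $(-1)$-summand into $\O_Y(-R)=p^*J$, and Lemma \ref{lem:2push} then pushes the splitting down to $X$; the uniqueness of $J$ is simply quoted as injectivity of $p^*$ along branched coverings. You work entirely downstairs instead: you decompose $p_*\O_Y$ into eigensheaves of the deck involution acting $\O_X$-linearly, identify the invariant summand with $\O_X$ via Lemma \ref{lem:pup}, and pin down $p^*J\cong\O(-R)$ through the adjunction counit and the local generator $z$ of the odd functions at a ramification point. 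These are two incarnations of the same eigendecomposition --- your local generator $z$ plays exactly the role of the paper's Hecke twist, and in both cases the content is that the anti-invariant eigensheaf pulls back to the ideal sheaf of $R$ --- but your version is self-contained and avoids the Hecke/$\sigma$-invariant-pushforward formalism, whereas the paper's version is deliberately phrased to exercise that formalism, which is then reused throughout Sections \ref{sec:strat}--\ref{sec:global_even}. What your approach buys in addition is an actual proof of the uniqueness assertions: the projection-formula computation of $\ker(p^*)$ replaces the paper's unproved citation, and the two caveats you flag (a nonzero map of line bundles of equal degree on a curve is an isomorphism, and there is no nonzero map $\tau\to J$ when $\deg J=-\tfrac12\deg R<0$) do suffice to compare the two rank-two decompositions. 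The only point left implicit on both sides is the non-triviality of $J$ in the unbranched case, which follows from $h^0(X,p_*\O_Y)=h^0(Y,\O_Y)=1$ for connected $Y$ (or from Example \ref{exam:I}, as you note).
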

\begin{proof}
Let $L=\O_Y$ in the construction above. So, $E=\O_Y \oplus \O_Y$ and 
\[ \hat{\sigma}=\begin{pmatrix}0 & 1 \\ 1 & 0 \end{pmatrix}
\] The diagonalizing frame for $\hat{\sigma}$ defines a global splitting
\[E= \O_Y \begin{pmatrix} 1 \\ 1 \end{pmatrix} \oplus \O_Y\begin{pmatrix} 1 \\ -1 \end{pmatrix} \quad \text{with} \quad \hat{\sigma}= \begin{pmatrix} 1 & 0 \\ 0 & -1 \end{pmatrix}.
\] If $p$ is a branched covering, we apply a Hecke transformation and obtain  
\[ \hat{E}^{(R,\alpha)} = \O_Y \oplus \O_Y(-R)=p^*\O_X \oplus p^*J.
\] 
The uniqueness of $J$ follows from the injectivity of the pullback along bra-\linebreak nched coverings. If $p$ is unbranched, $E$ is a $\sigma$-invariant vector bundle with the lifted $\sigma$-action on the second factor being $-\id_{\O_Y}$. Hence, the second factor descends to the line bundle $J$. In both cases, Lemma \ref{lem:2push} gives the result. 
\end{proof}

\section{Moduli of \texorpdfstring{$\sigma$}{sigma}-Invariant Higgs Bundles}\label{sec:strat}
After identifying the Hitchin fibres with moduli spaces of $\sigma$-invariant Higgs bundles on the normalised spectral curve in Section \ref{sec:pap}, we will now prove the stratification result for these moduli spaces in a more general setting. Thereafter, we will identify these strata as fibre bundles over Prym varieties. We return to the case of singular $\SL(2,\C)$-Hitchin fibres in the following section.  

\subsection{The stratification}

Let $Y$ be a Riemann surface of genus $g(Y) \geq 2$ with an involutive biholomorphism $\sigma: Y \rightarrow Y$. Denote by $Y/\sigma$ the unique Riemann surface, such that there exists a branched two-sheeted covering of Riemann surfaces $p: Y \rightarrow Y/\sigma$ factoring through $\sigma$. In particular, the genus of $Y/\sigma$ is given by
\[ g(Y/\sigma)= \tfrac{1}{2}\left(g(Y)+1-\tfrac{1}{2}\#\Fix(\sigma)\right).
\] Depending on $g(Y)$, this restricts the number of fix points of the involution $\sigma$. Let $M$ be a line bundle on $Y/\sigma$ with a non-zero section $\lambda: Y \rightarrow p^*M$, such that $\hat{\sigma}\lambda=-\lambda$. Here $p^*M$ is regarded as a $\sigma$-invariant holomorphic line bundle with the lift $\hat{\sigma}$ induced by pullback (cf. Proposition \ref{prop:pull}). In particular, $\lambda$ has a zero of odd order at every branch point. Let $\Lambda=\div(\lambda)$. In this section, we parametrize
\[ \M_\lambda^\sigma = \M^\sigma(Y,p^*M,\lambda):= \M^\sigma(Y,p^*M) \cap \H_{p^*M}^{-1}(-\lambda^2),
\] the polystable $\sigma$-invariant $p^*M$-twisted $\SL(2,\C)$-Higgs bundles on $Y$ with characteristic equation
\[ (T+\lambda)(T-\lambda).
\] By assumption, $-\lambda^2$ is a $\sigma$-invariant section of $p^*M^2$ and hence descends to $a \in H^0(Y/\sigma,M^2)$. Then $\M_\lambda^\sigma$ is identified with the image of $p^*: \H_M^{-1}(a) \rightarrow \M(Y,p^*M)$ by Proposition \ref{prop:pull} and is therefore an analytic subspace. 

\begin{lemm}\label{lem:loc:form} Let $(E,\Phi) \in \M_\lambda^\sigma$. Let $y \in Y$ and $m \in H^0(U, p^*M)$ be a local frame in an open neighbourhood $U$ of $y$. There exists a local frame of $E\rest_U$, such that the Higgs field is given by
\[ \Phi=z^{D_y} \begin{pmatrix} 0& 1 \\ z^{2\Lambda_y-2D_y} & 0 \end{pmatrix} \otimes m.
\] 
\end{lemm}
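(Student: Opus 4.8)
The plan is to reduce the statement to a purely local normal-form problem for a traceless $2\times 2$ matrix of holomorphic functions and to solve it by a cyclic-vector argument. First I would fix a local coordinate $z$ on $Y$ centred at $y$ and exploit the freedom in the choice of frame of $p^*M$: since $\lambda$ has divisor $\Lambda$, the section $\lambda/z^{\Lambda_y}$ is a nonvanishing local section of $p^*M$ near $y$, so I may as well take $m=\lambda/z^{\Lambda_y}$, i.e.\ arrange $\lambda = z^{\Lambda_y} m$ on $U$. Writing $\Phi = A\otimes m$ in an arbitrary holomorphic frame of $E\rest_U$, the membership $(E,\Phi)\in\M_\lambda^\sigma$ gives that $A$ is traceless, and the characteristic equation $\det\Phi = -\lambda^2$ gives $\det A = -z^{2\Lambda_y}$ \emph{exactly} (no unit). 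It is worth noting that only $\tr\Phi = 0$ and $\det\Phi=-\lambda^2$ enter locally; the $\sigma$-invariance and polystability hypotheses play no role in this computation.

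Next I would define $D_y := \ord_y\Phi$, the minimal vanishing order at $y$ of the entries of $A$; this is manifestly independent of the chosen frame and is the integer appearing in the statement. Factoring $A = z^{D_y} B$ with $B$ holomorphic and $B(0)\neq 0$, the identity $\det A = z^{2D_y}\det B = -z^{2\Lambda_y}$ forces $\det B = -z^{2\Lambda_y - 2D_y}$, and holomorphy of $\det B$ yields $0\le D_y\le \Lambda_y$. The elementary but essential observation is that $B(0)$ is a \emph{nonzero traceless} $2\times 2$ matrix, hence never a scalar multiple of the identity; therefore it admits a cyclic vector $v_0$, that is, $v_0$ and $B(0)v_0$ are linearly independent.

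I would then take the constant vector $v$ with value $v_0$: the pair $\{v, Bv\}$ is a holomorphic frame of $E\rest_U$ near $y$, since $\det[\,v \mid Bv\,]$ is a unit by construction. By Cayley--Hamilton, $B^2 = -(\det B)\,\id = z^{2\Lambda_y - 2D_y}\,\id$, so in the frame $\{v, Bv\}$ the matrix $B$ becomes the companion matrix $\left(\begin{smallmatrix} 0 & z^{2\Lambda_y - 2D_y} \\ 1 & 0 \end{smallmatrix}\right)$. Multiplying by $z^{D_y}$ and swapping the two frame vectors then produces exactly $\Phi = z^{D_y}\left(\begin{smallmatrix} 0 & 1 \\ z^{2\Lambda_y - 2D_y} & 0 \end{smallmatrix}\right)\otimes m$. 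The main obstacle is to organise the two reductions so that the unit in $\det\Phi$ is eliminated \emph{before} the matrix manipulation, which is precisely what the choice $m = \lambda/z^{\Lambda_y}$ accomplishes, and to verify that the cyclic-vector frame change is holomorphic and invertible on a whole neighbourhood rather than merely at $y$; both points are settled by the non-scalar nature of $B(0)$ together with the unit condition on $\det[\,v\mid Bv\,]$.
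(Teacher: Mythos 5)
Your proof is correct and follows essentially the same route as the paper: factor out $z^{D_y}$, observe that the remaining traceless matrix is non-scalar at $y$, and pass to the frame generated by a cyclic vector together with Cayley--Hamilton — the paper does exactly this via a constant Jordan-form normalisation followed by the explicit gauge $g=\tfrac{1}{\sqrt{b}}\bigl(\begin{smallmatrix} b & 0 \\ -a & 1 \end{smallmatrix}\bigr)$, which is the cyclic-vector frame written out in coordinates. The only cosmetic difference is that you absorb the unit in $\det\Phi$ by replacing the given frame $m$ with $\lambda/z^{\Lambda_y}$, whereas the paper keeps $m$ and instead chooses the coordinate $z$ so that $\det\Phi=-z^{2\Lambda_y}m^2$; both normalisations are harmless here.
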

\begin{proof}
Choose a coordinate disc $(U,z)$ centred at $y$, such that the determinant $\det(\Phi)=-z^{2\Lambda_y} m^2$. There exists a non vanishing section $\phi \in H^0(U,\End(E))$ such that
\[ \Phi(z)=z^{D_y}\phi(z)m. \]
There are two possible Jordan forms of $\phi$ at $y$. If $D_y < \Lambda_y$ there is one Jordan block of size 2, if $D_y = \Lambda_y$, $\phi$ is diagonalizable with eigenvalues $\pm 1$. Thus, after a constant gauge transformation we can assume
\[ \phi(z)=\begin{pmatrix} a(z) & b(z) \\ c(z) & -a(z) \end{pmatrix} \quad \text{with} \quad \phi(0)=\begin{pmatrix}
0 & 1 \\ \ast & 0 \end{pmatrix} \,.
\] Hence,
\[ g= \frac{1}{\sqrt{b(z)}}\begin{pmatrix} b(z) & 0 \\ -a(z) & 1 \end{pmatrix}\in \SL(E\rest_{U}), 
\] is a well-defined gauge transformation, such that 
\[ g^{-1} \phi g = \begin{pmatrix} 0 & 1 \\ -\det(\phi) & 0\end{pmatrix}=\begin{pmatrix} 0 & 1 \\ z^{2\Lambda_y-2D_y} & 0\end{pmatrix}.
\] 
\end{proof}

For $(E,\Phi) \in \M_\lambda^\sigma$, we denote by $\div(\Phi)$ the vanishing divisor of $\Phi$. The properties of such divisors are summarized in the following definition. 

\begin{defi} 
An effective divisor $D \in \Div(Y)$ is called $\sigma$-Higgs divisor on $(Y,\sigma,\lambda)$, if $0 \leq D \leq \Lambda $, $\sigma^*D=D$ and for all $y \in \mathsf{Fix}(\sigma) \subset \mathsf{supp}\Lambda$
\[ D_y \equiv 0 \mod 2.
\]
\end{defi}

\begin{theo} \label{theo:stratifi}
There exists a stratification 
\[ \M^\sigma(Y,p^*M,\lambda)= \bigsqcup\limits_D \mathcal{S}_D
\] by locally closed analytic subsets 
\[ \mathcal{S}_D=\{ (E,\Phi) \in \M^\sigma(Y,p^*M,\lambda) \mid \div(\Phi)=D\}
\] indexed by $\sigma$-Higgs divisors $D \in \Div(Y)$. 
\end{theo}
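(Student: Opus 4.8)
The plan is to verify that the assignment $(E,\Phi)\mapsto\div(\Phi)$ sends every point of $\M^\sigma(Y,p^*M,\lambda)$ to a $\sigma$-Higgs divisor, and then to observe that the level sets $\mathcal{S}_D$ of this assignment are finitely many locally closed analytic subsets. Disjointness and the covering property are immediate from the definition of $\mathcal{S}_D$, so the content lies in (a) checking that $\div(\Phi)$ always satisfies the three conditions defining a $\sigma$-Higgs divisor, and (b) the local-closedness of each stratum. Note that $\div(\Phi)$ is intrinsic to $\Phi$, so it is independent of the chosen lift $\hat{\sigma}$.

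First I would fix $(E,\Phi)\in\M_\lambda^\sigma$ with lift $\hat{\sigma}=\hat{\sigma}_E$ and treat the three conditions in turn. The bound $0\leq\div(\Phi)\leq\Lambda$ is local: writing $\Phi=z^{D_y}\phi$ with $\phi(y)\neq0$ near $y$, the characteristic equation forces $\det(\Phi)=-\lambda^2$, so $\det(\phi)$ vanishes to order $2\Lambda_y-2D_y\geq0$; this is precisely the constraint under which the normal form of Lemma \ref{lem:loc:form} exists, giving $0\leq D_y\leq\Lambda_y$. For $\sigma$-invariance, condition iii) of Definition \ref{def:sigma-inv} expresses $\Phi$ near $\sigma(y)$ as a conjugate of $\Phi$ near $y$ by the fibre isomorphisms $\hat{\sigma}_E,\hat{\sigma}_M$; since these are isomorphisms the vanishing orders agree, so $(\div\Phi)_{\sigma(y)}=(\div\Phi)_y$, i.e.\ $\sigma^*\div\Phi=\div\Phi$.

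The parity condition at the fixed points is the crux and I expect it to be the main obstacle. Fix $y\in\Fix(\sigma)\cap\supp\Lambda$, where $\Lambda_y$ is odd. Choose a $\sigma$-invariant frame $m$ of $p^*M$ near $y$ and the frame of Lemma \ref{lem:loc:form}, in which $\Phi$ is represented by the full matrix $N(z):=z^{D_y}\left(\begin{smallmatrix}0&1\\z^{2\Lambda_y-2D_y}&0\end{smallmatrix}\right)$ tensored with $m$. Represent $\hat{\sigma}_E$ (which covers $z\mapsto-z$) by a matrix $S(z)$; involutivity gives $S(-z)S(z)=\id$ and condition ii) of Definition \ref{def:sigma-inv} gives $S(0)=\id$ (a fibrewise statement, hence frame-independent and preserved under the gauge transformation of Lemma \ref{lem:loc:form}). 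A direct computation shows $N(-z)=(-1)^{D_y}N(z)$, while condition iii), read off in this frame with $m$ $\sigma$-invariant, becomes $N(-z)S(z)=S(z)N(z)$. Combining these and comparing the lowest-order term in $z$ while using $S(0)=\id$ yields $(-1)^{D_y}=1$ in the case $D_y<\Lambda_y$; the remaining case $D_y=\Lambda_y$ would force $(-1)^{\Lambda_y}=1$, contradicting that $\Lambda_y$ is odd. Hence $D_y$ is even, as required. The delicate point is to arrange the two frame choices simultaneously and to track the leading term of the intertwining relation correctly.

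Finally, to see that $\mathcal{S}_D$ is locally closed analytic, I would note that $\{\div\Phi\geq D\}$ is cut out of the analytic space $\M_\lambda^\sigma$ by the vanishing of finitely many jets of $\Phi$ along $\supp D$, hence is a closed analytic subset. Then $\mathcal{S}_D=\{\div\Phi\geq D\}\setminus\bigcup_{D'}\{\div\Phi\geq D'\}$, where $D'$ ranges over the finitely many $\sigma$-Higgs divisors obtained from $D$ by raising the order along one $\sigma$-orbit, is locally closed. Since every $\sigma$-Higgs divisor satisfies $0\leq D\leq\Lambda$ there are only finitely many of them, so the decomposition is a finite stratification, completing the proof.
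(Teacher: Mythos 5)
Your verification that $\div(\Phi)$ is a $\sigma$-Higgs divisor is correct and in fact supplies detail that the paper dispenses with as ``easy to see''. The bound $0\leq\div(\Phi)\leq\Lambda$ from $\det(\Phi)=-\lambda^2$, the $\sigma$-invariance from condition iii) of Definition \ref{def:sigma-inv}, and above all the parity argument at a fixed point --- combining $N(-z)=(-1)^{D_y}N(z)$ with the intertwining relation $S(z)N(z)=N(-z)S(z)$ and $S(0)=\id$, and comparing coefficients of $z^{D_y}$ --- are all sound; the same comparison correctly excludes $D_y=\Lambda_y$ at a fixed point because $\Lambda_y$ is odd there. Your bookkeeping with the finitely many divisors and the presentation of $\S_D$ as a closed set minus finitely many smaller closed sets also matches the paper's use of lower semicontinuity.

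The gap is in the assertion that $\{\div(\Phi)\geq D\}$ is ``cut out of $\M_\lambda^\sigma$ by the vanishing of finitely many jets of $\Phi$ along $\supp D$''. On the coarse moduli space the jets of $\Phi$ at points of $\supp D$ are not holomorphic functions of the moduli point: to make sense of them you need a local universal family (or a slice/Kuranishi argument plus descent), which is available only locally at stable points and is genuinely problematic at the strictly polystable locus, which is present in $\M_\lambda^\sigma$ when $\Fix(\sigma)=\varnothing$ (cf.\ Proposition \ref{lem:lift} and Example \ref{exam:branchpoint}). So, as written, the analyticity of the closed sets $\bigcup_{D'\geq D}\S_{D'}$ is not established. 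The paper circumvents this with a global device you are missing: letting $s_D$ be the canonical $\sigma$-invariant section of $\O(D)$, the map $(E,\Phi)\mapsto(E,s_D\Phi)$ is a holomorphic bijection from $\M^\sigma(Y,p^*M(-D),\lambda/s_D)$ --- itself an analytic subspace, being the pullback of a Hitchin fibre for the twist $M(-\tfrac12\Nm D)$ --- onto $\bigcup_{D'\geq D}\S_{D'}$, and the image of a complex space under an injective holomorphic map into a complex space is an analytic subset (\cite{GPR94} I.10.13). To repair your route you would have to either carry out the slice/descent argument, including at polystable points, or adopt the $s_D$-twist; the latter is the cleaner path and is the actual content of the paper's proof beyond the semicontinuity you already have.
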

\begin{proof}
First, it is easy to see that for $(E,\Phi) \in \M^\sigma(Y,p^*M,\lambda)$ the vanishing divisor $\div(\Phi)$ is a $\sigma$-Higgs divisor. These divisors form a lower semi-continuous invariant on $\M_\lambda^\sigma$ (cf. Lemma \ref{lem:loc:form}). In particular, for fixed $\sigma$-Higgs divisor $D$
\[ \bigcup\limits_{D' \geq D} \mathcal{S}_{D'} \quad \mathrm{is\ closed\quad and} \quad \bigcup\limits_{D' \leq D} \mathcal{S}_{D'} \quad \mathrm{is\ open.}
\] Hence, $\mathcal{S}_D$ is locally closed. To see that the closed subset is an analytic subset, we need to identify it as the Hitchin  fibre of $\sigma$-invariant $\SL(2,\C)$-Higgs bundles with a different twist. Fix a $\sigma$-Higgs divisor $D$ and let $s_D$ be the canonical section of $\O(D)$, which is $\sigma$-invariant. Then $(p^*M)(-D)=p^*(M(-\tfrac12 \mathsf{Nm} D))$ and hence it is a pullback. Moreover $\tfrac{\lambda}{s_D} \in H^0(p^*M(-D))$ satisfies $\sigma^*(\lambda/s_D)=-\lambda/s_D$. So  
\[ \M^\sigma(Y,p^*M(-D),\lambda/s_D)
\] is the subspace of $\sigma$-invariant $p^*M(-D)$-twisted $\SL(2,\C)$-Higgs bundles on $Y$ with determinant $-\lambda^2/s_D^2$. This is the pullback of a Hitchin fibre in the moduli space of $M(-\tfrac12 \mathsf{Nm} D)$-twisted $\SL(2,\C)$-Higgs bundles on $Y/\sigma$ and hence it is an analytic subspace of $\M(Y,p^*M(-D))$. There is a holomorphic bijective map
\begin{align*} \M^\sigma(Y,p^*M(-D),\lambda/s_D) &\rightarrow \bigcup\limits_{D' \geq D} \mathcal{S}_{D'} \subset \M^\sigma(Y,p^*M,\lambda), \\
(E,\Phi) \qquad &\mapsto \quad (E,s_D \Phi).
\end{align*}
Therefore, its image is an analytic subspace of $\M^\sigma(Y,p^*M,\lambda)$ (see \cite{GPR94} I.10.13).
\end{proof}

\subsection{Prym varieties}

\begin{defi} Let $D$ be an effective divisor on $Y$, such that $\sigma^*D=D$. Then the $D$-twisted Prym variety of $(Y,\sigma)$ is defined by
\[ \Pr_D=\{ L \in \mathsf{Pic}(Y) \mid L \otimes \sigma^*L = \O_Y(D)^{-1} \}.
\]
\end{defi}

\begin{theo} Consider $\M_\lambda^\sigma$ as above. For every stratum $\mathcal{S}_D$, there exists a holomorphic map 
\[ \mathsf{Eig}_D: \mathcal{S}_D \rightarrow \Pr_{\Lambda-D}, \qquad (E,\Phi) \mapsto \Ker(\Phi-\lambda\id_E).
\]
\end{theo}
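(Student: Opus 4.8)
The plan is to build $\Eig_D$ in two stages: first verify that $L=\Ker(\Phi-\lambda\,\id_E)$ is an honest line subbundle of $E$ lying in $\Pr_{\Lambda-D}$, and then check that the assignment $(E,\Phi)\mapsto L$ is holomorphic. For well-definedness I would argue locally using the normal form of Lemma \ref{lem:loc:form}. Near $y$, in the frame given there and up to the frame $m$ of $p^*M$ and units, the twisted endomorphism $\Phi-\lambda\,\id_E$ is represented by the matrix
\[
\begin{pmatrix} -z^{\Lambda_y} & z^{D_y} \\ z^{2\Lambda_y-D_y} & -z^{\Lambda_y} \end{pmatrix},
\]
which has identically vanishing determinant. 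Solving $(\Phi-\lambda\,\id_E)v=0$ forces $v_2=z^{\Lambda_y-D_y}v_1$ (here $D\le\Lambda$ is used), so the kernel sheaf is freely generated by the nowhere-vanishing germ $(1,z^{\Lambda_y-D_y})$; in particular it is saturated and $L$ is a line subbundle. The identical computation for the other eigenvalue shows $L':=\Ker(\Phi+\lambda\,\id_E)$ is locally generated by $(1,-z^{\Lambda_y-D_y})$.

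The second ingredient is $\sigma$-equivariance. For a local section $v\in L$ one has $\Phi v=v\otimes\lambda$ in $E\otimes p^*M$; applying property iii) of Definition \ref{def:sigma-inv} together with $\hat{\sigma}_M\lambda=-\lambda$ gives $\Phi(\hat{\sigma}_E v)=-(\hat{\sigma}_E v)\otimes\lambda$, so $\hat{\sigma}_E$ maps $L$ into $L'$ while covering $\sigma$. As $\hat{\sigma}_E$ is a fibrewise isomorphism this yields $\sigma^*L\cong L'$. (At a fixed point this is consistent precisely because $\Lambda_y$ is odd while $D_y$ is even, so $\Lambda_y-D_y$ is odd and $\sigma^*(1,z^{\Lambda_y-D_y})=(1,-z^{\Lambda_y-D_y})$.) I would then consider the natural morphism $\iota\colon L\oplus\sigma^*L\to E$ built from the two inclusions. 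Since $\det E=\O_Y$, its determinant is a section of $(L\otimes\sigma^*L)^{-1}$; it is not identically zero because $\pm\lambda$ are distinct away from $\supp\Lambda$, and the local model above shows it vanishes to order exactly $\Lambda_y-D_y$ at each $y$. Hence $\div(\det\iota)=\Lambda-D$, giving $L\otimes\sigma^*L\cong\O_Y(\Lambda-D)^{-1}$, which is exactly the defining condition of $\Pr_{\Lambda-D}$.

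For holomorphicity the plan is to realise $\Eig_D$ as the classifying map of a flat family of line bundles. Over a local parameter space carrying a universal pair $(\mathcal{E},\Phi)$, with $\lambda$ pulled back from $Y$, the morphism $\Phi-\lambda\,\id_{\mathcal{E}}\colon\mathcal{E}\to\mathcal{E}\otimes p^*M$ has fibrewise kernel of constant rank and degree along $\mathcal{S}_D$ — this is what the stratum condition $\div(\Phi)=D$ guarantees through the local form. Its relative saturated kernel is then a line bundle flat over the base, so the induced map to $\Pic(Y)$ is holomorphic, and by the first part it factors through $\Pr_{\Lambda-D}$; being canonical, it descends to $\mathcal{S}_D$. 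The hard part will be this last step: one has to ensure the saturated kernel is genuinely flat (i.e.\ that saturation does not jump) uniformly across the whole stratum, and that the construction is independent of the chosen universal family, so that it defines a morphism out of the moduli space rather than just a set-theoretic map.
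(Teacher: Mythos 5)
Your argument is correct and follows essentially the same route as the paper: the sheaf-theoretic kernel is identified locally via the normal form of Lemma \ref{lem:loc:form}, $\hat{\sigma}_E$ exchanges the two eigenline bundles so that $\Ker(\Phi+\lambda\,\id_E)\cong\sigma^*L$, and the twisting condition $L\otimes\sigma^*L\cong\O_Y(\Lambda-D)^{-1}$ comes from the inclusion $\O(L)\oplus\O(\sigma^*L)\to\O(E)$, whose cokernel the paper records as a torsion sheaf with $\det(\mathcal{T})=\O(\Lambda-D)$ --- the same computation as your $\div(\det\iota)=\Lambda-D$. Your closing remarks on holomorphicity via a universal family go slightly beyond the paper, which does not spell this step out.
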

\begin{proof}
Let $(E,\Phi) \in \mathcal{S}_D$ and let $\O(L)=\Ker(\Phi-\lambda\id_E)$ the sheaf-theoretical kernel. Then $\O(\sigma^*L)= \linebreak \Ker(\Phi+\lambda\id_E)$. The inclusions $\O(L) \rightarrow \O(E),  \O(\sigma^*L) \rightarrow \O(E)$ define an exact sequence of coherent analytic sheaves
\[ 0 \rightarrow \O(L) \oplus \O(\sigma^*L) \rightarrow \O(E) \rightarrow \mathcal{T} \rightarrow 0,
\] where $\mathcal{T}$ is a torsion sheaf supported at $Z(\lambda)$. This torsion sheaf can be constructed explicitly using the local description of $\Phi$ in a neighbourhood of $p \in Z(\lambda)$ given in Lemma \ref{lem:loc:form}. In particular,
\[ \O_Y= \det(E)=L \otimes \sigma^*L \otimes \det(\mathcal{T})=L \otimes \sigma^*L \otimes \O(\Lambda-D).
\]
\end{proof}

\begin{prop}\label{prop:isom:prym}
Let $D$ be a $\sigma$-Higgs divisor on $(Y,\sigma,\lambda)$. The twisted Prym variety $\Pr_{\Lambda-D}$ is an abelian torsor of dimension 
\[ g(Y)-g(Y/\sigma) = \tfrac{1}{2}\left(g(Y)-1+\tfrac{1}{2} \# \Fix(\sigma)\right).
\] over
\begin{itemize}
\item[i)] $\Prym(\tilde{\Sigma}):=\Nm^{-1}(\O_{Y/\sigma})$, if $\Fix(\sigma) \neq \varnothing$,
\item[ii)]$\Pr_0=\Nm^{-1}(\O_{Y/\sigma}) \sqcup \Nm^{-1}(I)$, if $\Fix(\sigma) = \varnothing$, where $I$ is the unique non-trivial line bundle $I \in \ker(p^*)$.
\end{itemize} 
\end{prop}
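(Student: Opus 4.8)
The plan is to realise $\Pr_{\Lambda-D}$ as a torsor under the group
\[ \Pr_0 = \{ M \in \Pic(Y) \mid M \otimes \sigma^*M = \O_Y \} \]
acting by tensor product, to settle non-emptiness via surjectivity of the norm map, and finally to identify $\Pr_0$ with the stated Prym groups and read off the dimension. The starting observation I would record is the fundamental relation $p^*\Nm(L) = L \otimes \sigma^*L$ for the norm map $\Nm \colon \Pic(Y) \to \Pic(Y/\sigma)$. This simultaneously shows that $L \otimes \sigma^*L$ always lies in the image of $p^*$ and that $\Pr_0$ is a subgroup of $\Jac(Y)$ (forcing $\deg = 0$).

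Granting non-emptiness, the torsor structure is routine. If $L, L' \in \Pr_{\Lambda-D}$, then $(L'\otimes L^{-1}) \otimes \sigma^*(L'\otimes L^{-1}) = \O(D-\Lambda) \otimes \O(D-\Lambda)^{-1} = \O_Y$, so $L' \otimes L^{-1} \in \Pr_0$; conversely tensoring an element of $\Pr_{\Lambda-D}$ by one of $\Pr_0$ preserves the defining equation $L \otimes \sigma^*L = \O(D-\Lambda)$. Hence the $\Pr_0$-action is free and transitive as soon as $\Pr_{\Lambda-D} \neq \varnothing$.

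The crux is therefore non-emptiness, and since $L \otimes \sigma^*L$ is constrained to lie in the image of $p^*$, I must first check that $\O(D-\Lambda)$ is a pullback. Here I would combine two facts: the section $\lambda \in H^0(Y,p^*M)$ has divisor $\Lambda$, so $\O(\Lambda) \cong p^*M$; and the identity $(p^*M)(-D) = p^*\bigl(M(-\tfrac12\Nm D)\bigr)$ already recorded in the proof of Theorem \ref{theo:stratifi}. The divisor $\tfrac12\Nm D$ is honest precisely because $D$ is a $\sigma$-Higgs divisor: its multiplicities at $\Fix(\sigma)$ are even and its free part is $\sigma$-symmetric, so every multiplicity of $\Nm D$ is even. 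Combining the two facts gives $\O(\Lambda - D) = p^*N$ with $N = M(-\tfrac12\Nm D)$. Because $\Nm$ is surjective (e.g.\ since $\Nm \circ p^*$ is multiplication by $2$ on $\Jac(Y/\sigma)$, which is onto), I may choose $L$ with $\Nm(L) = N^{-1}$; then $L \otimes \sigma^*L = p^*N^{-1} = \O(D-\Lambda)$, so $L \in \Pr_{\Lambda-D}$. A degree check, $\deg L = \tfrac12(\deg D - \deg\Lambda) = \deg N^{-1}$ using $\deg\Lambda = 2\deg M$, confirms consistency.

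It remains to identify $\Pr_0$ and compute the dimension. From $L \in \Pr_0 \iff \Nm(L) \in \ker(p^*)$: in the ramified case $\ker(p^*) = 0$, so $\Pr_0 = \Nm^{-1}(\O_{Y/\sigma}) = \Prym(\tilde{\Sigma})$, which is a connected abelian variety by Mumford \cite{Mu74}; in the unramified case $\ker(p^*) = \{\O_{Y/\sigma}, I\}$, yielding $\Pr_0 = \Nm^{-1}(\O_{Y/\sigma}) \sqcup \Nm^{-1}(I)$. Since $\Pr_{\Lambda-D}$ is a $\Pr_0$-torsor its dimension equals $\dim\ker(\Nm|_{\Jac}) = g(Y) - g(Y/\sigma)$, and substituting the genus formula $g(Y/\sigma) = \tfrac12(g(Y)+1-\tfrac12\#\Fix(\sigma))$ gives exactly $\tfrac12(g(Y)-1+\tfrac12\#\Fix(\sigma))$. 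The main obstacle is the non-emptiness step, i.e.\ verifying that $\O(\Lambda-D)$ is a pullback; once the two displayed identities reduce this to the surjectivity of $\Nm$, the remaining torsor and dimension arguments are standard.
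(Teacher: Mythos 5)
Your proof is correct, and it diverges from the paper's at the one genuinely non-formal step, namely non-emptiness of $\Pr_{\Lambda-D}$. The paper reduces non-emptiness to the non-emptiness of the stratum $\mathcal{S}_D$ via the map $\Eig_D$ and then defers to a forward reference: explicit $\SL(2,\R)$-Higgs bundles with arbitrary $\sigma$-Higgs divisor are exhibited only later (Theorem \ref{theo:realpts:even}), and that construction itself starts from a $\sigma$-symmetric point of $\Pr_{\Lambda-\tpst D}$. You instead argue intrinsically on the Picard group: $\O(\Lambda)\cong p^*M$ because $\Lambda=\div(\lambda)$ with $\lambda\in H^0(Y,p^*M)$, the $\sigma$-Higgs condition makes $\tfrac12\Nm D$ an honest divisor so that $\O(\Lambda-D)=p^*\bigl(M(-\tfrac12\Nm D)\bigr)$, and surjectivity of the norm map then produces $L$ with $p^*\Nm(L)=L\otimes\sigma^*L=\O(D-\Lambda)$. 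This is self-contained, avoids any appearance of circularity, and makes explicit why the $\sigma$-Higgs condition on $D$ is exactly what is needed (it forces $\O(D-\Lambda)$ into the image of $p^*$, which is a necessary condition since $L\otimes\sigma^*L$ always is a pullback). The torsor argument, the identification $\Pr_0=\Nm^{-1}(\ker p^*)$ in the two cases, and the dimension count via $\dim\ker(\Nm|_{\Jac})=g(Y)-g(Y/\sigma)$ coincide with the paper's. One small point worth tightening: your parenthetical justification of surjectivity of $\Nm$ via $\Nm\circ p^*=(\cdot)^{\otimes 2}$ only covers $\Jac(Y/\sigma)$ directly; either add the degree adjustment by twisting with a point, or simply note that $\Nm$ is induced by the pushforward of divisors, which is manifestly surjective. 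What the paper's route buys is a geometric realisation of points of $\Pr_{\Lambda-D}$ as eigenline bundles of actual Higgs bundles; what yours buys is independence from the later sections and a cleaner logical order.
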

\begin{proof} 
Let $N={\Lambda-D}$. We need to show that $\Pr_N$ is non-empty. If there exists $L \in \Pr_N$, the group action
\begin{align*}
\Pr_0 \rightarrow \Pr_N, \qquad M \mapsto L \otimes M
\end{align*} is simply transitive. Hence, $\Pr_N$ is a torsor over $\Pr_0$ in both cases. Due to the map $\mathsf{Eig}_D: S_D \rightarrow \Pr_{N}$, it is enough to show that $\mathcal{S}_D$ is non-empty for $\sigma$-Higgs divisors $D$. After identifying $\M_\lambda^\sigma$ with the $\SL(2,\C)$-Higgs bundles on $Y/\sigma$ in the corresponding Hitchin fibre it will be easy to construct examples with all possible $\sigma$-Higgs divisors by looking at $\SL(2,\R)$-Higgs bundles (see Proposition \ref{theo:realpts:even}). 

We are left with identifying $\Pr_0$. Let $L \in \Pic(Y)$, we have $(p^* \circ \Nm)(L) = L \otimes \sigma^*L$. If $\Fix(\sigma) \neq \varnothing$, then $p$ has a branch point and $p^*$ is injective. This gives i). If $\Fix(\sigma) = \varnothing$, then $\Ker(p^*)=\{ \O_{Y/\sigma},I \}$ and hence we obtain ii). 
\end{proof}

\subsection{Hecke parameters}

The fibres of each stratum $\S_D$ over $\Pr_{\Lambda-D}(Y)$ can be identified as Hecke parameters (cf. Section 3). Consider $\M_\lambda^\sigma$ as above, $D$ a $\sigma$-Higgs divisor and $L \in \Pr_{\Lambda-D}(Y)$. Let 
\[ (E_L,\Phi_L)=\left(L \oplus \sigma^*L, \begin{pmatrix} \lambda & 0 \\ 0 & - \lambda
\end{pmatrix} \right).
\] $E_L$ has a natural lift $\hat{\sigma}: E_L \rightarrow E_L$ of $\sigma$ induced by the pullback. Choose a local frame $s$ of $L$ at a branch point $p \in Y$. Then 
\[ \hat{\sigma}=\begin{pmatrix} 0 & 1 \\ 1 & 0 \end{pmatrix}
\] with respect to the frame $s, \sigma^*s$ of $E_L$. Fix the diagonalizing frame 
\begin{align} s_+=s \oplus \sigma^*s , \qquad s_-=s \oplus - \sigma^*s. \label{equ:s+-}
\end{align}
\begin{lemm}\label{lemm:Hecke:inv} Let $\alpha\in H^0(\Lambda-D, E_L)^*$. Then $\hat{E}_L^{(\Lambda-D,\alpha)}$ is a $\sigma$-invariant holomorphic vector bundle if 
\begin{itemize}
\item[i)] for all $p \in \Fix(\sigma)$, $[\hat{\sigma}\alpha]_p=-[\alpha]_p$, and
\item[ii)] for all $p \in \supp(\Lambda-D)\setminus \Fix(\sigma)$, $[\hat{\sigma} \alpha]_p=[\alpha]_{\sigma p}$.
\end{itemize}
\end{lemm}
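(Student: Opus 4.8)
The plan is to show that the natural lift $\hat\sigma$ of $\sigma$ on $E_L$, viewed as acting on the sheaf of meromorphic sections of $E_L$, restricts to a lift on the subsheaf $\O(\hat E_L^{(\Lambda-D,\alpha)})$, and that this restriction satisfies the two axioms of Definition \ref{def:sigma-inv}. Since the Hecke transformation alters $\O(E_L)$ only in arbitrarily small neighbourhoods of the points of $\supp(\Lambda-D)$, away from this finite set $\hat E_L^{(\Lambda-D,\alpha)}$ agrees with $E_L$ and carries the unchanged lift $\hat\sigma$, where $\hat\sigma^2=\id$ is inherited and gluing is automatic. Thus everything reduces to a local computation at each $q\in\supp(\Lambda-D)$. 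Because $\sigma^*\Lambda=\Lambda$ (as $\sigma^*\lambda=-\lambda$ forces $\div(\sigma^*\lambda)=\div(\lambda)$) and $\sigma^*D=D$, the divisor $\Lambda-D$ is $\sigma$-invariant, so its support splits into fixed points $q\in\Fix(\sigma)$ and conjugate pairs $\{q,\sigma q\}$ with $(\Lambda-D)_q=(\Lambda-D)_{\sigma q}=:l_q$. I would treat these two cases separately, using condition i) for the former and condition ii) for the latter.

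For a conjugate pair $\{q,\sigma q\}$ with $q\notin\Fix(\sigma)$, I would fix a coordinate $z$ at $q$ and use $\sigma^*z$ as coordinate at $\sigma q$, so that $\sigma^*(z^{-l_q})$ has pole order exactly $l_q$ at $\sigma q$. By Lemma \ref{lemm:trans:Hecke}, choosing a frame $s_1,s_2$ with $[s_2]_q=[\alpha]_q$, the modified sheaf at $q$ is generated over $\O(E_L)$ by the pole section $z^{-l_q}s_2$. Reading condition ii) as $\hat\sigma([\alpha]_q)=[\alpha]_{\sigma q}$, I can take $\hat\sigma s_2$ as the representative of $[\alpha]_{\sigma q}$. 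Then $\hat\sigma$ sends $z^{-l_q}s_2$ to $\sigma^*(z^{-l_q})\,\hat\sigma s_2$, which is precisely the pole generator of the modified sheaf at $\sigma q$, while it preserves $\O(E_L)$. Hence $\hat\sigma$ carries the modified sheaf at $q$ isomorphically onto that at $\sigma q$ and, by $\hat\sigma^2=\id$, back again, so the total modified subsheaf is preserved. No ramification axiom is needed, since neither $q$ nor $\sigma q$ is a ramification point.

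The fixed-point case is the main point of the lemma, and the place where I expect the real work. For $q\in\Fix(\sigma)$ the covering is branched, so $\Lambda_q$ is odd; since $D$ is a $\sigma$-Higgs divisor, $D_q$ is even, and therefore $l_q=(\Lambda-D)_q$ is odd. Condition i) reads $\hat\sigma([\alpha]_q)=-[\alpha]_q$, and by replacing any representative $s_2$ of $[\alpha]_q$ with $\tfrac12(s_2-\hat\sigma s_2)$ I can arrange $\hat\sigma s_2=-s_2$ exactly without changing the class, hence without changing the bundle. Completing $s_2$ to a frame by $s_1=s_+$ (the $(+1)$-eigenvector from \eqref{equ:s+-}, which is transverse to $s_2$ since anti-invariance forces the leading term of $s_2$ into the $s_-$-direction, where $H^0(\Lambda-D,E_L)^*$ guarantees it is nonzero), the modified sheaf near $q$ is generated by $s_+$ and $z^{-l_q}s_2$. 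The key computation is
\[ \hat\sigma\bigl(z^{-l_q}s_2\bigr)=\sigma^*(z^{-l_q})\,\hat\sigma s_2=(-1)^{l_q}z^{-l_q}\cdot(-s_2)=(-1)^{l_q+1}z^{-l_q}s_2=z^{-l_q}s_2, \]
the last equality using that $l_q$ is odd. Thus $s_+,z^{-l_q}s_2$ is a $\hat\sigma$-invariant frame of $\hat E_L^{(\Lambda-D,\alpha)}$ near $q$, which simultaneously shows that $\hat\sigma$ preserves the modified subsheaf and that $\hat\sigma$ acts as the identity on the fibre over the ramification point $q$, verifying axiom ii) of Definition \ref{def:sigma-inv}.

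Combining the two local pictures with the unchanged lift away from $\supp(\Lambda-D)$ produces a global lift $\hat\sigma$ of $\sigma$ on $\hat E_L^{(\Lambda-D,\alpha)}$ with $\hat\sigma^2=\id$ and the identity action on every ramification fibre, which is exactly the data of a $\sigma$-invariant holomorphic vector bundle. The only genuinely delicate step is the fixed-point analysis: one must pass to the $\hat\sigma$-adapted (anti-invariant) representative of the modification direction and then exploit the parity of $l_q$. It is here that the $\sigma$-Higgs condition on $D$ is indispensable — it forces $l_q$ odd, whereas for even $l_q$ the same computation would yield $\hat\sigma(z^{-l_q}s_2)=-z^{-l_q}s_2$ and break the ramification axiom.
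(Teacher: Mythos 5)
Your proof is correct and follows essentially the same route as the paper: condition ii) lets the lift carry the pole generator at $q$ to the one at $\sigma q$ for conjugate pairs, while at fixed points one passes to an anti-invariant representative $s_\alpha$ of the Hecke direction, completes it to a frame with $s_+$, and uses the oddness of $(\Lambda-D)_p$ (forced by $D_p$ even) to see that $s_+,\,z^{-l}s_\alpha$ is a $\hat\sigma$-invariant frame. The paper states this more tersely, but the key computation $(-1)^{l+1}=1$ for $l$ odd is exactly the one you carry out.
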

\begin{proof}
Let $p \in \supp(\Lambda-D)\setminus\Fix(\sigma)$. If ii) is satisfied, the lift $\hat{\sigma}$ induces a lift of $\sigma$ on $\hat{E}_L^{(\Lambda-D,\alpha)}$ at $p$ . Let $p \in \Fix(\sigma)$. From i) $\alpha$ can be extended to a local section $s_\alpha$ around $p$ such that $s_+,s_\alpha$ is a frame of $E_L$. With respect to this frame, the Hecke transformation is equivalent to introducing the new transition function 
\[ \psi_{01}= \begin{pmatrix} 1 &0 \\ 0 & z^{\Lambda_p-D_p} \end{pmatrix}
\] (with notation as in \ref{equ:trans:Hecke1},\ref{equ:trans:Hecke2}). Because $(\Lambda-D)_p \equiv 1 \mod 2$, we conclude that the induced frame of $\hat{E}_L^{(\Lambda-D,\alpha)}$ at $p$ is $\sigma$-invariant. In conclusion, $\hat{E}_L^{(\Lambda-D,\alpha)}$ is a $\sigma$-invariant holomorphic bundle (cf. Definition \ref{def:sigma-inv}).
\end{proof}

Let $D \in \Div^+(Y)$ be an effective divisor. Define 
\begin{align*} H^0(D,(E_L,\hat{\sigma}))&:= \{ \alpha \in H^0(D,E_L) \mid \alpha \text{ satisfies i) and ii) } \} \\
H^0(D,(E_L,\hat{\sigma}))^*&:= \{ \alpha \in H^0(D,E_L)^* \mid \alpha \text{ satisfies i) and ii) } \} \\
G_D&:=\{ \phi \in H^0(D,\O_Y)^* \mid \sigma^* \phi=\phi \}.
\end{align*}

\begin{prop}\label{prop:u-coord}
Let $\supp \Lambda=\Fix(\sigma)$, $D$ a $\sigma$-Higgs divisor and $N:=\Lambda-D \in \Div^+(Y)$. Let $(U,z)$ be a union of coordinate neighbourhoods $(U_p,z_p)$ around $p \in \Fix(\sigma)$ disconnecting $\Fix(\sigma)$, such that $\sigma: z_p \mapsto -z_p$. Let $s \in H^0(U,L)$ be a frame. Then there is a holomorphic surjective map
\[ u_s: H^0(N,(E_L,\hat{\sigma}))^* \rightarrow \bigoplus_{p \in \Fix(\sigma)} \C z_p + \C z_p^3+ \dots + \C z_p^{N_p - 2},
\] which factors through the action of $G_N$ and induces a bijection on the quotient. Let $s'=fs$ with $f=f_e+f_o \in H^0(U,\O_Y^*)$ and $f_e,f_o$ the even and odd part with respect to $\sigma$. Then
\[ u_{s'}=\frac{f_eu_s-f_o}{f_e-f_ou_s} \mod z^N.
\]
\end{prop}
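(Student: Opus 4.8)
The plan is to reduce everything to a local computation at each fixed point and then read off the three assertions—surjectivity onto the stated target, factoring through $G_N$ with a bijection on the quotient, and the transformation law—from explicit formulas in the diagonalizing frame $s_+,s_-$ of \eqref{equ:s+-}. First I would record the local shape of the lift. Since $\supp\Lambda=\Fix(\sigma)$ and $D$ is a $\sigma$-Higgs divisor, every $p\in\supp N$ is a fixed point with $D_p$ even, so $N_p=\Lambda_p-D_p$ is odd (recall $\lambda$ vanishes to odd order at branch points), and only condition i) of Lemma \ref{lemm:Hecke:inv} is relevant. Using $\sigma^*f(z)=f(-z)$, a direct computation in this frame gives
\[ \hat{\sigma}\bigl(c(z)s_+ + d(z)s_-\bigr) = c(-z)\,s_+ - d(-z)\,s_-,
\]
so the invariance condition $[\hat{\sigma}\alpha]_p=-[\alpha]_p$ is equivalent to $c$ being odd and $d$ even. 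Writing $\alpha=c\,s_+ + d\,s_-$, the value of $\alpha$ at $p$ equals $d(0)\,s_-$ because $c(0)=0$; hence $\alpha\in H^0(N,(E_L,\hat{\sigma}))^*$ forces $d(0)\neq 0$, i.e.\ $d$ is invertible modulo $z^{N_p}$.

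Next I would define $u_s(\alpha)$ to be the truncation of $c\,d^{-1}$ modulo $z^N$ at each $p$. Since $c$ is odd and $d$ even, $c\,d^{-1}$ is odd, and because $N_p$ is odd its truncation lands in $\C z_p + \C z_p^3 + \dots + \C z_p^{N_p-2}$, which is exactly the stated target; holomorphy is clear since $d$ is invertible. Surjectivity is immediate by taking $d=1$ and $c$ equal to the prescribed odd polynomial. For the action, note $G_N$ consists of the even non-vanishing germs, which are precisely the $\phi\in H^0(N,\O_Y)^*$ preserving the parity conditions, and $\alpha\mapsto\phi\alpha$ scales both $c$ and $d$ by $\phi$, leaving $c\,d^{-1}$ unchanged; thus $u_s$ factors through $G_N$. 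Conversely, if $u_s(\alpha)=u_s(\alpha')$ then $c\,d^{-1}=c'\,d'^{-1}\bmod z^N$, so $\alpha'=(d'd^{-1})\,\alpha\bmod z^N$ with $d'd^{-1}\in G_N$; hence $u_s$ is injective on the quotient, giving the bijection.

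For the transformation law I would compute the change of diagonalizing frame under $s'=fs$ with $f=f_e+f_o$. Using $\sigma^*s'=(f_e-f_o)\sigma^*s$ one finds $s'_+ = f_e s_+ + f_o s_-$ and $s'_- = f_o s_+ + f_e s_-$; inverting the matrix $\left(\begin{smallmatrix} f_e & f_o \\ f_o & f_e\end{smallmatrix}\right)$, whose determinant $f_e^2-f_o^2$ is even and non-vanishing at $p$ (since $f_o(0)=0$ and $f(0)=f_e(0)\neq 0$), expresses the coefficients of $\alpha$ in the new frame as $c'=(f_e c - f_o d)/(f_e^2-f_o^2)$ and $d'=(-f_o c + f_e d)/(f_e^2-f_o^2)$. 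Dividing, $u_{s'}=c'd'^{-1}=(f_e u_s - f_o)/(f_e - f_o u_s)\bmod z^N$, as claimed, the denominator being invertible mod $z^N$ because $f_o(0)=0$.

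I expect the main obstacle to be purely bookkeeping: keeping straight the $\sigma^*$-action on the coefficient germs when computing $\hat{\sigma}$ (this is what produces the odd/even split) and matching truncation orders so that the image is exactly the odd jets of degree $\le N_p-2$. Once the local normal form $\alpha=c\,s_+ + d\,s_-$ with $c$ odd, $d$ even, $d(0)\neq 0$ is in place, all three claims reduce to short rational manipulations modulo $z^N$.
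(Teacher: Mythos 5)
Your proposal is correct and follows essentially the same route as the paper: parametrize $\hat\sigma$-invariant Hecke parameters in the diagonalizing frame $s_\pm$ by an odd/even pair of coefficient germs, define $u_s$ as their ratio truncated mod $z^{N_p}$, and derive the transformation law by inverting the frame change $s'_\pm = f_e s_\pm + f_o s_\mp$. The only difference is that you spell out a few steps the paper leaves implicit (the parity check from $\hat\sigma$-invariance, invertibility of the even coefficient, and the explicit $G_N$-element realizing injectivity on the quotient).
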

\begin{proof}  
A choice of $s$ induces a frame $s_+=s+ \sigma^*s, \ s_-=s - \sigma^*s$ of $E_L$. With respect to such frame, we can explicitly parametrize 
\[ H^0(N,(E_L,\hat{\sigma}))^*=\{ [as_++bs_-] \mid a,b \in H^0(N,\O_Y): \sigma^*a=-a, \sigma^*b=b \}.
\] Let $N=lp$ with $l \in \N$, $p \in Y$. We define the isomorphism
\begin{align*} u_s: H^0(N,(E_L,\hat{\sigma}))^* &\rightarrow  \{u \in H^0(N,\O_Y) \mid \sigma^*u=-u\} \\
 [as_++bs_-] \qquad &\mapsto \qquad \tfrac{a}{b} \mod z^{N_p}.
 \end{align*} This map clearly factors through $G_N$ and separates the orbits. The right side is a $\C$ vector space over the basis $z,z^3,\dots, z^{N_p-2}$. Let now $s'=(f_e+f_o)s$, then 
 \[ s_+'=f_es_++f_os_-, \quad s_-'=f_es_-+f_os_+
 \] and
 \[ \alpha= \frac{1}{f_e^2-f_o^2}\left( (a f_e-bf_o)s_+'+(-af_o+f_eb)s_-') \right).
 \] Applying $u_{s'}$ gives the result.
\end{proof}

\begin{prop}\label{prop:Hecke_bundle} Let $\supp \Lambda=\Fix(\sigma)$, $D$ a $\sigma$-Higgs divisor and $N=\Lambda-D$. Then 
\[ F_D=\left\{ H^0(N,(E_L,\hat{\sigma}))^*/G_N \mid L \in \Pr_{N}(Y) \right\} \rightarrow   \Pr_{N}(Y)\]
is a holomorphic vector bundle of rank
\[ r=\sum\limits_{y \in Z(\lambda)} \tfrac12 (\Lambda_y-D_y-1)= \tfrac12 \left(\deg(\Lambda)-\deg(D)-\#\mathsf{Fix}(\sigma)\right).
\]
\end{prop}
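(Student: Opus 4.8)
The plan is to read off the fibres directly from Proposition \ref{prop:u-coord} and then upgrade the fibrewise identification to a genuine holomorphic vector bundle by producing local trivializations over $\Pr_N(Y)$ whose transition maps preserve the linear structure. The two nontrivial points will be the construction of holomorphically varying frames near the branch points and, above all, the verification that the resulting charts glue linearly.

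First I would settle the rank. For $L \in \Pr_N(Y)$ and a local frame $s$ of $L$ near the branch points, Proposition \ref{prop:u-coord} identifies the fibre $H^0(N,(E_L,\hat{\sigma}))^*/G_N$ with the vector space
\[ V := \bigoplus_{p \in \Fix(\sigma)} \left( \C z_p + \C z_p^3 + \dots + \C z_p^{N_p-2} \right). \]
Since $\supp\Lambda = \Fix(\sigma)$ and $D$ is a $\sigma$-Higgs divisor, each $N_p = \Lambda_p - D_p$ is odd, so the odd monomials $z_p, z_p^3, \dots, z_p^{N_p-2}$ number $\tfrac12(\Lambda_p - D_p - 1)$. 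Summing over $p \in \Fix(\sigma)$ gives $\dim V = r$, and the second displayed form of $r$ follows from $\deg\Lambda = \sum_p \Lambda_p$, $\deg D = \sum_p D_p$ (as $D$ is supported on $\Fix(\sigma)$) and $\#\Fix(\sigma) = \sum_p 1$.

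Next I would build local trivializations. Over a small open set $\mathcal{U} \subset \Pr_N(Y)$ a universal (Poincaré) family on $\Pr_N(Y) \times Y$ provides a holomorphic family $\{L\}_{L \in \mathcal{U}}$ together with holomorphically varying frames $s_L$ of $L$ on a fixed union of coordinate discs $(U_p,z_p)$ around the branch points. For each $L$ the germs $(a,b)$ representing a class $[a s_+ + b s_-] \in H^0(N,(E_L,\hat{\sigma}))^*$ depend holomorphically on $L$, so the maps $u_{s_L}$ of Proposition \ref{prop:u-coord} assemble into a holomorphic chart
\[ F_D|_{\mathcal{U}} \xrightarrow{\ \sim\ } \mathcal{U} \times V, \qquad [\alpha] \mapsto \big(L,\, u_{s_L}[\alpha]\big). \]

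The hard part will be the linearity of the transitions, and this is where the main obstacle lies. On an overlap two such frames differ by $s'_L = f_L s_L$ with $f_L = f_e + f_o$ a holomorphic unit, and Proposition \ref{prop:u-coord} gives $u_{s'} = \tfrac{f_e u_s - f_o}{f_e - f_o u_s} \bmod z^N$. For a general $f_L$ this is only fractional-linear: writing $h = f_o/f_e = O(z)$ one finds $u_{s'} = (u_s - h)(1 - h u_s)^{-1} = u_s - h + \dots$, whose translation term $-h$ already destroys linearity, so arbitrary frames do \emph{not} glue to a vector bundle. The key observation I would exploit is that the frames can be chosen with \emph{even} transition functions, i.e.\ $f_o \equiv 0$: because each coordinate disc $U_p$ is contractible, the restriction of the Poincaré bundle to $\Pr_N(Y) \times U_p$ is the pullback of its restriction to $\Pr_N(Y) \times \{p\}$, so its transition functions may be taken constant in the fibre coordinate $z_p$, in particular invariant under $\sigma: z_p \mapsto -z_p$. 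With such frames the transformation formula collapses to $u_{s'} = u_s$, the charts patch by the identity on $V$, and $F_D \to \Pr_N(Y)$ is exhibited as a holomorphic vector bundle of rank $r$. The fibre dimension and the holomorphy of the charts are immediate from Proposition \ref{prop:u-coord}; the only genuine difficulty is this compatibility of the linear structures on overlaps.
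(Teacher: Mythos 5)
Your computation of the rank and your use of a universal line bundle to produce local charts are exactly what the paper does: its proof of Proposition \ref{prop:Hecke_bundle} chooses a coherent local frame $s$ of the universal bundle over $U\times V$, takes the resulting $u$-coordinate as a trivialisation $F_D\rest_V\cong V\times\C^r$, and then simply cites Proposition \ref{prop:u-coord} for the holomorphy of the coordinate change $u_{s'}=(f_eu_s-f_o)/(f_e-f_ou_s)\bmod z^N$. You are right that this transition is only fractional-linear (and, after truncation mod $z^N$, polynomial of degree $>1$ in the coefficients of $u_s$ once $N\geq 5$), so what that argument literally establishes is a holomorphic fibre bundle with fibre $\C^r$; the linearity of the gluing is not addressed in the paper at all, and the fibre-bundle statement is all that is used later in Theorem \ref{theo:strata:Hecke-bundle:odd} and Theorem \ref{theo:strat:general}. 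So you have correctly isolated the one point the paper's proof glosses over.

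However, your repair of that point does not work. The assertion that, because $U_p$ is contractible, $\mathcal{L}\rest_{U_p\times\Pr_N(Y)}$ is the pullback of $\mathcal{L}\rest_{\{p\}\times\Pr_N(Y)}$ is a topological fact, not a holomorphic one, and holomorphically it is false: the slices $\mathcal{L}\rest_{\{z\}\times\Pr_N(Y)}$ are line bundles on $\Pr_N(Y)$ that vary nontrivially with $z$ (the classifying map $z\mapsto[\mathcal{L}\rest_{\{z\}\times\Pr_N}]\in\Pic^0(\Pr_N)$ is essentially the Abel--Prym map), so the restriction is not a pullback from the second factor and the transition functions cannot be normalised to be constant in $z_p$. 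Making them merely \emph{even} in $z_p$ is equally obstructed, since that would force $\mathcal{L}\rest_{\{z\}\times\Pr_N}\cong\mathcal{L}\rest_{\{\sigma z\}\times\Pr_N}$ for all $z$ near $p$, which fails for the same reason. Concretely, the first obstruction to removing $f_o$ is the cocycle $\{\partial_{z}\log f_{ij}\rest_{z=0}\}\in H^1(\Pr_N,\O)$, the Kodaira--Spencer class at $z=0$ of the family of slices; it is the codifferential of the Abel--Prym map at the branch point and is nonzero whenever some $\sigma$-anti-invariant holomorphic one-form does not vanish at $p$, which is the generic situation. Hence the translation term $-f_o/f_e$ in your expansion cannot be gauged away by a choice of frames, and your charts still glue by nonlinear polynomial maps. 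The honest conclusion available from your (and the paper's) construction is that $F_D\rightarrow\Pr_N(Y)$ is a holomorphic fibre bundle with fibre $\C^r$ and polynomial transition maps, which suffices for everything downstream; upgrading this to a genuine linear structure would require a further argument (for instance a global holomorphic section plus a reduction of the polynomial structure group), which neither you nor the paper supplies.
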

\begin{proof} There exists a universal line bundle
\[ \mathcal{L} \rightarrow Y \times \Pr_{N}. 
\] Let $U$ be a disconnecting neighbourhood of $\Fix(\sigma)$ as above. A local trivialisation of $\mathcal{L}$ over $U \times V \subset Y \times \Pr_{N}$ is equivalent to choosing a local frame $s \in H^0(U,L)$ over $V \in \Pr_{N}$ in a coherent way. It defines a $u$-coordinate of $F_D$ over $V$, in other words, a local trivialisation $F_D \rest_V \cong V \times \C^r$. Changing the trivialisation corresponds to choosing a different holomorphic frame $s' \in H^0(U,L)$. The corresponding transformation of the $u$ coordinate is holomorphic by the previous proposition.
\end{proof}

\begin{theo}\label{theo:strata:Hecke-bundle:odd}
Let $\supp \Lambda= \Fix(\sigma)$ and $D$ a $\sigma$-Higgs divisor. Then there is an isomorphism of $F_D \rightarrow S_D$ making the following diagram commute:
\begin{center}
\begin{tikzpicture}  \matrix (m) [matrix of math nodes,row sep=3em,column sep=4em,minimum width=2em]
  {
     F_D & \mathcal{S}_D \\
    \mathsf{Prym}_{\Lambda-D} & \mathsf{Prym}_{\Lambda-D}  \\
  };
  \path[-stealth]
    (m-1-1) edge  (m-2-1)
          	edge (m-1-2)
    (m-1-2) edge node [left] {$\Eig_D$}(m-2-2)
    (m-2-1) edge node [above] {$\id$} (m-2-2);
\end{tikzpicture}
\end{center}
In particular,
\[ \dim \mathcal{S}_D = \deg(M)-\tfrac12\deg(D) + g(Y/\sigma)-1.
\]
\end{theo}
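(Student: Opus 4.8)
The theorem asserts that the vector bundle $F_D \to \Pr_{\Lambda-D}$ constructed in Proposition \ref{prop:Hecke_bundle} is isomorphic to the stratum $\mathcal{S}_D$ compatibly with the projections to the Prym variety, where the right-hand projection is $\Eig_D$. The plan is to construct the isomorphism explicitly by sending a Hecke parameter to the Higgs bundle obtained via Hecke transformation followed by $\sigma$-invariant pushforward, and to produce an inverse using the eigenline bundle together with the local normal form of Lemma \ref{lem:loc:form}.

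First I would define the forward map $F_D \to \mathcal{S}_D$. Fix $L \in \Pr_{N}(Y)$ with $N = \Lambda - D$, and let $(E_L,\Phi_L) = (L \oplus \sigma^*L, \mathrm{diag}(\lambda,-\lambda))$ with its natural lift $\hat\sigma$ as in the guiding example. Given a class in $H^0(N,(E_L,\hat\sigma))^*/G_N$, represented by $\alpha$ satisfying conditions i) and ii) of Lemma \ref{lemm:Hecke:inv}, the Hecke-transformed bundle $\hat{E}_L^{(N,\alpha)}$ carries an induced lift $\hat\sigma$ and is $\sigma$-invariant by that lemma. The Higgs field $\Phi_L$ descends to a Higgs field on the Hecke transform (one must check it remains holomorphic after twisting, which follows because $\lambda$ vanishes to order $\Lambda_y$ while the Hecke twist lowers the relevant entry by exactly $N_y = \Lambda_y - D_y$, leaving vanishing order $D_y$, matching Lemma \ref{lem:loc:form}). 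Passing to the $\sigma$-invariant pushforward $\tilde\pi_*$ then yields an $\SL(2,\C)$-Higgs bundle on $X$; by the guiding example (Lemma \ref{lem:2push}) and the normal form this lands in $\mathcal{S}_D$ with eigenline bundle $L$, so the diagram commutes and the map is well-defined on $G_N$-orbits by the invariance already recorded. Holomorphy in families follows from the universal-bundle construction underlying Proposition \ref{prop:Hecke_bundle}.

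For the inverse, start with $(E,\Phi) \in \mathcal{S}_D$. Its eigenline bundle $\Eig_D(E,\Phi) = L \in \Pr_{N}$ is already given by the previous theorem, so the fibrewise task is to recover the Hecke parameter. Pulling back to $Y$ and using the exact sequence $0 \to \O(L) \oplus \O(\sigma^*L) \to \O(E) \to \mathcal{T} \to 0$ from the proof of the Eig theorem, I would compare $E$ with $E_L = L \oplus \sigma^*L$: the two agree away from $Z(\lambda)$, and the local normal form of Lemma \ref{lem:loc:form} shows that near each $y \in \supp N$ the bundle $E$ is precisely a Hecke transform of $E_L$ in some direction $\alpha$. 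The $u$-coordinate of Proposition \ref{prop:u-coord} extracts this direction modulo $G_N$, giving a point of the fibre of $F_D$; $\sigma$-invariance of $E$ forces $\alpha$ to satisfy conditions i) and ii), so the point lies in $H^0(N,(E_L,\hat\sigma))^*/G_N$. One checks the two maps are mutually inverse by unwinding the pushforward-pullback adjunction (Lemma \ref{lem:pup}) together with $\tilde\pi^*$ injectivity from Proposition \ref{prop_pi*_inj} in the odd case. The dimension formula then follows by adding the rank $r = \tfrac12(\deg\Lambda - \deg D - \#\Fix(\sigma))$ from Proposition \ref{prop:Hecke_bundle} to $\dim \Pr_{N} = g(Y) - g(Y/\sigma)$ from Proposition \ref{prop:isom:prym}, and rewriting using $\deg\Lambda = 2\deg M$ and the genus relation.

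The main obstacle I anticipate is verifying that the forward and inverse constructions are genuinely inverse to each other at the level of $\sigma$-invariant data, rather than merely matching eigenline bundles: one must confirm that the Hecke direction reconstructed from $(E,\Phi)$ via the $u$-coordinate, when fed back through Hecke transformation and $\sigma$-invariant pushforward, returns the same Higgs bundle on $X$ and not merely an isomorphic one with a different $\hat\sigma$-lift. This is where the uniqueness of the lift in the odd case (Proposition \ref{lem:lift}) is essential, and where the careful bookkeeping of frames $s_+, s_-$ and the transition function $\mathrm{diag}(1, z^{\Lambda_p - D_p})$ from Lemma \ref{lemm:Hecke:inv} must be shown to be consistent with the normal form; the gauge freedom in Lemma \ref{lem:loc:form} must be matched exactly against the $G_N$-action modded out in the definition of $F_D$.
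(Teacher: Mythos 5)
Your overall strategy coincides with the paper's: the forward map is the Hecke transformation of $(E_L,\Phi_L)$ in a direction $\alpha$ satisfying the $\sigma$-compatibility conditions of Lemma \ref{lemm:Hecke:inv}, and the inverse recovers the Hecke parameter as a $u$-coordinate from the eigenline inclusions. Two points, however, need correcting or completing.

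First, the pushforward to the base is out of place. Theorem \ref{theo:strata:Hecke-bundle:odd} lives entirely on $Y$: the stratum $\S_D$ sits inside $\M^\sigma(Y,p^*M,\lambda)$, whose points are $\sigma$-invariant Higgs bundles on $Y$, not on the quotient surface (the descent to the actual Hitchin fibre only happens later, in Theorem \ref{theo:strat:sl2C}). So after performing the Hecke transformation you should stop: $(\hat{E}_L^{(N,\alpha)},\hat{\Phi}_L^{(N,\alpha)})$ with its induced lift is already the desired point of $\S_D$, and neither $\tilde{\pi}_*$, nor Lemma \ref{lem:pup}, nor the injectivity of Proposition \ref{prop_pi*_inj}, nor the uniqueness of the lift from Proposition \ref{lem:lift} enters the argument. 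What does need checking in the forward direction is that $\det(\hat{E}_L^{(N,\alpha)})=\O_Y$ and that the conjugated Higgs field is holomorphic with vanishing divisor exactly $D$; the paper verifies this by the explicit transition-function computation giving the local form (\ref{equ:loc:form1}).

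Second, and more substantially, your inverse direction is asserted rather than proved. The claim that Lemma \ref{lem:loc:form} ``shows that near each $y$ the bundle $E$ is precisely a Hecke transform of $E_L$ in some direction $\alpha$'' is exactly the content that must be established, and the $u$-coordinate of Proposition \ref{prop:u-coord} is defined on Hecke parameters, not on Higgs bundles, so it cannot by itself extract anything from $(E,\Phi)$. The paper carries out the missing step as follows: fix inclusions $i_\pm$ of $L$ and $\sigma^*L$ into $E$, set $s_\pm = i_+(s)\pm i_-(\sigma^*s)$, note that $s_+$ is non-vanishing at a ramification point while $s_-$ vanishes to odd order, complete $s_+$ to a $\sigma$-invariant frame $s_+,s_a$, and normalise by the unipotent $\sigma$-invariant gauge transformations so that $s_-=-us_+ + z^n s_a$ with $u$ a polynomial of degree less than $n$; computing $\Phi$ in this frame identifies $n=\Lambda_y-D_y$ and yields the local form (\ref{equ:loc:form2}), which agrees with (\ref{equ:loc:form1}) and is what makes the two constructions mutually inverse. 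You correctly flag this matching of frames against the $G_N$-action as the main obstacle, but the proposal leaves it open. The dimension count at the end is correct.
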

\begin{proof} Let $N=\Lambda-D$. Let $L \in \Pr_{N}(Y)$. Let $(U,z)$ be a union of coordinate neighbourhood of $\Fix(\sigma)$ disconnecting $\Fix(\sigma)$, such that $\sigma: z \mapsto -z$. Let $t \in H^0(U,p^*M)$ and $s \in H^0(U,L)$ local frames. We will show in i) how to produce Higgs bundles in $\S_D$ by applying Hecke transformation to $(E_L,\Phi_L)$. This defines the map $F_D \rightarrow \mathcal{S}_D$. To see that it is an isomorphism, we will show in ii) how to recover the $u$-coordinate from $(E,\Phi) \in \S_D$. 
\begin{itemize}
\item[i)] Let $\alpha \in H^0(N, (E_L,\hat{\sigma}))^*$, we saw in Lemma \ref{lemm:Hecke:inv} that $\hat{E}^{(N,\alpha)}$ with the induced lift $\hat{\sigma}$ is a $\sigma$-invariant holomorphic vector bundle. Furthermore, 
\[ \det(\hat{E}^{(N,\alpha)})=\det(E_L)(N)=L \otimes \sigma^*L \otimes \O(\Lambda-D)= \O_Y.
\] The Higgs field $\Phi_L$ induces a Higgs field on $\hat{E}^{(N,\alpha)}$. From Lemma \ref{lemm:trans:Hecke} it is easy to see that the Hecke transformation of $E_L$ in direction $\alpha=u s_++s_-$ is given by introducing a new transition function
\[ \hat{\psi}_{01}= \begin{pmatrix} 1 & -uz^{-N} \\ 0 & z^{-N} \end{pmatrix}
\] with respect to $s_+,s_-$ at every $p \in \supp \Lambda$ (with notation as in \ref{equ:trans:Hecke1},\ref{equ:trans:Hecke2}). In particular, the induced Higgs field is given by 
\begin{align} \hat{\Phi}_L^{(N,\alpha)}= \psi_{01}^{-1} \Phi_L \psi_{01}= \begin{pmatrix} -u z^{\Lambda} & z^D(1-u^2) \\
z^{2\Lambda-D} & u z^\Lambda \end{pmatrix} t.\label{equ:loc:form1}
\end{align} This is a well-defined Higgs field on $\hat{E}^{(N,\alpha)}$ with $\det(\hat{\Phi}_L^{(N,\alpha)})=-\lambda^2$ and vanishing divisor $D$. In conclusion, 
\[ (\hat{E}^{(N,\alpha)},\hat{\Phi}_L^{(N,\alpha)}) \in \Eig_D^{-1}(L) \subset \S_D.
\]
\item[ii)] Let $(E,\Phi) \in \Eig_D^{-1}(L) \subset \S_D$. Fix inclusions 
\[ i_+: L \rightarrow E, \quad i_-: \sigma^*L \rightarrow E
\] onto the corresponding subbundles. (There are $\C^*$-many such inclusions, but the coordinate will not depend on this choice.) Let 
\[ s_+:=i_+(s) + i_-( \sigma^*(s)), \quad s_-:=i_+(s) - i_-( \sigma^*(s))\in H^0(U,E).
\] As $\sigma$ is fixing the fibre over $y \in \Fix(\sigma)$, $s_+$ is non-vanishing. On the other hand, $s_-$ has a zero of odd order at $y$. We augment $s_+$ to a $\sigma^*$-invariant frame $s_+,s_a$ of $E$. With respect to this frame $s_-= -u s_+ + u' s_a$ with holomorphic odd functions $u,u' \in H^0(U,\O_Y)$. After acting on this frame by gauge transformations of the form
\[ \begin{pmatrix} 1 & \phi_1 \\ 0 & \phi_2 \end{pmatrix} 
\] with $\phi_i\in H^0(U,\O_Y)$, such that $\sigma^*\phi_i=\phi_i$, we may assume that $u'=z^n$ and $u$ is a polynomial of degree $<n$. The $\sigma$-invariant section $s_a$ is uniquely defined by these conditions. 
By definition $\Phi s_+=\lambda s_-$ and $\Phi s_-= \lambda s_+$. Hence, the Higgs field is given by
\begin{align} \Phi=\begin{pmatrix} -u z^{\Lambda_p} & z^{\Lambda_p-n} (1-u^2) \\ z^{\Lambda_p+n} & u z^{\Lambda_p} \end{pmatrix} t \label{equ:loc:form2}
\end{align} with respect to the invariant frame $s_+,s_a$. Computing the vanishing order of $\Phi$ we see that $n=\Lambda_y-D_y$ at $y \in \Fix(\sigma)$. Moreover, $u$ defines a polynomial germ $[u] \in H^0(N,\O_Y)$, such that $\sigma^*[u]=-[u]$. If we apply i) to $\alpha=[us_++s_-] \in H^0(N,(E_L,\hat{\sigma}))^*$ the Hecke transformation $(\hat{E}_L^{(N,\alpha)},\hat{\Phi}_L^{(N,\alpha)})$ is isomorphic to $(E,\Phi)$. There is a trivial isomorphism on $Y \setminus \Fix(\sigma)$, where the two Higgs bundles are isomorphic to $L \oplus \sigma^*L$. With respect to the fixed frame $s_+,s_a$ on $(E,\Phi)$ and the induced frame on $(\hat{E}_L^{(N,\alpha)},\hat{\Phi}_L^{(N,\alpha)})$ this extends to an isomorphism of holomorphic vector bundles. Local descriptions of the Higgs field with respect to these frames were computed in \ref{equ:loc:form1}, \ref{equ:loc:form2} and agree. Hence, the map extends to an isomorphism of Higgs bundles. In conclusion, the map $F_D \rightarrow \mathcal{S}_D$ defined in i) is a fibrewise isomorphism. 
\end{itemize}
\end{proof}

\begin{coro} Consider $\M^\sigma(Y,p^*M,\lambda)$, such that $Z(\lambda)=\mathsf{Fix}(\sigma)$ and $\lambda$ has only simple zeros. Then 
\[ \M^\sigma(Y,p^*M,\lambda)=\Pr_\Lambda. \]
\end{coro}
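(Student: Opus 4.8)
The plan is to show that the two hypotheses force the stratification of Theorem~\ref{theo:stratifi} to consist of a single stratum, and then to read off from the rank formula of Proposition~\ref{prop:Hecke_bundle} that this stratum is the twisted Prym variety itself.

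First I would enumerate the $\sigma$-Higgs divisors. Since $Z(\lambda)=\Fix(\sigma)$ and $\lambda$ has only simple zeros, the divisor $\Lambda=\div(\lambda)$ is reduced with $\supp\Lambda=\Fix(\sigma)$ and $\Lambda_y=1$ for every $y\in\Fix(\sigma)$. Any $\sigma$-Higgs divisor $D$ satisfies $0\le D\le\Lambda$, so $\supp D\subseteq\Fix(\sigma)$ and $D_y\in\{0,1\}$ there; the parity requirement $D_y\equiv 0\pmod 2$ at the fixed points then forces $D_y=0$. Hence $D=0$ is the only $\sigma$-Higgs divisor, and Theorem~\ref{theo:stratifi} yields $\M^\sigma(Y,p^*M,\lambda)=\mathcal{S}_0$.

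Next I would invoke Theorem~\ref{theo:strata:Hecke-bundle:odd} with $D=0$; its standing hypothesis $\supp\Lambda=\Fix(\sigma)$ holds by the computation just made. It gives an isomorphism, compatible with $\Eig_0$, between $\mathcal{S}_0$ and the Hecke bundle $F_0\to\Pr_{\Lambda}$. By Proposition~\ref{prop:Hecke_bundle} the rank of $F_0$ equals
\[ r=\tfrac12\bigl(\deg\Lambda-\deg(0)-\#\Fix(\sigma)\bigr)=\tfrac12\bigl(\#\Fix(\sigma)-\#\Fix(\sigma)\bigr)=0, \]
where $\deg\Lambda=\#\Fix(\sigma)$ because $\Lambda$ is the reduced divisor on $\Fix(\sigma)$. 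A holomorphic vector bundle of rank zero coincides with its base, so $F_0=\Pr_{\Lambda}$ and therefore $\M^\sigma(Y,p^*M,\lambda)=\mathcal{S}_0=\Pr_{\Lambda}$.

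I do not expect a real obstacle: the corollary is a degenerate case of the already-established structure theory, and the only genuine content is the parity bookkeeping excluding every nonzero $\sigma$-Higgs divisor. The single point meriting a word of care is the boundary case $\Fix(\sigma)=\varnothing$ (so $\lambda$ is nowhere vanishing and $\Lambda=0$), where $\Pr_{\Lambda}=\Pr_0$ is disconnected by Proposition~\ref{prop:isom:prym}; the argument applies verbatim, since again $D=0$ is the unique $\sigma$-Higgs divisor and the rank is $0$.
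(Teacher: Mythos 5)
Your proof is correct and follows exactly the route of the paper: the parity condition at fixed points rules out every nonzero $\sigma$-Higgs divisor, so the whole space is the single stratum $\mathcal{S}_0$, which Theorem~\ref{theo:strata:Hecke-bundle:odd} and the rank formula identify with the rank-zero Hecke bundle $F_0=\Pr_\Lambda$. The paper's proof is just a one-line version of this same argument; your spelled-out parity bookkeeping and rank computation are the intended details.
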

\begin{proof}
In this case, the only $\sigma$-Higgs divisor is $D=0$. So by Theorem \ref{theo:strat:general} and Theorem \ref{theo:strata:Hecke-bundle:odd}, we have $\M^\sigma(Y,p^*M,\lambda)\cong \mathcal{S}_0 \cong F_0 \cong \Pr_\Lambda$. 
\end{proof}

To describe the extra data for zeros of even order, it is more convenient to use extension classes. We will give an interpretation in terms of Hecke parameters in Proposition \ref{prop:Hecke:even}.

\begin{prop}\label{prop:even:zeros}
Consider $\M^\sigma(Y,p^*M,\lambda)$, such that $Z(\lambda)=\mathsf{Fix}(\sigma)\sqcup \{y,\sigma^*y\}$, the zeros $y$ and $\sigma^*y$ are of order $m \geq 1$, and all other zeros are simple. Fix a $\sigma$-Higgs divisor $D$ and $L \in \Pr_{\Lambda-D}$. Then 
\begin{align*} \mathsf{Eig}_D^{-1}(L) &\cong \{ [c] \in H^0(\Lambda_y y, L^2 p^*M) \mid \ord_y[c]=D_y\} \cong \C^* \times \C^{\Lambda_y-D_y-1}. 
\end{align*} The last isomorphism is determined by the choice of a local coordinate $(U,z)$ centred at $y$ and the choice of a local frame $s \in H^0(U,L^2p^*M)$.
\end{prop}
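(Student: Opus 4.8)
The plan is to extract the fibre $\Eig_D^{-1}(L)$ entirely from the local behaviour of the Higgs field at the single non-fixed zero $y$; the data at every other point of $Z(\lambda)$ will be either rigid or forced by $\sigma$-invariance.

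First I would produce the invariant $[c]$. For $(E,\Phi)\in\Eig_D^{-1}(L)$ the eigenline bundle $L=\Ker(\Phi-\lambda\id_E)$ is a subsheaf of $E$ on which $\Phi$ acts by $\lambda$, and since $\det E=\O_Y$ the quotient is $E/L\cong L^{-1}$, on which $\Phi$ acts by $-\lambda$ because $\tr\Phi=0$. On a coordinate disc $(U,z)$ centred at $y$ with $\lambda=z^{\Lambda_y}t$ for a frame $t$ of $p^*M$, the extension $0\to L\to E\to L^{-1}\to 0$ splits, so with respect to a frame $e_1$ of $L$ and a lift $e_2$ of a generator of $L^{-1}$ the Higgs field is upper triangular,
\[ \Phi=\begin{pmatrix} z^{\Lambda_y} & c \\ 0 & -z^{\Lambda_y}\end{pmatrix}t, \]
and its off-diagonal entry is the local expression of a germ $c\in L^2p^*M$, as it is a map $L^{-1}\to L\otimes p^*M$.

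Next I would pin this germ down. Replacing $e_2$ by $e_2+ge_1$ changes $c$ by an element of $z^{\Lambda_y}\O$, so the class $[c]\in H^0(\Lambda_y y,L^2p^*M)$ is well defined; this is the only residual freedom once $L$ is fixed, since the stability of $(E,\Phi)$ forces the remaining automorphisms to be $\pm\id_E$, which act trivially on $[c]$. Reading the condition $\div(\Phi)=D$ at $y$ gives $\min(\Lambda_y,\ord_y c)=D_y$, hence $\ord_y[c]=D_y$ (for $D_y<\Lambda_y$; the extremal value $D_y=\Lambda_y$ is the split, strictly polystable point $[c]=0$). This defines the map $\Eig_D^{-1}(L)\to\{[c]\mid\ord_y[c]=D_y\}$, and since a germ modulo $z^{\Lambda_y}$ with $\ord_y=D_y$ has leading coefficient in $\C^*$ and $\Lambda_y-D_y-1$ further free coefficients, the target is $\C^*\times\C^{\Lambda_y-D_y-1}$.

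It remains to see this map is a bijection, and here lies the main obstacle: one must verify that no further modulus is hidden in the global extension class in $H^1(Y,L^2)$. I would argue completeness by viewing $E$ as a locally free modification of $L\oplus\sigma^*L$: away from $Z(\lambda)$ one has $(E,\Phi)=(L\oplus\sigma^*L,\mathrm{diag}(\lambda,-\lambda))$ canonically, at the simple zeros in $\Fix(\sigma)$ (where $D=0$) the gluing is rigid exactly as in Theorem \ref{theo:strata:Hecke-bundle:odd}, and at $\sigma y$ the local data is the image under the lift $\hat\sigma$ of the data at $y$; since such a modification is determined by its local contributions, fixing $L$ together with the germ at $y$ rigidifies $(E,\Phi)$. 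For surjectivity I would realise any prescribed $[c]$ by the construction of Theorem \ref{theo:strata:Hecke-bundle:odd}(i), gluing the upper triangular model over $U$ to the diagonal model on $Y\setminus Z(\lambda)$ --- equivalently a Hecke transformation of $(L\oplus\sigma^*L,\mathrm{diag}(\lambda,-\lambda))$ at $\Lambda-D$ in a direction encoding $[c]$ --- and fixing the $\sigma y$-datum by $\hat\sigma$, so that the result is a $\sigma$-invariant Higgs bundle in $\Eig_D^{-1}(L)$. Holomorphy of the bijection and of its inverse in families over $\Pr_{\Lambda-D}$ then follows as in the cited theorem.
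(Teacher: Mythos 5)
Your proof is correct, and it reaches the same invariant $[c]\in H^0(\Lambda_y y,L^2p^*M)$ with the same vanishing-order condition, but it handles the key completeness step by a genuinely different route. The paper follows \cite{GO13}: it parametrizes $(E,\Phi)$ by the extension class of $0\to(L,\lambda)\to(E,\Phi)\to(L^{-1},-\lambda)\to 0$ in the hypercohomology $\mathbb{H}^1(Y,(L^2,2\lambda))$ of the two-term complex $\O(L^2)\xrightarrow{2\lambda}\O(L^2p^*M)$, and uses the five-term exact sequence of the associated spectral sequence to identify this group with the space of local germs $H^0(\Lambda,L^2p^*M)$ — this is precisely where your worry about a hidden modulus in $H^1(Y,L^2)$ is discharged, since the cohomology sheaves of the complex are concentrated in the torsion cokernel of $2\lambda$. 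The inverse is then built in the Dolbeault picture by extending $[c]$ smoothly and solving $\delb c=2b\lambda$. You instead split the extension locally, read $c$ off as the off-diagonal entry of the upper-triangular germ of $\Phi$, and argue completeness by viewing $\O(E)$ as a modification of $\O(L)\oplus\O(\sigma^*L)$ supported on $Z(\lambda)$, rigid at the simple fixed zeros and $\hat\sigma$-determined at $\sigma y$; surjectivity comes from gluing local models, i.e.\ from Hecke transformations. This is more elementary (no hypercohomology) and anticipates the dictionary between extension data and Hecke parameters that the paper only establishes later in Proposition \ref{prop:Hecke:even}; the price is that you must justify that the stalk of $\O(E)$ at $y$ inside $(\O(L)\oplus\O(\sigma^*L))\otimes\mathcal{K}$ is determined by, and determines, the class $[c]$ (it is spanned by $e_1$ and $2\lambda e_2$ modulo $c\,e_1$, so the check is short, but it should be made explicit), whereas the spectral-sequence argument delivers the global parametrization in one stroke.
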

\begin{proof} By assumption, we can trivialize the covering in a neighbourhood $U$ of $p(y) \in Y/\sigma$. Let $p^{-1}(U)=U_y \sqcup U_{\sigma y}$, such that $y \in U_y$. Let $(E,\Phi) \in  \mathsf{Eig}_D^{-1}(L)$. By the $\sigma$-invariance $(E,\Phi)\rest_{p^{-1}(U)}$ is uniquely determined by $(E,\Phi)\rest_{U_y}$. So we need to parametrise the possible $(E,\Phi)\rest_{U_y}$ with eigenvalues $\pm \lambda\rest_{U_y}$. Regarding $(E,\Phi)\rest_{U_y}$ as a $\SL(2,\C)$-Higgs bundle with reducible spectral curve, we will use the description of $\SL(2,\C)$-Hitchin fibres with this property given in \cite{GO13} Section 7. Write $(E,\Phi)$ as an extension
\[ 0 \rightarrow (L,\lambda) \rightarrow (E,\Phi) \rightarrow (L^*,-\lambda) \rightarrow 0.
\] These extensions are parametrised by the hypercohomology group 
\[ \mathbb{H}^1(Y,(L^2,2\lambda))
\] of the complex of locally free sheaves
\[ \O_Y(L^2) \xrightarrow{2\lambda} \O_Y(L^2 p^*M).
\] The 5-term exact sequence of (one of) the associated spectral sequences reveals that 
\[ \mathbb{H}^1(Y,(L^2,2\lambda)) \cong H^0( \Lambda, L^2 p^*M)=\bigoplus\limits_{y \in \ \mathsf{supp}(\Lambda)}\O(L^2 p^*M)_y / \sim,
\] where for $v,v' \in \bigoplus\limits_{y \in \ \mathsf{supp}(\Lambda)}\O(L^2 p^*M)_y$
\[ v \sim v' \quad \Leftrightarrow \quad v=v'+f\lambda \qquad \mathrm{with} \quad f \in \bigoplus\limits_{y \in \mathsf{supp}(\Lambda)}\O_y.
\] By Theorem \ref{theo:strata:Hecke-bundle:odd}, the extension data at the simple zeros in $\Fix(\sigma)$ is uniquely determined by $\sigma$-invariance. Hence, the fibres of $\mathsf{Eig}_D$ are parametrised by 
\[ H^0(\Lambda_y y, L^2 p^*M),
\] where we consider $\Lambda_y y$ as a divisor supported at the point $y$. Furthermore, one can explicitly construct a Higgs bundle 
\[ \left( E=L \oplus_{C^\infty} L^{-1}, \delb_E= \begin{pmatrix} \delb_L & b \\ 0 & \delb_{L^{-1}} \end{pmatrix}, \Phi= \begin{pmatrix} \lambda & c \\ 0 & -\lambda \end{pmatrix}\right) 
\] from the extension data $[c] \in H^0( \Lambda, L^2 p^*M)$ by extending $[c]$ to a smooth section $c \in \A^{0}(Y,L^2 p^*M)$ and solving the equation 
\[ \delb c = 2b \lambda.
\] for $b \in \A^{(0,1)}(Y, L^2)$. In this way, we see that $\div(\Phi)_y=D_y$ if and only if $D_y= \ord_y c$. So $(E,\Phi) \in  \mathsf{Eig}_D^{-1}(L)$ are parametrized by the polynomial germs $[c] \in H^0(\Lambda_y y, L^2 p^*M)$ with $\ord_y([c])=D_y$. With respect to a coordinate $(U,z)$ centred at $y$ and a frame $s$ of $L^2 p^*M\rest_U$, such polynomial germ $[c]$ has a unique representative of the form
\[ \left(c_{D_y}z^{D_y}+c_{D_y+1} z^{D_y+1} + \dots + c_{\Lambda_y-1}z^{\Lambda_y-1}\right)s.
\] with $c_{D_y} \in \C^*$ and $c_{D_y+1}, \dots, c_{\Lambda_y-1} \in \C$. This defines the non-canonical isomorphism to $ \C^* \times \C^{\Lambda_y-D_y-1}$.
\end{proof}

\begin{theo}\label{theo:strat:general}
Fix a moduli space  $\M^\sigma(Y,p^*M,\lambda)$ and a compatible $\sigma$-Higgs divisor $D$. Then the stratum indexed by $D$ is a holomorphic fibre bundle
\[ (\C^\times)^{r_1} \times \C^{r_2} \rightarrow \mathcal{S}_D \rightarrow \Pr_{\Lambda-D}
\] with 
\begin{align*} r_1&=\tfrac12\left( \# Z(\lambda) -\# \Fix(\sigma)\right) \\
r_2&= \tfrac12 \left(\deg(\Lambda)-\deg(D) - \# Z(\lambda)\right).
\end{align*}
In particular, the dimension of the stratum is given by
\[ \deg(M)-\tfrac12 \deg(D) +   g(Y/\sigma) -1.
\]
\end{theo}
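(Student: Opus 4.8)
The plan is to take the eigenline map $\Eig_D \colon \S_D \to \Pr_{\Lambda-D}$ from the previous theorem as the bundle projection and to analyse its fibres $\Eig_D^{-1}(L)$ one $\sigma$-orbit of $\supp\Lambda$ at a time. The starting observation is that every $(E,\Phi) \in \Eig_D^{-1}(L)$ agrees with the standard pair $(E_L,\Phi_L) = (L \oplus \sigma^*L, \left(\begin{smallmatrix}\lambda & 0 \\ 0 & -\lambda\end{smallmatrix}\right))$ away from $\supp\Lambda$, since there $L$ and $\sigma^*L$ are the two distinct eigenline subbundles and the inclusions $\O(L)\oplus\O(\sigma^*L)\hookrightarrow\O(E)$ are isomorphisms. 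Hence $(E,\Phi)$ is recovered from $(E_L,\Phi_L)$ by prescribing independent local gluing data in disjoint $\sigma$-invariant neighbourhoods of the points of $\supp\Lambda$, and I expect the fibre to factorise as a product of local contributions, one for each $\sigma$-orbit in $\supp\Lambda$.

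I would then read off the two types of local contribution from results already in hand. At a fixed point $p \in \Fix(\sigma)\cap\supp\Lambda$ the local model is exactly the one treated in Theorem \ref{theo:strata:Hecke-bundle:odd}: admissible Hecke directions modulo $G_N$ are parametrised by the $u$-coordinate of Proposition \ref{prop:u-coord}, ranging over the odd polynomials $\C z_p + \C z_p^3 + \dots + \C z_p^{N_p-2}$ with $N_p = \Lambda_p - D_p$, a copy of $\C^{\frac12(\Lambda_p - D_p - 1)}$ carrying no $\C^\times$ factor because $\sigma$-invariance pins down the leading term. At a $\sigma$-orbit $\{y,\sigma y\}$ disjoint from $\Fix(\sigma)$, the data at $\sigma y$ are determined from those at $y$ by $\sigma$-invariance, and the analysis of Proposition \ref{prop:even:zeros} identifies the remaining choice with the extension classes $[c] \in H^0(\Lambda_y y, L^2 p^*M)$ of order exactly $D_y$, that is with $\C^\times \times \C^{\Lambda_y - D_y - 1}$.

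Taking the product over all orbits then yields the fibre $(\C^\times)^{r_1}\times\C^{r_2}$: each of the $\tfrac12(\#Z(\lambda) - \#\Fix(\sigma))$ free orbits supplies exactly one $\C^\times$, so $r_1 = \tfrac12(\#Z(\lambda) - \#\Fix(\sigma))$, while summing the $\C$-dimensions $\tfrac12(\Lambda_p - D_p - 1)$ over fixed points and $\Lambda_y - D_y - 1$ over free orbits collapses, using $\sigma^*\Lambda = \Lambda$, $\sigma^*D = D$ and $\deg\Lambda = \deg(p^*M) = 2\deg M$, to $r_2 = \tfrac12(\deg\Lambda - \deg D - \#Z(\lambda))$. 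The asserted dimension follows by adding $r_1 + r_2 = \tfrac12(\deg\Lambda - \deg D - \#\Fix(\sigma))$ to $\dim\Pr_{\Lambda-D} = g(Y) - g(Y/\sigma)$ from Proposition \ref{prop:isom:prym} and substituting the genus relation $g(Y) - 2g(Y/\sigma) = \tfrac12\#\Fix(\sigma) - 1$, which gives $\deg M - \tfrac12\deg D + g(Y/\sigma) - 1$.

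Finally, to promote this fibrewise description to a holomorphic fibre bundle I would globalise exactly as in Proposition \ref{prop:Hecke_bundle}: fixing a universal bundle $\mathcal L \to Y \times \Pr_{\Lambda-D}$ and a disconnecting neighbourhood of $\supp\Lambda$, a local frame of $\mathcal L$ over an open $V \subset \Pr_{\Lambda-D}$ simultaneously trivialises every local contribution, so that $\Eig_D^{-1}(V) \cong V \times \big((\C^\times)^{r_1}\times\C^{r_2}\big)$; a change of frame $s \mapsto fs$ transforms the $u$-coordinates by the Möbius formula of Proposition \ref{prop:u-coord} and the extension coordinates by the corresponding holomorphic rescaling, so the transition maps depend holomorphically on $L$. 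The main obstacle is precisely this last step: verifying that the two a priori different local descriptions, Hecke directions at the fixed points and extension classes at the free orbits, patch into a \emph{single} holomorphic product bundle with holomorphic transition functions over the whole twisted Prym variety, and that the product structure genuinely decouples the orbits rather than only doing so fibrewise.
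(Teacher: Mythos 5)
Your proposal is correct and follows essentially the same route as the paper: the fibre of $\Eig_D$ decomposes as a Cartesian product of local contributions at the zeros of $\lambda$, with the fixed-point factors supplied by Theorem \ref{theo:strata:Hecke-bundle:odd} and the free-orbit factors by Proposition \ref{prop:even:zeros}, and the holomorphic bundle structure is obtained by the universal-bundle/local-frame argument of Proposition \ref{prop:Hecke_bundle}. Your dimension bookkeeping matches the paper's, and the "main obstacle" you flag at the end is resolved exactly as you suggest, by noting that all local coordinates transform holomorphically under a change of frame of the universal line bundle.
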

\begin{proof}
All the extension data depends only on the structure of the Higgs bundle at $Z(\lambda)$. So if we have more than one higher order zero in $\lambda$ the fibre of $\Eig_D$ is a Cartesian product of the fibres described in Theorem~\ref{theo:strata:Hecke-bundle:odd} and Proposition \ref{prop:even:zeros}. The coordinates in Proposition \ref{prop:even:zeros} depend holomorphically on the choice of a local frame $s \in H^0(U,L^2 p^*M)$. Hence, the argument given in the proof of Theorem \ref{theo:strat:general} establishes the structure of a holomorphic fibre bundle on $\mathcal{S}_D$.
\end{proof}

\section{Stratification of Singular Hitchin Fibres}\label{sec:stra:sl2c}
In this section, we specialize the results of the previous section to the singular Hitchin fibres in the moduli space of $K$-twisted Higgs bundles on $X$.

\begin{defi}
Let $q_2 \in H^0(X,K^2)$. A Higgs divisor associated to $q_2$ is a divisor $D \in \Div(X)$ supported at $Z(q_2)$, such that for all $p \in Z(q_2)$
\[ 0 \leq D_p \leq \lfloor \frac12 \ord_p(q_2) \rfloor, 
\] where $\lfloor \cdot \rfloor$ denotes the floor function.
\end{defi}

For $q_2 \in H^0(X,K^2)$ let
\begin{align*} n_{\even}=\# \{ p \in Z(q_2) \mid p \text{ zero of even order} \} \\
n_{\odd}=\# \{ p \in Z(q_2) \mid p \text{ zero of odd order}. \}
\end{align*}

\begin{theo}\label{theo:strat:sl2C}
Let $q_2 \in H^0(X,K^2)$ be a quadratic differential with no global square root on $X$. Then there exists a stratification
\[ \H^{-1}(q_2)= \bigcup\limits_{D} \mathcal{S}_D
\] by locally closed sets $\mathcal{S}_D$ indexed by Higgs divisors $D$. If $n_{\odd} \geq 1$, each stratum has the structure of a holomorphic fibre bundle 
\[ (\C^\times)^{r_1} \times \C^{r_2} \rightarrow \mathcal{S}_D \rightarrow \Pr_{\Lambda-D}(\tilde{\Sigma}),
\] where
\begin{align*} r_1&=n_{\mathrm{even}}, \quad \text{and} \quad
r_2= 2g-2- \deg(D)-n_{\even}-\frac12n_{\odd}.
\end{align*}  If $n_{\odd}=0$, $\mathcal{S}_D$ is a branched two-to-one covering of a holomorphic fibre bundle over 
\[ \Nm^{-1}(I(D-\tfrac12\div(q_2)))
\] with fibres given as in the previous case. Here $I$ denotes the unique non-trivial line bundle $I \in \ker(\tpst)$. In general, the dimension of a stratum $\mathcal{S}_D$ is given by
\[ 3g-3-\deg(D).
\]  
\end{theo}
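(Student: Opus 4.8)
The plan is to reduce the entire statement to the general stratification of $\sigma$-invariant Higgs bundles established in Theorem \ref{theo:strat:general}, transported across the pullback to the normalised spectral curve. Concretely I would take $Y=\tilde\Sigma$, $p=\tilde\pi$ and $M=K$, so that $Y/\sigma=X$ and the section $\lambda\in H^0(\tilde\Sigma,\tpst K)$ with $\sigma^*\lambda=-\lambda$ and $\lambda^2=-\tpst q_2$ plays the role of the auxiliary section, with divisor $\Lambda=\div(\lambda)$. By Proposition \ref{prop:pull} the pullback $\tpst\colon\H^{-1}(q_2)\to\M^\sigma(\tilde\Sigma,\tpst K,\lambda)$ carries the Hitchin fibre into the setting of the previous section, and the stratification $\H^{-1}(q_2)=\bigcup_D\S_D$ by Higgs divisors is exactly the pullback of the stratification of Theorem \ref{theo:stratifi}; in particular each $\S_D$ is locally closed. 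The first task is to match the indexing sets: a Higgs divisor $D$ on $X$, constrained by $0\le D_p\le\lfloor\tfrac12\ord_p q_2\rfloor$, corresponds to the $\sigma$-Higgs divisor $\tilde D$ on $\tilde\Sigma$ with $\tilde D_y=2D_p$ at a branch point $y$ over an odd zero and $\tilde D_y=\tilde D_{\sigma y}=D_p$ at the conjugate pair over an even zero. The evenness condition $\tilde D_y\equiv 0\bmod 2$ at fixed points is precisely the doubling in the first case, and by the local normal form of Lemma \ref{lem:loc:form} one has $\div(\tpst\Phi)=\tilde D$.

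With this dictionary the numerical assertions are pure bookkeeping applied to Theorem \ref{theo:strat:general}. The fixed points of $\sigma$ are the $n_{\odd}$ branch points and the remaining zeros of $\lambda$ form $n_{\even}$ conjugate pairs, so $\#\Fix(\sigma)=n_{\odd}$ and $\#Z(\lambda)=n_{\odd}+2n_{\even}$; hence $r_1=\tfrac12(\#Z(\lambda)-\#\Fix(\sigma))=n_{\even}$ immediately. For $r_2$ I would use $\deg\Lambda=\deg(\tpst K)=4g-4$ and $\deg\tilde D=2\deg D$, which yields $r_2=\tfrac12(\deg\Lambda-\deg\tilde D-\#Z(\lambda))=2g-2-\deg D-n_{\even}-\tfrac12 n_{\odd}$. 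The dimension formula follows the same way from $\dim\S_D=\deg K-\tfrac12\deg\tilde D+g(Y/\sigma)-1=(2g-2)-\deg D+g-1=3g-3-\deg D$, using $g(Y/\sigma)=g$.

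It remains to descend from $\M^\sigma(\tilde\Sigma,\tpst K,\lambda)$ back to $\H^{-1}(q_2)$, and here the two cases diverge. If $n_{\odd}\ge 1$ then $\Fix(\sigma)\neq\varnothing$, so by Proposition \ref{prop_pi*_inj} the pullback is injective and, via Lemma \ref{lem:pup}, an isomorphism onto its image; thus $\S_D$ inherits the holomorphic fibre-bundle structure over $\Pr_{\Lambda-D}(\tilde\Sigma)$ directly, the base being a torsor over $\Prym(\tilde\Sigma)$ by Proposition \ref{prop:isom:prym}(i). If instead $n_{\odd}=0$ the covering is unbranched, $\tpst$ is generically two-to-one, and the non-injectivity is the identification $(E,\Phi)\sim(E\otimes I,\Phi)$ of Proposition \ref{prop_pi*_inj}(ii). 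Since these two Higgs bundles share the same pulled-back eigenline bundle $L$, they lie over a single point of the base, so $\S_D$ is a two-to-one cover of the fibre bundle $\tpst\S_D$, branched exactly along the locus $E\cong E\otimes I$ of Example \ref{exam:branchpoint}. To identify the base component I would compute $\Nm(L)$: the defining sequence of the eigenline map gives $\tpst\Nm(L)=L\otimes\sigma^*L=\O(\tilde D-\Lambda)=\tpst\O(D-\tfrac12\div q_2)$, so $\Nm(L)$ differs from $\O(D-\tfrac12\div q_2)$ by an element of $\ker\tpst=\{\O,I\}$; the constraint $\det E=\O_X$ together with $\det(\tilde\pi_*\O_{\tilde\Sigma})=I$ from Corollary \ref{coro:push:trivial} forces the component $\Nm^{-1}(I(D-\tfrac12\div q_2))$, as stated.

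The step I expect to be the main obstacle is this last component identification in the even case: one must genuinely track the line bundle $I=\det(\tilde\pi_*\O_{\tilde\Sigma})$ through the reconstruction of $(E,\Phi)$ as a $\sigma$-invariant pushforward in order to see that $\det E=\O_X$ selects the $I$-twisted component rather than the untwisted one, and separately confirm that the branch locus of the two-to-one cover is precisely the fixed-point set $E\cong E\otimes I$ of the $I$-action described in Example \ref{exam:branchpoint}. Everything else is a matter of faithfully transporting Theorem \ref{theo:strat:general} across the pullback together with the divisor dictionary above.
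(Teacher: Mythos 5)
Your proposal follows essentially the same route as the paper's proof: reduce to Theorem \ref{theo:strat:general} via the pullback $\tpst$ to $\M^\sigma(\tilde\Sigma,\tpst K,\lambda)$, identify Higgs divisors on $X$ with $\sigma$-Higgs divisors on $\tilde\Sigma$ via pullback of divisors, carry out the same numerical bookkeeping with $\#\Fix(\sigma)=n_{\odd}$ and $\#Z(\lambda)=n_{\odd}+2n_{\even}$, and in the $n_{\odd}=0$ case pin down the component $\Nm^{-1}(I(D-\tfrac12\div q_2))$ by computing $\det E$ through $\det(\tilde\pi_*\O_{\tilde\Sigma})=I$ from Corollary \ref{coro:push:trivial}. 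The determinant computation you flag as the main obstacle is exactly the step the paper carries out, and your outline of it is correct.
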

\begin{proof} The stratification by Higgs divisors is obtained in the same way as in Theorem \ref{theo:stratifi}. We analysed the map $\tpst: \H^{-1}(q_2) \rightarrow \M^\sigma(\tilde{\Sigma},\tpst K,\lambda)$ in Section \ref{sec:pap}. It is bijective, if there is at least one zero of odd order, and generically two-to-one, if $n_{\odd}=0$. We showed above how $\M_\lambda^\sigma$ is stratified by $\sigma$-Higgs divisors (Theorem \ref{theo:stratifi}). Here a $\sigma$-Higgs divisors on $(\tilde{\Sigma},\sigma,\lambda)$ is the pullback $\tpst D$ of a Higgs divisor $D \in \Div(X)$ associated to $q_2$. The structure of the strata was described in Theorem \ref{theo:strat:general}. We have $\# \mathsf{Fix}(\sigma)=n_\odd$, $\# Z(\lambda)=n_\odd+ 2n_\even$, and 
\[ g(\tilde{\Sigma})=2g-1 + \tfrac{n_\odd}{2}.
\] Hence, the dimension of the stratum $\mathcal{S}_D$ is given by
\[ \deg(K)-\tfrac12 \deg(\tpst D) +   g -1=3g-3-\deg(D).
\] 

We are left with showing that for $n_{\odd}=0$ the eigenline bundles $\O(L)=\Ker(\tpst\Phi -\lambda \id_E)$ lies in 
\[ \Nm^{-1}(I(D-\tfrac12\div(q_2))) \subsetneq \Pr_{\Lambda-\tpst D}.
\] In this case, the $\sigma$-invariant pushforward of a $\sigma$-invariant vector bundle $(E,\hat{\sigma})$ with $\det(E)=\O_{\tilde{\Sigma}}$ has determinant $\O_X$ or $I$ depending on the lift $\hat{\sigma}$. If $(E,\Phi)$ is in the lowest dimensional stratum, i.e.\ $D=\frac12 \div(q_2)$, then 
\begin{align*} \det(E)&=\det(\tilde{\pi}_*^\sigma(L \oplus \sigma^*L, \hat{\sigma}))= \det(\tilde{\pi}_*L) \\ 
&= \Nm(L) \otimes \det(\tilde{\pi}_*\O_{\tilde{\Sigma}})=\Nm(L) \otimes I. 
\end{align*} using Lemma \ref{lem:2push} and Corollary \ref{coro:push:trivial}. Hence, $\Nm(L)=I$. Similarly, for the higher strata $\det(E)=\O_X$ if and only if $\Nm(L)=I(D-\frac12\div(q_2))$. 
\end{proof}

\subsection{Algebraic and geometric interpretation}

The torus structure is not completely lost, when we degenerate to the singular locus.  Starting with a torus of codimension 1 in the first degeneration, the deeper we go into the singular locus the lower the dimension of the subtorus becomes. All singular fibres with irreducible and reduced spectral curve have a subtorus of at least dimension $g-1$. Furthermore, these subtori are torsors over the Prym variety of the normalised spectral cover. \\
The highest dimensional stratum $\mathcal{S}_0$ corresponds to Higgs bundles $(E,\Phi) \in \H^{-1}(q_2)$ with non-vanishing Higgs field. One can show, that this stratum corresponds to the locally free sheaves on the singular spectral curve by the Beauville-Narasimhan-Ramanan correspondence \cite{BNR89}. The lowest dimensional stratum $\mathcal{S}_{D_\text{max}}$ contains the Higgs bundles $(E,\Phi) \in \H^{-1}(q_2)$ with maximal vanishing order. At a zero of odd order $2m+1$, they can be locally written as 
\[ \Phi(z)=z^m\begin{pmatrix} 0 & 1 \\ z & 0 \end{pmatrix} \d z
\] and at a zero of order $2m$, they are diagonalizable with local eigensections $\pm z^m \d z$. If $B \in \Div(X)$ is the branch divisor of the covering by the normalised spectral curve, we have
\[ D_{\text{max}}= \Lambda- \tfrac12 \tpst B.
\]
This stratum $\mathcal{S}_{D_\text{max}}$ has no Hecke parameters or extension data. It is obtained as the algebraic pushforward along $\tilde{\pi}: \tilde{\Sigma} \rightarrow X$ of the line bundles $L(\tfrac12 \tpst B)$ for $L$ in the torsor over the Prym variety
\[ \Pr_{\tfrac12 \tpst B}(\tilde{\Sigma}) \quad \text{resp.} \quad \Nm^{-1}(I),
\] if $\tilde{\pi}$ is unbranched. Restricted to certain sets of quadratic differentials in the singular locus one recovers the subintegrable systems described by Hitchin \cite{Hi19}.

\section{Singular Fibres with Locally Irreducible Spectral Curve}\label{sec:glob:odd}
In this section, we start analysing how the strata glue together to form the singular Hitchin fibre. We will consider the case, where the spectral curve is locally irreducible, i.e.\ the quadratic differential has only zeros of odd order. To do so, we need to compactify the moduli of Hecke parameters of the highest stratum. We show that these singular Hitchin fibres are themselves holomorphic fibre bundles over twisted Prym varieties with fibres given by the compactified moduli of Hecke parameters. This allows an explicit description of the fibres for the first degenerations. \\

\subsection{Hecke transformations - revisited}
We will again work in the setting introduced in Section \ref{sec:strat}. So $(Y,\sigma)$ is Riemann surface with an involution $\sigma$, $p: Y \rightarrow Y/\sigma$ the associated two-covering, $M$ a line bundle on $Y/\sigma$ and $\lambda: Y \rightarrow p^*M$ a section with the properties described there. In this section, we consider $\M_\lambda^\sigma=\M^\sigma(Y,p^*M,\lambda)$, such that $Z(\lambda)=\Fix(\sigma)$. Under identifying $\M_\lambda^\sigma$ with the Hitchin fibre via  pullback, this extra condition is equivalent to the spectral curve being locally irreducible. 

Let us shortly recall how we used Hecke transformations in Section \ref{sec:strat}. We saw in Theorem \ref{theo:strata:Hecke-bundle:odd} that for fixed $L \in \Prym_{\Lambda}(Y)$ the Higgs bundles $(E,\Phi) \in \mathcal{S}_0$, which project to $L$, are parametrized by 
\[ \Eig_\Lambda^{-1}(L)= H^0(\Lambda,(E,\hat{\sigma}))^* / G_\Lambda.
\] After choosing frames of $L$ at $Z(\lambda)$, the Hecke parameters are encoded in the polynomial germs $u$. We reconstructed a $\sigma$-invariant Higgs bundle from the spectral data $(L,u)$ as the Hecke transformation of $(E_L,\Phi_L)$ at $\Lambda$ in direction of $\alpha=us_++s_-$ introducing the new transition function 
\[ \hat{\psi}_{01}= \begin{pmatrix} 1 & -uz^{-\Lambda_p} \\ 0 & z^{-\Lambda_p} \end{pmatrix}
\] 
(see Theorem \ref{theo:strata:Hecke-bundle:odd}). To compactify the moduli of Hecke parameters, we need to allow Hecke parameters $\alpha \in H^0(\Lambda,(E,\hat{\sigma}))$, which may vanish on $\supp \Lambda$. Fix $L \in \Prym_{\Lambda}(Y)$ and a frame $s \in H^0(U,L)$ in a neighbourhood $U$ of $Z(\lambda)$.

\begin{defi}\label{def:higher:Hecke} Let $\alpha \in H^0(\Lambda,(E,\hat{\sigma}))\setminus\{0\}$. Define $(\hat{E}_L^{(\Lambda,\alpha)},\hat{\Phi}_L^{(\Lambda,\alpha)})$ by introducing a new transition function 
\[ \hat{\psi}_{01}^y= \begin{pmatrix} b^{-1} & -az^{-\Lambda_p} \\ 0 & bz^{-\Lambda_p}
\end{pmatrix},
\] with respect to $s_+,s_-$ for all $y \in Z(\lambda)$, where $a,b$ are defined through $\alpha=as_++bs_-$ (see Section \ref{sec:Hecke} for details on the notation).
\end{defi}

Recall that the Hecke transformation is invariant under the group action of 
\[ G_\Lambda= \{ \phi \in H^0(\Lambda,\O_Y)^* \mid \sigma^* \phi = \phi \}
\] on the Hecke parameters $H^0(\Lambda,(E,\hat{\sigma}))^*$. When we allow the Hecke parameters to vanish, there is another equivalence relation.

\begin{lemm}\label{lemm:equ:ii}
\begin{itemize}
\item[i)] Let $\alpha \in H^0(\Lambda,(E_L,\hat{\sigma}))$ and $\phi \in G_\Lambda$. Then $\hat{E}_L^{(\Lambda,\alpha)} \cong \hat{E}_L^{(\Lambda,\phi\alpha)}$. 
In particular, Definition \ref{def:higher:Hecke} and Definition \ref{defi:dual:Hecke} agree for $\alpha \in H^0(\Lambda,(E_L,\hat{\sigma}))^*$.
\item[ii)] Let $\alpha,\alpha' \in H^0(\Lambda,(E_L,\hat{\sigma}))$, such that $\div(\alpha)=\div(\alpha')=D$. Then $\hat{E}_L^{(\Lambda,\alpha)} \cong \hat{E}_L^{(\Lambda,\alpha')}$, if the projections of $\alpha,\alpha'$ to $H^0(\Lambda-D,(E_L,\hat{\sigma}))$ agree. 
\end{itemize}
\end{lemm}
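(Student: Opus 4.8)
The plan is to describe both Hecke transformations as sublattices inside one and the same sheaf, namely the sheaf of sections of $E_L$ meromorphic along $\Fix(\sigma)=\supp\Lambda$, and to show that these sublattices coincide. Since $\hat{E}_L^{(\Lambda,\alpha)}$ and $\hat{E}_L^{(\Lambda,\alpha')}$ are built by the same gluing recipe and agree with $E_L$ on $Y\setminus\Fix(\sigma)$, an equality of the sublattices near each fixed point produces the identity isomorphism of subsheaves, hence the asserted isomorphism of bundles. Concretely, writing $\alpha=as_++bs_-$ in the frame of \eqref{equ:s+-}, a local section over the disc $V_0$ around $p$ is carried into the meromorphic sections of $E_L$ by the transition function $\hat{\psi}_{01}$ of Definition~\ref{def:higher:Hecke}, so that near $p$ one has
\[ \O(\hat{E}_L^{(\Lambda,\alpha)})_p=\O_p\bigl\langle\, b^{-1}s_+,\ z^{-\Lambda_p}(-as_++bs_-)\,\bigr\rangle, \]
the $\O_p$-module spanned by the two columns of $\hat{\psi}_{01}$. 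The whole argument thus reduces to comparing these spans at each $p\in\Fix(\sigma)$.

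For part i), passing from $\alpha$ to $\phi\alpha$ with $\phi\in G_\Lambda$ a $\sigma$-invariant non-vanishing germ rescales the two generators by the units $\phi^{-1}$ and $\phi$ respectively. Multiplying generators of an $\O_p$-module by units leaves the module unchanged, so the lattices coincide and $\hat{E}_L^{(\Lambda,\alpha)}\cong\hat{E}_L^{(\Lambda,\phi\alpha)}$. The ``in particular'' is the case $\alpha\in H^0(\Lambda,(E_L,\hat{\sigma}))^*$: here $a$ is odd and $b$ even, so non-vanishing of $\alpha$ forces $b$ to be a unit, and normalising with $\phi=b^{-1}\in G_\Lambda$ turns the transition function of Definition~\ref{def:higher:Hecke} into the one of Lemma~\ref{lemm:trans:Hecke} with $u=a/b$, which is Definition~\ref{defi:dual:Hecke}.

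For part ii) it suffices, by symmetry, to prove $\O(\hat{E}_L^{(\Lambda,\alpha')})_p\subseteq\O(\hat{E}_L^{(\Lambda,\alpha)})_p$ at each $p$. I would first pin down the vanishing orders: $\sigma$-invariance makes $a,a'$ odd and $b,b'$ even, while $D_p$ is even because $D$ is a $\sigma$-Higgs divisor; together with $\div(\alpha)=\div(\alpha')=D$ this forces $\ord_p b=\ord_p b'=D_p$ and $\ord_p a,\ord_p a'>D_p$. Equality of the projections to $H^0(\Lambda-D,(E_L,\hat{\sigma}))$ means $a-a',\,b-b'\in z^{\Lambda_p-D_p}\O_p$. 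Setting $y=b'/b$, a unit since $\ord_p b=\ord_p b'$, and $x=z^{-\Lambda_p}(ab'-a'b)$, one checks directly that the generators of the $\alpha'$-lattice are the combinations $b'^{-1}s_+=y^{-1}(b^{-1}s_+)$ and $z^{-\Lambda_p}(-a's_++b's_-)=x\,(b^{-1}s_+)+y\,z^{-\Lambda_p}(-as_++bs_-)$ of the generators of the $\alpha$-lattice. This exhibits the desired inclusion, provided $x\in\O_p$, i.e.\ provided $\ord_p(ab'-a'b)\ge\Lambda_p$.

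The main obstacle, and the only genuine computation, is this last estimate. Writing $a'=a+z^{\Lambda_p-D_p}\delta_a$ and $b'=b+z^{\Lambda_p-D_p}\delta_b$ with $\delta_a,\delta_b\in\O_p$, one finds $ab'-a'b=z^{\Lambda_p-D_p}(a\delta_b-b\delta_a)$, whose order is at least $(\Lambda_p-D_p)+\min(\ord_p a,\ord_p b)=\Lambda_p$, since $\delta_a,\delta_b$ are holomorphic and $\min(\ord_p a,\ord_p b)=D_p$. It is precisely here that the parity bookkeeping is indispensable: without the $\sigma$-invariance forcing $\ord_p b=D_p$ and $\ord_p a>D_p$ one could not rule out $\ord_p a=D_p$, and the order estimate would fail. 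Granting it, $x\in\O_p$, the inclusion of lattices holds, and by symmetry the two lattices are equal, proving $\hat{E}_L^{(\Lambda,\alpha)}\cong\hat{E}_L^{(\Lambda,\alpha')}$.
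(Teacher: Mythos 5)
Your mechanism is the same as the paper's: the paper right-multiplies the transition matrix of Definition \ref{def:higher:Hecke} by $\left(\begin{smallmatrix}\phi^{-1}&0\\0&\phi\end{smallmatrix}\right)$ for i) and by a unipotent upper-triangular matrix (combined with the $G_\Lambda$-action) for ii), which is precisely your change of generators $\left(\begin{smallmatrix}y^{-1}&x\\0&y\end{smallmatrix}\right)$ of the local lattice. Part i), including the identification with Definition \ref{defi:dual:Hecke} via $\phi=b^{-1}$, is correct.

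There is, however, a genuine gap in part ii), at the sentence claiming that $D_p$ is even because $D$ is a $\sigma$-Higgs divisor. The divisor $D$ in this lemma is $\div(\alpha)$, the vanishing divisor of the Hecke \emph{parameter}, not the $\sigma$-Higgs divisor of the resulting Higgs bundle: by Proposition \ref{prop:Appl:Hecke} a parameter with $\ord_p(\alpha)=n$ produces a Higgs bundle in $\S_{2n\,p}$, so it is $2n$, not $n$, that is forced to be even. The strata $V_n\subset\Heck_d$ with $n$ odd are non-empty and indispensable --- Lemma \ref{lemm:u-coord:2} treats them separately, and there one has $\ord_p a=n<\ord_p b$ --- and the lemma must cover them, since the equivalence relation defining $\Heck_d$ is exactly relation ii). In that case your bookkeeping inverts: $\ord_p a=\ord_p a'=D_p$ while $\ord_p b,\ord_p b'>D_p$ and need not be equal, so $y=b'/b$ need not be a unit and the two lattices need not coincide (they can genuinely differ, with the isomorphism of bundles not being the identity near $p$). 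Note that your determinant estimate $\ord_p(ab'-a'b)\ge\Lambda_p$ is actually unaffected, since it only uses $\min(\ord_p a,\ord_p b)=D_p$ and not which coordinate attains the minimum; so your remark that the estimate ``would fail'' without the parity is misplaced --- what fails is the invertibility of $y$. Repairing the odd case is not a mere relabelling of $a$ and $b$, because the transition matrix of Definition \ref{def:higher:Hecke} is upper triangular with $b^{-1}$ in the corner: one must either pass to the normal-form representatives of Lemma \ref{lemm:u-coord:2} (where the $b$-coordinate is controlled) or argue as the paper does, combining the unipotent move on the $a$-coordinate with the $G_\Lambda$-action. As written, your justification excludes exactly the odd strata.
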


\begin{proof}
In i), the new transition function of the Hecke transformation in direction of $\phi\alpha$ is given by 
\[ \begin{pmatrix} \phi^{-1} b^{-1} & -\phi az^{-\Lambda_p} \\ 0 & \phi bz^{-\Lambda_p} \end{pmatrix}= \begin{pmatrix} b^{-1} & -az^{-\Lambda_p} \\ 0 & bz^{-\Lambda_p} \end{pmatrix} \begin{pmatrix} \phi^{-1} & 0 \\ 0 & \phi \end{pmatrix}
 \] Hence, the transition functions define isomorphic vector bundles. For ii), let $\phi \in H^0(\Lambda,\O_Y)$, such that $\sigma^*\phi=\phi$. Then the Hecke transformations with respect to $\alpha=as_+ + bs_-$ and $\alpha'=(a + \phi \frac{\lambda}{b})s_++bs_-$ are isomorphic. The isomorphism is given by the gauge transformation
\[ \begin{pmatrix}b^{-1} & -az^{-\Lambda_p} \\ 0 & bz^{-\Lambda_p} \end{pmatrix}\begin{pmatrix} 1 & \phi \\ 0 & 1 \end{pmatrix} = \begin{pmatrix} b^{-1} & (\frac{\phi}{b}z^{\Lambda_p}-a)z^{-\Lambda_p} \\ 0 & bz^{-\Lambda_p}
\end{pmatrix}.
\] This provides the equivalence in the $a$-coordinate. Using the $G_\Lambda$ action one obtains equivalence ii).
\end{proof}

\subsection{Weighted projective spaces}\label{ssect:weighted:proj}
We will obtain a topological model for the compact moduli of Hecke parameters by gluing subsets of weighted projective spaces. Let us recall some basic facts about weighted projective spaces. \\

A weight vector $(i_0, \dots, i_n) \in \N^n$ defines a $\C^*$-action on $\C^{n+1}$ by
\[ \C^* \times \C^{n+1} \rightarrow \C^{n+1}, \quad (\lambda,x_0,\dots,x_n) \mapsto (\lambda^{i_0}x_0, \dots, \lambda^{i_n}x_n).
\] The weighted projective space $\P(i_0,\dots,i_n)$ is defined as the quotient of $\C^{n+1}\setminus\{(0,\dots,0)\}$ by this action. We will denote the equivalence class of $(x_0,\dots,x_n)$ by $(x_0:\dots:x_n)$. 

Weighted projective spaces are complex orbifolds. 
We obtain orbifold charts in the same way one defines affine charts of projective space $\P^n$. For example, for points of the form $(1,x_1,\dots,x_n) \in \C^{n+1}$, the $\C^*$-action restricts to an action of $\Z_{i_0}$ given by 
\[ (1,x_1,\dots,x_n) \mapsto (1,\xi_{i_0}^{i_1}x_1,\dots,\xi_{i_0}^{i_n}x_n),
\] where $\xi_{i_0}$ is a primitive $i_0$-th root of unity. Weighted projective spaces are normal toric complex spaces. In an orbifold chart, the torus action is given by
\[ (1:x_1:\dots:x_n) \mapsto (1:\lambda_1^{i_1}x_1: \dots : \lambda_n^{i_n}x_n)
\] for $(\lambda_1, \dots, \lambda_n) \in (\C^*)^n$. This extends to an analytic action on $\P(i_0,\dots,i_n)$. We call a analytic subspace $Y \subset \P(i_1,\dots,i_n)$ toric, if it is preserved by the torus action.

\subsection{Compact moduli of Hecke parameters}\label{ssec:mod:Heck}
In this section, we want to study the compact moduli of Hecke parameters. To do so we will restrict our attention to the Hecke parameters at a single higher order zero. 

Let $(U,z)$ be holomorphic disc centred at $0 \in \C$ and $\sigma: z \mapsto -z$. Let $d \in \N$ a odd number and $D$ the divisor with coefficient $d$ at zero. Define 
\[ \mathsf{Heck}_d= \left. \left\{ \begin{pmatrix} a \\ b \end{pmatrix} \in H^0(D,\O_U^2) \ \middle| \ \sigma^*a=-a, \ \sigma^*b=b \right\} \middle/ \sim \right. ,	
\] where 
\begin{align*} \alpha=\begin{pmatrix}a \\ b \end{pmatrix} \sim \alpha'=\begin{pmatrix}a' \\ b' \end{pmatrix} \quad \Leftrightarrow  \quad \begin{array}{l} \ord_0(\alpha)=\ord_0(\alpha')=:n \\
\text{and }  \alpha=\alpha' \text{ mod } z^{d-n}.
\end{array}
\end{align*} These are the equivalence classes of relation ii) in Lemma \ref{lemm:equ:ii}. For $0 \leq {n} < \frac{d}{2}$ let 
\[ V_{n}:= \{ \alpha \in \mathsf{Heck}_d \mid  \ord_0(\alpha)=n \}.
\] 
We can understand the quotient of $\Heck_d$ by $G_D$ by gluing subsets, on which we find explicit invariant polynomials. By Proposition \ref{prop:G}, $G_D=\C^* \times H_D$, where
\[ H_D= \{ 1+\phi_2z^2+ \dots \in H^0(D, \O_U) \} 
\] is the maximal unipotent normal subgroup. We will first factor through $H_D$ as orbits of unipotent group actions on affine spaces are closed. The resulting intermediate quotient can be factored through $\C^*$. The subsets $V_n$ will correspond to the strata of the stratification \ref{theo:stratifi}.

\begin{lemm}\label{lemm:u-coord:2} Let $0 \leq n \leq \frac{d-3}{2}$. There is a holomorphic map $u_n: V_n \rightarrow \P^{\frac12(d-2n-1)}$ invariant under the $G_D$-action and separating the orbits. Its image is an affine chart of $\P^{\frac12(d-2n-1)}$. For $n=\frac{d-1}{2}$, the $G_D$ action identifies $V_n$ to a point.
\end{lemm}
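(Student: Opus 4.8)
The plan is to represent each class in $V_n$ by factoring out its common zero at $0$ and then to exploit the parity constraints $\sigma^*a=-a$, $\sigma^*b=b$ to single out one unit-valued component, whose reciprocal ratio with the other component will furnish the invariant $u_n$. First I would fix a representative $\alpha=\begin{pmatrix} a \\ b\end{pmatrix}$ with $\ord_0(\alpha)=n$, noting that $a$ is odd (only odd powers of $z$) and $b$ is even. Since an odd germ has odd order and an even germ even order, the parity of $n$ decides which component realises the order: if $n$ is even then $\ord_0 b=n$ with $b$ a unit after dividing by $z^n$, while $\ord_0 a\geq n+1$; if $n$ is odd the roles of $a$ and $b$ are exchanged. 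Writing $\alpha=z^n\tilde\alpha$ with $\tilde\alpha=\begin{pmatrix}\tilde a \\ \tilde b\end{pmatrix}$, exactly one entry of $\tilde\alpha$ is a unit (the even entry when $n$ is even, the odd entry when $n$ is odd) and the other is an odd germ of positive order. I would define $u_n$ as the ratio of the non-unit entry over the unit entry, which in either case is an odd polynomial modulo $z^{d-2n}$ (as $\alpha$ lives modulo $z^{d-n}$, the factor $\tilde\alpha$ lives modulo $z^{d-2n}$). The odd monomials $z,z^3,\dots,z^{d-2n-2}$ span a space of dimension $\tfrac12(d-2n-1)$, which I identify with the standard affine chart $\C^{\frac12(d-2n-1)}\subset\P^{\frac12(d-2n-1)}$.

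Invariance under $G_D$ is then immediate: an element $\phi\in G_D$ is an even unit, so passing from $\alpha$ to $\phi\alpha$ preserves the factorisation $z^n\tilde\alpha$ up to multiplying both $\tilde a$ and $\tilde b$ by $\phi$, leaving the defining ratio unchanged; holomorphicity of $u_n$ follows since it is a truncated division by a unit. For surjectivity onto the whole affine chart I would exhibit, for each odd polynomial $u$ of degree $<d-2n$, the explicit representative $z^n\begin{pmatrix} u \\ 1 \end{pmatrix}$ when $n$ is even (and $z^n\begin{pmatrix}1\\ u\end{pmatrix}$ when $n$ is odd), which has order exactly $n$ and maps to $u$.

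The decisive point for injectivity on orbits is the bookkeeping of the two truncations. Multiplying $\alpha$ (of order $n$, taken modulo $z^{d-n}$) by $\phi$ only depends on $\phi$ modulo $z^{d-2n}$, so $G_D$ acts on $V_n$ through its truncation to even units modulo $z^{d-2n}$. Given $\alpha,\alpha'$ with $u_n(\alpha)=u_n(\alpha')$, say $\tilde a/\tilde b=\tilde a'/\tilde b'$ in the even case, I would set $\phi:=\tilde b'/\tilde b$, an even unit modulo $z^{d-2n}$, and check directly that $\phi\tilde\alpha=\tilde\alpha'$, hence $\phi\cdot\alpha=\alpha'$; this shows $u_n$ separates the $G_D$-orbits. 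Finally, the boundary case $n=\tfrac{d-1}{2}$ gives $d-2n=1$, so there are no odd monomials below degree $1$ and the target $\P^0$ is a single point. Concretely $\tilde\alpha$ is then determined modulo $z$, i.e.\ by the single nonzero value of its unit entry, and the factor $\C^*\subset G_D$ rescales this value freely, so $V_n$ is one $G_D$-orbit and collapses to a point.

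I expect the main obstacle to be organisational rather than conceptual: keeping the parity distinction ($n$ even versus odd) and the two nested truncations (modulo $z^{d-n}$ for elements of $\Heck_d$, modulo $z^{d-2n}$ for the effective $G_D$-action) consistent throughout, so that $u_n$ is simultaneously well defined, holomorphic, $G_D$-invariant, surjective onto the affine chart, and orbit-separating. Once the uniform description "ratio of the non-unit component over the unit component after dividing by $z^n$" is in place, each of these properties reduces to a short truncated power-series computation.
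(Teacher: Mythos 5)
Your proposal is correct and follows essentially the same route as the paper: the parity of $n$ singles out which of the two components realises the exact order $n$, and the invariant is the truncated ratio of the non-unit component over the unit component after dividing out $z^n$ (the paper packages this as a unique $H_D$-normal form followed by a $\C^*$-quotient, but in the chart where the unit coefficient equals $1$ its coordinates are exactly your $u_n$). Your explicit choice of $\phi=\tilde b'/\tilde b$ for orbit separation and of the representative $z^n\begin{pmatrix}u\\1\end{pmatrix}$ for surjectivity just makes precise what the paper leaves implicit.
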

\begin{proof} 
Let us assume $n \leq \frac{d-3}{2}$ is even. Every $\alpha \in V_n$ has a unique representative of the form 
\[ \begin{pmatrix} \frac{a_{n+1}z^{n+1}+\dots+a_{d-2}z^{d-2}}{1+\frac{b_{n+2}}{b_n}z^2+ \dots + \frac{b_{d-1}}{b_n}z^{d-1}} 	\\ b_n z^n
\end{pmatrix} \mod z^{d-n}
\] with $b_n \neq 0$ in its $H_D$-orbit. In particular, $b_n$ and the $(n+1)$-th, \dots, $(d-n-2)$-th derivatives of the fraction in the first coordinate define $\tfrac12(d-2n+1)$ holomorphic functions invariant under the $H_D$-action. This defines a map
\[ V_n \rightarrow \C^* \times \C^{\frac12(d-2n-1)}.
\]The $\C^*$-action acts with weight $1$ on every coordinate. By factoring through $\C^*$, we obtain the desired map to an affine chart of $\P^{\frac12(d-2n-1)}$.
For $n$ odd, every $\alpha \in V_n$ has a unique representative 
\[ \begin{pmatrix} a_{n}z^n \\ \frac{b}{1+\frac{a_{n-2}}{a_n}z^2+\dots + \frac{a_{d-2}}{a_2}z^{d-2}}
\end{pmatrix}  \mod z^{d-n} . 
\] By recording $a_n$ and the $(n+1)$-th, \dots, $(d-n-2)$-th derivative of the second coordinate and again factoring through the $\C^*$-action we obtain an invariant map 
\[ V_n \rightarrow \P^{\frac12(d-2n+1)}.
\] As $a_n \neq 0$, the image is an affine chart. If $n=\frac{d-1}{2}$ is odd, the only $H_D$-invariant function on $V_n$ is $a_n \neq 0$. Hence, $V_n$ is identified to a point by the $\C^*$-action. Similarly, for $n=\frac{d-1}{2}$ even.
\end{proof}

It seems impossible to find enough invariant functions to define the global quotient $\Heck_d/G_D$. However, we obtain a topological model by gluing the quotients of subsets, which are easier to understand.

\begin{prop}\label{prop:N_l^n} There exist finitely many locally closed connected subsets $N_i \subset \mathsf{Heck}_d$, $i \in I$, such that 
\begin{itemize}
\item[i)] for every $n < l \leq \frac{d-1}{2}$ and $\alpha \in V_l$, there exist $i \in I$, such that $\alpha \in N_i$ and $N_i \cap V_n \neq 0$,
\item[ii)] there exist algebraic maps $N_i \rightarrow \P(1,1, 2,3, \dots, m_{i})$ invariant by the action of $G_D$, which separate the $G_D$-orbits. Their images are toric subspaces and contain no singular points. 
\end{itemize}
\end{prop}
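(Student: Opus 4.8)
The plan is to construct the pieces $N_i$ by slicing $\Heck_d$ according to the leading behaviour of the germ $\alpha=\begin{pmatrix}a\\b\end{pmatrix}$, and then to carry out the quotient by $G_D=\C^*\times H_D$ in the two stages indicated before Lemma \ref{lemm:u-coord:2}: first mod out the unipotent part $H_D$, whose orbits on an affine space are closed, and then the residual scaling $\C^*$. First I would fix global coordinates by writing $a=a_1z+a_3z^3+\dots+a_{d-2}z^{d-2}$ and $b=b_0+b_2z^2+\dots+b_{d-1}z^{d-1}$, so that $\Heck_d=\bigsqcup_n V_n$ with the truncation depth $d-n$ depending on the stratum. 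The basic invariant is the meromorphic ratio $u=a/b$ (or $v=b/a$), which is fixed by all of $G_D$, since $H_D$ and $\C^*$ act on $a$ and $b$ by the same even unit resp. scalar; on a single $V_n$ this recovers the affine charts of Lemma \ref{lemm:u-coord:2}. It is precisely the stratum-dependence of the truncation $d-n$ that will force the compactification across strata to be genuinely weighted rather than an ordinary projective space.

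Each $N_i$ I would take to be the locally closed set obtained by prescribing which leading coefficients of $a$ and $b$ are permitted to be non-zero, chosen so that its top stratum is a single $V_l$ and its degenerations---obtained by letting the designated leading coefficient vanish---fill out the lower strata $V_n$, $n<l$, that one wishes to connect; the finitely many admissible incidence patterns among the finitely many strata $V_n$, $n<d/2$, produce finitely many such $N_i$, each connected since it is swept out by a connected family of normal forms. On each slice I would integrate the $H_D$-action explicitly: as $H_D$ is unipotent and acts on an affine space, the geometric quotient exists and is again affine, and I would present it by the invariant functions of Lemma \ref{lemm:u-coord:2}, namely the leading coefficient of whichever of $a,b$ dominates together with the successive coefficients of the normalised ratio. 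The residual $\C^*$ then acts on these invariants, and the crucial computation is that, because the normalisation killing the unipotent directions was performed modulo the stratum-dependent depth $z^{d-n}$, the induced weights are $1,1,2,3,\dots,m_i$ rather than all equal to one; dividing out $\C^*$ yields the asserted algebraic map $N_i\to\P(1,1,2,3,\dots,m_i)$.

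That this map separates $G_D$-orbits follows by combining orbit separation for $H_D$ from the first stage with the identification of the $\C^*$-quotient of the invariant affine space as the weighted projective chart. For the remaining geometric claims in (ii) I would note that the invariant coordinates are monomial in the original data, so that the image is defined by torus-equivariant (monomial) relations and is therefore a toric subspace in the sense of Section \ref{ssect:weighted:proj}; and that at least one of the two weight-one coordinates is non-vanishing on the image, which confines it to the smooth locus of $\P(1,1,2,3,\dots,m_i)$, where the isotropy of the $\C^*$-action is trivial. Property (i) should then be immediate from the design of the slices. The main obstacle I anticipate is the weight computation in the second stage: one must carefully track how dividing by the even-unit normalisation interacts with the variable truncation depth $d-n$ and with the scaling, verifying that this interaction produces exactly the consecutive weights $2,3,\dots,m_i$, and that across the finitely many $N_i$ these charts glue to cover $\Heck_d/G_D$ while retaining the connectivity required in (i).
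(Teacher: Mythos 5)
Your overall strategy coincides with the paper's: the sets $N_i$ are cut out by prescribing which coefficients of $a$ and $b$ vanish and which leading one does not, the quotient is taken in two stages (first the unipotent part $H_D$, then $\C^*$), and the invariants are the leading coefficient together with the cleared-denominator derivatives of the normalised ratio. However, two points in your outline are genuine gaps rather than routine verifications. First, the computation you defer as ``the main obstacle'' is the actual content of the proof: for $l$ odd and $n$ even the paper writes every $H_D$-orbit in $N_l^n$ in a normal form whose second component has denominator $1+\tfrac{x_{l+2}}{x_l}z^2+\dots$, and the $n$-th through $(d-l-2)$-th derivatives, after clearing that denominator, are homogeneous polynomials in the $x_j$ of degrees $1,2,\dots,\tfrac12(d-l-n)$ (for instance $x_{l+2}$, then $x_{l+4}x_l+x_{l+2}^2$, and so on). Since $\C^*$ acts with weight one on each $x_j$, these degrees \emph{are} the weights; they do not arise from an ``interaction of the truncation depth with the scaling'', and in particular your description of the invariant coordinates as monomial in the original data is false --- they are non-monomial homogeneous polynomials, so the toric property of the image must be checked from their explicit form rather than deduced from monomiality.

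Second, your uniform recipe of one slice per incidence pattern with one weighted projective space per slice breaks down when $n$ and $l$ have the same parity. In that case the leading term of the degenerate elements in $V_l$ sits in the same component ($a$ or $b$) as the one governing $V_n$, and the derivative invariants of a single normal form no longer separate orbits; the paper must refine $N_l^n$ into the sets $\prescript{k}{}{N}_l^n$ indexed by the first non-vanishing coefficient $x_k$ of the opposite parity, each mapping to a different target $\P(1,1,2,\dots,\tfrac12(d-k-n))$. Your construction as stated produces no such refinement, so property (ii) would fail for those pairs $(n,l)$. The remaining items --- orbit separation by recovering the $u$-coordinates of Lemma \ref{lemm:u-coord:2} on both $N_i\cap V_n$ and $N_i\cap V_l$, and smoothness of the image because the weight-one coordinate coming from $x_l\neq 0$ is non-vanishing --- do match the paper's argument.
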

\begin{proof} For $n \leq l \leq \frac{d-1}{2}$ let 
\begin{align*} {N}_l^n:=H_D \cdot \{ &a= x_1z+ x_3z^3 + \dots , b=x_0z + x_2 z^2 + \dots \mod z^{d-n} \mid  \\ 
&x_0=  \dots= x_{n-1}= x_{n+1}=\dots =x_{l-1}=0, x_l \neq 0 \}.
\end{align*}
Let $\alpha=(a,b) \in N_l^n$. If $x_n \neq 0$, we have $\ord_0(\alpha)=n$, hence $\alpha \in V_n$. If $x_n=0$, we have $\alpha \in V_l$. So $N_l^n$ describes a locally closed subset of $V_n$ containing $V_l$ in its closure. 
We first want to find invariant polynomials by the $H_D$-action and then take the quotient by $\C^*$. Let $l$ be odd and $n$ be even, then
\begin{align*} a&=x_lz^l + \dots + x_{d-n-2}z^{d-n-2}, \\ b&= x_nz^n +x_{l+1}z^{l+1}+ \dots + x_{d-n-1}z^{d-n-1}
\end{align*} Every orbit in $N_l^n$ has a representative of the form
\begin{align} \begin{pmatrix} x_lz^{l} \\ \frac{x_nz^n+x_{l+1}z^{l+1}+ \dots + x_{d-n-1}z^{d-n-1}}{1+ \tfrac{x_{l+2}}{x_l}z^2+ \dots +\tfrac{x_{d-n-2}}{x_l}z^{d-n-2-l}}
\end{pmatrix} \mod z^{d-n}. \label{equ:4.1}
\end{align} The $n$-th, $n+2$-th, \dots, $(d-l-2)$-th derivative give (after multiplying with their common divisor) 
\[ \tfrac12 (d-l-n)
\] homogeneous polynomials of degree 
\[ 1, 2, \dots, \tfrac12 (d-l-n).
\] The representative in \ref{equ:4.1} is not quite unique because if we act by $(1+z^{d-n-l}\phi) \in H_D$ the $a$-coordinate stays unchanged modulo $z^{d-n}$. However, as we only record up to the $d-l-2$-th derivative of the $b$-coordinate these homogeneous polynomials are invariant under the $H_D$-action. Furthermore, it is easy to see that they are independent elements of the algebra of $H_D$-invariant polynomials on $N_l^n$ because the $n+2k$-th derivative is the first one to contain $x_{l+2k}$. By recording $x_l$ in addition, we have $\tfrac12 (d-l-n+2)$ independent homogeneous polynomials. This defines a map
\[ N_l^n \rightarrow \C^{\tfrac12 (d-l-n+2)}
\] invariant by the $H_D$-action. Factoring through the $\C^*$-action we obtain the desired algebraic map
\[ N_l^n \rightarrow \P(1,1,2, 3, \dots, \frac{d-l-n}{2}).
\]
To show that it separates orbits we first consider $N_l^n \cap V_l$, i.e.\ $x_n = 0$. Here every element has a unique representative $\frac{b}{a} \mod z^{d-l}$. Those are determined by the invariant polynomials induced from the derivatives of order $l+1$ up to $d-l-2$. Instead on $N_l^n \cap V_n$ we can uniquely represent each element by a $u$-coordinate, see Lemma \ref{lemm:u-coord:2}. This $u$-coordinate can be recovered from the invariant polynomials. The $(n+2)$-nd derivative encodes $x_{l+2}$, the $(n+4)$-th $x_{l+4}$ etc. and the $(d-l-2)$-th encodes $x_{d-n-2}$. So the map separates orbits. \\
As $x_l \neq 0$ we see that the image is contained in
\[ \{(y_0: \dots : y_{\tfrac12 (d-l-n)}) \in \P(1,1,\dots, \tfrac12 (d-l-n)) \mid y_0 \neq 0 \}.
\] This subset contains no singularity of the weighted projective space. Furthermore, by the explicit description of the homogeneous polynomials it is easy to verify that the image is closed under the torus action of $(\C^*)^{\tfrac12 (d-l-n)}$. \\
Now, let us consider the case of even $n$ and even $l$. Here we have to take a finer decomposition. Let $k > l$ a odd number then
\begin{align*} \prescript{k}{}{N}_n^l:=H_D \cdot &\left\{  \begin{array}{l} a= x_1z+ x_3z^3 + \dots + x_{d-n-2}z^{d-n-2} \\ b=x_0z + x_2 z^2 +\dots + x_{d-n-1}z^{d-n-1} \end{array} \middle\vert \right.  \\ 
 & \left. \begin{array}{l} x_0=  \dots= x_{n-1}= x_{n+1}=\dots =x_{l-1}=0, x_l \neq 0, \\
x_{l+1}=x_{l+3}=\dots= x_{k-2}=0, x_k \neq 0 \end{array} \right\}.
\end{align*} 
Clearly
\[ \bigcup\limits_{\text{odd } k \geq l} \prescript{k}{}{N}_n^l=N_n^l.
\] So with these subsets we still satisfy property i). 
For fixed $k$ we proceed as before by computing the derivatives of order $n$ up to $(d-k-2)$ of
\[ \frac{x_nz^n+x_{l}z^{l}+ \dots + x_{d-1}z^{d-1}}{1+ \tfrac{x_{k+2}}{x_k}z^2+ \dots +\tfrac{x_{d-2}}{x_k}z^{d-2-l}}.
\] They define a map 
\[ \prescript{k}{}{N}_l^n \rightarrow \P(1,1,2,3 \dots, \tfrac 12 (d-k-n)).
\] invariant by the action of $G_D$. For $x_n \neq 0$ we can recover the $u$-coordinate of $\prescript{k}{}{N}_l^n \cap V_n$ as above. If $x_n = 0$ the $u$-coordinate of the lower stratum is now given by $\frac{a}{b} \mod z^{d-l}$. We recover $a_{k+2}$ from the $l+2$-th derivative, $a_{k+4}$ from the $l+4$-derivative till $a_{d-l-2}$ from the $d-k-2$-th derivative. These uniquely defines the $u$-coordinate on $V_l$. With the same argument as above the image contains no singular points and is closed under the torus action. \\
When $n$ is odd we can obtain the same results by changing the role of $a$ and $b$.
\end{proof}

\begin{theo}\label{theo:moduli_Hecke}
The quotient of $\Heck_d$ by the action of $G_D$ is a union of toric subspaces of weighted projective spaces glued algebraically along torus orbits.
\end{theo}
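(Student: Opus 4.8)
The plan is to assemble $\Heck_d/G_D$ from the local quotients furnished by Proposition \ref{prop:N_l^n} and then to verify that the gluing between these pieces is compatible with the toric structure. Since $G_D = \C^* \times H_D$ is non-reductive, there is no global GIT quotient available; the strategy is instead to work chart by chart, using the orbit-separating invariant maps of property (ii) to realise each local quotient as an honest toric subspace, and to patch these together.

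First I would record that the pieces $N_l^n$ (for $n \le l \le \tfrac{d-1}{2}$), refined by the $\prescript{k}{}{N}_l^n$ in the even--even case, cover $\Heck_d$: the diagonal pieces $N_n^n$ recover the open strata $V_n$, while property (i) of Proposition \ref{prop:N_l^n} guarantees that every point of a higher stratum $V_l$ lies in some off-diagonal piece meeting $V_n$. By property (ii), each invariant algebraic map $N_i \to \P(1,1,2,\dots,m_i)$ separates the $G_D$-orbits, hence descends to a bijection of $N_i/G_D$ onto its image $Z_i$, which is a smooth toric subspace of the weighted projective space. This realises every local quotient as one of the desired toric subspaces.

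Next I would analyse the overlaps. Each $N_i$ is contained in $V_{n_i} \cup V_{l_i}$, with the open part $V_{n_i}$ carrying the top stratum, so two pieces can meet only along a common stratum $V_n$. On $N_i \cap V_n$ both invariant systems specialise to the intrinsic $u$-coordinate of $V_n$ from Lemma \ref{lemm:u-coord:2}; consequently the two coordinate descriptions of a point of $V_n$ are related by reading off the same $u$-coordinate from two collections of homogeneous invariant polynomials, so the transition is algebraic. Because these polynomials are homogeneous and the weighted-projective torus acts by rescaling the homogeneous coordinates, the transition is torus-equivariant, and the overlap locus is cut out by the vanishing of homogeneous coordinates, hence is a union of torus orbits. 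Gluing the $Z_i$ along these identifications exhibits $\Heck_d/G_D$ as a union of smooth toric subspaces glued algebraically along torus orbits.

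The hard part is this overlap comparison: one must check that the two descriptions of a point of $V_n$, coming from $N_i$ and from $N_j$, indeed agree after an algebraic, torus-equivariant change of coordinates, and that the identification takes place precisely along unions of torus orbits rather than merely along torus-invariant subsets. Via Lemma \ref{lemm:u-coord:2} together with the explicit invariant polynomials read off the rational expression in \ref{equ:4.1}, this reduces to matching finite jets of those expressions, which I expect to be a bookkeeping verification once the indexing of the pieces $N_l^n$ and $\prescript{k}{}{N}_l^n$ is fixed.
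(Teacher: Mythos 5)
Your proposal is correct and follows essentially the same route as the paper: glue the orbit-separating quotient maps of Proposition \ref{prop:N_l^n} along the strata $V_l$, and verify via Lemma \ref{lemm:u-coord:2} that on each overlap both systems of homogeneous invariants recover the same $u$-coordinate, so the transition is polynomial and the overlap images are unions of torus orbits. The only difference is that the paper actually carries out the jet-matching computation you defer (for the representative case $N_l^n\cap V_n$ with $n$ even, $l$ odd), exhibiting the explicit images in $\P^{\frac12(d-2n-1)}$ and $\P(1,1,2,\dots,\frac{d-l-n}{2})$.
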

\begin{proof}
Most of the work was already done in the previous lemma by introducing the sets $N_i \subset \Heck_d$ and the $G_D$-invariant, orbit-separating maps 
\[ N_i \rightarrow \P(1,1,2,3, \dots ,m_i).
\] These maps identify the quotients $N_i/G_D$ with toric subspaces of weighted projective spaces. 
We can build a model for the quotient $\Heck_d/G_D$ by gluing together this subsets $N_i/G_D$ along their intersection. It is left to show that this happens algebraically along torus orbits. 
It is enough to show that for all $i \in I$ and $0 \leq l \leq \frac{d-3}{2}$ the intersection $N_i \cap V_l$ is mapped onto a toric subspace under the two maps to weighted projective spaces and that the coordinate change is polynomial. \\
We will show this for $N_l^n\cap V_n$ with $n < l \leq \frac{d-3}{2}$, $n$ even and $l$ odd. For the other cases, it works in the same way. Denote by 
\begin{align*}  u_n: V_n  \rightarrow \P^{\tfrac12(d-2n-1)}, \qquad f_l: N_l^n \rightarrow \P(1,1,2,\dots,\frac{d-l-n}{2})
\end{align*} the $G_D$-invariant maps defined in Lemma \ref{lemm:u-coord:2} and Proposition \ref{prop:N_l^n}. Let $\alpha \in N_l^n\cap V_n$. We can choose a representative of the form
\[ \begin{pmatrix} x_lz^l+x_{l+2}z^{l+2}+ \dots + x_{d-n-2}z^{d-n-2}  \\ z^n
\end{pmatrix}.
\] The image under $u_n$ is given by 
\[ (1:0:\dots:0: x_l:x_{l+2}:x_{d-n-2}) \in \P^{\tfrac12(d-2n-1)}.
\] So $u_n(N_l^n\cap V_n)$ is clearly a union of torus orbits. On the other hand, we can explicitly compute the values of the invariant polynomials defining $f_l$ and obtain
\begin{align*} &\left(  x_l \ : \ 1 \ : \ x_{l+2}\ :\ x_{l+4}x_l+x_{l+2}^2\ :\ \dots\ :\ x_{d-n-2}(x_l)^{\tfrac12 (d-l-2-n)}+ \dots \right) \\
&\in \P(1,1,2,\dots,\frac{d-l-n}{2}) .
\end{align*} This is again a union of torus orbits. It is easy to check that the gluing maps are polynomial.
\end{proof}

\begin{figure}
\begin{center}
\includegraphics[scale=0.45]{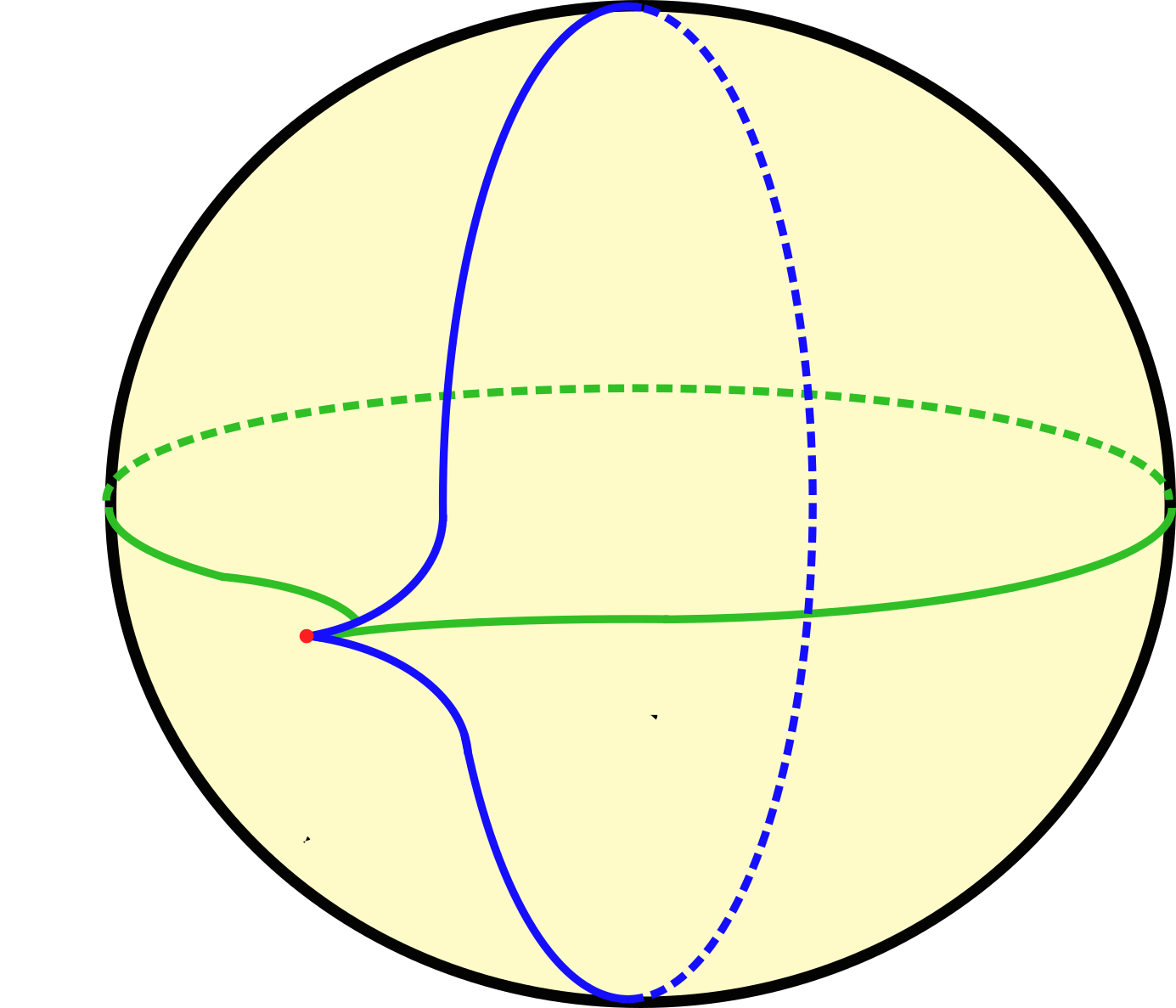}
\caption{The compact moduli of Hecke parameters for $d=5$: $V_0$ yellow, $V_1$ blue, $V_2$ red, $N_2^0$ green. \label{fig:Heckemoduli}}
\end{center}
\end{figure}

\begin{exam} Let us take a closer look at the compact moduli of Hecke parameters for $d=5$. This example is sketched in Figure \ref{fig:Heckemoduli}. 

In this case, we have three strata of Hecke parameters by Lemma \ref{lemm:u-coord:2}: $\Heck_5=V_0 \cup V_1 \cup V_2$, where $V_0\cong \C^2, V_1 \cong \C$, and $V_0 \cong \{0\}$. Proposition \ref{prop:N_l^n} defined three subset $N_l^n$, where $N_1^0= V_0 \cup V_1$ is of dimension $2$, $N_2^1=V_1 \cup V_2$ is of dimension $1$, and $N_2^0$ is a one-dimensional subset of $V_0$ with $V_2$ in its closure. 

Because the glueing maps are algebraic, we can give $\Heck_5/G_D$ the structure of a scheme using a pushout-construction (cf. \cite{Schw05}). This is the universal scheme structure $(\Heck_5/G_D, \O)$, such that for any other scheme structure $(\Heck_5/G_D, \O')$ the inclusions of the subsets $V_i$ and $N_l^n$ into  \linebreak $(\Heck_5/G_D, \O')$ factor through $(\Heck_5/G_D, \O)$. However, because we do not glue along closed subschemes, this scheme structure has some pathologies as explained in \cite{Schw05} Example 3.2. In particular, it is not the scheme structure we will obtain below by embedding the space of Hecke parameters in the singular Hitchin fibre. 
\end{exam}

\subsection{Global fibring over twisted Prym varieties}\label{ssec:glob:odd}
We will show that the singular fibres with locally irreducible spectral curve fibre over twisted Prym varieties with fibres given by the compact moduli of Hecke parameters. As a first step we identify the twisted Prym varieties of all strata.

\begin{defi}\label{def:eigtw} Let $\M_\lambda^\sigma$, such that $\Fix(\sigma)=Z(\lambda)$. Define
\begin{align*} \Eig_{\mathsf{tw}}: \M_\lambda^\sigma= \bigcup\limits_D \mathcal{S}_D \rightarrow \Pr_\Lambda(Y), \quad (E,\Phi) \mapsto \Eig_D(E,\Phi)(-\tfrac12 D).
\end{align*}
\end{defi}

\begin{rema} This is well-defined, because $D$ has only even coefficients. When $\Fix(\sigma) \neq Z(\lambda)$, then there is no canonical way to identify the twisted Prym varieties of different strata. See Section \ref{sec:global_even} for more details.
\end{rema}

We defined two kinds of $u$-coordinates: First in Proposition \ref{prop:u-coord}, when parametrising the strata and second in Lemma \ref{lemm:u-coord:2}, when parametrising $V_n \subset \Heck_d$. They are equivalent in the following way.

\begin{prop}\label{prop:Appl:Hecke} Let $p \in Y$ and $\Lambda=d \cdot p$. Let $0 \leq n \leq \frac{d-3}{2}$ and $(a,b) \in V_n \subset \Heck_d $. Let $L \in \Pr_{\Lambda}(Y)$, choose a frame $s$ of $L$ in neighbourhood of $p$ and let $\alpha=as_++bs_-\in H^0(\Lambda,(E,\hat{\sigma}))$. Then 
\[ (\hat{E}_L^{(\Lambda,\alpha)},\hat{\Phi}_L^{(\Lambda,\alpha)}) \in \S_{2n \, p},
\] its image under $\Eig_{2n\ p}$ is $L(n\ p)$ and the $u$-coordinate defined in Proposition \ref{prop:u-coord} is given by
\[ u_n(\alpha) \in \C^{\frac12 (d-2n-1)} \subset \P^{\frac12 (d-2n-1)},
\] with $u_n$ defined in Lemma \ref{lemm:u-coord:2}. 
\end{prop}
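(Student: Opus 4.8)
The plan is to reduce all three assertions to a single explicit computation in the coordinate $(U,z)$ with $\sigma\colon z\mapsto -z$, normalising $\lambda=z^{d}t$ for a local frame $t$ of $p^*M$ (so $\Lambda_p=d$). In the frame $s_+,s_-$ the model Higgs field of $(E_L,\Phi_L)$ is $\Phi_L=\left(\begin{smallmatrix}0&\lambda\\ \lambda&0\end{smallmatrix}\right)$, since $\Phi_Ls_+=\lambda s_-$ and $\Phi_Ls_-=\lambda s_+$. First I would conjugate $\Phi_L$ by the transition function of Definition \ref{def:higher:Hecke},
\[ \hat\psi_{01}=\begin{pmatrix} b^{-1} & -az^{-d}\\ 0 & bz^{-d}\end{pmatrix},\qquad \hat\psi_{01}^{-1}=\begin{pmatrix} b & a\\ 0 & b^{-1}z^{d}\end{pmatrix}, \]
to read off the Higgs field of $(\hat E_L^{(\Lambda,\alpha)},\hat\Phi_L^{(\Lambda,\alpha)})$ in the new frame, namely $\hat\Phi_L^{(\Lambda,\alpha)}=\hat\psi_{01}^{-1}\Phi_L\hat\psi_{01}=\left(\begin{smallmatrix} ab^{-1}z^{d} & b^{2}-a^{2}\\ b^{-2}z^{2d} & -ab^{-1}z^{d}\end{smallmatrix}\right)t$. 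Although $\hat\psi_{01}$ has a pole at $p$, it is holomorphic and invertible on the overlap $U\setminus\{p\}$, which is all that Definition \ref{def:higher:Hecke} requires; I would then verify directly that every entry of $\hat\Phi_L^{(\Lambda,\alpha)}$ extends holomorphically across $p$.

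For $n$ even I would use the $G_D$-normal form of Lemma \ref{lemm:u-coord:2}, in which $b=z^{n}$ (after the $\C^*$-scaling) and $a$ is the odd fraction of order $\geq n+1$. Substituting, the off-diagonal entry $b^{2}-a^{2}=z^{2n}-a^{2}$ has order exactly $2n$ while all remaining entries have strictly larger order, so $\div(\hat\Phi_L^{(\Lambda,\alpha)})=2n\,p$ and hence $(\hat E_L^{(\Lambda,\alpha)},\hat\Phi_L^{(\Lambda,\alpha)})\in\mathcal{S}_{2n\,p}$; this is the first claim. For the eigenline bundle I would compute the $(+\lambda)$-eigenvector in the new frame as $\hat\psi_{01}^{-1}(1,1)=(z^{n}+a,\;z^{d-n})$, which vanishes to order $n$ at $p$. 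Dividing by $z^{n}$ gives a local generator at $p$ that differs from the generator $s_++s_-$ of the $(+\lambda)$-eigenline $L\subset E_L$ over $U\setminus\{p\}$ precisely by a pole of order $n$ at $p$. Since away from $p$ the $(+\lambda)$-eigenline of $\hat\Phi_L^{(\Lambda,\alpha)}$ is still $L$, this shows $\Eig_{2n\,p}(\hat E_L^{(\Lambda,\alpha)},\hat\Phi_L^{(\Lambda,\alpha)})=L(n\,p)$, the second claim.

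For the last claim I would compare $\hat\Phi_L^{(\Lambda,\alpha)}$ with the canonical model (\ref{equ:loc:form1})--(\ref{equ:loc:form2}) of a Higgs bundle in $\mathcal{S}_{2n\,p}$. Substituting $u'=-a/b$ together with $\Lambda_p=d$, $D_p=2n$ into that model reproduces the matrix computed above term by term (with the sign convention of (\ref{equ:loc:form1})), so by Theorem \ref{theo:strata:Hecke-bundle:odd} the Hecke parameter of $(\hat E_L^{(\Lambda,\alpha)},\hat\Phi_L^{(\Lambda,\alpha)})$ over $L(n\,p)$ is $u'=-a/b\bmod z^{d-2n}$, which is exactly the $u$-coordinate of Proposition \ref{prop:u-coord} for the stratum $\mathcal{S}_{2n\,p}$ (where $N_p=d-2n$). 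Finally I would expand $-a/b=-az^{-n}$ in the normal form of Lemma \ref{lemm:u-coord:2}: its coefficients of $z,z^{3},\dots,z^{d-2n-2}$ are, up to the projective normalisation by $b_n$ and the sign fixed by the eigenvalue convention, precisely the derivatives of the fraction $a$ recorded by $u_n$, i.e. the image of $u_n(\alpha)$ in the affine chart $\C^{\frac12(d-2n-1)}\subset\P^{\frac12(d-2n-1)}$. This identifies the two coordinates. The case $n$ odd follows verbatim after exchanging the roles of $s_+$ and $s_-$ (equivalently of $a,b$ and of the eigenvalues $\pm\lambda$), using the corresponding normal form of Lemma \ref{lemm:u-coord:2}.

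The main obstacle I anticipate is bookkeeping rather than conceptual. The delicate points are: reconciling the two a priori different coordinates — the quotient $a/b\bmod z^{N}$ of Proposition \ref{prop:u-coord} and the recorded derivatives of Lemma \ref{lemm:u-coord:2} — and pinning down the precise normalisation (which affine chart, and the overall sign inherited from (\ref{equ:loc:form1})); justifying that the pole of $\hat\psi_{01}$ at $p$ does no harm, i.e. that Definition \ref{def:higher:Hecke} genuinely produces a Higgs bundle extending holomorphically over $p$ for every $\alpha\in V_n$; and confirming that the order-$n$ vanishing of the eigenvector translates into the twist $L(n\,p)$ compatibly with the even-coefficient condition on $\sigma$-Higgs divisors at $\Fix(\sigma)$.
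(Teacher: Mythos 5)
Your proposal is correct and follows essentially the same route as the paper: conjugate $\Phi_L$ by $\hat\psi_{01}$ to get the explicit Higgs field (the paper's formula (\ref{equ:Higgsfield})), read off the eigensection $\hat\psi_{01}^{-1}(1,1)^{T}=(b+a,\,z^{d}b^{-1})^{T}$ whose vanishing order $n$ gives both the stratum $\S_{2n\,p}$ and the twist $L(n\,p)$, and then identify the residual Hecke parameter in the normal form $b=z^{n}$. The only cosmetic difference is in the last step — the paper decomposes the rescaled eigensection $\tilde s$ into $s_{\pm}$ and reads off $u=\tilde a/\tilde b$ directly from Proposition \ref{prop:u-coord}, whereas you match the matrix against (\ref{equ:loc:form1}); your sign $u=-a/b$ versus the paper's $+\tilde a/\tilde b$ reflects a sign the paper itself is inconsistent about between (\ref{equ:loc:form1}) and (\ref{equ:loc:form2}), and you correctly flag this normalisation as the delicate point.
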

\begin{proof}
The Higgs field of the Hecke transformation at $p$ is given by
\begin{align} \hat{\Phi}_L=\hat{\Phi}_L^{(\Lambda,\alpha)}=\hat{\psi}_{01}^{-1} \Phi_L \hat{\psi}_{01} =\begin{pmatrix} \frac{a}{b}z^d & b^2-a^2 \\ \frac{z^{2d}}{b^2} & -\frac{a}{b}z^d \end{pmatrix} \d z \label{equ:Higgsfield}
\end{align} with respect to the induced frame on $\hat{E}^{(\Lambda,\alpha)}$.
A section of the eigenline bundle $L$ at $p$ is given by 
\[ s=\begin{pmatrix} b+a \\ z^db^{-1}
\end{pmatrix}.
\]  Let $s=z^{\ord_p(a+b)}\tilde{s}$, then $\tilde{s}$ defines a non-vanishing section of the eigenline bundle $\hat{L} = \Ker(\hat{\Phi}_L-\lambda \id_{\hat{E}_L})$. 
In particular, $\hat{L}=L(np)$ and
\[ (\hat{E}_L^{(\Lambda,\alpha)},\hat{\Phi}_L^{(\Lambda,\alpha)}) \in \S_{2n\, p}.
\] 
To compute the $u$-coordinate at $p$, let us first assume that $n$ is even, i.e.\ $\ord_p(a+b)=\ord_p(b)$. Then
\[ \tilde{s}=\begin{pmatrix} \tilde{b} + \tilde{a} \\ z^{d-2n}\tilde{b}^{-1}
\end{pmatrix}
\] with $\tilde{a}$ an odd polynomial of degree $d-n-2$ and $\tilde{b}$ a non-vanishing even polynomial of degree $d-n-1$. The sections $s_{\pm}$ are given by 
\[ s_+=\tilde{s}+\sigma^*\tilde{s}= \begin{pmatrix} \tilde{b} \\ 0 \end{pmatrix}, \qquad s_-=\tilde{s}-\sigma^*\tilde{s}= \begin{pmatrix} \tilde{a} \\ z^{d-n} \tilde{b}^{-1} \end{pmatrix}.
\] Hence, the $u$-coordinate as defined in Proposition \ref{prop:u-coord} is given by 
\[ u=\frac{\tilde{a}}{\tilde{b}} \mod z^{d-2n}.
\] With respect to the basis $z,z^3, \dots, z^{d-2n-2}$, this exactly gives the coordinates $u_n$ defined in Lemma \ref{lemm:u-coord:2}. When $n$ is odd, a similar consideration gives the result.
\end{proof}

Let $D$ be a $\sigma$-Higgs divisor on $(Y,\sigma,\lambda)$. Define
\[ \Heck_D:= \bigtimes\limits_{p \in \supp(D)} \Heck_{D_p}.
\] 

\begin{prop}\label{prop:topo:fibreing} 
Consider $\M_\lambda^\sigma$, such that $Z(\lambda)=\Fix(\sigma)$. Then the map $\Eig_{\mathrm{tw}}: \M_\lambda^\sigma \rightarrow \Pr_\Lambda(Y)$ is a topological fibre bundle with fibres given by the compact moduli of Hecke parameters $\Heck_D/G_D$. 
\end{prop}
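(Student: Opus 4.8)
The plan is to produce, over each point $L_0\in\Pr_\Lambda(Y)$, an explicit bijection between the fibre $\Eig_{\mathrm{tw}}^{-1}(L_0)$ and the compact moduli of Hecke parameters $\Heck_\Lambda/G_\Lambda=\prod_{p\in\Fix(\sigma)}\Heck_{\Lambda_p}/G_{\Lambda_p}$ (the $\Lambda$-instance of the construction of Section \ref{ssec:mod:Heck}, which makes sense since each $\Lambda_p$ is odd), and then to upgrade these fibrewise bijections to local trivialisations via a universal line bundle. First I fix $L_0$ together with a frame $s$ of $L_0$ on a disconnecting neighbourhood $U$ of $\Fix(\sigma)$, giving the base point $(E_{L_0},\Phi_{L_0})$ and the frame $s_+,s_-$ of \eqref{equ:s+-}. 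To $\alpha\in\Heck_\Lambda$ I associate the higher Hecke transformation $(\hat E_{L_0}^{(\Lambda,\alpha)},\hat\Phi_{L_0}^{(\Lambda,\alpha)})$ of Definition \ref{def:higher:Hecke}. By Proposition \ref{prop:Appl:Hecke}, if $\alpha$ vanishes to order $n_p$ at each $p\in\Fix(\sigma)$, this Higgs bundle lies in $\S_D$ for $D=\sum_p 2n_p\,p$, has eigenline bundle $\Eig_D=L_0(\tfrac12 D)$, and its stratum $u$-coordinate agrees with the $V_{n_p}$-coordinate inside $\Heck_{\Lambda_p}$. Hence $\Eig_{\mathrm{tw}}(\hat E_{L_0}^{(\Lambda,\alpha)},\hat\Phi_{L_0}^{(\Lambda,\alpha)})=\Eig_D(\cdot)(-\tfrac12 D)=L_0$, so the entire image lies in $\Eig_{\mathrm{tw}}^{-1}(L_0)$; this is exactly what the twist in Definition \ref{def:eigtw} was designed to achieve, compensating the shift $L_0\mapsto L_0(\tfrac12 D)$ of eigenline bundles across the strata.

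Next I show that this map descends to a bijection $\Heck_\Lambda/G_\Lambda\to\Eig_{\mathrm{tw}}^{-1}(L_0)$. For surjectivity, any $(E,\Phi)\in\Eig_{\mathrm{tw}}^{-1}(L_0)$ lies in some stratum $\S_D$ with $\Eig_D(E,\Phi)=L_0(\tfrac12 D)$; part ii) of the proof of Theorem \ref{theo:strata:Hecke-bundle:odd} realises it as a Hecke transformation of $(E_{L_0(\frac12 D)},\Phi_{L_0(\frac12 D)})$, which by Proposition \ref{prop:Appl:Hecke} coincides with the higher Hecke transformation of the base point $(E_{L_0},\Phi_{L_0})$ for a parameter $\alpha\in\prod_p V_{D_p/2}\subset\Heck_\Lambda$. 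For injectivity, two parameters $\alpha,\alpha'$ give isomorphic $\sigma$-invariant Higgs bundles precisely when they are related by the two equivalences of Lemma \ref{lemm:equ:ii}: the $G_\Lambda$-action (part i)) and the relation that $\alpha,\alpha'$ have the same divisor $D$ and the same image in $H^0(\Lambda-D,(E,\hat\sigma))$ (part ii)). The latter is exactly the relation $\sim$ defining $\Heck_\Lambda$, and quotienting further by $G_\Lambda$ identifies the isomorphism classes with $\Heck_\Lambda/G_\Lambda$.

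For local triviality I invoke the universal line bundle $\mathcal L\to Y\times\Pr_\Lambda(Y)$ used in Proposition \ref{prop:Hecke_bundle}. Over a small $V\subset\Pr_\Lambda(Y)$, a trivialisation of $\mathcal L$ on $U\times V$ gives a coherent family of frames $\{s_L\}_{L\in V}$ and hence a map $\Psi_V\colon V\times(\Heck_\Lambda/G_\Lambda)\to\Eig_{\mathrm{tw}}^{-1}(V)$, $(L,[\alpha])\mapsto(\hat E_L^{(\Lambda,\alpha)},\hat\Phi_L^{(\Lambda,\alpha)})$, which is a fibrewise bijection commuting with the projections by the previous two paragraphs. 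On each stratum $\Psi_V$ restricts to the holomorphic bundle isomorphism of Theorem \ref{theo:strata:Hecke-bundle:odd}, since the transition functions of the higher Hecke transformation depend holomorphically on $L$ and $\alpha$. A second choice of frame $s'=fs$ changes the parameter by the fractional-linear formula of Proposition \ref{prop:u-coord}, which by Proposition \ref{prop:Appl:Hecke} is compatible with the coordinates on the pieces $V_n$ and $N_i$ assembling $\Heck_\Lambda$ in Theorem \ref{theo:moduli_Hecke}; thus the transition functions between the $\Psi_V$ are continuous automorphisms of $\Heck_\Lambda/G_\Lambda$, exhibiting $\Eig_{\mathrm{tw}}$ as a topological fibre bundle.

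I expect the main obstacle to be this last step: checking that the fibrewise bijections glue to genuine homeomorphisms across the stratum boundaries. Inside a single $\S_D$ everything is holomorphic, but $\M_\lambda^\sigma$ is built from strata lying over the different twisted Prym varieties $\Pr_{\Lambda-D}$, and one must verify that the change-of-frame automorphisms extend continuously over the compactified fibre $\Heck_\Lambda/G_\Lambda$, which is only a union of toric charts glued along torus orbits (Theorem \ref{theo:moduli_Hecke}). Controlling continuity exactly at the degeneration loci $V_l\subset\overline{V_n}$ is the delicate point, and it is precisely the absence of a global complex structure on the compact moduli of Hecke parameters that forces the conclusion to be a topological, rather than holomorphic, fibre bundle.
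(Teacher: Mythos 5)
Your proposal is correct and follows essentially the same route as the paper: trivialise the universal line bundle $\mathcal{L}$ over $U\times V$, send Hecke parameters to Higgs bundles via the higher Hecke transformation of Definition \ref{def:higher:Hecke}, and use the $u$-coordinate identification of Proposition \ref{prop:Appl:Hecke} to obtain fibrewise bijectivity together with continuity across the degeneration of strata. The one small imprecision is that Lemma \ref{lemm:equ:ii} only asserts that the two equivalences \emph{suffice} for isomorphism; the converse needed for injectivity is supplied by the orbit-separating $u$-coordinates of Proposition \ref{prop:u-coord} and Theorem \ref{theo:strata:Hecke-bundle:odd}(ii), which is exactly the ingredient the paper's own proof invokes.
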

\begin{proof}
By definition, it is clear that $\Eig_{\mathrm{tw}}$ is continuous on each stratum. From Proposition \ref{prop:Appl:Hecke}, it is continuous under the degeneration from one stratum to another. Let $U$ a union of neighbourhoods of $Z(\lambda)$ and $V \subset \Pr_\Lambda(Y)$ open such that there exists a local frame of the universal bundle
\[ s: U \times V \rightarrow \mathcal{L}
\] (cf. the proof of Proposition \ref{prop:Hecke_bundle}). By applying Hecke transformations we obtain a commuting diagram
\begin{center}
\begin{tikzpicture}\matrix (m) [matrix of math nodes,row sep=3em,column sep=4em,minimum width=2em]
  {
     \Heck_D/G_D \times V& \Eig_{\mathrm{tw}}^{-1}(V)  \\
    V  & \\
  };
  \path[-stealth]
    (m-1-1) edge node [above] {$\Heck$} (m-1-2)
          	edge node [left] {$\mathrm{pr}_2$}(m-2-1)
    (m-1-2) edge node [below] {\qquad $\Eig_{\mathrm{tw}}$} (m-2-1);
\end{tikzpicture}
\end{center}
The identification of $u$-coordinates in the previous proposition shows that this map is bijective. 
\end{proof}

Following paragraph 2.49 in \cite{KaKa83}, an analytic subset of a complex space is called reducible, if it is the union of proper analytic subsets. Let $X$ a complex space and $\mathrm{Sing}(X)$ the singular set. Then a irreducible component $Z \subset X$ is defined as the closure of a connected component $X\setminus \mathrm{Sing}(X)$. An irreducible component is an irreducible analytic subset.

\begin{coro}\label{coro:irred:odd} The space $\M_\lambda^\sigma$ with $Z(\lambda)=\Fix(\sigma)$ is an irreducible complex space. In particular, it is connected.
\end{coro}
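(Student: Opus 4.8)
The plan is to exhibit $\M_\lambda^\sigma$ as the closure of a single connected complex manifold, the open stratum $\S_0$, and then invoke the elementary fact that the closure of an irreducible analytic set is irreducible. Concretely, if $\M_\lambda^\sigma = A \cup B$ with $A,B$ proper analytic subsets and $\S_0$ is irreducible and dense, then $\S_0 = (\S_0 \cap A) \cup (\S_0 \cap B)$ forces $\S_0 \subseteq A$ (say), whence $\M_\lambda^\sigma = \overline{\S_0} \subseteq A$, contradicting properness. So the whole task reduces to two points: irreducibility of $\S_0$ and density of $\S_0$.

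First I would analyse the open stratum. Since $Z(\lambda) = \Fix(\sigma)$, the parameter $r_1$ of Theorem \ref{theo:strat:general} vanishes, so by Theorem \ref{theo:strata:Hecke-bundle:odd} the stratum $\S_0 \cong F_0$ is the total space of a holomorphic vector bundle $\C^{r_2} \rightarrow \S_0 \rightarrow \Pr_\Lambda(Y)$. By Proposition \ref{prop:isom:prym} the base $\Pr_\Lambda(Y)$ is a torsor over $\Prym(\tilde{\Sigma}) = \Nm^{-1}(\O_{Y/\sigma})$; because a quadratic differential of positive degree always has zeros, we have $\Fix(\sigma) = Z(\lambda) \neq \varnothing$, so the covering is ramified and we are in case i), where the Prym is a \emph{connected} abelian variety. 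Hence $\S_0$ is a holomorphic vector bundle over a connected complex torus: it is a connected complex manifold, and therefore an irreducible analytic set.

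Next I would establish density. Proposition \ref{prop:topo:fibreing} presents $\M_\lambda^\sigma$ as a topological fibre bundle over $\Pr_\Lambda(Y)$ with fibre $\Heck_D/G_D$, under which $\S_0$ is the open sub-bundle whose fibre is the class of $V_0/G_D$. It therefore suffices to check that $V_0$ is dense in $\Heck_d$ for each factor, which is exactly property i) of Proposition \ref{prop:N_l^n}: for every higher stratum point $\alpha \in V_l$ it produces a connected locally closed set $N_i \ni \alpha$ meeting $V_0$, so that $V_l \subseteq \overline{V_0}$ and hence $V_0$ is dense in the compact moduli $\Heck_d/G_D$. Using the local triviality $\M_\lambda^\sigma \cong V \times (\Heck_D/G_D)$ with $\S_0 \cong V \times (V_0/G_D)$ from Proposition \ref{prop:topo:fibreing}, this fibrewise density upgrades to density of $\S_0$ in the total space, and the conclusion above gives that $\M_\lambda^\sigma$ is irreducible, hence connected.

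The main obstacle is the density step. One must make sure the fibrewise closure relation $V_l \subseteq \overline{V_0}$ coming from Proposition \ref{prop:N_l^n} genuinely globalises through the trivialisation of Proposition \ref{prop:topo:fibreing}, which is only a \emph{topological} fibre bundle; the cleanest way is to observe that density is a local, set-theoretic statement and that a product $V \times (V_0/G_D)$ is dense in $V \times (\Heck_D/G_D)$ as soon as $V_0/G_D$ is dense in the fibre. A secondary point, easy but essential, is the connectedness of the base Prym, which is precisely where the hypothesis $Z(\lambda)=\Fix(\sigma)$ (forcing ramification, and thus case i) rather than the two-component case ii) of Proposition \ref{prop:isom:prym}) is used.
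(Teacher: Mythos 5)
Your proof is correct and follows essentially the same route as the paper: both arguments rest on (a) the open stratum $\S_0$ being a connected complex manifold, as a holomorphic bundle over the twisted Prym variety, which is connected because the covering is ramified, and (b) density of $\S_0$, obtained from the fibrewise degeneration of Hecke parameters (Proposition \ref{prop:N_l^n} / Theorem \ref{theo:moduli_Hecke}) transported through the topological fibre bundle structure of Proposition \ref{prop:topo:fibreing}. The only cosmetic difference is that you conclude irreducibility via the standard ``closure of an irreducible dense subset'' argument, whereas the paper invokes the definition of irreducible components as closures of connected components of the smooth locus; these are equivalent formalisations of the same final step.
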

\begin{proof} The space of Hecke parameters of the highest stratum is connected. The twisted Prym variety $\Pr_\Lambda(Y)$ is connected as long as there exists a branch point of $p$ (see \cite{HaPa12}). Furthermore the closure of the highest stratum is the whole singular Hitchin fibre by Theorem \ref{theo:moduli_Hecke} and the previous proposition. In particular, the set of non-singular points is connected and hence $\M_\lambda^\sigma$ is irreducible.
\end{proof}

\begin{rema}
We want to point out that connectedness was already shown in \cite{GO13}. Though not stated there, the irreducibility also follows from Proposition 4.5 and Theorem 6.1 of \cite{GO13} together with the connectedness of the Prym variety. 
\end{rema}

\begin{theo}\label{theo:anal:fibreing} The map $\Eig_{\mathrm{tw}}: \M_\lambda^\sigma \rightarrow \Pr_\Lambda(Y)$ is holomorphic. In particular, the compact moduli of Hecke parameters $\Heck_\Lambda/G_\Lambda$ is a complex space.
\end{theo}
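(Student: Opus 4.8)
The plan is to upgrade the topological triviality established in Proposition \ref{prop:topo:fibreing} to a biholomorphic one. First I record that $\Eig_{\mathrm{tw}}$ is already holomorphic stratum by stratum: on the open stratum $\S_0$ it coincides with the holomorphic eigenline map $\Eig_0$, and on a lower stratum $\S_D$ it is the composition of the holomorphic map $\Eig_D \colon \S_D \to \Pr_{\Lambda-D}$ (defined after Proposition \ref{prop:isom:prym}) with the translation $L \mapsto L(-\tfrac12 D)$, which is an isomorphism $\Pr_{\Lambda-D} \to \Pr_\Lambda$ of abelian torsors. Since the lower strata form, by the dimension count of Theorem \ref{theo:strat:general}, a closed analytic subset of codimension at least one, it remains only to prove holomorphicity across strata, i.e.\ at the points where the vanishing divisor of $\Phi$ jumps.

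For this I would work in the local Hecke charts furnished by Proposition \ref{prop:topo:fibreing}. Over an open set $V \subset \Pr_\Lambda(Y)$ on which the universal bundle admits a frame along $Z(\lambda)$, applying the Hecke transformation of Definition \ref{def:higher:Hecke} to $(E_L,\Phi_L)$ produces a surjection
\[ \Psi \colon \Heck_\Lambda \times V \longrightarrow \Eig_{\mathrm{tw}}^{-1}(V), \qquad \Eig_{\mathrm{tw}} \circ \Psi = \mathrm{pr}_V, \]
and this construction varies holomorphically in both arguments because it is given by the explicit family of transition functions and Higgs fields of \eqref{equ:Higgsfield}. The content of Proposition \ref{prop:Appl:Hecke} is precisely that, after passing to the quotient, the $u$-coordinates used to build the complex structure on $\Heck_\Lambda/G_\Lambda$ in Theorem \ref{theo:moduli_Hecke} agree with the $u$-coordinates parametrising the strata in Theorem \ref{theo:strata:Hecke-bundle:odd}. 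Hence $\Psi$ descends to a continuous bijection $\Heck_\Lambda/G_\Lambda \times V \to \Eig_{\mathrm{tw}}^{-1}(V)$ that is biholomorphic on each stratum; as $\Eig_{\mathrm{tw}}$ is the second projection in this chart, it suffices to see that the chart itself is biholomorphic, after which the final assertion is immediate, each fibre then being an analytic subspace of $\M_\lambda^\sigma$.

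The main obstacle is exactly this passage across strata, since holomorphicity on each stratum together with continuity does not formally imply global holomorphicity. I would close the gap with a Riemann-type removable singularity argument: $\Eig_{\mathrm{tw}}$ is continuous (Proposition \ref{prop:topo:fibreing}), holomorphic on the dense open stratum $\S_0$, and its target $\Pr_\Lambda(Y)$ is a complex manifold, so it is enough to know that the local model $\Heck_\Lambda/G_\Lambda \times V$ is normal along its lower strata. This is where Theorem \ref{theo:moduli_Hecke} is indispensable: the moduli of Hecke parameters is glued from toric subspaces of weighted projective spaces along torus orbits, and weighted projective spaces are normal, so the boundary strata are thin and a continuous extension holomorphic off them is automatically holomorphic. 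The delicate step I expect to require the most care is verifying that the analytic structure carried by $\M_\lambda^\sigma$ as a Hitchin fibre coincides with the one coming from the weighted-projective gluing of Theorem \ref{theo:moduli_Hecke} --- equivalently, that $\Psi$ is a holomorphic quotient map --- since only then does normality of the local model transfer to $\M_\lambda^\sigma$ and license the removable singularity theorem.
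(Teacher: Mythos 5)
Your overall strategy --- holomorphicity stratum by stratum plus a Riemann-type extension across the strata boundaries --- is the right one and matches the paper's in spirit, but the specific mechanism you invoke to license the extension has a genuine gap. You reduce everything to normality of the local model $\Heck_\Lambda/G_\Lambda \times V$, arguing that because the moduli of Hecke parameters is glued from toric subspaces of \emph{normal} weighted projective spaces, a continuous map holomorphic off the boundary strata extends holomorphically. This does not follow: Theorem \ref{theo:moduli_Hecke} glues locally closed \emph{subspaces} of weighted projective spaces along torus orbits, and the glued space need not be normal even when the pieces are. Indeed, Proposition \ref{prop:order5} shows that $\Heck_5/G_D$ is a toric complex space \emph{normalised by} $\P(1,1,2)$ but not equal to it (the comparison map fails to be biholomorphic at the lowest stratum), so the local model is demonstrably non-normal in the first nontrivial case. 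Since the first Riemann extension theorem for continuous functions requires normality of the source, your argument breaks precisely at the point you flag as delicate. There is also a mild circularity: you want to show the chart $\Psi$ is biholomorphic, which presupposes a complex structure on $\Heck_\Lambda/G_\Lambda$, whereas that structure is the \emph{conclusion} (''in particular'') of the theorem.

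The paper avoids normality altogether. It first establishes holomorphicity of $\Eig_{\mathrm{tw}}$ on $\S_0 \cup \S_1$ by hand: the relevant piece $N_1^0/G_\Lambda$ of the Hecke moduli is, by Proposition \ref{prop:N_l^n}, an open \emph{non-singular} toric subspace of a weighted projective space, so $V_0 \cup V_1$ is a complex manifold and one gets an honest holomorphic fibre bundle $F_{01} \to \Pr_\Lambda(Y)$ mapping analytically onto $\S_0\cup\S_1$ compatibly with the projections. The complement of $\S_0\cup\S_1$ then has codimension at least two, and the paper applies the \emph{second} Riemann extension theorem (GPR94, Thm.\ I.12.13), valid for reduced, locally pure-dimensional complex spaces --- irreducibility coming from Corollary \ref{coro:irred:odd} and pure dimensionality from flatness of the Hitchin map --- to extend the coordinate functions of $\Eig_{\mathrm{tw}}$ meromorphically; continuity (your Proposition \ref{prop:topo:fibreing}) then forces the extension to be holomorphic. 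If you want to salvage your version, you must either prove normality of the Hitchin fibre along the lower strata (which is false at the level of the local models) or, as the paper does, push the holomorphic locus out to codimension two before extending.
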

\begin{proof}
We will use a version of the Riemann extension theorem for complex spaces to prove the theorem. To do so, we have to reduce the problem to codimension 2. Let us again assume that there is only one higher order zero of $\lambda$. We saw in Proposition \ref{prop:N_l^n} that an open neighbourhood $N_1^0/G_\Lambda$ of the first stratum $V_1$ in the zeroth stratum $V_0$ can be identified with an open non-singular toric subspace of a weighted projective space $\P(1,1,2,\dots,n)$ . Gluing this open subset to $V_0$, we obtain a complex manifold $V=V_0 \cup V_1$ of Hecke parameters of the zeroth and first stratum. We can build a holomorphic fibre bundle $F_{01}$
\[ V \rightarrow F_{01} \rightarrow \Pr_{\Lambda}(Y)
\] by choosing local frames of $L \in \Pr_{\Lambda}(Y)$ around $Z(\lambda)$. Through Hecke transformations, we obtain an analytic map to $\S_0 \cup \S_1$, such that the following diagram commutes
\begin{center}
\begin{tikzpicture}\matrix (m) [matrix of math nodes,row sep=3em,column sep=4em,minimum width=2em]
  {
     F_{01} & \S_0 \cup \S_1  \\
    \Pr_{\Lambda}(Y)  & \\
  };
  \path[-stealth]
    (m-1-1) edge node [above] {$\Heck$} (m-1-2)
          	edge (m-2-1)
    (m-1-2) edge node [below] {\qquad $\Eig_{\mathrm{tw}}$} (m-2-1);
\end{tikzpicture}
\end{center}
Hence $\Eig_{\mathrm{tw}}$ is holomorphic on $\S_0 \cup \S_1$. 

To extend it, we use the Riemann extension theorem (Thm. I.12.13 in \cite{GPR94}) for reduced locally pure dimensional complex spaces. By Theorem \ref{coro:irred:odd}, $\M_\lambda^\sigma$ is an irreducible complex space. Furthermore, the Hitchin map is flat and therefore its fibres are locally pure dimensional (see Thm. II.2.13 in \cite{GPR94}). Let $p \in \M_\lambda^\sigma\setminus (\S_0 \cup \S_1)$. For a small neighbourhood $U \subset \M_\lambda^\sigma$ of $p$, we can choose coordinate functions 
\[ f: V \subset \Pr_\Lambda(Y) \rightarrow \C^{\dim \Pr_\Lambda},
\] such that $\Eig_{\mathrm{tw}}(U \cap (\S_0 \cup S_1)) \subset V$. Then $f \circ \Eig_{\mathrm{tw}}$ define holomorphic functions on $U$ away from a analytic subset of codimension 2. By the extensions theorem they extend to $U$ meromorphically. We already showed that $\Eig_{tw}$ as defined in Definition \ref{def:eigtw} is a continuous extension. Hence $\Eig_{\mathrm{tw}}$ is holomorphic.
\end{proof}

In conclusion, we obtain the following description of singular $\SL(2,\C)$-Hitchin fibres with locally irreducible spectral curve.

\begin{theo}\label{theo:global_fibreing} Let $q_2 \in H^0(X,K^2)$ with only zeros of odd order. Then $\H^{-1}(q_2)$ is holomorphic fibre bundle 
\[ \Heck_\Lambda / G_\Lambda \rightarrow \H^{-1}(q_2) \rightarrow \Pr_\Lambda(\tilde{\Sigma}).
\] In particular, the singular Hitchin fibre is an irreducible complex space.
\end{theo}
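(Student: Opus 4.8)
The plan is to assemble the final theorem from the machinery already developed, treating it as the specialization of the abstract $\M_\lambda^\sigma$ results to the geometric situation $Y=\tilde{\Sigma}$, $M=K$, with $\tilde{\pi}\colon\tilde{\Sigma}\to X$ the normalized spectral cover. First I would invoke Proposition \ref{prop_pi*_inj}: since $q_2$ has only zeros of odd order, we have $n_{\odd}\geq 1$, so $\tilde{\pi}$ has at least one ramification point and the pullback $\tpst\colon\H^{-1}(q_2)\to\M^\sigma(\tilde{\Sigma},\tpst K,\lambda)$ is injective. Moreover, by the discussion preceding Lemma \ref{lem:loc:form} the image is precisely $\M_\lambda^\sigma=\M^\sigma(\tilde{\Sigma},\tpst K,\lambda)$, so $\tpst$ is a biholomorphism onto this moduli space. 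The hypothesis ``all zeros odd'' translates exactly into the condition $Z(\lambda)=\Fix(\sigma)$ that drives the entire Section \ref{sec:glob:odd} analysis: a zero of $q_2$ of order $2m+1$ is a branch point of $\tilde{\pi}$, hence a fixed point of $\sigma$ where $\lambda$ vanishes to odd order, and there are no even zeros contributing non-fixed points of $Z(\lambda)$.

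Next I would apply Theorem \ref{theo:anal:fibreing} directly to $\M_\lambda^\sigma$. That theorem already establishes that $\Eig_{\mathrm{tw}}\colon\M_\lambda^\sigma\to\Pr_\Lambda(\tilde{\Sigma})$ is holomorphic, and Proposition \ref{prop:topo:fibreing} establishes that it is a topological fibre bundle with fibre the compact moduli of Hecke parameters $\Heck_\Lambda/G_\Lambda$. Combining these two, $\Eig_{\mathrm{tw}}$ is a holomorphic fibre bundle
\[
\Heck_\Lambda/G_\Lambda \longrightarrow \M_\lambda^\sigma \longrightarrow \Pr_\Lambda(\tilde{\Sigma}).
\]
Transporting this across the biholomorphism $\tpst$ yields the claimed fibre bundle structure on $\H^{-1}(q_2)$, with base $\Pr_\Lambda(\tilde{\Sigma})$ and fibre $\Heck_\Lambda/G_\Lambda$. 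The irreducibility statement is then immediate from Corollary \ref{coro:irred:odd} applied to $\M_\lambda^\sigma$ (whose hypothesis $Z(\lambda)=\Fix(\sigma)$ holds here, and for which the required branch point of $\tilde{\pi}$ exists precisely because $n_{\odd}\geq 1$), transported through $\tpst$.

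The main subtlety—really the only place requiring care beyond bookkeeping—is verifying that $\tpst$ is a biholomorphism of complex spaces and not merely a continuous bijection, so that the holomorphic fibre bundle structure genuinely transports. Injectivity and surjectivity onto $\M_\lambda^\sigma$ follow from Proposition \ref{prop_pi*_inj} and the uniqueness of the lift $\hat{\sigma}$ from Proposition \ref{lem:lift}(i); biholomorphicity follows because $\tpst$ and the $\sigma$-invariant pushforward $\tilde{\pi}_*^\sigma$ are mutually inverse holomorphic maps (Lemma \ref{lem:pup}), so I would simply remark that these identifications are analytic. A second point to state explicitly is that the Prym variety $\Pr_\Lambda(\tilde{\Sigma})$ appearing in Theorem \ref{theo:anal:fibreing} is a torsor over $\Prym(\tilde{\Sigma})$ by Proposition \ref{prop:isom:prym}(i), which uses $\Fix(\sigma)\neq\varnothing$, consistent with the intended geometric meaning in the statement. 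Everything else is a direct citation of the already-proven holomorphicity (Theorem \ref{theo:anal:fibreing}), local triviality (Proposition \ref{prop:topo:fibreing}), and irreducibility (Corollary \ref{coro:irred:odd}) results, so I do not anticipate any genuinely new obstacle here; the theorem is essentially the packaging of Section \ref{sec:glob:odd} into the language of singular $\SL(2,\C)$-Hitchin fibres.
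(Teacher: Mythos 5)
Your proposal is correct and follows essentially the same route as the paper: Theorem \ref{theo:global_fibreing} is stated there precisely as the packaging of Proposition \ref{prop:topo:fibreing}, Theorem \ref{theo:anal:fibreing}, and Corollary \ref{coro:irred:odd}, transported to $\H^{-1}(q_2)$ via the identification $\tpst\colon\H^{-1}(q_2)\to\M^\sigma(\tilde{\Sigma},\tpst K,\lambda)$ from Section \ref{sec:pap}. Your explicit attention to the biholomorphicity of $\tpst$ (via Proposition \ref{prop_pi*_inj}, Proposition \ref{lem:lift}(i), and Lemma \ref{lem:pup}) is a point the paper leaves implicit, but it is the same argument.
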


\subsection{The first degenerations}\label{ssec:exam}\ \\
\textbf{Zeros of order 3}
Let $q_2 \in H^0(X,K^2)$ with one zero of order 3, such that all other zeros are simple. In this case, $G_\Lambda \cong \C^*$ is reductive and it is easy to see that the compact moduli of Hecke parameters is given by 
\[  \Heck_3/ \C^* \cong \P^1.
\] So as a direct consequence of Theorem \ref{theo:anal:fibreing}, we obtain:
\begin{coro}\label{coro:zero_order3} Let $q_2 \in H^0(X,K^2)$ with $k$ zeros of order $3$, such that all other zeros are simple. Then the singular Hitchin fibre is a holomorphic fibre bundle
\[ (\P^1)^k \rightarrow \H^{-1}(q_2)  \xrightarrow{\Eig_{\mathsf{tw}}} \Pr_{\Lambda}(\tilde{\Sigma}).
\] In particular, $\H^{-1}(q_2)$ is a toric complex space.
\end{coro}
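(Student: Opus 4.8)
The plan is to obtain the corollary as a direct application of Theorem~\ref{theo:global_fibreing} and then to pin down the fibre by hand. First I would record that $q_2$ has only zeros of order $1$ and $3$, hence only zeros of odd order; in particular $q_2$ has no global square root and its spectral curve is locally irreducible, so $Z(\lambda)=\Fix(\sigma)$. Thus Theorem~\ref{theo:global_fibreing} applies verbatim and exhibits
\[ \Heck_\Lambda/G_\Lambda \rightarrow \H^{-1}(q_2) \xrightarrow{\Eig_{\mathsf{tw}}} \Pr_{\Lambda}(\tilde{\Sigma}), \]
with $\Lambda=\div(\lambda)$, as a holomorphic fibre bundle. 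The entire content of the statement then reduces to identifying the fibre $\Heck_\Lambda/G_\Lambda$ as $(\P^1)^k$.

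Next I would compute $\Lambda$ and factor the fibre. Since $\lambda^2=-\tpst q_2$ one has $\div(\lambda^2)=\tpst\div(q_2)$, and as the ramification index of $\tilde\pi$ at every branch point equals $2$, the multiplicity of $\Lambda$ at the ramification point over a zero of $q_2$ of order $\ell$ is exactly $\ell$. Hence $\Lambda$ has coefficient $3$ over each of the $k$ order-$3$ zeros and coefficient $1$ over each simple zero. Because both $G_\Lambda$ and the space of Hecke parameters are products indexed by $\supp\Lambda$, the fibre splits as $\Heck_\Lambda/G_\Lambda=\prod_{p\in\supp\Lambda}\Heck_{\Lambda_p}/G_{\Lambda_p}$. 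At a simple zero $\Lambda_p=1$, and by Lemma~\ref{lemm:u-coord:2} (with $n=\tfrac{d-1}{2}=0$) the unique stratum is collapsed to a point, so $\Heck_1/G_1$ is a single point carrying no Hecke data. At an order-$3$ zero $\Lambda_p=3$: Lemma~\ref{lemm:u-coord:2} gives $\Heck_3=V_0\sqcup V_1$ with $V_0/G_3\cong\C$ an affine chart (via the $u_0$-coordinate) and $V_1/G_3$ a single point, and by Theorem~\ref{theo:moduli_Hecke} these glue algebraically so that $V_1/G_3$ becomes the point at infinity, yielding $\Heck_3/G_3\cong\P(1,1)=\P^1$. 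Combining the factors, the fibre is $(\P^1)^k$, which together with Theorem~\ref{theo:global_fibreing} gives the asserted $(\P^1)^k$-bundle structure with projection $\Eig_{\mathsf{tw}}$.

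For the final clause, $(\P^1)^k$ is a smooth projective toric variety with torus $(\C^\times)^k$, so the fibres of $\Eig_{\mathsf{tw}}$ are toric; this is the sense in which $\H^{-1}(q_2)$ is a toric complex space. The step I expect to require the most care — and the main obstacle — is checking that this toric structure is compatible with the bundle. By Proposition~\ref{prop:u-coord} a change of local frame $s'=fs$ of the universal bundle acts on each $\P^1$-factor by the Möbius map $u\mapsto (f_e u-f_o)/(f_e-f_o u)$, which on the leading $u_0$-coordinate reduces to a translation fixing the point at infinity, i.e.\ the lowest-stratum section $\mathcal{S}_{D_{\max}}$. Thus the transition cocycle lies in the unipotent translation group, which is normalised by the fibre torus $(\C^\times)^k$ and fixes the canonical section $\mathcal{S}_{D_{\max}}$; this is what makes the fibrewise toric description glue, so that $\H^{-1}(q_2)$ is covered by toric charts. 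I would emphasise that since the structure group need not reduce to the torus itself, the honest statement is that the total space is a fibre bundle with toric fibres (equivalently, the projectivisation of a rank-$2$ extension in each factor), and that it is precisely the careful formulation of this toric structure, rather than the fibre identification, that must be handled with attention.
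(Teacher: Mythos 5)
Your proposal is correct and follows essentially the same route as the paper: the paper likewise deduces the corollary directly from the global fibreing theorem (Theorem \ref{theo:anal:fibreing}/\ref{theo:global_fibreing}) after observing that the compact moduli of Hecke parameters at each order-$3$ zero is $\Heck_3/G_\Lambda\cong\P^1$. You merely spell out the steps the paper compresses into ``it is easy to see'' (the computation of $\Lambda$, the factorisation over $\supp\Lambda$, the gluing of $V_0$ and $V_1$, and the compatibility of the toric fibre structure with the unipotent transition maps).
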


\noindent \textbf{Zeros of order 5}
Let us now consider $q_2 \in H^0(X,K^2)$ with zeros of order 5.
\begin{prop}\label{prop:order5} The compact moduli of Hecke parameters $\Heck_5/G_D$ is a toric complex space normalised by $\P(1,1,2)$.
\end{prop}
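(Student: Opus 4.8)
The plan is to compute the ring of $G_D$-invariants explicitly for $d=5$, read off $\P(1,1,2)$ from it, and then upgrade the resulting bijection to a normalisation. First I would fix the parametrisation $\alpha=(a,b)$ with $a=a_1z+a_3z^3$ odd and $b=b_0+b_2z^2+b_4z^4$ even, and recall from Proposition \ref{prop:G} that $G_D=\C^*\times H_D$, where the unipotent part $H_D=\{1+\phi_2z^2+\phi_4z^4\}$ acts by multiplication. A direct computation shows that
\[ y_0=a_1,\qquad y_1=b_0,\qquad y_2=a_1b_2-b_0a_3 \]
are $H_D$-invariant and carry $\C^*$-weights $1,1,2$: the parameter $\phi_4$ absorbs $b_4$ while $\phi_2$ kills the remaining direction, so these three functions generate the relevant invariants. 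They therefore define a map to $\P(1,1,2)$ on the locus where they do not all vanish, which is precisely $V_0\cup V_1$.

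Next I would match this with the stratumwise description recalled in the preceding example. Using the orbit-separating coordinates of Lemma \ref{lemm:u-coord:2} and Proposition \ref{prop:N_l^n}, I expect $(y_0:y_1:y_2)$ to restrict to isomorphisms $V_0/G_D\xrightarrow{\sim}\{y_1\neq0\}$ (via $u_0$, giving $(a_1/b_0,\,(a_1b_2-b_0a_3)/b_0^2)$) and $N_1^0/G_D\xrightarrow{\sim}\{y_0\neq0\}$; both are smooth $\C^2$-charts and they glue exactly as the two affine charts of $\P(1,1,2)$, in accordance with the gluing of Theorem \ref{theo:moduli_Hecke}. Hence $(V_0\cup V_1)/G_D\cong\P(1,1,2)\setminus\{(0:0:1)\}$, the complement of the unique singular point. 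The remaining stratum $V_2$ is a single point on which $y_0,y_1,y_2$ all vanish; this vanishing is the algebraic signature of the fact that $V_2$ must correspond to $(0:0:1)$ even though the invariants cannot separate it.

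I would then produce the normalisation map $\nu\colon\P(1,1,2)\to\Heck_5/G_D$ as the inverse of the above isomorphism on $\P(1,1,2)\setminus\{(0:0:1)\}$, extended by $\nu(0:0:1)=V_2$. Toricity of $\Heck_5/G_D$ comes for free from Theorem \ref{theo:moduli_Hecke}, and $\P(1,1,2)$ is a normal toric variety, so it suffices to check that $\nu$ is a finite, bijective, birational morphism onto the reduced space $\Heck_5/G_D$; a finite birational morphism from a normal space onto a reduced space is its normalisation. Bijectivity and birationality follow from the previous paragraph together with the single extra point $V_2$ (the map is an isomorphism over the dense open $V_0$), and finiteness follows since $\P(1,1,2)$ is compact and $\nu$ has finite fibres.

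The main obstacle is the behaviour at $V_2$. I must show that $\nu$ is genuinely holomorphic across the singular point $(0:0:1)$ and that $\Heck_5/G_D$ is non-normal there, so that $\nu$ is a nontrivial normalisation rather than an isomorphism. The weight-$2$ chart $\{y_2\neq0\}\cong\C^2/\Z_2$ is where everything concentrates: its $\Z_2$-quotient ($A_1$) structure and the two one-dimensional approaches to $V_2$ — from $V_1$ along $N_2^1$ and from $V_0$ along $N_2^0$ — must be reconciled with the single $A_1$-point of $\P(1,1,2)$. Because the naive pushout structure on $\Heck_5/G_D$ is pathological, as noted in the example following Theorem \ref{theo:moduli_Hecke}, I would pin down the structure at $V_2$ through the embedding of the space of Hecke parameters into the singular Hitchin fibre: holomorphy of the Hecke family there (in the spirit of Theorem \ref{theo:anal:fibreing}) forces $\nu$ to be holomorphic and identifies the local ring at $V_2$ as the proper non-normal subring of $\O_{\C^2/\Z_2}$ that $\P(1,1,2)$ normalises.
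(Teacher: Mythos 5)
Your argument is correct and is essentially the paper's: the paper takes the invariant map of Proposition \ref{prop:N_l^n} on $N_1^0$, writes down its explicit inverse $(y_0:y_1:y_2)\mapsto\bigl(y_1^2z,\ y_0y_1+y_2z^2\bigr)$, checks via the $u_0$-coordinate that this extends holomorphically over the missing locus with the singular point $(0:0:1)$ sent to the single point $V_2$, and concludes that the resulting bijective map from $\P(1,1,2)$, biholomorphic away from one point, is the normalisation, pushing the torus action forward from $\P(1,1,2)$. The only differences are cosmetic: you build the map in the forward direction from the invariants $(a_1,\,b_0,\,a_1b_2-b_0a_3)$ of weights $(1,1,2)$ rather than writing the inverse, your proposed extra step of exhibiting non-normality at $V_2$ is not actually needed for the statement, and the holomorphy at $(0:0:1)$ is obtained exactly as you suggest, from the complex structure that Theorem \ref{theo:anal:fibreing} places on $\Heck_5/G_D$ via the embedding into the Hitchin fibre.
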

\begin{proof} In Proposition \ref{prop:N_l^n}, we defined an isomorphism from the 
\[ N_1^0 \rightarrow \P(1,1,2)\setminus\{(y_0:0:y_2)\}.
\] Its inverse is given by
\[ \P(1,1,2) \setminus \{( y_0:0:y_2)\} \rightarrow \Heck_d/G_\Lambda, \quad (y_0:y_1:y_2) \mapsto \begin{pmatrix} y_1^2z \\ y_0y_1+y_2z^2 
\end{pmatrix}. 
\] This map naturally extends to $(0:0:1)$ by mapping it onto $V_2$ consisting of a single point. If $y_0 \neq 0$, the image lies in $V_0$ and 
\[ (u_0 \circ \psi)(y_0:y_1:y_2)= \frac{y_1}{y_0}z+\frac{y_2}{y_0^2}z^3.
\] Therefore, it extends holomorphically to $y_0 \neq0, y_1=0$. The map is biholomorphic away from the point in the lowest stratum, which is a fixed point of the full-dimensional torus action on $\P(1,1,2)$. Hence, we can pushforward the torus action to the moduli of Hecke parameters.
\end{proof}

\begin{coro}\label{coro:zero_order5} Let $q_2 \in H^0(X,K^2)$ with $k$ zeros of order $3$ and $l$ zeros of order $5$, such that all other zeros are simple. Then, up the normalisation, $\H^{-1}(q_2)$ is a holomorphic fibre bundle 
\[  (\P^1)^k \times (\mathbb{P}(1,1,2))^l \rightarrow \H^{-1}(q_2) \xrightarrow{\Eig_{\mathsf{tw}}} \Pr_{\Lambda}(\tilde{\Sigma}).
\] In particular, $\H^{-1}(q_2)$ is a toric complex space.
\end{coro}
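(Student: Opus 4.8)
The plan is to deduce the statement from the global fibring theorem by factoring the fibre over the points of $\supp(\Lambda)$ and then passing to normalisations. Since $q_2$ has only zeros of odd order, Theorem \ref{theo:global_fibreing} presents $\H^{-1}(q_2)$ as a holomorphic fibre bundle over $\Pr_\Lambda(\tilde{\Sigma})$ with fibre the compact moduli of Hecke parameters $\Heck_\Lambda/G_\Lambda$. First I would record the relevant local orders. At a zero of $q_2$ of order $2m+1$ the normalised cover $\tilde{\pi}$ is ramified, so $\div(\tpst q_2) = \tpst\div(q_2)$ has coefficient $2(2m+1)$ at the corresponding ramification point $p$; since $\lambda^2 = -\tpst q_2$ this gives $\Lambda_p = 2m+1$. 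Hence the simple zeros contribute $\Lambda_p = 1$, the $k$ triple zeros contribute $\Lambda_p = 3$, and the $l$ quintuple zeros contribute $\Lambda_p = 5$.

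Next I would factor the fibre. By the definition of $\Heck_\Lambda$ preceding Proposition \ref{prop:topo:fibreing} one has $\Heck_\Lambda = \prod_{p \in \supp(\Lambda)} \Heck_{\Lambda_p}$, and applying Proposition \ref{prop:G} together with the $\sigma$-invariance (using $Z(\lambda) = \Fix(\sigma)$, so every support point is a fixed point) splits the group as $G_\Lambda = \prod_p G_{\Lambda_p}$, acting factorwise. Therefore $\Heck_\Lambda/G_\Lambda = \prod_p \bigl(\Heck_{\Lambda_p}/G_{\Lambda_p}\bigr)$. The simple-zero factors $\Heck_1/G_1$ are single points and drop out; each triple-zero factor is $\Heck_3/\C^* \cong \P^1$ by the computation preceding Corollary \ref{coro:zero_order3}; and each quintuple-zero factor is $\Heck_5/G_D$, which by Proposition \ref{prop:order5} is a toric complex space normalised by $\P(1,1,2)$. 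This identifies the fibre as $(\P^1)^k \times (\Heck_5/G_D)^l$.

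Finally I would pass to the normalisation. Normalisation commutes with finite products of reduced complex spaces, and $\P^1$ is smooth hence already normal, so the normalisation of the fibre is $(\P^1)^k \times \P(1,1,2)^l$. To upgrade this to the total space, I would invoke that normalisation is a local construction compatible with biholomorphisms: the locally trivialising biholomorphisms of the bundle $\Eig_{\mathsf{tw}}$ lift uniquely to the normalised fibres by the universal property of normalisation, so the normalised fibres reassemble into a holomorphic fibre bundle over $\Pr_\Lambda(\tilde{\Sigma})$ with fibre $(\P^1)^k \times \P(1,1,2)^l$, which is precisely the normalisation of $\H^{-1}(q_2)$. The toric conclusion then follows because this fibre is a product of toric varieties, and the fibrewise torus actions (pushed forward as in Proposition \ref{prop:order5} and the order-$3$ case) are compatible with the chosen trivialisations. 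I expect the main obstacle to be exactly this last step, namely verifying that normalising each fibre and regluing reproduces the normalisation of the total space; concretely this amounts to checking that the transition data of the bundle lifts holomorphically to the normalised model and that the resulting glued space satisfies the universal property of the normalisation, the product decomposition and the identification of the individual factors being comparatively routine given the cited results.
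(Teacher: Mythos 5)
Your argument is correct and follows the same route the paper intends: the corollary is stated as a direct consequence of Theorem \ref{theo:global_fibreing} together with the identifications $\Heck_3/\C^*\cong\P^1$ and Proposition \ref{prop:order5}, with the fibre factoring over $\supp(\Lambda)$ exactly as you describe. Your final normalisation step is also the paper's (implicit) one — compare Example \ref{exam:m=2}, where the normal $\P(1,1,2)$-bundle built from the transition functions of $\S_0$ maps generically biholomorphically onto the Hitchin fibre and is therefore its normalisation.
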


\section{Singular Fibres with Irreducible Spectral Curve}\label{sec:global_even}
Whenever there exist locally reducible singularities of the spectral curve, the singular Hitchin fibres do not fibre over twisted Prym varieties. However, we can still describe the degeneration to lower strata using Hecke transformations. In Section \ref{sec:strat}, it was more convenient to parametrize the extra data at even zeros with extension data. We will reinterpret these extra data as Hecke parameters now. \\

Fix $\M_\lambda^\sigma$, such that $\{y, \sigma(y)\}=Z(\lambda)\setminus\Fix(\sigma)$ and all other zeros of $\lambda$ are simple. Let $D$ be an associated $\sigma$-Higgs divisor. Let $L \in \Pr_{\Lambda-D}$ and $(E,\Phi) \in \M_\lambda^\sigma$ obtained from $(E_L,\Phi_L)$ by applying the unique Hecke transformation at all simple zeros. Choose frames $s_1 \in H^0(U,L), \ s_2 \in H^0(U,\sigma^*L)$ for a neighbourhood $U$ of $y$ and let 
\[ s_+:=s_1 \oplus s_2, \quad s_-:=s_1 \oplus - s_2. 
\] the induced frame of $E\rest_U=(L \oplus \sigma^*L) \rest_U$. 
\begin{prop}\label{prop:Hecke:even} Let $l=(\Lambda-D)_y$ and $\alpha=as_++bs_- \in H^0(l y,E)^*$, such that $a(0) \neq \pm b(0)$. Then 
\[ \left( \hat{E}^{(y + \sigma y, \alpha + \sigma^*\alpha)}, \hat{\Phi}^{(y + \sigma y, \alpha + \sigma^*\alpha)} \right) \in S_D \subset \M_\lambda^\sigma  
\] and the extension datum at $y$ introduced in Proposition \ref{prop:even:zeros} is given by 
\[ \left[ \frac{b+a}{b-a} \ s_1^2 \d z \right] \in H^0(ly,L^2K).
\]
\end{prop}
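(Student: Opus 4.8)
\emph{The plan} is to reduce everything to an explicit local computation at $y$ and then to reconcile the two parametrisations of $\Eig_D^{-1}(L)$: the Hecke description used here and the extension description of Proposition \ref{prop:even:zeros}. Since the modification at $\sigma y$ is forced by $\sigma$-invariance and the bundle is untouched away from $\{y,\sigma y\}$, it suffices to work on a disc $(U,z)$ around $y$ with $\lambda = z^{\Lambda_y}\,\d z$ and $l := (\Lambda-D)_y = \Lambda_y - D_y$. In the frame $s_+,s_-$ one has $\Phi_L = \left(\begin{smallmatrix} 0 & \lambda \\ \lambda & 0 \end{smallmatrix}\right)$, and the order-$l$ Hecke transformation in direction $\alpha = a s_+ + b s_-$ is given by the transition function of Definition \ref{def:higher:Hecke} and Lemma \ref{lemm:trans:Hecke}. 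The same conjugation $\hat\psi_{01}^{-1}\Phi_L\hat\psi_{01}$ that produced (\ref{equ:Higgsfield}) now yields, in the induced frame,
\[ \hat\Phi = \begin{pmatrix} \tfrac{a}{b} z^{\Lambda_y} & (b^2-a^2) z^{D_y} \\ b^{-2} z^{2\Lambda_y - D_y} & -\tfrac{a}{b} z^{\Lambda_y} \end{pmatrix}\d z, \]
which specialises to (\ref{equ:Higgsfield}) when $D_y = 0$.

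First I would settle membership in $\mathcal{S}_D$. The upper off-diagonal entry $(b^2-a^2)z^{D_y}\,\d z = (b-a)(b+a)z^{D_y}\,\d z$ has order exactly $D_y$ precisely because the hypothesis $a(0)\neq\pm b(0)$ makes $(b-a)(b+a)$ a unit at $y$, while the other three entries vanish to order $\geq \Lambda_y > D_y$ (using the companion chart in which $a$ is the unit when $b(0)=0$). Hence $\div(\hat\Phi)_y = D_y$, and together with the unaffected simple zeros this places the resulting pair in $\mathcal{S}_D$.

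Next I would identify the eigenline and extract the extension datum. Solving $(\hat\Phi - \lambda\,\id)v = 0$ gives the $\lambda$-eigensection $v_1$, which is non-vanishing at $y$ and restricts on the punctured disc to $2b\,s_1$; thus $\Ker(\hat\Phi-\lambda\,\id)$ glues to $L$ and $\Eig_D(\hat E,\hat\Phi) = L$, as required. To read off the datum of Proposition \ref{prop:even:zeros} I would rewrite $\hat\Phi$ in a frame adapted to the extension $0 \to (L,\lambda)\to(\hat E,\hat\Phi)\to(L^{-1},-\lambda)\to 0$: take $\tilde v_1 := v_1/(2b)$ as the frame of the sub $L$ matched to $s_1$, and complete it by a lift $e_2$ of the frame of $L^{-1} = \hat E/\hat L$ fixed by the trivialisation $\det\hat E \cong \O_Y$ via $\tilde v_1 \wedge e_2 = \Omega$. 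The upper-right entry of $\hat\Phi$ in this frame is the extension class in $L^2K$; expressing it against $s_1^2\,\d z$ gives a unit multiple of $z^{D_y}$, and after factoring the common $z^{D_y}$ (which encodes the order-$D_y$ condition of Proposition \ref{prop:even:zeros}) one arrives at the class $\left[\tfrac{b+a}{b-a}\,s_1^2\,\d z\right]$ in $H^0(ly,L^2K)$, well defined modulo $\lambda\,\O$.

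The hard part will be this last normalisation. The raw off-diagonal entry is only defined up to the choice of lift $e_2$ (this is exactly the ambiguity modulo $\lambda\,\O$) and up to the identification $\hat E/\hat L\cong L^{-1}$, and it is the canonical $\SL$-trivialisation $\det\hat E \cong \O_Y$ — itself produced by the Hecke modifications at $\sigma y$ and at the simple zeros — that pins down this identification and hence both the scalar $\tfrac{b+a}{b-a}$ and the power $z^{D_y}$. Tracking this trivialisation through the transition function $\hat\psi_{01}$ (whose determinant is $z^{-l}$) and reconciling the truncation $H^0(ly,\cdot)$ used here with the truncation $H^0(\Lambda_y y,\cdot)$ of Proposition \ref{prop:even:zeros} is where the genuine care lies; in particular the labelling conventions for $s_\pm$ and for the sub/quotient must be matched so that the ratio appears as $\tfrac{b+a}{b-a}$ and not its reciprocal.
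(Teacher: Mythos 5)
Your proposal is correct and follows exactly the route the paper takes: the paper's entire proof is the one-line remark that this is ``a local computation from the description of the Higgs field given in (\ref{equ:Higgsfield})'', and your argument carries out precisely that computation, correctly generalising (\ref{equ:Higgsfield}) to an order-$l$ twist, reading off $\div(\hat\Phi)_y=D_y$ from the unit $(b-a)(b+a)$, and extracting the extension class from the upper-right entry in a frame adapted to the eigenline. You supply considerably more detail than the paper does, and your closing remarks correctly locate the only delicate points (the normalisation of the lift $e_2$ modulo $\lambda\,\O$, the truncation, and the sign/labelling conventions that decide between $\tfrac{b+a}{b-a}$ and its reciprocal).
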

\begin{proof}
This is a local computation from the description of the Higgs field given in (\ref{equ:Higgsfield}). 
\end{proof}

From this description, we see that for the Hecke parameters at even zeros of the quadratic differential, there are two different ways to degenerate to lower strata: 
\begin{itemize}
\item[i)] By degenerating to lower strata in the moduli spaces of Hecke parameters, i.e. allowing $\alpha$ to vanish. Here the eigenline bundle is twisted by a $\sigma$-invariant divisor $D+\sigma^*D$. 
\item[ii)] By imposing 
\[ a \equiv b \mod z^l \quad \text{or} \quad a \equiv -b \mod z^l
\] for some $l \leq  \ord_{y}(\lambda)$, while $a(0)$, $b(0) \neq 0$. In this case, the eigenline bundle is twisted by divisors $l y$ or $l \sigma(y)$, respectively. 
\end{itemize}
Consonant with the previous section, we can find a compactification of the Hecke parameters of the highest stratum by allowing Hecke parameters in $\alpha \in H^0(\Lambda_y y, E)$. Define 
\[ \Heck_{\Lambda_{y}}:=H^0(\Lambda_{y}y,E)/\sim,
\] where $\sim$ denotes the analogue of relation ii) of Lemma \ref{lemm:equ:ii}. Along the lines of Section \ref{ssec:mod:Heck}, we can study the quotient of $\Heck_{\Lambda_{y}}$ by the non-reductive group action of $H^0(\Lambda_{y} \, y, \O_Y^*)$ and obtain a topological model by gluing toric subspaces of weighted projective spaces. Following Section \ref{ssec:glob:odd} one proves:

\begin{theo}\label{theo:hecke_to_fibre} Let $q_2 \in H^0(X,K^2)$ with one zero $x \in X$ of order $2d$, such that all other zeros are simple. Let $\tilde{\pi}^{-1}(x)=\{y,\sigma y\}$ and $L \in \Pr_\Lambda(\tilde{\Sigma})$. Denote by $(E,\Phi) \in \tpst \H^{-1}(q_2)$ the unique $\sigma$-invariant Higgs bundle obtained by applying Hecke transformations to $(E_L,\Phi_L)$ at the simple zeros of $q_2$. There is a continuous injective map
\[ T_L: \Heck_{d}/H^0(d y, \O_Y^*) \rightarrow \tpst \H^{-1}(q_2),
\] defined by applying Hecke transformations to $(E,\Phi)$ at $x \in X$. Its image is the closure of $\tpst \Eig_0^{-1}(L)$ in $\tpst \H^{-1}(q_2)$ and is given by
\[ \bigcup\limits_{l_1+l_2 \leq d} \tpst \Eig^{-1}_{D(l_1,l_2)}(L(l_1y+l_2\sigma y))
\] with $D(l_1,l_2)=(l_1+l_2)y+(l_1+l_2)\sigma y \in \Div^+(Y)$.
\end{theo}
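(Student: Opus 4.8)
The plan is to follow the argument of Section~\ref{ssec:glob:odd} essentially verbatim, with the branch point of the odd case replaced by the unbranched pair $\tilde{\pi}^{-1}(x)=\{y,\sigma y\}$ and the $\sigma$-invariant group $G_\Lambda$ replaced by the full local group $H^0(dy,\O_Y^*)$. First I would define $T_L$ concretely: given a nonzero representative $\alpha\in H^0(dy,E)$ of a class in $\Heck_d/H^0(dy,\O_Y^*)$ (where now $\Heck_d=H^0(dy,E)/\sim$ carries no parity constraint, $y$ not being a fixed point), I form the $\sigma$-invariant Hecke parameter $\alpha+\sigma^*\alpha$ supported on $dy+d\,\sigma y$ and apply the extended Hecke transformation of Definition~\ref{def:higher:Hecke} at $y$ and $\sigma y$ simultaneously. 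Because $\sigma^*\alpha$ is prescribed at $\sigma y$, the construction is $\sigma$-equivariant and the induced lift $\hat\sigma$ survives; the determinant and characteristic-polynomial computations of part i) of Theorem~\ref{theo:strata:Hecke-bundle:odd}, the full-order twist at $y+\sigma y$ restoring $\det=\O_{\tilde{\Sigma}}$, then place $T_L(\alpha)$ in $\tpst\H^{-1}(q_2)$. To see that $T_L$ descends to the quotient I would prove the even-zero analogue of Lemma~\ref{lemm:equ:ii}: scaling $\alpha$ by $\phi\in H^0(dy,\O_Y^*)$ (equivalently $\sigma^*\phi$ at $\sigma y$) and modifying $\alpha$ within its $\sim$-class both produce isomorphic Higgs bundles. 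These are the same local gauge computations as in Lemma~\ref{lemm:equ:ii}, now run at $y$ and transported to $\sigma y$ by $\sigma$; this is precisely why no $\sigma$-invariance is imposed on $\phi$ and the acting group is all of $H^0(dy,\O_Y^*)$.

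Next I would establish continuity and injectivity. On the open locus $\{a(0)\neq\pm b(0)\}$ the map $T_L$ agrees with the parametrisation of $\tpst\Eig_0^{-1}(L)$ by extension data from Proposition~\ref{prop:Hecke:even} together with Proposition~\ref{prop:even:zeros}, and is there continuous and bijective. Continuity across the boundary I would deduce from the explicit form of the transformed Higgs field in (\ref{equ:Higgsfield}) together with the topological model of $\Heck_d/H^0(dy,\O_Y^*)$ as a union of toric subspaces of weighted projective spaces glued along torus orbits, the even-zero counterpart of Theorem~\ref{theo:moduli_Hecke}; this is exactly the argument of Proposition~\ref{prop:topo:fibreing}. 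For injectivity I would recover $\alpha$ up to equivalence from $(\hat E,\hat\Phi)$: the eigenline bundle and the local normal form of the Higgs field at $y$ determine the Hecke parameter, just as in part ii) of Theorem~\ref{theo:strata:Hecke-bundle:odd}.

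Finally I would identify the image stratum by stratum. Setting $l_1=\ord_y(b-a)$ and $l_2=\ord_y(b+a)$, the two independent boundary directions $a\equiv b$ and $a\equiv -b$ of Proposition~\ref{prop:Hecke:even}, the off-diagonal entry $b^2-a^2=(b-a)(b+a)$ of the Higgs field in (\ref{equ:Higgsfield}) vanishes to order $l_1+l_2$, so the vanishing divisor of the transformed Higgs field is $D(l_1,l_2)=(l_1+l_2)(y+\sigma y)$, subject to the necessary bound $l_1+l_2\leq d=\Lambda_y$. The two eigensections align with the sheets $L$ and $\sigma^*L$ to orders $l_1$ and $l_2$, so the eigenline bundle becomes $L(l_1y+l_2\sigma y)$, which satisfies $L' \otimes \sigma^* L' = \O(-(\Lambda-D(l_1,l_2)))$ and hence lies in $\Pr_{\Lambda-D(l_1,l_2)}$. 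This yields the decomposition $\bigcup_{l_1+l_2\leq d}\tpst\Eig^{-1}_{D(l_1,l_2)}(L(l_1y+l_2\sigma y))$. Since $\Heck_d/H^0(dy,\O_Y^*)$ is compact and its open stratum is dense, continuity of $T_L$ forces the image to be closed and to coincide with the closure of $\tpst\Eig_0^{-1}(L)$.

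I expect the main obstacle to be this last image identification. The delicate points are tracking how the two boundary directions $a\equiv\pm b$ interact with the vanishing of $\alpha$ itself, verifying the bound $l_1+l_2\leq d$ and the asymmetric twist $L(l_1y+l_2\sigma y)$, and reconciling the meromorphic extension datum $\tfrac{b+a}{b-a}$ of Proposition~\ref{prop:Hecke:even} with the holomorphic germ $[c]$ of Proposition~\ref{prop:even:zeros} taken modulo $\lambda$. Once the bookkeeping of these vanishing orders is pinned down, the continuity and injectivity arguments transfer from Section~\ref{ssec:glob:odd} with only cosmetic changes.
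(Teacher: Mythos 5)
Your proposal is correct and follows essentially the same route as the paper, whose own proof is just a two-sentence sketch instructing the reader to adapt Proposition \ref{prop:Appl:Hecke} to Hecke transformations at the even zero and to compute the eigenline bundles of the limit points; your expansion (the $\sigma$-equivariant transformation at $y$ and $\sigma y$, descent through the full group $H^0(dy,\O_Y^*)$, and the identification of the image via $l_1=\ord_y(b-a)$, $l_2=\ord_y(b+a)$ with $l_1+l_2\leq d$) is precisely that adaptation, consistent with the conventions of Proposition \ref{prop:Hecke:even} and Example \ref{exam:m=1}.
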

\begin{proof}
This theorem is proven by adapting Proposition \ref{prop:Appl:Hecke} to Hecke transformations at the even zeros of $q_2$. This allows to compute the eigenline bundles of the limit points determining the image of $T_L$.
\end{proof}

Define $F_{q_2}$ as the topological fibre bundle over $\Pr_\Lambda$ with fibres given by the moduli of Hecke parameters (cf. Proposition \ref{prop:Hecke_bundle}). We can define a continuous map $T: F_{q_2} \rightarrow \H^{-1}(q_2)$ by applying $T_L$ fibrewise. However, as we will see below, this map is no longer injective. It has the property that it makes the following diagram commute
\begin{center}
\begin{tikzpicture}\matrix (m) [matrix of math nodes,row sep=3em,column sep=4em,minimum width=2em]
  {
    T^{-1}(\S_0) & \S_0 \\
    \Pr_\Lambda  & \\
  };
  \path[-stealth]
    (m-1-1) edge node [above] {$T$}(m-1-2)
          	edge (m-2-1)
    (m-1-2) edge node [right] {\quad$\Eig_0$}(m-2-1);
\end{tikzpicture}
\end{center}
But there is no way to extend the fibreing to the whole singular fibre. This was already encountered in \cite{GO13} and \cite{Hi19}. To illustrate why these two properties fail, we describe the case of zeros of order 2. \\

\begin{exam}[Zeros of order 2]\label{exam:m=1} Let $q_2 \in H^0(X,K^2)$ having a zero $x \in X$ of order two, such that all other zeros are simple. 
Let $\{y, \sigma y\}= \tilde{\pi}^{-1}(x)$. The compact moduli of Hecke parameters at $x$ is given by
\[ \left(H^0(y,E_L)\setminus\{0\}\right)/H^0(y,\O_Y^*) = \left(\C^2\setminus \{0 \}\right) /\C^* = \P^1.
\] In this case, the stratification of Hecke parameters by vanishing order is trivial. However, from Theorem \ref{theo:strat:sl2C} the stratification of $\H^{-1}(q_2)$ has two strata. One, where the Higgs field is non-vanishing and one, where it is diagonalizable and vanishes at $x$ of order 1. 

Let $L \in \Pr_\Lambda(Y)$ and $\alpha \in F_{q_2}\rest_L$. Let $U \subset X$ a neighbourhood of $x$. Choosing frames $s_1 \in H^0(U,L)$ and $s_2 \in H^0(U,\sigma^*L)$, the Hecke parameter $\alpha$ can be written as $\alpha=as_++bs_-$. Then the Higgs field of $T(L,\alpha)$ is given by formula (\ref{equ:Higgsfield}) in Section \ref{ssec:glob:odd}. Hence,
\[ T(L,\alpha) \in S_0 \quad \Leftrightarrow \quad a_0 \neq \pm b_0.
\] Furthermore, it is easy to check that for $a_0=b_0$ the eigenline bundle of the Hecke transformation is given by $L(y)$, whereas for $a_0=-b_0$ it is given by $L(\sigma y)$. We conclude that for given $L \in \Pr_{\Lambda}(\tilde{\Sigma})$ 
\[ T\left(L,s_++s_-\right) = T\left(L(y-\sigma y )),s_+-s_-\right).
\] In particular, $T$ is not injective and the fibreing can not be extended. 
However, Theorem \ref{theo:hecke_to_fibre} defines a holomorphic map from a holomorphic $\P^1$-bundle over $\Pr_\Lambda$ to the Hitchin fibre, which has a holomorphic inverse on the dense stratum $\S_0$. Hence, the normalisation of the Hitchin fibre $\H^{-1}(q_2)$ is a $\P^1$-bundle over a Prym variety. 
\end{exam}

\begin{rema} In upcoming work of Xuesen Na and the author \cite{HoNa}, we will show that the compact moduli of Hecke parameters at locally reducible singularities of the spectral curve are complex analytic spaces. Then the map $T$ defines a one-sheeted analytic covering in the language of \cite{GPR94}. The analogue of a birational morphism in the analytic category. 
\end{rema}

\begin{exam}[zeros of order 4] \label{exam:m=2} 
Let $q_2\in H^0(X,K^2)$, such that there is one zero $x \in X$ of order $4$ and all other zeros are simple. Then up to normalisation the singular Hitchin fibre is given by a holomorphic $\P(1,1,2)$-bundle over $\Pr_\Lambda(\tilde{\Sigma})$. \\
The proof is similar to the proof of Proposition \ref{prop:order5}. Choose a local coordinate $(U,z)$ centred at $y \in \tilde{\pi}^{-1}(x)$ and let
\[ \Heck_2=\left\{ \alpha=\begin{pmatrix} a_0 +a_1 z \\ b_0 +b_1 z \end{pmatrix} \in H^0(2 \cdot y,\O_Y^2) \right\} / \sim,
\] where $\alpha \sim \alpha' \Leftrightarrow a_0=b_0=0$. The compact moduli of Hecke parameters at $x \in X$ is the quotient of $\Heck_2$ by $G_2:=H^0(2 y, \O_Y^*)$. We defined holomorphic $G_2$-invariant functions
\[ \Heck_2\setminus \{[0]\} \rightarrow \P(1,1,2)\setminus\{(0:0:1)\}, \quad  \alpha \mapsto (b_0:a_0:b_1a_0-b_0a_1).
\] An inverse is given by 
\begin{align*} \Psi: \P(1,1,2)\setminus\{(0:0:1)\} &\rightarrow (\Heck_2\setminus \{[0]\})/G_2, \\ (y_0:y_1:y_2) \quad &\mapsto \begin{bmatrix} y_1^2 \\ y_0y_1 + y_2 z \end{bmatrix}.
\end{align*} $\Psi$ extends to $(0:0:1)$ by mapping it to $[0] \in \Heck_2/G_2$. The holomorphic transition functions of $\S_0$ define a holomorphic fibre bundle
\[ \P(1,1,2) \rightarrow F \rightarrow 	\Pr_\Lambda(\tilde{\Sigma})
\]
with a holomorphic map $T \circ \Psi: F \rightarrow \H^{-1}(q_2)$. The $\P(1,1,2)$-bundle over $\Pr_\Lambda(\tilde{\Sigma})$ is normal and $T \circ \Psi$ is biholomorphic on the highest stratum. Hence, this is the normalisation of $\H^{-1}(q_2)$.
\end{exam}

\begin{coro}\label{coro:even_irred} Let $q_2 \in H^0(X,K^2)$ with at least one zero of odd order. Then $\H^{-1}(q_2)$ is irreducible.
\end{coro}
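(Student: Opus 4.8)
The plan is to follow the strategy already used for Corollary \ref{coro:irred:odd}: produce a single connected, smooth, dense open stratum inside $\H^{-1}(q_2)$ and deduce from it that the regular locus of the fibre is connected, which by the definition of irreducible component recalled above gives irreducibility. Since $q_2$ has a zero of odd order, the involution has nonempty fixed-point set, the pullback $\tpst$ is injective by Proposition \ref{prop_pi*_inj}, and case (i) of Theorem \ref{theo:strat:sl2C} applies directly.

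First I would isolate the top stratum $\S_0$ (the locus where the Higgs field is nowhere vanishing, $D=0$). By Theorem \ref{theo:strat:sl2C} it is a holomorphic $(\C^\times)^{r_1}\times\C^{r_2}$-bundle over $\Pr_\Lambda(\tilde{\Sigma})$. Because $\Fix(\sigma)\neq\varnothing$, the base is a torsor over the connected abelian variety $\Prym(\tilde{\Sigma})$ by Proposition \ref{prop:isom:prym}(i), hence connected; the fibre $(\C^\times)^{r_1}\times\C^{r_2}$ is connected as well, so $\S_0$ is a connected complex manifold. Being the top stratum it is open in $\H^{-1}(q_2)$, and therefore contained in the regular locus.

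The heart of the argument is the density of $\S_0$, i.e. the claim that every lower stratum $\S_D$ lies in $\overline{\S_0}$. Since the extension and Hecke data at distinct zeros of $q_2$ are mutually independent (as in the proof of Theorem \ref{theo:strat:general}, where all such data is shown to depend only on the local structure of the Higgs bundle at $Z(\lambda)$), it suffices to realise the degeneration one zero at a time. At an odd zero of order $d$ this is the compact moduli of Hecke parameters of Theorem \ref{theo:moduli_Hecke}, in which the open stratum $V_0$ is dense (Section \ref{ssec:mod:Heck}); at an even zero it is Theorem \ref{theo:hecke_to_fibre}, whose map $T_L$ sends $\Heck_d/H^0(dy,\O_Y^*)$ onto the closure of $\tpst\Eig_0^{-1}(L)$, an image which by construction contains the fibres of all lower strata lying over the twists of $L$. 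Running these local degenerations simultaneously over all zeros and all $L\in\Pr_\Lambda(\tilde{\Sigma})$ shows $\S_D\subset\overline{\S_0}$ for every Higgs divisor $D$, so $\S_0$ is dense.

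To conclude I would argue as follows: the Hitchin map is flat, so $\H^{-1}(q_2)$ is locally pure-dimensional (cf. the proof of Theorem \ref{theo:anal:fibreing}) and its regular locus is a complex manifold containing $\S_0$ as a connected dense open subset. Any regular point $p$ has a connected manifold neighbourhood $U$ open in $\H^{-1}(q_2)$; since $\S_0$ is dense, $U\cap\S_0\neq\varnothing$, so $p$ lies in the same connected component of the regular locus as $\S_0$. Hence the regular locus is connected, $\H^{-1}(q_2)$ has a single irreducible component, and is irreducible. The main obstacle is precisely the density step: unlike the purely-odd case of Theorem \ref{theo:global_fibreing}, there is no global fibreing over the Prym variety once even zeros are present (the map $T$ fails to be injective, as in Example \ref{exam:m=1}), so density cannot be read off from a bundle structure and must instead be assembled from the local Hecke-degeneration pictures at the individual zeros, together with the independence of the data at different zeros.
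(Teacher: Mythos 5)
Your proposal is correct and follows essentially the same route as the paper: the paper's proof likewise establishes $\overline{\S_0}=\H^{-1}(q_2)$ by combining the odd-zero degeneration picture (Proposition \ref{prop:topo:fibreing}) with the even-zero one (Theorem \ref{theo:hecke_to_fibre}), notes that $\S_0$ is a $(\C^*)^{r_1}\times\C^{r_2}$-bundle over a twisted Prym variety that is connected because $\tilde{\pi}$ has a branch point, and concludes irreducibility from connectedness of the smooth locus. The only cosmetic difference is that you spell out the independence of the Hecke/extension data at distinct zeros and the locally-pure-dimensionality step, which the paper leaves implicit, and that the connectedness of $\Nm^{-1}(\O)$ is really the cited fact from \cite{HaPa12} rather than part of Proposition \ref{prop:isom:prym} itself.
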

\begin{proof}
Theorems \ref{prop:topo:fibreing} and \ref{theo:hecke_to_fibre} show that $\overline{\S_0}= \H^{-1}(q_2)$. Furthermore, $\S_0$ is a $(\C^*)^{r_1} \times \C^{r_2}$-bundle over a twisted Prym variety, which is connected as long as there exists a branch point of $\tilde{\pi}: \tilde{\Sigma} \rightarrow X$ (see \cite{HaPa12}). In particular, the smooth points of $\H^{-1}(q_2)$ are connected and hence $\H^{-1}(q_2)$ is irreducible. 
\end{proof}

We already encountered above, that the case of quadratic differentials with only zeros of even order is very special. As we saw in Proposition \ref{prop_pi*_inj} the pullback of Higgs bundles to the normalized spectral curve is not injective. A second issue is that $$\Prym(\tilde{\Sigma})=\Nm^{-1}(\O_{X})$$ is not connected. 
A detailed discussion of this issue can be found in \cite{Mu71} (and \cite{HaPa12}). We give a short recall of Mumford's approach here. 
\begin{lemm}
Let $L \in \Prym(\tilde{\Sigma})$, then there exists a line bundle $M$ on $\tilde{\Sigma}$ of degree 1 or 0, such that $L\cong M \otimes \sigma^*M^{-1}$. 
\end{lemm}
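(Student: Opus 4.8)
The plan is to realize $\Prym(\tilde{\Sigma})=\Nm^{-1}(\O_X)$ as the image of the homomorphism
\[ \phi:\Pic(\tilde{\Sigma}) \to \Jac(\tilde{\Sigma}), \qquad M \mapsto M \otimes \sigma^* M^{-1}, \]
restricted to line bundles of degree $0$ and $1$. First I would check that $\phi$ lands in $\Prym(\tilde{\Sigma})$: since $\Nm \circ \sigma^* = \Nm$, we get $\Nm(M \otimes \sigma^* M^{-1}) = \Nm(M) \otimes \Nm(M)^{-1} = \O_X$. It therefore suffices to show that $\phi(\Pic^0(\tilde{\Sigma})) \cup \phi(\Pic^1(\tilde{\Sigma}))$ exhausts $\Nm^{-1}(\O_X)$.

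Next I would identify $\phi(\Pic^0(\tilde{\Sigma}))$ with the identity component. In this case $\tilde{\pi}$ is unramified, so $g(\tilde{\Sigma}) = 2g-1$ and $\dim \Nm^{-1}(\O_X) = g-1$. The restriction $\phi|_{\Pic^0} = 1 - \sigma^*$ is a homomorphism of abelian varieties whose image is connected, contains the identity, and lies in $\Nm^{-1}(\O_X)$; a tangent-space computation shows its image has dimension equal to that of the $(-1)$-eigenspace of $\sigma^*$ on $H^0(\tilde{\Sigma}, K_{\tilde{\Sigma}})$, namely $g(\tilde{\Sigma})-g = g-1$. Hence $\phi(\Pic^0(\tilde{\Sigma}))$ equals the identity component $P$ of $\Nm^{-1}(\O_X)$. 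By Mumford \cite{Mu71}, $\Nm^{-1}(\O_X)$ has exactly two connected components, $P$ and a second one $P'$.

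It then remains to show $\phi$ maps $\Pic^1(\tilde{\Sigma})$ onto $P'$. Fixing $M_0$ of degree $1$, the coset $\phi(\Pic^1) = \phi(M_0)\cdot \phi(\Pic^0) = \phi(M_0)\cdot P$ is a single connected component, so the claim reduces to $\phi(M_0) \notin P$. If instead $\phi(M_0) = \phi(N)$ for some $N \in \Pic^0$, then $M_0 \otimes N^{-1}$ would be a $\sigma$-invariant line bundle of odd degree. I would rule this out via the descent sequence for the étale $\Z/2$-cover $\tilde{\pi}$,
\[ 0 \to H^1(\Z/2, \C^*) \to \Pic(X) \xrightarrow{\tpst} \Pic(\tilde{\Sigma})^{\sigma} \to H^2(\Z/2, \C^*), \]
in which $H^2(\Z/2, \C^*) = 0$ because $\C^*$ is $2$-divisible; thus every $\sigma$-invariant line bundle is a pullback $\tpst R$ and has even degree $2\deg(R)$, a contradiction. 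Consequently $\phi(M_0) \in P'$, hence $\phi(\Pic^1) = P'$, and $\phi(\Pic^0 \cup \Pic^1) = P \sqcup P' = \Nm^{-1}(\O_X)$, which is the assertion.

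The main obstacle is exactly this last step: distinguishing the two components of $\Nm^{-1}(\O_X)$ and certifying that odd degrees reach the non-identity one. Everything hinges on the nonexistence of odd-degree $\sigma$-invariant line bundles on $\tilde{\Sigma}$ (equivalently, on Mumford's description of the component group), and one must keep in mind that this is a genuine feature of the unramified case, where $\ker(\tpst)=\{\O_X,I\}$ is nontrivial; the connectedness picture and the parametrization both change once $\tilde{\pi}$ acquires branch points.
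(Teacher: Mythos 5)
Your proof is correct, but note that the paper itself offers no proof of this lemma: it is stated as a recall of Mumford's result, with the argument deferred to \cite{Mu71} (see also \cite{HaPa12}). The classical proof there is divisor-theoretic: one shows that every divisor class killed by the norm is of the form $E-\sigma^*E$ (using Tsen's theorem to write the function cutting out $\Nm(D)$ as a norm from $\tilde{\Sigma}$), and then observes that $\deg E$ is only well defined modulo $2$, since $E$ may be replaced by $E+p+\sigma(p)$. Your route is genuinely different: you identify $(1-\sigma^*)\Pic^0(\tilde{\Sigma})$ with the identity component of $\Nm^{-1}(\O_X)$ by a tangent-space dimension count, and you separate the degree-$0$ and degree-$1$ cosets via the nonexistence of odd-degree $\sigma$-invariant line bundles, which you derive from the vanishing of $H^2(\Z/2,\C^*)$ in the descent sequence for the \'etale cover --- all of this checks out. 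What Mumford's argument buys is self-containedness: it produces the representative $M$ directly and does not need the number of connected components as an input; indeed in \cite{Mu71} the two-component statement is deduced \emph{from} this lemma together with the parity invariant $L\mapsto \dim H^0(L\otimes\tpst K^{\frac12}) \bmod 2$. Your argument, by contrast, consumes ``exactly two components'' as a black box, so to avoid a circular chain of citations you should source that fact from a proof independent of the present lemma --- the duality $\pi_0(\ker\Nm)\cong\ker(\tpst)^\vee\cong\Z/2$ for an unramified double cover, which is the form in which it appears in \cite{HaPa12}, does the job (and in fact ``at most two components'' would suffice for your purposes). With that reference pinned down the proof is complete, and your closing remark that the picture collapses to the single component $(1-\sigma^*)\Pic^0(\tilde{\Sigma})$ once $\tilde{\pi}$ acquires branch points is exactly right.
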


The two connected components can be seen in two different ways: First they can be defined as
\[ P_i=\{ M\otimes \sigma^*M^{-1} \mid M \in \Pic^i(\tilde{\Sigma})\}
\] for $i=0,1$. 
To see that these define different components Mumford showed that the function
\[ \Prym(\tilde{\Sigma}) \rightarrow \Z_2, \quad L \mapsto \dim H^0(L\otimes \tpst K^{\frac12}) \mod 2
\] is locally constant and is equal to $i \mod 2$ restricted to $P_i$. \\
Secondly, one can see the different components in the following way:
Let $L=M\otimes \sigma^*M^{-1}$ and $\O(D)=M$. Let $D=D_+-D_-$, where $D_\pm$ are effective divisors then $\deg(D_+)=\deg(D_-)+i$. We can also decompose $D-\sigma^*D=(D-\sigma^*D)_++(D-\sigma^*D)_-$ into effective divisors. Then $\deg((D-\sigma^*D)_+)=2\deg(D)-i \equiv i \mod 2$. This does only depend on the parity of $\deg(M)$ again. So if we represent a element $L \in \Pr_0(\tilde{\Sigma})$ with a divisor $C$ with $C + \sigma^*C=0$, then $\deg(C_+) \mod 2$ tells us in which connected component it lies.

\begin{theo}\label{theo:red:even:zero} Let $q_2 \in H^0(X,K^2)$, such that all zeros of $q_2$ have even order. Then the singular Hitchin fibres $\H^{-1}(q_2)$ is connected and has four irreducible components.
\end{theo}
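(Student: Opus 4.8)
The plan is to reduce to a count of connected components of the smooth dense locus and to treat connectedness separately. By Theorem~\ref{theo:strat:sl2C} the open stratum $\S_0$ (where the Higgs field vanishes nowhere) is dense, and by the degeneration analysis of Theorem~\ref{theo:hecke_to_fibre} its closure is all of $\H^{-1}(q_2)$. Since $\S_0$ is smooth, being assembled from fibre bundles over abelian varieties and an étale cover thereof, and its complement consists of strata of positive codimension, the irreducible components of $\H^{-1}(q_2)$ are exactly the closures of the connected components of $\S_0$. Thus it suffices to prove that $\S_0$ has four connected components. Connectedness of $\H^{-1}(q_2)$ itself I would take from \cite{GO13}; alternatively one checks that the four components meet along deeper strata.

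Next I would isolate two $\Z_2$-invariants on $\S_0$. By Theorem~\ref{theo:strat:sl2C} applied with $n_\odd=0$ and $D=0$, the stratum $\S_0$ is a branched two-to-one covering of a $(\C^\times)^{r_1}\times\C^{r_2}$-bundle $\mathcal B$ over $\Nm^{-1}\!\big(I(-\tfrac12\div(q_2))\big)$. This base is a torsor over $\Prym(\tilde\Sigma)=\Nm^{-1}(\O_X)$, which by Mumford \cite{Mu71} (recalled above) has exactly two connected components; as the bundle fibres are connected, $\mathcal B$ has two connected components. This gives the first invariant $\epsilon_1$, namely the Mumford component of the eigenline bundle $L=\Ker(\tpst\Phi-\lambda\,\id)$. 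The covering $\S_0\to\mathcal B$ is the generically two-to-one map of Proposition~\ref{prop_pi*_inj}, with deck transformation $(E,\Phi)\mapsto(E\otimes I,\Phi)$; its two sheets constitute the second invariant $\epsilon_2$. Because $\tpst I\cong\O_{\tilde\Sigma}$, the operation $\otimes I$ fixes the eigenline bundle, so $\epsilon_1$ is constant along the fibres of $\S_0\to\mathcal B$ while $\epsilon_2$ distinguishes them; the four components should then be the fibres of $(\epsilon_1,\epsilon_2)\colon\S_0\to\Z_2\times\Z_2$.

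The heart of the argument, and the step I expect to be the main obstacle, is to show that this double cover \emph{splits} over each of the two components of $\mathcal B$, so that $\S_0$ has $2\times2=4$ components rather than $2$. I would first check that the $\otimes I$-action is fixed-point free on $\S_0$: by Example~\ref{exam:branchpoint} a fixed Higgs bundle is $\big(I^{1/2}\oplus I^{-1/2},\left(\begin{smallmatrix}0 & \alpha \\ \alpha & 0\end{smallmatrix}\right)\big)$ with $\alpha$ the descent of $\lambda$, which vanishes to order $\tfrac12\ord_p(q_2)$ at each zero of $q_2$; hence this Higgs field vanishes and the fixed point lies in a strictly lower stratum. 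Thus $\S_0\to\mathcal B$ is an étale $\Z_2$-cover, classified by a class in $H^1\!\big(\mathcal B^{(i)};\Z_2\big)$ over each component, and splitting is the vanishing of this class, i.e.\ a globally consistent choice of lift $\hat\sigma$. To exhibit it I would use the mod-$2$ parity $f(E,\Phi)=\dim H^0(X,E\otimes N)\bmod 2$ for a fixed theta characteristic $N$ on $X$: since $\det E=\O_X$ gives $E\cong E^\vee$, the bundle $E\otimes N$ carries a $K$-valued symplectic form, so $f$ is a deformation invariant by Mumford–Atiyah parity. Decomposing sections into $\hat\sigma$-eigenspaces (Definition~\ref{def:sigma:push}) gives $f(E)+f(E\otimes I)\equiv\dim H^0(\tilde\Sigma,\tpst N\otimes\tilde E)\bmod 2$, and the delicate point is to verify that this right-hand side is odd uniformly over both components of $\mathcal B$, using the torsion modification relating $\tilde E$ to $L\oplus\sigma^*L$; this is where I anticipate the real work.

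As a cross-check and a fall-back for the splitting, I would run the explicit low-order computations of Examples~\ref{exam:m=1} and~\ref{exam:m=2}: for a single double (resp.\ quartic) zero the normalisation of $\H^{-1}(q_2)$ is a $\P^1$- (resp.\ $\P(1,1,2)$-) bundle over $\Prym(\tilde\Sigma)$, and reading off the two Prym components together with the identification $T(L,s_++s_-)=T\big(L(y-\sigma y),s_+-s_-\big)$ exhibits the four pieces directly, since $\O(y-\sigma y)$ lies in the nontrivial Prym component. Finally, combining the four components of $\S_0$ with the connectedness of $\H^{-1}(q_2)$ from \cite{GO13} yields the statement: the $\otimes I$-fixed locus of Example~\ref{exam:branchpoint} glues the two $\epsilon_2$-sheets within each value of $\epsilon_1$, so the four components meet along lower strata, consistently with connectedness.
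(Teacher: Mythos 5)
Your overall route is the same as the paper's: $\S_0$ is dense, the irreducible components are the closures of the connected components of $\S_0$, the count comes from the two Mumford components of the Prym torsor combined with the two-to-one pullback $\tpst$ of Proposition \ref{prop_pi*_inj}, and connectedness comes from gluing along lower strata (the identification $T(L,s_++s_-)=T(L(y-\sigma y),s_+-s_-)$ moving between the two Prym components, together with the branch point of Example \ref{exam:branchpoint} sitting in the lowest stratum). The paper itself obtains the count by the bare multiplication $2\times 2$: the open stratum of $\M^\sigma(\tilde\Sigma,\tpst K,\lambda)$ has two components and the pullback is an unbranched two-to-one covering over it. You are right to flag that this presupposes the $\otimes I$-double cover is trivial over each component of the base, and isolating that as the crux is a genuine gain in precision. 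Two smaller points: your fixed-point-freeness argument should not rest on Example \ref{exam:branchpoint} exhibiting \emph{all} fixed points --- the paper's argument is that $E\otimes I\cong E$ forces an extra automorphism of $\tpst(E,\Phi)$, hence strict polystability, hence a diagonalizable Higgs field lying in the lowest stratum; and Theorem \ref{theo:hecke_to_fibre} as stated assumes a single even zero with all other zeros simple, so density of $\S_0$ in the all-even case needs the (routine) extension indicated in Section \ref{sec:global_even}.

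The step you call the heart of the argument is, however, where the proposal breaks. The Mumford--Atiyah parity theorem applies to bundles carrying a nondegenerate \emph{symmetric} $K$-valued pairing; the pairing you get on $E\otimes N$ from $\Lambda^2E\cong\O_X$ is alternating, and for alternating pairings the parity of $h^0$ is not a deformation invariant. Concretely, for $M\in\Jac(X)$ generic one has $h^0(X,(M\oplus M^{-1})\otimes N)=0$, while $h^0(X,(N\oplus N^{-1})\otimes N)=h^0(K)+h^0(\O_X)=g+1$, so the parity already jumps within the connected family of rank-two bundles with trivial determinant whenever $g$ is even. Hence $f$ is not locally constant for the reason you give; and even granting some replacement, the oddness of $\dim H^0(\tilde\Sigma,\tpst N\otimes\tilde E)$ is left unverified, while the fall-back via Examples \ref{exam:m=1} and \ref{exam:m=2} only covers a single double or quartic zero. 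As written, the proposal reproduces the paper's argument up to the splitting claim and then substitutes a tool that does not work; one would instead need a genuinely locally constant $\Z_2$-valued invariant separating $(E,\Phi)$ from $(E\otimes I,\Phi)$ along $\S_0$, or a direct construction of a section of the covering over each component of the base.
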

\begin{proof} We showed in Proposition \ref{prop_pi*_inj}, that $\H^{-1}(q_2)$ is a branched two-to-one covering of $\M^\sigma(\tilde{\Sigma},\pi^*K,\lambda)$. Example \ref{exam:branchpoint} shows that one can always find a branch point in the lowest stratum. In particular, we can conclude connectedness of the singular fibre from the connectedness of $\M^\sigma(\tilde{\Sigma},\pi^*K,\lambda)$. 

Let $L \in \Nm^{-1}(I(-\frac12\div(q_2)))$ and $p \in \tilde{\pi}^{-1}Z(q_2)$. As we saw in Theorem \ref{theo:hecke_to_fibre}, we can degenerate from $\Eig_0^{-1}(L) \subset \S_0$ and $\Eig_0^{-1}(L(p - \sigma(p))) \subset \S_0$ to one and the same point in the lower stratum. Moreover we just saw, that we interchange the connected component of the twisted Prym variety by tensoring with $\O(p - \sigma(p))$ for $p \in \tilde{\Sigma}$. Hence, $\H^{-1}(q_2)$ is connected.

However, the fibre is reducible, because $\S_0$ is disconnected. It decomposes by restricting the $\C^*\times \C^n$-bundle to the two connected components of $\Nm^{-1}(I(-\frac12 \div(q_2))$. The closures of these two connected components of $\S_0$ define two irreducible components of $\M^\sigma(\tilde{\Sigma},\pi^*K,\lambda)$. As the pullback of a branch point of $\tpst$ has an extra automorphism it is strictly polystable and hence lies in the lowest stratum. In particular, the pullback is two-to-one on the highest stratum. Thereby, $\H^{-1}(q_2)$ has four irreducible components. 
\end{proof}

\section{Real Points in Singular Hitchin Fibres}\label{sec:real}
In this section, we are going to study $K$-twisted $\SL(2,\R)$-Higgs bundles with irreducible and reduced spectral curve. We will show that for each stratum they are parametrised by the two-torsion points of the Prym variety and a discrete choice of Hecke parameters at the even zeros of the quadratic differential. The result for regular Hitchin fibres was considered in \cite{Sc13b,PNThesis}.\\

A line bundle $L \in \Prym(\tilde{\Sigma})$ is a two-torsion point, if $L^2\cong \O_X$. Under the Prym condition, this is equivalent to $\sigma^*L \cong L$. We call $L \in \Pr_N(\tilde{\Sigma})$ $\sigma$-symmetric, if $\sigma^*L \cong L$. Choosing a $\sigma$-symmetric base point for the simply transitive action of $\Prym(\tilde{\Sigma})$ on $\Pr_N(\tilde{\Sigma})$ the two-torsion points are bijectively mapped on the $\sigma$-symmetric points. Recall that the definition of $\sigma$-invariant holomorphic line bundle required the lift $\hat{\sigma}$ to restrict to the identity at all ramification points (cf. Definition \ref{def:sigma-inv}).

\begin{theo}\label{theo:realpts:odd} Let $q_2 \in H^0(X,K^2)$ a quadratic differential, such that all zeros have odd order and $D \in \Div(X)$ an associated Higgs divisor. Then the $\SL(2,\R)$-Higgs bundles in $\S_D\subset \H^{-1}(q_2)$ are parametrised by the $\sigma$-symmetric points of $\Pr_{\Lambda-\tpst D}(\tilde{\Sigma})$. 
\end{theo}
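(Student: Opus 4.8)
The plan is to identify the $\SL(2,\R)$-Higgs bundles inside $\S_D$ as the fixed points of the holomorphic involution $\Theta\colon(E,\Phi)\mapsto(E,-\Phi)$ on $\H^{-1}(q_2)$, and then to analyse $\Theta$ through the fibre-bundle structure $\Eig_D\colon \S_D\to\Pr_{\Lambda-\tpst D}(\tilde{\Sigma})$ of Theorem \ref{theo:strat:sl2C}. First I would recall the standard Higgs-theoretic description: a stable $(E,\Phi)\in\H^{-1}(q_2)$ is an $\SL(2,\R)$-Higgs bundle exactly when it admits a reduction $E\cong L_0\oplus L_0^{-1}$ with $\Phi$ off-diagonal, $\beta\in H^0(X,L_0^2K)$, $\gamma\in H^0(X,L_0^{-2}K)$. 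Conjugating by $\mathrm{diag}(i,-i)\in\SL(E)$ shows that every such bundle satisfies $(E,\Phi)\cong(E,-\Phi)$; conversely, for a stable bundle any isomorphism $\psi\colon(E,\Phi)\to(E,-\Phi)$ has $\psi^2=\pm\id_E$, and its eigenbundle decomposition produces precisely such an off-diagonal reduction (the compact-form alternative $\Phi=0$ is excluded because $q_2\neq0$). By the stability lemma of Section \ref{sec:prel}, every point of $\H^{-1}(q_2)$ is stable, so the $\SL(2,\R)$-locus in $\S_D$ is exactly $\Fix(\Theta)\cap\S_D$.

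Next I would show that $\Theta$ covers the involution $L\mapsto\sigma^*L$ on the base. Using $\sigma^*\lambda=-\lambda$ and that the pulled-back Higgs field is $\sigma$-invariant, one computes
\[ \Eig_D(E,-\Phi)=\Ker(\tpst\Phi+\lambda)=\sigma^*\Ker(\tpst\Phi-\lambda)=\sigma^*\Eig_D(E,\Phi). \]
Thus if $(E,\Phi)$ is $\Theta$-fixed then $\Eig_D(E,\Phi)$ is $\sigma$-symmetric, and conversely $\sigma^*L\cong L$ is necessary for a real point in the fibre over $L$. This already rules out real points over non-symmetric $L$ and shows that $\Theta$ restricts to an involution of each fibre $\Eig_D^{-1}(L)\cong\C^{r_2}$ lying over a $\sigma$-symmetric $L$.

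It then remains to prove that over each $\sigma$-symmetric $L$ there is exactly one $\Theta$-fixed point. Fixing the isomorphism $L\cong\sigma^*L$ supplied by the $\sigma$-invariant structure (with $\hat{\sigma}$ restricting to the identity at the ramification points, as required in Definition \ref{def:sigma-inv}) rigidifies the diagonalising frame $s_+=s\oplus\sigma^*s$, $s_-=s\oplus-\sigma^*s$ of $E_L$. The involution $\Theta$ sends $\Phi_L=\mathrm{diag}(\lambda,-\lambda)$ to $\mathrm{diag}(-\lambda,\lambda)$, i.e.\ it interchanges the two sheets and acts on the frame by $s_+\mapsto s_+$, $s_-\mapsto-s_-$. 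Tracing this through the Hecke-parameter description of $\Eig_D^{-1}(L)$ in Proposition \ref{prop:Appl:Hecke}, where a point is recorded by the odd germ $u=a/b$ for the direction $\alpha=as_++bs_-$, one finds that the induced action on the $u$-coordinate is $u\mapsto-u$. An involution of the affine fibre $\C^{r_2}$ with linearisation $-\id$ has the single fixed point $u=0$, which is therefore the unique $\SL(2,\R)$-Higgs bundle with eigenline bundle $L$. Combined with the previous paragraph, $\Eig_D$ restricts to a bijection between the $\SL(2,\R)$-Higgs bundles in $\S_D$ and the $\sigma$-symmetric points of $\Pr_{\Lambda-\tpst D}(\tilde{\Sigma})$.

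The main obstacle I anticipate is the bookkeeping in this last step: one must check that the isomorphism $L\cong\sigma^*L$ can be normalised (via the $\sigma$-invariant lift and the even order of $\tpst D$ at the branch points) so that the induced map on Hecke parameters is exactly the linear involution $u\mapsto-u$ rather than a general Möbius-type involution, and that the fixed bundle at $u=0$ genuinely descends to the off-diagonal $\SL(2,\R)$-form on $X$. The most transparent route is to realise this fixed bundle as a $\sigma$-invariant pushforward $\tilde{\pi}_*^{\sigma}$ of the symmetric data, so that the sheet-swap interchanging the eigenline summands descends to the involution $\mathrm{diag}(1,-1)$ that exhibits the reduction $L_0\oplus L_0^{-1}$.
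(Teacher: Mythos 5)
Your argument is correct, but it is organised quite differently from the paper's proof. The paper argues constructively: given a $\sigma$-symmetric $L$ with an isomorphism $\phi\colon\sigma^*L\to L$, it records the signs $\phi\rest_p=\pm1$ at the ramification points, splits $N=\Lambda-\tpst D$ accordingly as $N_++N_-$, and performs the Hecke transformation at $u=0$ to exhibit the real point explicitly as $L(N_-)\oplus L(N_+)$ with off-diagonal Higgs field; the converse direction recovers $N_\pm$ from the off-diagonal entries of a given $\SL(2,\R)$-Higgs bundle and shows its eigenline bundle is $\sigma$-symmetric. You instead characterise the real locus as $\Fix(\Theta)$ for $\Theta\colon(E,\Phi)\mapsto(E,-\Phi)$ (legitimate here because every point of these fibres is stable), use $\Eig_D\circ\Theta=\sigma^*\circ\Eig_D$ to force $\sigma$-symmetry, and obtain uniqueness in each fibre from the fact that the induced involution is $u\mapsto-u$. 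Both proofs land on $u=0$, but the work sits in different places. Your route makes the necessity of $\sigma$-symmetry and the "at most one per fibre" statement essentially formal; the bookkeeping you flag at the end is, however, genuinely the crux: you need the fibre involution to be \emph{exactly} $u\mapsto-u$ in the frame normalised by $\phi(\sigma^*s)=\pm s$, since knowing only that the linearisation at one fixed point is $-\id$ would not by itself exclude further fixed points. That computation does go through — tracing $s_+\mapsto s_+$, $s_-\mapsto-s_-$ through part ii) of Theorem \ref{theo:strata:Hecke-bundle:odd} and checking via Proposition \ref{prop:u-coord} that the residual sign ambiguity in $\phi$ does not move $u$. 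What the paper's construction buys, and what you defer to your closing remark, is the explicit descended form $L(N_-)\oplus L(N_+)$; this explicit splitting is reused elsewhere (in Theorem \ref{theo:realpts:even} and in the non-emptiness argument of Proposition \ref{prop:isom:prym}), so if you adopt the fixed-point route you should still extract it, e.g.\ by computing the $\sigma$-invariant pushforward at $u=0$ as you propose.
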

\begin{proof}
Let $N:=\Lambda-\tpst D$ and $L \in \Pr_{N}(\tilde{\Sigma})$, such that there exists an isomorphism $\phi: \sigma^*L \rightarrow L$. Then $\phi$ is unique up to $\pm \id$ and restricts to $\pm 1$ at each $p \in \Fix(\sigma)=\tilde{\pi}^{-1}Z(q_2)$. Choose a frame $s \in H^0(U,L)$ at $p\in \Fix(\sigma)$, such that $s =  \pm \phi( \sigma^*s)$ for $\phi_p=\pm 1$ respectively. Such a frame is uniquely defined up to multiplying by a $\sigma$-invariant holomorphic function and therefore defines a unique $u$-coordinate (cf. Proposition \ref{prop:u-coord}). The induced frame $s_+,s_-$ defines a global splitting 
\[ (E_L,\Phi_L)= \left(L \oplus L, \begin{pmatrix} 0 & \lambda \\ \lambda & 0 \end{pmatrix} \right).
\] We decompose $N=N_++N_-$, such that 
\[ \supp N_\pm = \{ p \in \Fix(\sigma) \mid \phi \rest_p = \pm \id \}.
\] The Hecke transformation of $(E_L,\Phi_L)$ at $N$ in direction $u=0$ is given by
\[ (\hat{E}_L, \hat{\Phi}_L ) = \left( L(N_-) \oplus L(N_+), \begin{pmatrix} 0 & \frac{\lambda \eta_-}{\eta_+} \\ \frac{\lambda \eta_+}{\eta_-} & 0 \end{pmatrix} \right)
\] with $\eta_\pm \in H^0(\tilde{\Sigma}, \O(N_\pm))$ canonical. The induced lift of $\sigma$ to $L(N_\pm)$ is the identity at all ramification points. Hence, $(\hat{E}_L, \hat{\Phi}_L )$ descends to a $\SL(2,\R)$-Higgs bundle on $X$. If we choose $-\phi$ in the beginning the role of $N_\pm$ are interchanged and we obtain a $\SL(2,\R)$-Higgs bundle isomorphic over $\SL(2,\C)$. 

For the converse, consider a $\SL(2,\R)$-Higgs bundle
\[ (E,\Phi)=\left(L \oplus L^{-1}, \begin{pmatrix} 0 & \alpha \\ \beta & 0 \end{pmatrix} \right) \in \S_D.
\] There are divisors $N_\pm \in \Div^+(\tilde{\Sigma})$, such that 
\[ \tpst \alpha= \frac{\lambda \eta_-}{\eta_+}, \quad \tpst \beta  = \frac{\lambda \eta_+}{\eta_-}
\] for $\eta_\pm \in H^0(\tilde{\Sigma},\O(N_\pm))$ canonical. The eigenline bundles are defined by 
\begin{align} (\tpst L)(-N_-) \xrightarrow{(\eta_-,\pm\eta_+)} \tpst L \oplus \tpst L^{-1}. \label{equ:eig}
\end{align} and correspond to a $\sigma$-symmetric point of $\Pr_N(\tilde{\Sigma})$. Furthermore, the induced isomorphism 
\[ \phi: \sigma^*((\tpst L)(-N_-)) \rightarrow (\tpst L)(-N_-)
\] is $-1$ at $\supp N_-$. So we recover $(E,\Phi)$ with the construction in the first part of the proof.	
\end{proof}

\begin{exam}
The pullback $\tpst K^{-\frac12} \in \Pr_\Lambda$ is $\sigma$-symmetric. The corresponding $\SL(2,\R)$-Higgs bundle is the image of the Hitchin section
\[ \left( K^{-\frac12} \oplus K^{\frac12}, \begin{pmatrix} 0 & 1 \\  q_2 & 0
\end{pmatrix}\right) \in \S_0.
\] More generally, if $\deg( D) \equiv 0 \mod 2$, there exist line bundles $M$ on $X$ such that $M^2\cong \O_X(D)$. Then $\tpst (K^{-\frac12}M) \in \Pr_{\Lambda-\tpst D}$ is $\sigma$-symmetric. The corresponding $\SL(2,\R)$-Higgs bundle is of the form
\[ \left(K^{-\frac12}M \oplus K^{\frac12}M^{-1}, \begin{pmatrix} 0 & \eta \\ \frac{q_2}{\eta} & 0\end{pmatrix}\right)
\] with $\eta \in H^0(X,\O(D))$ canonical. These are the only $\SL(2,\R)$-Higgs coming from $\sigma$-invariant line bundles. \\
\end{exam}

\begin{coro} 
Let $q_2 \in H^0(X,K^2)$ be a quadratic differential, such that all zeros have odd order. Then $\H^{-1}(q_2)$ contains
\[ 2^{2g-2}\prod\limits_{p \in Z(q_2)} (\ord_p(q_2)+1)
\] $\SL(2,\R)$-Higgs bundles.
\end{coro}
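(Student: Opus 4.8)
The plan is to combine the stratification of Theorem~\ref{theo:strat:sl2C} with the fibrewise parametrisation of the real points from Theorem~\ref{theo:realpts:odd}, and then to count. Since the strata $\S_D$ are pairwise disjoint, the total number of $\SL(2,\R)$-Higgs bundles is the sum over all Higgs divisors $D$ of the number of $\SL(2,\R)$-Higgs bundles in $\S_D$. First I would count the strata: as every zero has odd order, writing $\ord_p(q_2)=2m_p+1$ gives $\lfloor\tfrac12\ord_p(q_2)\rfloor=m_p$, so $D_p\in\{0,\dots,m_p\}$ and the number of Higgs divisors is $\prod_{p\in Z(q_2)}(m_p+1)=\prod_{p\in Z(q_2)}\tfrac{\ord_p(q_2)+1}{2}$.

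Next I would fix $D$ and count the real points in $\S_D$. By Theorem~\ref{theo:realpts:odd} these are in bijection with the $\sigma$-symmetric points of $\Pr_{\Lambda-\tpst D}(\tilde{\Sigma})$, and, as recalled before that theorem, once a $\sigma$-symmetric base point is chosen the $\sigma$-symmetric points are a torsor under the two-torsion subgroup $\Prym(\tilde{\Sigma})[2]$. Concretely, if $L_0$ is $\sigma$-symmetric with $L_0\otimes\sigma^*L_0\cong\O(-(\Lambda-\tpst D))$, then for $M\in\Prym(\tilde{\Sigma})$ the bundle $L_0\otimes M$ is again $\sigma$-symmetric iff $\sigma^*M\cong M$; together with $M\otimes\sigma^*M\cong\O$ this forces $M^2\cong\O$, i.e.\ $M\in\Prym(\tilde{\Sigma})[2]$. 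Hence the number of $\sigma$-symmetric points equals $\#\Prym(\tilde{\Sigma})[2]=2^{2\dim\Prym(\tilde{\Sigma})}$, and by Proposition~\ref{prop:isom:prym} we have $\dim\Prym(\tilde{\Sigma})=g-1+\tfrac12 n_{\odd}$, so each nonempty stratum contributes exactly $2^{2g-2+n_{\odd}}$ real points.

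The main obstacle is the only non-formal step: showing that every $\Pr_{\Lambda-\tpst D}(\tilde{\Sigma})$ actually contains a $\sigma$-symmetric point, so that the torsor count applies to each stratum. Using $\lambda$ we have $\O(\Lambda)\cong\tpst K$, whence $\O(-(\Lambda-\tpst D))\cong\tpst(K^{-1}\O(D))$. If $\deg D$ is even, then $K^{-1}\O(D)$ has even degree and admits a square root $M$ on $X$, and $L_0=\tpst M$ is $\sigma$-symmetric with $L_0^2\cong\tpst(K^{-1}\O(D))$. If $\deg D$ is odd, I would fix a ramification point $p_0$ of $\tpst$ (one exists since all zeros are odd and $q_2\neq 0$) lying over $x_0\in Z(q_2)$; then $\O_{\tilde{\Sigma}}(p_0)$ is $\sigma$-symmetric because $\sigma(p_0)=p_0$, and $\O(2p_0)\cong\tpst\O_X(x_0)$. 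Choosing a square root $M$ on $X$ of the now even-degree bundle $K^{-1}\O(D+x_0)$, the bundle $L_0=\tpst M\otimes\O(-p_0)$ is $\sigma$-symmetric and satisfies $L_0^2\cong\tpst(K^{-1}\O(D+x_0))\otimes\O(-2p_0)\cong\tpst(K^{-1}\O(D))$. In either case $\Pr_{\Lambda-\tpst D}$ carries a $\sigma$-symmetric point.

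Finally I would assemble the count. Combining the number of strata with the $2^{2g-2+n_{\odd}}$ real points each contributes, and using $n_{\odd}=\#Z(q_2)$ together with $\prod_{p\in Z(q_2)}\tfrac12=2^{-n_{\odd}}$, I obtain
\[
\Big(\prod_{p\in Z(q_2)}\tfrac{\ord_p(q_2)+1}{2}\Big)\cdot 2^{2g-2+n_{\odd}}
=2^{2g-2}\prod_{p\in Z(q_2)}(\ord_p(q_2)+1),
\]
which is the asserted number of $\SL(2,\R)$-Higgs bundles in $\H^{-1}(q_2)$.
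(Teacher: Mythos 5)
Your proof is correct and follows the same route as the paper: count the strata (one per Higgs divisor, $\prod_{p}\tfrac12(\ord_p(q_2)+1)$ of them) and multiply by the number of real points per stratum obtained from Theorem~\ref{theo:realpts:odd}. Two remarks. First, your per-stratum count $\#\Prym(\tilde{\Sigma})[2]=2^{2\dim\Prym(\tilde{\Sigma})}=2^{2g-2+n_{\odd}}$ is the one consistent with the asserted total; the paper's own proof states $2^{2g-2-n}$ per stratum, which is a slip in the exponent (it would give $2^{2g-2-2n}\prod_p(\ord_p(q_2)+1)$ rather than the claimed formula). Second, you explicitly verify that every $\Pr_{\Lambda-\tpst D}(\tilde{\Sigma})$ contains a $\sigma$-symmetric point, including the case $\deg D$ odd via a ramification point $p_0$ with $\O(2p_0)\cong\tpst\O_X(x_0)$; the paper only exhibits such base points for $\deg D$ even (in the example following Theorem~\ref{theo:realpts:odd}), so your construction closes a small gap needed to apply the torsor count uniformly to all strata.
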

\begin{proof} By the previous theorem, every stratum contains $2^{2g-2-n}$ $\SL(2,\R)$-points, where $n$ is the number of zeros. At a zero $p \in Z(q_2)$, we have $\frac{\ord_p(q_2)+1}{2}$ possible values for $D$ and hence there are
\[ \prod\limits_{p \in Z(q_2)} \tfrac12 (\ord_p(q_2)+1)
\] different strata.
\end{proof}

For quadratic differentials with zeros of even order, there are two Hecke parameters in each stratum leading to $\SL(2,\R)$-Higgs bundles. Here, we use the description of the extra data at even zeros by Hecke parameters given in Proposition \ref{prop:Hecke:even}.
\begin{theo}\label{theo:realpts:even}
Let $q_2 \in H^0(X,K^2)$ with at least one zero of odd order and $D\in \Div^+(X)$ an associated Higgs divisor. The $\SL(2,\R)$-Higgs bundles in the stratum $\mathcal{S}_D \subset \H^{-1}(q_2)$ are parametrised by the $\sigma$-symmetric points of $\Pr_{\Lambda-\tpst D}(\tilde{\Sigma})$ together with a choice of one of two possible Hecke parameters at every even zero $p$ of $q_2$, where $\tfrac12 \ord_p(q_2) \neq D_p$. In particular, each stratum $\S_D$ contains
\[ 2^{2g-2+n-n_0}
\] $\SL(2,\R)$-Higgs bundles, where $n=\#Z(q_2)$ and 
\[ n_0= \# \{ p \in Z(q_2) \mid \tfrac12 \ord_p(q_2)= D_p\}.
\]
\end{theo}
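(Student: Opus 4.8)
The plan is to identify the $\SL(2,\R)$-Higgs bundles in $\S_D$ with the fixed points of the holomorphic involution $\iota\colon(E,\Phi)\mapsto(E,-\Phi)$ and then to count these fixed points fibre by fibre over the twisted Prym variety. Since $q_2$ has at least one zero of odd order, the pullback $\tpst$ is injective (Proposition \ref{prop_pi*_inj}), so I may argue on $\tilde{\Sigma}$ with $\sigma$-invariant Higgs bundles. An $\SL(2,\R)$-Higgs bundle is one that splits as $M\oplus M^{-1}$ with off-diagonal Higgs field; conjugating by $\mathrm{diag}(1,-1)$ shows such pairs are exactly the $\iota$-fixed points, since for a stable $(E,\Phi)$ any intertwiner $(E,\Phi)\to(E,-\Phi)$ squares to a scalar and, after rescaling, to $\id_E$, whose $\pm1$-eigenspaces split $E$ and force $\Phi$ off-diagonal. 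Because $-\Phi$ interchanges the $\pm\lambda$-eigenlines, $\iota$ descends on $\Pr_{\Lambda-\tpst D}(\tilde{\Sigma})$ to $L\mapsto\sigma^*L$. Hence every $\SL(2,\R)$-point of $\S_D$ lies over a $\sigma$-symmetric eigenline bundle, and for fixed $\sigma$-symmetric $L$ the $\SL(2,\R)$-points in $\Eig_D^{-1}(L)$ are precisely the $\iota$-fixed points of that fibre.

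First I would count the $\sigma$-symmetric points of $\Pr_{\Lambda-\tpst D}(\tilde{\Sigma})$. Fixing a $\sigma$-symmetric base point (whose existence follows from the Hitchin-section construction as in Theorem \ref{theo:realpts:odd}), the simply transitive $\Prym(\tilde{\Sigma})$-action identifies the $\sigma$-symmetric points with the two-torsion of $\Prym(\tilde{\Sigma})$: for $P\in\Prym(\tilde{\Sigma})$ one has $\sigma^*P\cong P^{-1}$, so $\sigma^*P\cong P$ is equivalent to $P^2\cong\O$. Since $\dim\Prym(\tilde{\Sigma})=g-1+\tfrac12 n_\odd$, there are $2^{2(g-1)+n_\odd}=2^{2g-2+n_\odd}$ such points.

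Next I would count the $\iota$-fixed points inside a single fibre. By Theorem \ref{theo:strat:general}, $\Eig_D^{-1}(L)\cong(\C^\times)^{r_1}\times\C^{r_2}$, where each $\C^\times$-factor is contributed by an even zero $p$ with $D_p<\tfrac12\ord_p(q_2)$, while the $\C$-factors record the $u$-coordinates at odd zeros and the higher extension coefficients at even zeros. Reading off $\iota$ from the explicit Higgs field \eqref{equ:Higgsfield} and the extension datum of Propositions \ref{prop:even:zeros} and \ref{prop:Hecke:even}, I expect $\iota$ to act on each odd-zero $\C$-factor by $u\mapsto-u$, with the single fixed point $u=0$ reproducing the framing used in Theorem \ref{theo:realpts:odd}, and on each even-zero $\C^\times$-factor, in the Hecke coordinate $w=\tfrac{b+a}{b-a}$ of Proposition \ref{prop:Hecke:even}, by the inversion $w\mapsto w^{-1}$. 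This inversion has exactly the two fixed points $w=\pm1$, namely $a\equiv0$ and $b\equiv0$, which correspond to the two local factorisations $q_2=\alpha\beta$ with $\{\ord_p\alpha,\ord_p\beta\}=\{D_p,\ \ord_p(q_2)-D_p\}$, i.e.\ to twisting the eigenline at $y$ versus at $\sigma y$; on the higher even-zero $\C$-factors $\iota$ has a single fixed point. Thus each fibre carries $2^{r_1}=2^{n_\even-n_0}$ fixed points, the $n_0$ correction arising because the even zeros with $\tfrac12\ord_p(q_2)=D_p$ (the only zeros counted by $n_0$, since an odd order is never equal to $2D_p$) sit in the locally lowest stratum and contribute no $\C^\times$-factor.

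Multiplying the two counts gives $2^{2g-2+n_\odd}\cdot2^{n_\even-n_0}=2^{2g-2+n-n_0}$ with $n=n_\odd+n_\even$, as claimed. The main obstacle I anticipate is the even-zero step: one must determine the induced $\iota$-action on the $\C^\times$ of Hecke parameters precisely enough to see that it is genuine inversion — so that the fixed locus is exactly two points rather than empty or positive-dimensional — and verify that the two fixed values $a\equiv0$, $b\equiv0$ yield non-isomorphic Higgs bundles, both lying in $\S_D$ and carrying the off-diagonal $\SL(2,\R)$-form. Matching the $\C^\times$-factors to the even zeros with $D_p<\tfrac12\ord_p(q_2)$, and hence the exponent to $n_\even-n_0$, is then routine bookkeeping.
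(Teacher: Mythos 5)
Your route is genuinely different from the paper's. You count fixed points of the involution $\iota:(E,\Phi)\mapsto(E,-\Phi)$ fibrewise over the twisted Prym variety, whereas the paper argues by explicit construction and classification: given a $\sigma$-symmetric $L$ with an isomorphism $\phi:\sigma^*L\to L$ and a sign at each relevant even zero, it produces the $\SL(2,\R)$-Higgs bundle by Hecke transformations (the sign selecting the Hecke direction $[s_+]$ or $[s_-]$ of Proposition \ref{prop:Hecke:even}); conversely, starting from $\bigl(L\oplus L^{-1},\begin{pmatrix}0&\gamma\\ \delta&0\end{pmatrix}\bigr)\in\S_D$ it factors $\tpst\gamma=\lambda\eta_-/\eta_+$, $\tpst\delta=\lambda\eta_+/\eta_-$ and reads off the $\sigma$-symmetric eigenline bundle and the decomposition $N^{\even}=N^{\even}_++N^{\even}_-$. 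The paper therefore never needs to determine how $\iota$ acts on the fibre coordinates: the off-diagonal normal form classifies the real points directly. Your two fixed values $w=\pm1$ (i.e.\ $a\equiv0$ resp.\ $b\equiv 0$) are exactly the paper's two Hecke parameters $[s_\pm]$, and your bookkeeping $2^{2g-2+n_{\odd}}\cdot 2^{n_{\even}-n_0}$ — including the observation that the exponent $r_1=n_{\even}$ of Theorem \ref{theo:strat:sl2C} must be corrected by $n_0$ for even zeros in the locally lowest stratum, and that only even zeros contribute to $n_0$ — agrees with the paper. What the fixed-point approach buys is a conceptual explanation of the factor $2$ per even zero (an involution of $\P^1$ swapping the two degeneration directions $0\leftrightarrow\infty$ must be an inversion, hence has exactly two fixed points); what the paper's approach buys is the parametrization statement itself with no computation of the induced involution.

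The gap is the one you flag yourself: the assertions ``I expect $\iota$ to act by $u\mapsto-u$'' and ``by the inversion $w\mapsto w^{-1}$'' carry essentially all of the content and are not established; until they are, the fixed locus of a fibre could a priori be empty or positive-dimensional. Two points to watch when closing it. First, the involution on the even-zero factor $\C^*\times\C^{\Lambda_y-D_y-1}$ of Proposition \ref{prop:even:zeros} does not respect that product decomposition coordinate by coordinate; the correct statement is that, as a map of germs, it has the form $w\mapsto c/w \bmod z^{\Lambda_y}$ for a unit germ $c$ (computable from \ref{equ:Higgsfield}), whose fixed locus consists of the two square-root germs of $c$ — so the total of two fixed points per even zero is right, but not as ``two on the $\C^\times$ times one on each $\C$''. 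Second, your count of $\sigma$-symmetric points assumes $\Pr_{\Lambda-\tpst D}(\tilde{\Sigma})$ contains one for every Higgs divisor $D$; the Hitchin-section example only covers $\deg D$ even, so this non-emptiness needs a word (the paper is also brief here, but its constructive direction makes the existence question explicit rather than hiding it in a torsor count).
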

\begin{proof}
Let $L \in \Pr_{\Lambda-\tpst D}(\tilde{\Sigma})$, such that there exists an isomorphism $\phi: \sigma^*L \rightarrow L$. Fix a choice of $\pm$ at every even zero $p$, such that $\tfrac12 \ord_p(q_2) \neq \tpst D_p$. Let $Z^e \subset Z(q_2)$ be the set of zeros of even order and
\[ N^{\even}= (\Lambda- \tpst D) \rest_{\tilde{\pi}^{-1}Z^e}.
\] Let $N^{\even}=N^{\even}_+ + N^{\even}_-$, such that $N^{\even}_\pm$ is supported at the even zeros assigned a $\pm$ respectively. As seen above, the isomorphism $\phi$ defines a unique Hecke transformation at $\tilde{\pi}^{-1}(p)$ for all $p \in Z(q_2)$ of odd order. Performing these Hecke transformations, we obtain a $\sigma$-invariant Higgs bundle
\[ (E,\Phi)=\left(L_1 \oplus L_2, \begin{pmatrix} 0 & \alpha \\ \beta & 0 \end{pmatrix} \right)
\] on $\tilde{\Sigma}$ with $L_1 \otimes L_2 \cong \O_{\tilde{\Sigma}}(-N^{\even})$, which is locally diagonalizable over all even zeros of $q_2$. If $N^{\even}=0$, this descends to a $\SL(2,\R)$-Higgs bundle on $X$ and we are done. Let $\tilde{p} \in \tilde{\pi}^{-1}Z(q_2)$, such that $N^{\even}_{\tilde{p}} \neq 0$. Choose a frame $s \in H^0(U \cup \sigma(U),L)$ for a neighborhood $U$ of $\tilde{p}$, such that $\phi( \sigma^*s)=s$. Depending on the fixed choice of $\pm$, we define the Hecke parameter $\alpha=[s_\pm]$ with respect to the induced frame $s_+,s_-$. By Proposition \ref{prop:Hecke:even}, this defines a $\sigma$-invariant Higgs bundle 
\[ \left(\hat{E}^{(\tilde{p}+\sigma \tilde{p}, \alpha + \sigma^*\alpha)}, \hat{\Phi}^{(\tilde{p}+\sigma \tilde{p}, \alpha + \sigma^*\alpha)} \right).
\] Performing Hecke transformations like this over all even zeros $p \in X$, such that $\tfrac12 \ord_p(q_2) \neq \tpst D_p$, we obtain 
\[ \left( L_1(N^{\even}_-) \oplus L_2(N_+^{\even}), \begin{pmatrix} 0 & \frac{\alpha \eta_-}{\eta_+} \\ \frac{\beta \eta_+}{\eta_-} &0 \end{pmatrix} \right).
\] This $\sigma$-invariant Higgs bundle descends to a $\SL(2,\R)$-Higgs bundle in the desired stratum. 

For the converse, let 
\[ (E,\Phi)= \left( L \oplus L^{-1}, \begin{pmatrix} 0 & \gamma \\ \delta & 0 \end{pmatrix} \right) \in \S_D.
\] Then 
\[ \tpst (E,\Phi) = \left( \tpst L \oplus \tpst L^{-1}, \begin{pmatrix} 0 & \frac{\lambda \eta_-}{\eta_+} \\ \frac{\lambda \eta_+}{\eta_-} & 0 \end{pmatrix} \right)
\] for divisors $N_\pm \in \Div^+(\tilde{\Sigma})$ with canonical sections $\eta_\pm$. The eigenline bundles are defined by (\ref{equ:eig}) and define a $\sigma$-symmetric element of $\Pr_{\Lambda-\tpst D}$. There is a induced decomposition $N^{\even}=N^{\even}_+ + N^{\even}_-$ and hence a choice of $\pm$ for all $p \in Z^e$, where $N^{\even}_{\tilde{p}} \neq 0$, i.e.\ $\tfrac12 \ord_p(q_2) \neq D_p$. 
\end{proof}

\begin{rema}
The choice of $\pm$ in the previous theorem actually depends on choosing an isomorphism $\phi: \sigma^*L \rightarrow L$. However this isomorphism is unique up to $\pm \id_{\tilde{\Sigma}}$. Choosing $-\phi$ instead of $\phi$ corresponds to switching all $+$ to $-$ and vice versa. For the $\SL(2,\R)$-Higgs bundle, this corresponds to the gauge transformation interchanging the splitting line bundles.
\end{rema}

A general formula for the number of real points in a singular Hitchin fibre would be quite complicated. So let us finish by computing this number in some examples.
\begin{exam}
Let $q_2 \in H^0(X,K^2)$ be a quadratic differential with $d<2g-2$ double zeros and $4g-4-2d$ simple zeros. Then the Hitchin fibre contains
\[ 2^{6g-6-2d} \sum\limits_{k=0}^d \begin{pmatrix} d \\k \end{pmatrix} 2^k 
\] real points.
Let $q_2$ be a quadratic differential with one zero of order $2d<4g-4$ and $4g-4-2d$ simple zeros. Then the number is $(4d-3)2^{6g-6-2d}$.
\end{exam}


\vspace{1.5cm}


\begin{thebibliography}{34}
\providecommand{\natexlab}[1]{#1}
\providecommand{\url}[1]{\texttt{#1}}
\expandafter\ifx\csname urlstyle\endcsname\relax
  \providecommand{\doi}[1]{doi: #1}\else
  \providecommand{\doi}{doi: \begingroup \urlstyle{rm}\Url}\fi

\bibitem[AlCo19]{AlCo19}
D.~{Alessandrini} and B.~{Collier}.
\newblock {The geometry of maximal components of the \(\mathsf{PSp}(4,
  \mathbb{R})\) character variety.}
\newblock \emph{{Geom. Topol.}}, 23\penalty0 (3):\penalty0 1251--1337, 2019.
\newblock ISSN 1465-3060; 1364-0380/e.

\bibitem[ALS20]{ALS20}
D.~{Alessandrini}, Q.~{Li}, and A.~{Sanders}.
\newblock Nilpotent Higgs bundles and minimal surfaces in hyperbolic 3-space.
\newblock \emph{{to appear}}, 2020.

\bibitem[ABCGGO19]{ABCGGO}
M.~{Aparicio-Arroyo}, S.~{Bradlow}, B.~{Collier}, O.~{Garcia-Prada}, P.~B.
  {Gothen}, and A.~{Oliveira}.
\newblock {\(\text{SO}(p,q)\)-Higgs bundles and higher Teichm\"uller
  components.}
\newblock \emph{{Invent. Math.}}, 218\penalty0 (1):\penalty0 197--299, 2019.
\newblock ISSN 0020-9910; 1432-1297/e.

\bibitem[Bap10]{Ba10}
J.~M. {Baptista}.
\newblock {Non-abelian vortices, Hecke modifications and singular monopoles.}
\newblock \emph{{Lett. Math. Phys.}}, 92\penalty0 (3):\penalty0 243--252, 2010.
\newblock ISSN 0377-9017; 1573-0530/e.

\bibitem[BNR89]{BNR89}
A.~{Beauville}, M.~S. {Narasimhan}, and S.~{Ramanan}.
\newblock {Spectral curves and the generalized theta divisor.}
\newblock \emph{{J. Reine Angew. Math.}}, 398:\penalty0 169--179, 1989.
\newblock ISSN 0075-4102; 1435-5345/e.

\bibitem[BGG06]{BGG06}
S.~B. {Bradlow}, O.~{Garcia-Prada}, and P.~B. {Gothen}.
\newblock {Maximal surface group representations in isometry groups of
  classical Hermitian symmetric spaces.}
\newblock \emph{{Geom. Dedicata}}, 122:\penalty0 185--213, 2006.
\newblock ISSN 0046-5755; 1572-9168/e.

\bibitem[BBS19]{BBS19}
S.~B. Bradlow, L.~C. Branco, and L.~P. Schaposnik.
\newblock Orthogonal Higgs bundles with singular spectral curves, 2019,
\newblock arXiv:1909.03994.

\bibitem[Bra18]{Br18}
L.~C. Branco.
\newblock {Higgs bundles, Lagrangians and mirror symmetry}, 2018,
\newblock  arXiv:1803.01611.

\bibitem[DoPa12]{DoPa12}
R.~{Donagi} and T.~{Pantev}.
\newblock {Langlands duality for Hitchin systems.}
\newblock \emph{{Invent. Math.}}, 189\penalty0 (3):\penalty0 653--735, 2012.
\newblock ISSN 0020-9910; 1432-1297/e.

\bibitem[FGOP18]{FGOP18}
E.~Franco, P.~Gothen, A.~Oliveira, and A.~Peón-Nieto.
\newblock Unramified covers and branes on the Hitchin system, 2018,
\newblock arXiv:1802.05237.

\bibitem[GPR18]{GPR18}
O.~{Garcia-Prada}, A.~{Peon-Nieto}, and S.~{Ramanan}.
\newblock {Higgs bundles for real groups and the Hitchin-Kostant-Rallis
  section.}
\newblock \emph{{Trans. Amer. Math. Soc.}}, 370\penalty0 (4):\penalty0
  2907--2953, 2018.
\newblock ISSN 0002-9947; 1088-6850/e.

\bibitem[GoOl13]{GO13}
P.~B. Gothen and A.~G. Oliveira.
\newblock The singular fibre of the Hitchin map.
\newblock \emph{Int. Math. Res. Not. IMRN}, 2013\penalty0
  (5):\penalty0 1079--1121, 2013.
\newblock  ISSN 1073-7928; 1687-0247/e.

\bibitem[GPR94]{GPR94}
H.~{Grauert}, T.~{Peternell}, and R.~{Remmert}.
\newblock \emph{{Several complex variables VII. Sheaf-theoretical methods in
  complex analysis.}}, volume~74.
\newblock Berlin: Springer-Verlag, 1994.
\newblock ISBN 3-540-56259-1/hbk.

\bibitem[HaPa12]{HaPa12}
T.~{Hausel} and C.~{Pauly}.
\newblock {Prym varieties of spectral covers.}
\newblock \emph{{Geom. Topol.}}, 16\penalty0 (3):\penalty0 1609--1638, 2012.
\newblock ISSN 1465-3060; 1364-0380/e.

\bibitem[HaTh03]{HaTh03}
T.~Hausel and M.~Thaddeus.
\newblock Mirror symmetry, Langlands duality, and the Hitchin system.
\newblock \emph{Invent. Math.}, 153\penalty0 (1):\penalty0 197--229,
  Jul 2003.
\newblock ISSN 1432-1297.

\bibitem[HeWa19]{HeWa19}
S.~{He} and T.~{Walpuski}.
\newblock {Hecke modifications of Higgs bundles and the extended Bogomolny
  equation.}
\newblock \emph{{J. Geom. Phys.}}, 146:\penalty0 17, 2019.
\newblock ISSN 0393-0440.

\bibitem[Hit87a]{Hi87a}
N.~J. Hitchin.
\newblock The self-duality equations on a Riemann surface.
\newblock \emph{Proc. Lond. Math. Soc. (3)}, s3-55\penalty0
  (1):\penalty0 59--126, 1987.
\newblock ISSN 1460-244X.

\bibitem[Hit87b]{Hi87b}
N.~J. {Hitchin}.
\newblock {Stable bundles and integrable systems.}
\newblock \emph{{Duke Math. J.}}, 54:\penalty0 91--114, 1987.
\newblock ISSN 0012-7094; 1547-7398/e.

\bibitem[Hit92]{Hi92}
N.~J. Hitchin.
\newblock Lie groups and Teichmüller space.
\newblock \emph{Topology}, 31\penalty0 (3):\penalty0 449 -- 473, 1992.
\newblock ISSN 0040-9383.

\bibitem[Hit07]{Hi07}
N.~J. {Hitchin}.
\newblock {Langlands duality and \(G_{2}\) spectral curves.}
\newblock \emph{{Q. J. Math.}}, 58\penalty0 (3):\penalty0 319--344, 2007.
\newblock ISSN 0033-5606; 1464-3847/e.

\bibitem[Hit19]{Hi19}
N.~J. Hitchin.
\newblock Critical loci for Higgs bundles.
\newblock \emph{Comm. Math. Phys.}, Feb 2019.
\newblock ISSN 1432-0916.


\bibitem[Hor20]{Ho2}
J.~{Horn}.
\newblock sl(2)-type singular fibers of the symplectic and orthogonal Hitchin system.
\newblock \emph{to appear 2020}.

\bibitem[HoNa20]{HoNa}
J.~{Horn}, X.~{Na}.
\newblock The geometry of singular Hitchin fibres via Hecke modifications and abelianisation.
\newblock \emph{to appear 2020}.

\bibitem[HwRa04]{HwRa04}
J.~M. {Hwang} and S.~{Ramanan}.
\newblock {Hecke curves and Hitchin discriminant.}
\newblock \emph{{Ann. Sci. \'Ec. Norm. Sup\'er. (4)}}, 37\penalty0
  (5):\penalty0 801--817, 2004.
\newblock ISSN 0012-9593; 1873-2151/e.

\bibitem[KaWi07]{KaWi07}
A.~{Kapustin} and E.~{Witten}.
\newblock {Electric-magnetic duality and the geometric Langlands program.}
\newblock \emph{{Commun. Number Theory Phys.}}, 1\penalty0 (1):\penalty0
  1--236, 2007.
\newblock ISSN 1931-4523; 1931-4531/e.

\bibitem[KaKa83]{KaKa83}
L.~{Kaup} and B.~{Kaup}.
\newblock \emph{{Holomorphic functions of several variables. An introduction to
  the fundamental theory. With the assist. of Gottfried Barthel transl. by
  Michael Bridgland.}}, volume~3.
\newblock Walter de Gruyter, Berlin, 1983.
\newblock ISSN 0179-0986.

\bibitem[Mar94]{Ma94}
E.~{Markman}.
\newblock {Spectral curves and integrable systems.}
\newblock \emph{{Compos. Math.}}, 93\penalty0 (3):\penalty0 255--290, 1994.
\newblock ISSN 0010-437X; 1570-5846/e.

\bibitem[Mum71]{Mu71}
D.~Mumford.
\newblock Theta characteristics of an algebraic curve.
\newblock \emph{Ann. Sci. \'Ec. Norm. Sup\'er. (4)}, 4:\penalty0 181--192,
  1971.
\newblock ISSN 0012-9593.

\bibitem[Mum74]{Mu74}
D.~Mumford.
\newblock \emph{Prym varieties. I}, pages 325--350.
\newblock Academic Press, New York, 1974.

\bibitem[NaRa75]{NaRa75}
M.~S. {Narasimhan} and S.~{Ramanan}.
\newblock {Deformations of the moduli space of vector bundles over an algebraic curve.}
\newblock \emph{{Ann. of Math.(2)}}, 101:\penalty0
  391--417, 1975.
\newblock ISSN 0003-486X; 1939-8980/e.

\bibitem[Nit91]{Ni91}
N.~{Nitsure}.
\newblock {Moduli space of semistable pairs on a curve.}
\newblock \emph{{Proc. Lond. Math. Soc. (3)}}, 62\penalty0 (2):\penalty0
  275--300, 1991.
\newblock ISSN 0024-6115; 1460-244X/e.

\bibitem[Peo13]{PNThesis}
A.~{Peon-Nieto}.
\newblock Higgs bundles, real forms and the Hitchin fibration, 2013,
\newblock URL \url{https://www.mathi.uni-heidelberg.de/~apeonnieto/Tisis.pdf}.

\bibitem[Sch13a]{Sc13a}
L.~P. {Schaposnik}.
\newblock {Monodromy of the \(\mathrm{SL}_2\) Hitchin fibration.}
\newblock \emph{{Int. J. Math. Ind.}}, 24\penalty0 (2):\penalty0 21, 2013.
\newblock ISSN 0129-167X; 1793-6519/e.

\bibitem[Sch13b]{Sc13b}
L.~P. Schaposnik.
\newblock Spectral data for $G$-Higgs bundles, 2013, arXiv:1301.1981.

\bibitem[Sch05]{Schw05}
K.~{Schwede}.
\newblock {Gluing schemes and a scheme without closed points.}
\newblock \emph{{Recent progress in arithmetic and algebraic geometry. Proceedings of the 31st annual Barrett lecture series conference, Knoxville, TN, USA, April 25--27, 2002.}}, 157--172, 2005.
\newblock ISBN 0-8218-3401-0/pbk.

\bibitem[Sim88]{Si88}
C.~T. Simpson.
\newblock Constructing variations of Hodge structure using Yang-Mills theory
  and applications to uniformization.
\newblock \emph{J. Amer. Math. Soc}, 1\penalty0
  (4):\penalty0 867--918, 1988.
\newblock ISSN 08940347, 10886834.

\bibitem[SYZ01]{SYZ}
A.~{Strominger}, S.-T. {Yau}, and E.~{Zaslow}.
\newblock {Mirror symmetry is \(T\)-duality.}
\newblock In \emph{{Winter school on mirror symmetry, vector bundles and
  Lagrangian submanifolds. Proceedings of the winter school on mirror symmetry,
  Cambridge, MA, USA, January 1999}}, pages 333--347. Providence, RI: American
  Mathematical Society (AMS); Somerville, MA: International Press, 2001.
\newblock ISBN 0-8218-2159-8/pbk.

\bibitem[Won13]{Wo13}
M.~L. {Wong}.
\newblock {Hecke modifications, wonderful compactifications and moduli of
  principal bundles.}
\newblock \emph{{Ann. Sc. Norm. Super. Pisa Cl. Sci. (5)}}, 12\penalty0
  (2):\penalty0 309--367, 2013.
\newblock ISSN 0391-173X; 2036-2145/e.

\end{thebibliography}
\end{document}